\documentclass[a4paper,10pt]{amsart}
\usepackage{color}
\definecolor{Chocolat}{rgb}{0.36, 0.2, 0.09}
\definecolor{BleuTresFonce}{rgb}{0.215, 0.215, 0.36}
\usepackage[colorlinks,final,hyperindex]{hyperref}
\hypersetup{citecolor=BleuTresFonce, linkcolor=Chocolat}

\usepackage{graphicx}
\usepackage{epstopdf}
\usepackage[all]{xy}
\usepackage{amsmath,amsfonts,amssymb,MnSymbol}
\usepackage[centering,vscale=0.68,hscale=0.68]{geometry}
\usepackage{latexsym,amscd}
\usepackage{tikz}
\usetikzlibrary{arrows,decorations.markings,shapes.geometric}
\tikzset{>=latex}
\usepackage{url}
\usepackage{dsfont}
\usepackage{microtype}
\usepackage{todonotes}
\makeatletter
\providecommand\@dotsep{5}
\renewcommand{\listoftodos}[1][\@todonotes@todolistname]{%
  \@starttoc{tdo}{#1}}
\makeatother

\linespread{1.15}


\newtheorem{theorem}{Theorem}[section]
\newtheorem{lemma}[theorem]{Lemma}
\newtheorem{proposition}[theorem]{Proposition}
 
\theoremstyle{definition}  
\newtheorem{definition}[theorem]{Definition}
\newtheorem{example}[theorem]{Example}

\newtheorem{remark}[theorem]{Remark}


\newcommand{\Der}{{\operatorname{Der}}}
 
\newcommand{\x}{\operatorname{x}} 
\newcommand{\y}{\operatorname{y}} 
 
\newcommand{\Ad}{\operatorname{Ad}}
\newcommand{\ad}{{\operatorname{ad}}}
\newcommand{\<}{\langle} 
\renewcommand{\>}{\rangle} 
\newcommand{\Aut}{\operatorname{Aut}}
\newcommand{\on}{\operatorname}


\newcommand{\kk}{\mathbf{k}}
\newcommand{\Z}{{\mathbb Z}}
\newcommand{\Q}{{\mathbb Q}}
\newcommand{\C}{{\mathbb C}}

\newcommand{\NN}{{\mathbb N}}

\newcommand{\0}{\mathbf{0}}



\renewcommand{\sl}{\mathfrak{sl}}

\newcommand{\pb}{{\mathfrak{pb}}}
\renewcommand{\t}{{\mathfrak{t}}}

\renewcommand{\d}{{\mathfrak d}}
\newcommand{\g}{{\mathfrak{g}}}

\newcommand{\HH}{{\mathfrak H}}
\renewcommand{\u}{{\mathfrak{u}}}

\newcommand{\h}{{\mathfrak{h}}}
\renewcommand{\l}{{\mathfrak{l}}}
\newcommand{\n}{{\mathfrak{n}}}
\newcommand{\m}{{\mathfrak{m}}}

\newcommand{\SG}{{\mathfrak{S}}}
\newcommand{\f}{{\mathfrak{f}}}

\renewcommand{\i}{\on{i}}
\newcommand{\Diff}{\on{Diff}}
\newcommand{\PB}{\on{PB}}

\newcommand{\zz}{\mathbf{z}}

\newcommand{\Lie}{{\text{Lie}}}

\newcommand{\cO}{{\mathcal O}}

\newcommand{\mP}{{\mathcal P}}

\newcommand{\cM}{{\mathcal M}}
\newcommand{\ddd}{\mathbf{d}}
\newcommand{\XXX}{\mathbf{X}}


\tikzstyle cross=[preaction={draw=white, -, line width=4pt}, thick]
\tikzstyle normal=[thick]
\tikzstyle chord=[densely dotted, thick]
\tikzstyle zero=[ultra thick, gray]
\tikzstyle zell=[ultra thick, white]
\tikzstyle zerocross=[preaction={draw=white, -, line width=4pt}, ultra thick, gray]
\tikzstyle point=[draw,circle,inner sep=1,fill=black]
\tikzstyle diam=[draw,diamond,inner sep=1,fill=black]
\tikzstyle petitpoint=[draw,circle,inner sep=0.3,fill=black]

%
%
%
%


\newcommand{\noplus}{}
\newcommand{\tmmathbf}[1]{\ensuremath{\boldsymbol{#1}}}
\newcommand{\tmop}[1]{\ensuremath{\operatorname{#1}}}




\usepackage[toc,section=subsection]{glossaries}


\newglossary*{notation}{List of Notation}
\newglossary*{operads}{Operads}
\newglossary*{groups}{Groups}
\newglossary*{spaces}{Spaces}
\newglossary*{liealgebras}{Lie and associative algebras}
\newglossary*{torsors}{Torsor sets}
\newglossary*{bundles}{Bundles}
\newglossary*{associators}{Series}

 \makenoidxglossaries

 \loadglsentries{gls.tex}


\title{On the universal ellipsitomic KZB connection}
\author{Damien Calaque and Martin Gonzalez}
\address{Damien CALAQUE \newline \indent IMAG, Univ Montpellier, CNRS, Montpellier, France \& Institut Universitaire de France}
\email{damien.calaque@umontpellier.fr}
\address{Martin GONZALEZ \newline \indent Max-Planck Institut f\"ur Mathematik, Vivatsgasse 7, Bonn, Germany}
\email{marting@mpim-bonn.mpg.de}

\begin{document}

\begin{abstract} 
We construct a twisted version of the genus one universal\break Knizhnik--Zamolodchikov--Bernard 
(KZB) connection introduced by Calaque--Enriquez--Etingof, that we call the \textit{ellipsitomic} 
KZB connection. 
This is a flat connection on a principal bundle over the moduli space of $\Gamma$-structured 
elliptic curves with marked points, where $\Gamma=\Z/M\Z\times\Z/N\Z$, and $M,N\geq1$ are two integers. 
It restricts to a flat connection on $\Gamma$-twisted configuration spaces of points on elliptic 
curves, which can be used to construct a filtered-formality isomorphism for some interesting subgroups 
of the pure braid group on the torus. 
We show that the universal ellipsitomic KZB connection realizes as the usual KZB connection 
associated with elliptic dynamical $r$-matrices with spectral parameter, and finally, also 
produces representations of cyclotomic Cherednik algebras.
\end{abstract}

\maketitle

\setcounter{tocdepth}{1}

\tableofcontents


\section*{Introduction}


In this paper, which fits in a series of works about universal Knizhnik--Zamolodchikov--Bernard (KZB) 
connections by different authors \cite{CEE,En4}, we focus on a twisted version of the genus 1 situation. 
In his seminal work \cite{DrGal}, Drinfeld considers the monodromy representation of the universal 
Knizhnik--Zamolodchikov (KZ) equation which leads to the formality of the pure braid group (see reminder below) 
and the so-called theory of associators that makes the link between rich algebraic structures 
(such as braided monoidal categories) and the Grothendieck--Teichm\"uller group $\on{GT}$. 

Enriquez generalizes in \cite{En} Drinfeld's work to the twisted (a-k-a trigonometric, or cyclotomic) situation 
and relates it to multiple polylogarithms at roots of unity. Namely, he uses the universal trigonometric 
KZ system to prove the formality of some subgroups of the pure braid group on $\C^\times$ and to move 
relations between suitable algebraic structures (quasi-reflection algebras, or braided module categories)  
and analogues of the group $\on{GT}$. 

The next step has been made by Enriquez, Etingof and the first author in \cite{CEE}, where a universal 
version of the elliptic KZB system (see \cite{B1}) is defined and used to: 
\begin{itemize}
\item give a new proof (see \cite{BZ} for the original one) of the filtered formality of the pure braid group 
on the torus,
\item find a relation between the KZ associator and a generating series for 
iterated integrals of Eisenstein series (see also \cite{En3}),
\item provide examples of elliptic structures on braided monoidal categories (see also \cite{En2}). 
\end{itemize}

\medskip

The main goal of the present paper is to introduce a twisted version of the universal elliptic 
KZB system, called the \textit{ellipsitomic} KZB connection, and to derive from it the formality of 
some subgroups of the pure braid group on the torus. In a subsequent work \cite{CG-toappear}, we use it 
to emphasize a relation between generating series for values of multiple polylogarithms at roots of unity 
and values of elliptic multiple polylogarithms at torsion points. 

\medskip

Throughout the paper, $\kk$ is a field of characteristic zero, $M,N$ are 
fixed positive integers, and $\Gamma:=\Z/M\Z\times\Z/N\Z$.

\medskip	

\noindent\textbf{Genus zero situation (rational KZ).} First recall from \cite{Kohno} that the holonomy Lie algebra of 
the configuration space 
\[
\text{\gls{ConfCn}}:=\{\zz=(z_1,\dots,z_n)\in \C^n|z_i\neq z_j\textrm{ if }i\neq j\}
\]
of $n$ points on the complex line is isomorphic to the graded Lie $\mathbb{C}$-algebra $\t_n$ 
generated by $t_{ij}$, $1\leq i\neq j\leq n$, with relations
\begin{flalign}
& t_{ij}=t_{ji}\,,                			                 			\tag{S} \label{eqn:S} \\
& [t_{ij},t_{kl}]=0\quad\textrm{if }\#\{i,j,k,l\}=4 \,,				\tag{L} \label{eqn:L} \\
& [t_{ij},t_{ik}+t_{jk}]=0\quad\textrm{if }\#\{i,j,k\}=3\,. &	\tag{4T} \label{eqn:4T} 
\end{flalign}
Then, on the one hand, denote by $\text{\gls{PBn}}$ the fundamental group of $\on{Conf}(\C,n)$, also known as the pure 
braid group with $n$ strands, and by $\pb_n$ its Malcev Lie algebra (which is filtered by its lower central 
series, and complete). 
One can easily check that $\PB_n$ is generated by elementary pure braids $P_{ij}$, $1\leq i<j\leq n$, 
which satisfy the following relations: 
\begin{flalign}
& (P_{ij},P_{kl})=1\quad\textrm{if }\{i,j\}\textrm{ and }\{k,l\}~\textrm{are non crossing}\,,  	\tag{PB1}\label{eqn:PB1} \\
& (P_{kj}P_{ij}P_{kj}^{-1},P_{kl})=1\quad\textrm{if }i<k<j<l\,, 																\tag{PB2}\label{eqn:PB2} \\
& (P_{ij},P_{ik}P_{jk})=(P_{jk},P_{ij}P_{ik})=(P_{ik},P_{jk}P_{ij})=1\quad\textrm{if }i<j<k\,. &\tag{PB3}\label{eqn:PB3}  
\end{flalign}
We can depict the generator $P_{ij}$ in the following two equivalent ways: 
\begin{center}
\begin{tikzpicture}[baseline=(current bounding box.center)]
\tikzstyle point=[circle, fill=black, inner sep=0.05cm]
 \node[point, label=above:$1$] at (0,1) {};
 \node[point, label=below:$1$] at (0,-1.5) {};
 \draw[->,thick, postaction={decorate}] (0,1) -- (0,-1.45);
 \node[point, label=above:$i$] at (1,1) {};
 \node[point, label=below:$i$] at (1,-1.5) {};
  \node[point, label=above:$...$] at (2,1) {};
 \node[point, label=below:$...$] at (2,-1.5) {};
 \draw[thick] (1,1) .. controls (1,0.25) and (3.5,0.5).. (3.5,-0.25);
 \node[point, ,white] at (3,0.2) {};
 \node[point, label=above:$j$] at (3,1) {};
 \node[point, label=below:$j$] at (3,-1.5) {};
   \draw[->,thick, postaction={decorate}] (3,1) -- (3,-1.45);

   \draw[->,thick, postaction={decorate}] (3,0) -- (3,-1.45);
  \node[point, ,white] at (3,-0.7) {};
 \draw[->,thick, postaction={decorate}] (3.5,-0.25) .. controls (3.5,-1) and (1,-0.75).. (1,-1.45);


 \node[point, label=above:$n$] at (4,1) {};
 \node[point, label=below:$n$] at (4,-1.5) {};
 \draw[->,thick, postaction={decorate}] (4,1) -- (4,-1.45);
   \node[point, ,white] at (2,0.4) {};
    \node[point, ,white] at (2,-0.9) {};
   \draw[->,thick, postaction={decorate}] (2,1) -- (2,-1.45);
\end{tikzpicture}
$\qquad\longleftrightarrow\quad$
\begin{tikzpicture}[baseline=(current bounding box.center)] 
\tikzstyle point=[circle, fill=black, inner sep=0.05cm]
 \draw[loosely dotted] (1.5,-1.5) -- (-1.5,1.5); 
 \node[point, label=above:$n$] at (1.5,-1.5) {}; 
  \node[point, label=left:$i$] at (-0.5,0.5) {};
 \node[point, label=below:$j$] at (0.5,-0.5) {};
 \node[point, label=below:$1$] at (-1.5,1.5) {};
 \draw[->, thick, postaction={decorate}] (-0.5,0.5) .. controls (-0.5,0.5) and (0.65,0.7).. (0.65,-0.5); 
 \draw[thick] (0.65,-0.49) .. controls (0.5,-0.7)  .. (0.4,-0.5) ;
 \draw[thick, postaction={decorate}] (0.4,-0.5) .. controls (0.5,0.25) .. (-0.5,0.5);
\end{tikzpicture}
\end{center}
Therefore one has a surjective morphism of graded Lie algebras $p_n:\t_n\twoheadrightarrow{\rm gr}(\pb_n)$ 
sending $t_{ij}$ to $\sigma({\rm log}(P_{ij}))$, $i<j$ and $\sigma:\pb_n\to{\rm gr}(\pb_n)$ being the 
symbol map. 

On the other hand, denote by $\exp(\hat{\mathfrak{t}}_n)$ the exponential group associated with the 
degree completion $\hat{\mathfrak{t}}_n$ of $\t_n$.
The universal KZ connection on the trivial $\exp(\hat{\mathfrak{t}}_n)$-principal bundle over 
$\tmop{Conf} (\C, n)$ is then given by the holomorphic 1-form
\[
		w^{\tmop{KZ}}_n : = \underset{1 \leqslant i < j \leqslant n}{\sum}
		\frac{d z_i - d z_j}{z_i - z_j} t_{i j} \in \Omega^1 (\tmop{Conf}
		(\C, n), \mathfrak{t}_n)\,, 
\]
which takes its values in $\mathfrak{t}_n$. It is a fact that the connection
associated with this 1-form is flat, and descends to a flat connection on the moduli space
$\mathcal{M}_{0, n + 1} \simeq \tmop{Conf} (\C, n) / \tmop{Aff}
(\C)$ of rational curves with $n+1$ marked points.

Firstly, the regularized holonomy of this connection along the real straight path from 0 to 1 in 
$\mathcal{M}_{0, 4} \simeq \mathbb{P}^1-\{0,1,\infty\}$ gives a formal power series $\Phi_{\on{KZ}}$ 
in two non-commuting variables, called the KZ associator, that is a generating series for values 
at $0$ and $1$ of multiple polylogarithms.
Secondly, using the monodromy representation of the universal KZ connection, one obtains:
\begin{enumerate}
\item A morphism of filtered Lie algebras $\mu_n:\pb_n\to\hat{\t}_n$ such that 
${\rm gr}(\mu_n)\circ p_n={\rm id}$. Hence one 
concludes that $p_n$ and $\mu_n$ are bijective. 
This provides a filtered isomorphism from $\pb_n$ to the degree completion of its associated graded, 
which is actually $\hat\t_n$. This recovers the known fact that the group $\PB_n$ is \textit{$1$-formal}, 
meaning that its Malcev Lie algebra is isomorphic to the degree completion of a quadratic Lie algebra. 
\item A system of relations (called Pentagon ($P$) and two Hexagons ($H_{\pm}$)) satisfied by the KZ associator. 
Then, if $\kk$ is a field of characteristic 0, one can define a set of $\kk$-associators $\on{Ass}(\kk)$, 
for which the KZ associator will be a $\C$-point (showing at the same time that the set of such abstract 
$\C$-associators is indeed non-empty).  
\end{enumerate}

\medskip	
\noindent\textbf{A twisted variant (trigonometric/cyclotomic KZ).} Similarly, one can consider the configuration space 
$$ 
\text{\gls{ConfCn1}}:=\{\zz=(z_1,\dots,z_n)\in(\C^\times)^n|z_i\neq z_j\textrm{ if }i\neq j\}
$$
of $n$ points on $\C^\times$. Then $\on{Conf}(\C^\times,n)\simeq\on{Conf}(\C,n+1)/\C$ and thus its 
fundamental group $\PB_n^{1}$ is isomorphic to $\PB_{n+1}$. 
More generally, for any $M\in\Z-\{0\}$ one can consider an $M$-twisted configuration space 
$$
\text{\gls{ConfCnM}}:=\{\zz=(z_1,\dots,z_n)\in (\C^\times)^n|z_i^M\neq z_j^M\textrm{ for }i\neq j\}.
$$ 
In \cite{En} Enriquez exhibits, using the so-called universal trigonometric KZ connection, 
an isomorphism $\pb_n^{M}\to\exp(\hat{\t}_n^{M})$, where $\pb_n^{M}$ is the Malcev Lie algebra 
of the fundamental group $\text{\gls{PBnM}}\subset \PB_n^{1}$ of $\on{Conf}(\C^\times,n,M)$, and 
\gls{tnM} is the holonomy Lie algebra of $\on{Conf}(\C^\times,n,M)$. 
The monodromy of this connection along a suitable (non closed) path gives a universal pseudotwist 
$\Psi^{M}_{\on{KZ}}\in\exp(\bar\t_2^{M})$ that is a generating series for values of multiple 
polylogarithms at $M$th roots of unity, and satisfies relations with $\Phi_{\on{KZ}}$. 

\medskip	
\noindent\textbf{Genus one situation (elliptic KZB).} The genus one universal 
Knizhnik--Zamolodchikov--Bernard (KZB) connection $\nabla^{\on{KZB}}_{1,n}$ was introduced in 
\cite{CEE}. This is a flat connection over the moduli space of elliptic curves with $n$ marked points 
$\mathcal{M}_{1,n}$, which was independently discovered by Levin--Racinet \cite{LR} in the 
specific cases $n=1,2$. 
It restricts to a flat connection over the configuration space 
$$
\text{\gls{ConfTn}}:= \Lambda_\tau^n \backslash \{\zz=(z_1,\dots,z_n)\in \C^n | z_i - z_j \notin \Lambda_\tau \text{ if } i \neq j\}
$$
of $n$ points on an (uniformized) elliptic curve $E_{\tau}:=\Lambda_\tau \backslash \C$, for $\tau\in\h$ 
and $\Lambda_\tau=\Z + \tau \Z$.  More precisely, this connection is defined on a $G$-principal 
bundle over $\mathcal{M}_{1,n}$ where the Lie algebra associated with $G$ has as components:
\begin{enumerate}
\item a Lie algebra \gls{t1n} related to $\on{Conf}(\mathbb{T},n)$, somehow controlling 
the variations of the marked points: it has generators $x_i,y_i$, for $i=1,...,n$, corresponding 
to moving $z_i$ along the topological cycles generating $H_1(E_{\tau})$;
\item a Lie algebra $\mathfrak{d}$ with as components the Lie algebra $\mathfrak{sl}_2$ with standard 
generators $e,f,h$ and a Lie algebra $\mathfrak{d}_+:=\on{Lie}(\{\delta_{2m}| m\geq 1 \})$ such 
that each $\delta_{2m}$ is a highest weight element for $\mathfrak{sl}_2$. The Lie algebra 
$\mathfrak{d}$ somehow controls the variation of the curve in $\mathcal{M}_{1,n}$ and is closely related to the one defined in \cite{Ts}.
\end{enumerate}
Now, the connection $\nabla^{\on{KZB}}_{1,n}$ can be locally expressed as 
$\nabla^{\on{KZB}}_{1,n}:=d-\Delta(\zz|\tau)d\tau-\sum_iK_i(\zz|\tau)dz_i$ where 
\begin{enumerate}
\item the term $K_{i}(-|\tau) : \C^{n} \to \hat\t_{1,n}$ is meromorphic on $\C^n$, having only simple poles on
$$
\on{Diag}_{n,\tau}:=\bigcup_{i\neq j}\{\zz=(z_1,\dots,z_n)\in \C^n | z_i-z_j\in\Lambda_\tau\}\,.
$$
It is constructed out of a function
$$
k(x,z|\tau) := \frac{\theta(z+x|\tau)}{\theta(z|\tau)\theta(x|\tau)} - {\frac{1}{x}}. 
$$
This relates directly the connection $\nabla^{\on{KZB}}_{1,n}$ with Zagier's work \cite{Zag} on 
Jacobi forms (see Weil's book \cite{Weil}) and to Brown and Levin's work \cite{BL}.
\item the term $\Delta(\zz|\tau)$ is a meromorphic function $\C^{n}\times\h\to \on{Lie}(G)$, 
with only simple poles on $\on{Diag}_{n}:=\{(\zz,\tau)\in\C^{n}\times \h| \zz\in\on{Diag_{n,\tau}}\}$. 
The coefficients of $\delta_{2m}$ in $\Delta(\zz|\tau)$ are Eisenstein series.
\end{enumerate}
We also refer to Hain's survey \cite{Hain} and references therein for the Hodge theoretic and motivic aspects of the story. 

Then, one can construct a holomorphic map sending each $\tau\in\h$ to a couple 
$e(\tau):=(A(\tau),B(\tau))$ where $A(\tau)$ (resp. $B(\tau)$) is the regularized holonomy 
of the universal elliptic KZB connection along the straight path from 0 to 1 (resp. from 0 to 
$\tau$) in the once punctured elliptic curve $\Lambda_\tau \backslash (\C-\Lambda_\tau) \simeq 
E_{\tau} \backslash \on{Conf}(E_{\tau},2)$. Enriquez developed in \cite{En2} the general theory 
of elliptic associators, whose scheme is denoted $\on{Ell}$ and for which the couple $e(\tau)$ 
is an example of a $\C$-point. Some of the main features of the so-called elliptic KZB associators $e(\tau)$ are the following:
\begin{itemize}
\item They satisfy algebraic and modularity relations.
\item They satisfy a differential equation in the variable $\tau$ expressed only in terms of iterated 
integrals of Eisenstein series, which will be called iterated Eisenstein integrals.
\item When taking $\tau$ to $i\infty$ (which consists in computing the constant term of the 
$q$-expansion of the series $A(\tau)$ and $B(\tau)$, where $q=e^{2i\pi\tau}$), they can be 
expressed only in terms of the KZ associator $\Phi_{\on{KZ}}$.
\item They provide isomorphisms between the Malcev Lie algebra of the fundamental group 
$\PB_{1,n}$ of $\on{Conf}(\mathbb{T},n)$ and the degree completion of its associated Lie algebra $\t_{1,n}$. 
\end{itemize}
Observe that, contrary to what happens in genus $0$, $\PB_{1,n}$ (also known as the 
\textit{pure elliptic braid group}) is not $1$-formal (as $\t_{1,n}$ is not quadratic), but 
only \textit{filtered-formal} according to the terminology of \cite{SW}. 

\medskip
\noindent\textbf{Ellipsitomic KZB.} As we wrote above, the purpose of the present work is to define a twisted version 
of the genus one KZB connection introduced in \cite{CEE}. This is a flat connection on a principal bundle over the moduli space of elliptic curves with a $\Gamma$-structure and $n$ marked points. It restricts 
to a flat connection on the so-called $\Gamma$-twisted configuration space of points on an elliptic curve, which can be 
used for constructing a filtered-formality isomorphism for some interesting subgroups of the pure braid group on the torus. 

In a subsequent work \cite{CG-toappear}, we will define ellipsitomic KZB associators as renormalized holonomies 
along certain paths on a once punctured elliptic curve with a $\Gamma$-structure, and exhibit a relation between 
ellipsitomic KZB associators, the KZ associator \cite{DrGal} and the cyclotomic KZ associator \cite{En}. Moreover, 
ellipsitomic associators can be regarded as a generating series for iterated Eisenstein integrals whose coefficients 
are elliptic multiple zeta values at torsion points. In the case $M=N$ these coefficients are related to Goncharov's 
work \cite{Go}, and also to the recent work \cite{M2} of Broedel--Matthes--Richter--Schlotterer. 

We finally prove that the universal ellipsitomic KZB connection realizes as the usual KZB connection associated with 
certain elliptic dynamical $r$-matrices with spectral parameter, that should be compared with \cite{ES,FICM}. 

It is worth mentioning the recent work \cite{Tol}, where Toledano-Laredo and Yang define a similar KZB 
connection. More precisely, they construct a flat KZB connection on moduli spaces of elliptic curves 
associated with crystallographic root systems. The type $A$ case coincides with the universal elliptic 
KZB connection defined in \cite{CEE}, and we suspect that the type $B$ case coincides with the 
connection of the present paper for $M=N=2$. It is interesting to point out that a common generalization 
of their work and ours (for $M=N$) could be obtained by constructing a universal KZB connection 
associated with arbitrary complex reflection groups. 

\medskip
\noindent\textbf{Plan of the paper.} The paper is organized as follows: 
\begin{itemize}
\item In Section \ref{Bundles with flat connections on twisted configuration spaces}, we introduce $\Gamma$-twisted configuration spaces on an elliptic curve and define the universal ellipsitomic KZB connection on them. 
It takes values in a the Lie algebra $\t_{1,n}^\Gamma$ of infinitesimal ellipsitomic (pure) braids, that we also define. 
\item As in \cite{CEE}, the connection extends from the configuration space to the moduli space 
$\bar\cM_{1,[n]}^\Gamma$ of elliptic curves with a $\Gamma$-level structure and unordered marked points. 
This is proven in Section \ref{Bundles with flat connections on moduli spaces} using some technical definitions introduced in Section \ref{Lie algebras of derivations and associated groups}, involving 
derivations of the Lie algebra $\t_{1,n}^\Gamma$ related to the twisted configuration space in genus $1$. 
As in the untwisted case, the results of this section also apply to the ``unordered marked points'' situation as well.
\item In Section \ref{Realizations}, we provide a notion of realizations for the Lie algebras previously introduced, 
and show that the universal ellipsitomic KZB connection realizes to a flat connection intimately related to 
elliptic dynamical $r$-matrices with spectral parameter.
\item In Section \ref{Formality of subgroups of the pure braid group on the torus}, we derive from the monodromy representation the filtered-formality of the fundamental group of the twisted configuration space of the torus, 
which is a subgroup of $\PB_{1,n}$. 
As in the cyclotomic case, it extends to a relative filtered-formality result for the map 
$\on{B}_{1,n} \to \Gamma^n \rtimes \mathfrak{S}_n$. 
\item Finally, in Section \ref{Representations of Cherednik algebras}, we construct a homomorphism from the Lie algebra 
$\bar {\mathfrak {t}}^\Gamma_{1,n}\rtimes {\mathfrak {d}^\Gamma}$ to the twisted Cherednik algebra $H^\Gamma_n(k)$. 
This allows us to consider the twisted elliptic KZB connection with values in representations of 
the twisted Cherednik algebra. This study shall be closely related to the recent paper \cite{BE}.
\item We also include an appendix that summarizes our conventions for fundamental groups, covering maps, 
principal bundles, and monodromy maps. 
\end{itemize}

\medskip

\noindent\textbf{Acknowledgements.} Both authors are grateful to Benjamin Enriquez, 
Richard Hain and Pierre Lochak for numerous conversations and suggestions, as well as 
Adrien Brochier, Richard Hain, and Eric Hopper for their very valuable comments on an 
earlier version, which helped us correcting inaccuracies, and improved the exposition. 
We also thank Nils Matthes for discussions about twisted elliptic MZVs. 
The first author acknowledges the financial support of the \textit{ANR project SAT} and of 
the \textit{Institut Universitaire de France}. 
This paper is extracted from the second author's PhD thesis \cite{G1} at Sorbonne Universit\'e, 
and part of this work has been done while the second author was visiting the \textit{Institut 
Montpelli\'erain Alexander Grothendieck}, thanks to the financial support of the 
\textit{Institut Universitaire de France}. The second author warmly thanks the Max-Planck 
Institute for Mathematics in Bonn, for its hospitality and excellent working conditions.

\newpage

\section{Bundles with flat connections on $\Gamma$-twisted configuration spaces}
\label{Bundles with flat connections on twisted configuration spaces}


\subsection{The Lie algebra of infinitisemal ellipsitomic braids}\label{sec:deft1n}

In this paragraph, $\Gamma$ can be replaced by any finite abelian group (with the additive notation). 

For any positive integer $n$ we define \gls{t1IG} to be the bigraded $\kk$-Lie algebra with generators 
$x_i$ ($1\leq i\leq n$) in degree $(1,0)$, $y_i$ ($1\leq i\leq n$) in degree $(0,1)$, and $t^{\alpha}_{ij}$ 
($\alpha\in\Gamma$, $i\neq j$) in degree $(1,1)$, and relations 
\begin{flalign}
& t^{\alpha}_{ij} = t^{-\alpha}_{ji}\,, 																					\tag{tS$_{e\ell\ell}1$}\label{eqn:etS} \\
& [x_i,y_j] = [x_j,y_i] = \sum_{\alpha\in\Gamma} t^{\alpha}_{ij}\,,	 										\tag{tS$_{e\ell\ell}2$}\label{eqn:etSbis} \\
& [x_i,x_j] = [y_i,y_j] = 0\,, 																									\tag{tN$_{e\ell\ell}$}\label{eqn:etN} \\
& [x_i,y_i] = - \sum_{j:j\neq i} \sum_{\alpha\in\Gamma} t^{\alpha}_{ij},									\tag{tT$_{e\ell\ell}$}\label{eqn:etT} \\
& [t^{\alpha}_{ij},t^{\beta}_{kl}]=0\,,							\tag{tL$_{e\ell\ell}1$}\label{eqn:etL1} \\
& [x_i,t^{\alpha}_{jk}] = [y_i,t^{\alpha}_{jk}] = 0,  																\tag{tL$_{e\ell\ell}2$}\label{eqn:etL2}   \\
& [t^{\alpha}_{ij},t^{\alpha+\beta}_{ik}+t^{\beta}_{jk}] =0\,, 													\tag{t4T$_{e\ell\ell}1$}\label{eqn:et4T1} \\
& [x_i + x_j,t^{\alpha}_{ij}] = [y_i+y_j,t^{\alpha}_{ij}] = 0\,, &												\tag{t4T$_{e\ell\ell}2$}\label{eqn:et4T2} 
\end{flalign}
where $1\leq i,j,k,l\leq n$ are pairwise distinct and $\alpha, \beta \in\Gamma$. 
We will call $\t_{1,n}^\Gamma(\kk)$ the $\kk$-Lie algebra of \textit{infinitesimal ellipsitomic braids}.
Observe that $\sum_i x_i$ and $\sum_i y_i$ are central in $\t_{1,n}^\Gamma$. Then we denote by 
$\bar\t_{1,n}^\Gamma(\kk)$ the quotient of $\t_{1,n}^\Gamma(\kk)$ by $\sum_i x_i$ and $\sum_i y_i$, 
and the quotient morphism $\t_{1,n}^\Gamma(\kk)\to\bar\t_{1,I}^\Gamma(\kk)$ by $u\mapsto\bar u$.

\medskip

When $\kk=\C$ we write $\t_{1,n}^\Gamma:=\t_{1,n}^\Gamma(\C)$, 
and $\bar\t_{1,n}^\Gamma:=\bar\t_{1,n}^\Gamma(\C)$.

\medskip

There is an alternative presentation of $\t_{1,n}^\Gamma(\kk)$ and $\bar\t_{1,n}^\Gamma(\kk)$: 
\begin{lemma}\label{lem:pres1}
The Lie $\kk$-algebra $\t_{1,n}^\Gamma(\kk)$ (resp.~$\bar\t_{1,n}^\Gamma(\kk)$) can equivalently be 
presented with the same generators, and the following relations: 
\eqref{eqn:etS}, \eqref{eqn:etSbis}, \eqref{eqn:etN}, \eqref{eqn:etL1}, \eqref{eqn:etL2}, \eqref{eqn:et4T1}, and, 
for every $1\leq i\leq n$, 
\[
[\sum_jx_j,y_i]=[\sum_jy_j,x_i]=0
\]
(resp.~$\sum_jx_j=\sum_jy_j=0$). 
\end{lemma}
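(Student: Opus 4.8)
The plan is to show that the original and the new relation sets cut out the same ideal in the free Lie algebra on $\{x_i,y_i,t^\alpha_{ij}\}$, by producing two morphisms that are the identity on the generators and hence mutually inverse. Both sets contain \eqref{eqn:etS}, \eqref{eqn:etSbis}, \eqref{eqn:etN}, \eqref{eqn:etL1}, \eqref{eqn:etL2}, \eqref{eqn:et4T1} --- and, for $\t_{1,n}^\Gamma(\kk)$, also \eqref{eqn:et4T2} --- so everything reduces to two points: (a) modulo these common relations, the single family \eqref{eqn:etT} and the single family $[\sum_jx_j,y_i]=[\sum_jy_j,x_i]=0$ ($1\le i\le n$) generate the same ideal; and (b) in the $\bar\t_{1,n}^\Gamma(\kk)$ case, \eqref{eqn:et4T2} follows from the remaining relations once $\sum_jx_j=\sum_jy_j=0$ is imposed.

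\textbf{Step 1: equivalence of \eqref{eqn:etT} with the new relations.} First I would check that \eqref{eqn:etT} forces the new relations: using \eqref{eqn:etSbis} in the form $[x_j,y_i]=\sum_{\alpha}t^\alpha_{ij}$ for $j\ne i$ and \eqref{eqn:etT} for the term $j=i$, one gets $[\sum_jx_j,y_i]=[x_i,y_i]+\sum_{j\ne i}\sum_\alpha t^\alpha_{ij}=0$, and likewise $[\sum_jy_j,x_i]=-[x_i,y_i]-\sum_{j\ne i}[x_i,y_j]=0$. Conversely, in the algebra defined by the new presentation, expanding $[\sum_jx_j,y_i]=0$ with \eqref{eqn:etSbis} yields $[x_i,y_i]=-\sum_{j\ne i}[x_j,y_i]=-\sum_{j\ne i}\sum_\alpha t^\alpha_{ij}$, which is exactly \eqref{eqn:etT}. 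So the identity-on-generators maps are well defined in both directions and are inverse to each other, which settles the $\t_{1,n}^\Gamma(\kk)$ statement.

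\textbf{Step 2: the barred case.} For $\bar\t_{1,n}^\Gamma(\kk)$ I would run the same argument, after noting that, since $\bar\t_{1,n}^\Gamma(\kk)$ is by definition the quotient of $\t_{1,n}^\Gamma(\kk)$ by the central elements $\sum_jx_j$ and $\sum_jy_j$, imposing $\sum_jx_j=\sum_jy_j=0$ is equivalent modulo \eqref{eqn:etSbis} to imposing \eqref{eqn:etT} by Step 1. The extra verification is that \eqref{eqn:et4T2} has become superfluous: once $\sum_kx_k=0$, relation \eqref{eqn:etL2} gives $[x_i+x_j,t^\alpha_{ij}]=[\sum_kx_k,t^\alpha_{ij}]-\sum_{k\ne i,j}[x_k,t^\alpha_{ij}]=0$, and symmetrically for the $y$'s.

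\textbf{Main obstacle.} I expect the only delicate point to be exactly this last one, together with its dual: the reason \eqref{eqn:et4T2} \emph{cannot} be discarded in the unbarred presentation. Without the vanishing of $\sum_kx_k$, applying the Jacobi identity to $[\sum_kx_k,[x_i,y_j]]=0$ only produces the $\Gamma$-summed identity $[x_i+x_j,\sum_\alpha t^\alpha_{ij}]=0$, which is strictly weaker than \eqref{eqn:et4T2} as soon as $|\Gamma|\ge 2$, so \eqref{eqn:et4T2} must be kept among the relators of $\t_{1,n}^\Gamma(\kk)$ (consistently with the fact that the centrality of $\sum_jx_j,\sum_jy_j$ in $\t_{1,n}^\Gamma$ relies on it). Apart from tracking this subtlety, the argument is a routine manipulation of the defining relations via the Jacobi identity and needs no new idea.
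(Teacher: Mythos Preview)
You have misread the statement: in the alternative presentation of $\t_{1,n}^\Gamma(\kk)$, relation \eqref{eqn:et4T2} is \emph{dropped}, not retained. Thus your Step~1, while correct as far as it goes and essentially identical to the paper's first computation, is incomplete as a proof of the lemma as stated: one must also recover \eqref{eqn:et4T2} from the new relations in the unbarred case.

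The paper does attempt exactly this. From the Jacobi identity applied to $[\sum_k x_k,[x_i,y_j]]$, together with $[\sum_k x_k,y_j]=0$ and $[\sum_k x_k,x_i]=0$ (the latter by \eqref{eqn:etN}), one obtains $[\sum_k x_k,\sum_\alpha t^\alpha_{ij}]=0$; then \eqref{eqn:etL2} removes the $k\ne i,j$ summands, and the paper concludes $[x_i+x_j,t^\alpha_{ij}]=0$. But this is precisely the step you flag in your ``Main obstacle'': the argument as written only yields the $\Gamma$-summed identity $[x_i+x_j,\sum_\alpha t^\alpha_{ij}]=0$, not the individual relations for each $\alpha$. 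Your objection is well taken, and nothing else in the new relation list separates the $\alpha$-summands when $|\Gamma|\ge 2$. So the paper's own proof carries the very gap you spotted; either \eqref{eqn:et4T2} should indeed be kept among the relators of $\t_{1,n}^\Gamma(\kk)$ (which is what you tacitly assumed), or an additional argument is needed. Your treatment of the barred case, where $\sum_k x_k=0$ makes the derivation of \eqref{eqn:et4T2} from \eqref{eqn:etL2} immediate, is correct and is exactly the intended logic; note that only this barred version of the lemma is actually used elsewhere in the paper.
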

\begin{proof}
If $x_i,y_i$ and $t^{\alpha}_{ij}$ satisfy the initial relations, then 
$$[\underset{j}{\sum} x_j, y_i]=[ x_i, y_i]+[\underset{j\neq i}{\sum} x_j, y_i]=- \sum_{j:j\neq i} 
\sum_{\alpha\in\Gamma} t^{\alpha}_{ij}+ \sum_{j:j\neq i} \sum_{\alpha\in\Gamma} t^{\alpha}_{ij}=0.$$ 
Now, if $x_i,y_i$ and $t^{\alpha}_{ij}$ satisfy the above relations, then relations 
$[\underset{j}{\sum} x_j, y_i]=0$ and
 $[x_j,y_i] = \sum_{\alpha\in\Gamma} t^{\alpha}_{ij}$, for $i\neq j$, imply that 
 $[ x_i, y_i]=- \sum_{j:j\neq i} \sum_{\alpha\in\Gamma} t^{\alpha}_{ij}$. 
 Now, relations $[\underset{k}{\sum} x_k, y_j]=0$ and $[\underset{k}{\sum} x_k, x_i]=0$ 
 imply that $[\underset{k}{\sum} x_k, \sum_{\alpha\in\Gamma} t^{\alpha}_{ij}]=0$. 
 Thus, as $[x_i,t^{\alpha}_{jk}] = 0 $ if $ \on{card}\{i,j,k\}=3, $ we obtain relation 
 $[x_i + x_j,t^{\alpha}_{ij}] = 0 $, for $i\neq j. $ 
 In the same way we obtain $[y_i+y_j,t^{\alpha}_{ij}] = 0$, for $i\neq j$.
\end{proof}

There is an action $\Gamma^n \to \Aut(\t_{1,n}^{\Gamma}(\kk))$ defined as follows: 
\begin{itemize}
\item it leaves $x_i$'s and $y_i$'s invariant. 
\item for every $i$ and every $\alpha\in\Gamma$, $\alpha_i$ leaves $t^{\beta}_{kl}$'s invariant if $k,l\neq i$, 
and sends $t^{\beta}_{ij}$ to $t^{\beta+\alpha}_{ij}$. 
Here $\alpha_i$ denotes the element of $\Gamma^n$ whose only nonzero component is the $i$th one and is $\alpha$.  
\end{itemize}
This action descends to an action on $\bar\t_{1,n}^\Gamma(\kk)$. 

\begin{proposition}
For a group morphism $\rho:\Gamma_1\to\Gamma_2$, scalars $a,b,c,d\in\kk$ such that $ad-bc=|\ker(\rho)|$, 
and a (set theoretical) section ${\rm coker}(\rho)\to\Gamma_2$, there is a \textit{comparison morphism} 
$\phi_\rho: \t_{1,n}^{\Gamma_1}(\kk)\to\t_{1,n}^{\Gamma_2}(\kk)$ defined by 
$$
x_i\mapsto ax_i+cy_i\,,\quad y_i\mapsto b x_i+dy_i\,,\quad t_{ij}^\alpha\mapsto\sum_{\beta\in{\rm coker}(\rho)}t_{ij}^{\rho(\alpha)+\beta}\,.
$$
\end{proposition}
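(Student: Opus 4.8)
The plan is to prove that the prescription on generators extends to a morphism of Lie $\kk$-algebras, i.e.\ that the images of the generators of $\t_{1,n}^{\Gamma_1}(\kk)$ satisfy its defining relations in $\t_{1,n}^{\Gamma_2}(\kk)$. Abbreviate $X_i:=ax_i+cy_i$, $Y_i:=bx_i+dy_i$, and $T_{ij}^\alpha:=\sum_{\beta\in\mathrm{coker}(\rho)}t_{ij}^{\rho(\alpha)+\beta}$, where $\beta\in\mathrm{coker}(\rho)$ is viewed inside $\Gamma_2$ through the chosen section. By Lemma~\ref{lem:pres1} it is enough to check that $X_i,Y_i,T_{ij}^\alpha$ satisfy \eqref{eqn:etS}, \eqref{eqn:etSbis}, \eqref{eqn:etN}, \eqref{eqn:etL1}, \eqref{eqn:etL2}, \eqref{eqn:et4T1}, and $[\sum_jX_j,Y_i]=[\sum_jY_j,X_i]=0$.

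Most of these are formal. Using $[x_i,x_j]=[y_i,y_j]=0$ and \eqref{eqn:etSbis} in the target, relation \eqref{eqn:etN} for the $X_i,Y_i$ collapses to multiples of $[x_i,y_j]-[x_j,y_i]=0$. Relations \eqref{eqn:etL1} and \eqref{eqn:etL2} are inherited term by term from those of $\t_{1,n}^{\Gamma_2}(\kk)$, and the two centrality relations hold because $\sum_jX_j$ and $\sum_jY_j$ are $\kk$-linear combinations of the central elements $\sum_jx_j,\sum_jy_j$. The hypothesis $ad-bc=|\ker(\rho)|$ is used precisely in \eqref{eqn:etSbis}: one computes $[X_i,Y_j]=(ad-bc)[x_i,y_j]=(ad-bc)\sum_{\gamma\in\Gamma_2}t_{ij}^\gamma$, while $\sum_{\alpha\in\Gamma_1}T_{ij}^\alpha=|\ker(\rho)|\sum_{\gamma\in\Gamma_2}t_{ij}^\gamma$ because $(\alpha,\beta)\mapsto\rho(\alpha)+\beta$ is a $|\ker(\rho)|$-to-one surjection from $\Gamma_1\times\mathrm{coker}(\rho)$ onto $\Gamma_2$; the two agree exactly under the stated relation between the scalars. (With the original presentation the same count gives \eqref{eqn:etT} from $[X_i,Y_i]=(ad-bc)[x_i,y_i]$, and \eqref{eqn:et4T2} is immediate; and in either case $\phi_\rho$ descends to the $\bar\t$-quotients since $\sum_jX_j,\sum_jY_j$ lie in the ideal generated by $\sum_jx_j,\sum_jy_j$.)

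The heart of the matter is \eqref{eqn:etS} and \eqref{eqn:et4T1}, which both reduce to manipulations of sums over a transversal of $\mathrm{im}(\rho)$ in $\Gamma_2$. For \eqref{eqn:etS} one uses $t_{ji}^\gamma=t_{ij}^{-\gamma}$ to rewrite $T_{ji}^{-\alpha}=\sum_\beta t_{ij}^{\rho(\alpha)-\beta}$ and reindexes the sum over the group $\mathrm{coker}(\rho)$ by $\beta\mapsto-\beta$. For \eqref{eqn:et4T1}, fixing the summation index $\beta_0$ of $T_{ij}^\alpha$ and applying \eqref{eqn:et4T1} of the target with parameter $\gamma=\rho(\alpha)+\beta_0$ turns $[t_{ij}^{\rho(\alpha)+\beta_0},T_{ik}^{\alpha+\beta}]$ into a combination of brackets $[t_{ij}^{\rho(\alpha)+\beta_0},t_{jk}^\gamma]$ (with signs, with $\gamma$ running over a shifted transversal), which one must then reorganize so as to cancel $[t_{ij}^{\rho(\alpha)+\beta_0},T_{jk}^{\beta}]$. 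This reorganization is, I expect, the main obstacle: since the $t_{jk}^\gamma$ are linearly independent in bidegree $(1,1)$, a partial sum of $t_{jk}^\gamma$ over a transversal of $\mathrm{im}(\rho)$ depends genuinely on that transversal, so the cancellation must be set up carefully — exploiting \eqref{eqn:et4T1} of the target summed over a suitable family of transversals (equivalently, over translates of the chosen one), the antisymmetry just established, and the compatibility of the section with the group operation on $\mathrm{coker}(\rho)$; the transparent special case is $\rho=0$, where one sums \eqref{eqn:et4T1} over all of $\Gamma_2$ and invokes translation-invariance of the full sum $\sum_{\gamma\in\Gamma_2}t_{jk}^\gamma=[x_j,y_k]$. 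A convenient first step is to factor $\rho$ as $\Gamma_1\twoheadrightarrow\mathrm{im}(\rho)\hookrightarrow\Gamma_2$ and split the scalar matrix accordingly: the surjective factor involves no coset sums and is checked in a line, so all of the difficulty sits in the injective factor.
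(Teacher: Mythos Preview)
Your verification of \eqref{eqn:etSbis} matches the paper's own proof, which checks only that relation in detail and declares the rest ``immediate.'' You are right to single out \eqref{eqn:etS} and \eqref{eqn:et4T1} as less obvious, but your argument for \eqref{eqn:etS} contains a genuine error: after rewriting $T_{ji}^{-\alpha}=\sum_\beta t_{ij}^{\rho(\alpha)-s(\beta)}$ (with $s$ the chosen section), reindexing by $\beta\mapsto-\beta$ in $\mathrm{coker}(\rho)$ produces $\sum_\beta t_{ij}^{\rho(\alpha)-s(-\beta)}$, and for a merely set-theoretic section one has $s(-\beta)\neq-s(\beta)$ in general. This gap is not repairable without an extra hypothesis. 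For $\rho:\Z/2\Z\hookrightarrow\Z/4\Z$, $1\mapsto 2$, with section $s(0)=0$, $s(1)=1$, one finds $T_{ij}^0=t_{ij}^0+t_{ij}^1$ while $T_{ji}^{0}=t_{ji}^0+t_{ji}^1=t_{ij}^0+t_{ij}^3$; these differ already in bidegree $(1,1)$, so \eqref{eqn:etS} fails. No section rescues this example, since the nontrivial coset $\{1,3\}\subset\Z/4\Z$ is not stable under negation.

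Your worry about \eqref{eqn:et4T1}---that a partial sum over a transversal genuinely depends on that transversal---is therefore exactly on point, and is precisely what blocks the cancellation you hope to arrange: after applying \eqref{eqn:et4T1} in the target with the index $\gamma_1$ of $T_{ij}^\alpha$ fixed, one needs the translate $\{s(\gamma_2)-s(\gamma_1):\gamma_2\}$ to coincide with $\{s(\gamma_3):\gamma_3\}$ as subsets of $\Gamma_2$, i.e.\ the image of $s$ must be a subgroup. So the proposition as stated really requires the section to be a group homomorphism (equivalently, the short exact sequence $0\to\mathrm{im}(\rho)\to\Gamma_2\to\mathrm{coker}(\rho)\to0$ must split). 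This is automatic in the two cases the paper actually uses later---$\rho$ surjective, and $\Gamma_1=0$---so nothing downstream is affected; but your factorization into a surjection followed by an injection cannot be completed for a general injective $\rho$.
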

\begin{proof}
Let us prove that the relation $[x_i,y_j] = \sum_{\alpha\in\Gamma} t^{\alpha}_{ij} $, where $i\neq j $, is preserved 
by $\phi$. On the one hand $[\phi(x_i),\phi(y_j)]=|\ker(\rho)|\sum_{\alpha\in\Gamma_2} t^{\alpha}_{ij}$. On the other hand 
\begin{eqnarray*}
\phi([x_i,y_j]) & = & \sum_{\alpha\in\Gamma_1}  \phi(t^{\alpha}_{ij})= \sum_{\alpha\in\Gamma_1} 
						\sum_{\beta\in{\rm coker}(\rho)}t_{ij}^{\rho(\alpha)+\beta}
						=|\ker(\rho)|\sum_{\alpha\in\Gamma_2} t^{\alpha}_{ij}.
\end{eqnarray*}
The fact that the remaining relations are preserved is immediate.
\end{proof}
Comparison morphisms are bigraded, and pass to the quotient by $\sum_ix_i$, $\sum_iy_i$. 
When $\rho$ is surjective, they also are compatible with the operadic module structure of $\t_{1,\bullet}^{\Gamma}(\kk)$ 
from \cite{CG-toappear} (see Proposition 5.2 in \textit{loc. cit.}). 

\subsection{Principal bundles over $\Gamma$-twisted configuration spaces}\label{sec-confspaces}

Let $E$ be an elliptic curve over $\C$ and consider the connected unramified 
$\Gamma$-covering $p:\tilde E\to E$ corresponding to the canonical surjective 
group morphism $\rho:\pi_1(E)\cong \Z^2\to\Gamma$ where $\pi_1(E)\cong \Z^2$ 
is the natural choice of such an isomorphism. 
Let us then define the \textit{twisted configuration space} 
\[
\gls{ConfTnG}:=
\{\mathbf{z}=(z_1,\dots,z_n)\in\tilde E^n|p(z_i)\neq p(z_j)\textrm{ if }i\neq j\}\,,
\]
and $\gls{CTnG}:=\textrm{Conf}(E,n,\Gamma)/\tilde{E}$ its reduced version. 
Notice that $\textrm{C}(E,n,\Gamma)$ is just the inverse image of $\textrm{C}(E,n)$ 
under the surjection $p^n:\tilde E^n\to E^n$.

Let us fix a uniformization $\tilde E\simeq E_\tau$, where $\tau\in\mathfrak H$: 
$E_\tau=\Lambda_\tau \backslash \C$, with $\Lambda_\tau=\Z+\tau\Z$. 
Then $E\simeq E_{\tau,\Gamma}$, where $E_{\tau,\Gamma}=\Lambda_{\tau,\Gamma} \backslash \C$ 
and $\Lambda_{\tau,\Gamma}:=(1/M)\Z\times(\tau/N)\Z$. Therefore 
\[
\textrm{Conf}(E,n,\Gamma)\simeq \Lambda_\tau^n \backslash (\C^n-{\rm Diag}_{\tau,n,\Gamma})\,,
\]
where 
\[
{\rm Diag}_{\tau,n,\Gamma}
:=\{(z_1,\dots,z_n)\in\C^n|z_{ij}:=z_i-z_j\in\Lambda_{\tau,\Gamma}\textrm{ for some }i\neq j\}\,.
\]
We now define a principal ${\rm exp}(\hat{\t}_{1,n}^\Gamma)$-bundle \gls{Ptng} over 
$\textrm{Conf}(E,n,\Gamma)$ as the quotient 
\[
\Lambda_\tau^n \backslash \big((\C^n-{\rm Diag}_{\tau,n,\Gamma})\times {\rm exp}(\hat{\t}_{1,n}^\Gamma)\big)\,, 
\]
where the action is determined by the following non-abelian $1$-cocycle: 
\[
\big(\zz,(a+b\tau)_i\big)\longmapsto e^{-2\pi{\rm i}bx_i}\,.
\]
\begin{remark}[Notation]
Whenever there is an element $g$ in a group $G$, and $1\leq i\leq n$, 
we write $g_i$ for the element of $G^n$ given by $g$ on the $i$-th 
component and the unit on the others. 
\end{remark}
In other words, it is the restriction on $\textrm{Conf}(E,n,\Gamma)$ of the bundle over 
$\Lambda_\tau^n \backslash \C^n$ for which a section on $U\subset\Lambda_\tau^n \backslash \C^n$ is a regular 
map $f:\pi^{-1}(U)\to{\rm exp}(\hat{\t}_{1,n}^\Gamma)$ such that
\begin{itemize}
\item $f(\mathbf{z}+\delta_i)=f(\mathbf{z})$, 
\item $f(\mathbf{z}+\tau\delta_i)=e^{-2\pi{\rm i}x_i}f(\mathbf{z})$.
\end{itemize}
Here $\pi:\C^n\to\Lambda_\tau^n \backslash \C^n$ is the canonical projection and $\delta_i$ is the 
$i$th vector of the canonical basis of $\C^n$. 

Since the $e^{-2\pi{\rm i}\bar x_i}$'s in $\mathrm{exp}(\hat{\bar{\t}}_{1,n}^\Gamma)$ pairwise commute and their product is 
$1$, then the image of $\mathcal{P}_{\tau,n,\Gamma}$ under the natural morphism 
${\rm exp}(\hat{\t}_{1,n}^\Gamma)\to{\rm exp}(\hat{\bar{\t}}_{1,n}^\Gamma)$ 
is the pull-back of a principal ${\rm exp}(\hat{\bar{\t}}_{1,n}^\Gamma)$-bundle $\bar{\mathcal{P}}_{\tau,n,\Gamma}$ over 
$\textrm{C}(E,n,\Gamma)$. 


\subsection{Variations}\label{sec6.2var}

The first variation we are interested in concerns \textit{unordered configuration spaces}. 
The symmetric group $\mathfrak{S}_n$ acts on the left freely by automorphisms of $\textrm{Conf}(E,n,\Gamma)$ by 
$$
\sigma*(z_1,\dots,z_n):=(z_{\sigma^{-1}(1)},\dots,z_{\sigma^{-1}(n)})\,.
$$
This descends to a free action of $\mathfrak{S}_n$ on $\textrm{C}(E,n,\Gamma)$. 
We then defined the unordered twisted configuration spaces 
\[\textrm{Conf}(E,[n],\Gamma):=\mathfrak{S}_n \backslash\textrm{Conf}(E,n,\Gamma) 
\text{ and } \textrm{C}(E,[n],\Gamma):=\mathfrak{S}_n \backslash\textrm{C}(E,n,\Gamma)\,.
\]
The symmetric group $\mathfrak{S}_n$ also obviously acts on the Lie algebra $\t_{1,n}^\Gamma$. 
One can then define, keeping the notation of the previous paragraph, a principal 
$\exp(\hat{\t}_{1,n}^\Gamma)\rtimes \mathfrak{S}_n$-bundle $\mathcal{P}_{\tau,[n],\Gamma}$ over 
$\textrm{Conf}(E,[n],\Gamma)$: it is the restriction on $\textrm{Conf}(E,[n],\Gamma)$ of the bundle 
over $(\Lambda_\tau^n\rtimes \mathfrak{S}_n) \backslash \C^n$ for which a section 
on $U\subset\Lambda_\tau^n \backslash \C^n\rtimes \mathfrak{S}_n$ is a regular map 
$f:\pi^{-1}(U)\to\exp(\hat{\t}_{1,n}^\Gamma)\rtimes \mathfrak{S}_n$ such that 
\begin{itemize}
\item $f(\zz+\delta_i)=f(\zz)$, 
\item $f(\zz+\tau\delta_i)=e^{-2\pi\i x_i}f(\zz)$,
\item $f(\sigma*\zz)=\sigma f(\zz)$.
\end{itemize}
In more compact form: 
\[
\text{\gls{Ptnng}}=
(\Lambda_\tau^n\rtimes \mathfrak{S}_n) \backslash \big((\C^n-{\rm Diag}_{\tau,n,\Gamma})\times {\rm exp}(\hat{\t}_{1,n}^\Gamma)\rtimes\mathfrak{S}_n\big)\,.
\]
\begin{remark}
As before, $\mathcal{P}_{\tau,[n],\Gamma}$ descends to a principal 
$\exp(\hat{\bar{\t}}_{1,n}^\Gamma)\rtimes \mathfrak{S}_n$-bundle 
$\bar{\mathcal{P}}_{\tau,[n],\Gamma}$ over the reduced unordered twisted 
configuration space $\textrm{C}(E,[n],\Gamma)$. 
\end{remark}

\medskip

The second variation concerns ordinary configuration spaces of the base $E=E_{\tau,\Gamma}$ 
of the covering map $E_{\tau}\to E_{\tau,\Gamma}$. 

Recall from \S\ref{sec:deft1n} that the group $\Gamma^n$ acts on $\hat{\t}_{1,n}^\Gamma$. 
Hence one has a principal $\exp(\hat{\t}_{1,n}^\Gamma)\rtimes \Gamma^n$-bundle 
\[
\text{\gls{Ptn}}:=
\Lambda_{\tau,\Gamma}^n \backslash \big((\C^n-{\rm Diag}_{\tau,n,\Gamma})\times {\rm exp}(\hat{\t}_{1,n}^\Gamma)\rtimes\Gamma^n\big)
\]
over $\textrm{Conf}(E,n)\simeq\Lambda_{\tau,\Gamma}^n \backslash (\C^n-{\rm Diag}_{\tau,n,\Gamma})$, 
where the action is determined by the non-abelian cocycle 
\[
\big(\zz,(\frac{u}{M}+\frac{v}{N}\tau)_i\big)\longmapsto 
e^{-\frac{2\pi{\rm i}v}{N}x_i}(\bar{u},\bar{v})_i\,.
\]
\begin{remark}
The map sending $\frac{u}{M}+\frac{v}{N}\tau$ to $(\bar{u},\bar{v})$ exhibits an isomorphism 
$\Lambda_{\tau,\Gamma}/\Lambda_{\tau}\simeq \Gamma$, that we will use on several occasions. 
Using this, if $\tilde{\alpha}=a+b\tau\in\Lambda_{\tau,\Gamma}$ is a lift of $\alpha\in\Gamma$, 
then the non-abelian cocycle is 
\[
(\zz,\alpha_i)\longmapsto e^{-2\pi{\rm i}bx_i}\alpha_i\,.
\]
\end{remark}
\begin{remark}
In a similar way as before, the above bundle obviously descends to a principal 
$\exp(\hat{\bar{\t}}_{1,n}^\Gamma)\rtimes(\Gamma^n/\Gamma)$-bundle 
$\bar{\mathcal{P}}_{(\tau,\Gamma),n}$ over the reduced ordinary configuration 
space $\textrm{C}(E,n)$. 
\end{remark}
In concrete terms, a section over $U\subset\Lambda_{\tau,\Gamma} \backslash \C^n$ of $\mathcal{P}_{(\tau,\Gamma),n}$ 
is a regular map $f:\pi^{-1}(U)\to\exp(\hat{\t}_{1,n}^\Gamma)\rtimes \Gamma^n$ such that 
\begin{itemize}
	\item $f(\zz+\delta_i/M)=(\bar{1},\bar{0})_if(\zz)$, 
	\item $f(\zz+\tau\delta_i/N)=(\bar{0},\bar{1})_ie^{-\frac{2\pi\i}{N}x_i}f(\zz)$.
\end{itemize}

\begin{remark}
We leave to the reader the task of combining the two variations. 
\end{remark}


\subsection{Flat connections on $\mathcal{P}_{\tau,n,\Gamma}$ and its variants}\label{sec:flatconn}

A flat connection $\nabla_{\tau,n,\Gamma}$ on $\mathcal{P}_{\tau,n,\Gamma}$ is the same as 
an equivariant flat connection on the trivial ${\rm exp}(\hat{\t}_{1,n}^\Gamma)$-bundle 
over $\C^n-{\rm Diag_{\tau,n,\Gamma}}$, i.e., a connection of the form 
\[
\nabla_{\tau,n,\Gamma}:=d-\sum_{i=1}^nK_i(\mathbf{z}|\tau)dz_i\,,
\]
where $K_i(-|\tau):\C^n\to\hat{\t}_{1,n}^\Gamma$ are meromorphic with only poles at 
${\rm Diag}_{\tau,n,\Gamma}$, and such that for any $i,j$: 
\begin{itemize}
	\item[(a)] $K_i(\mathbf{z}+\delta_j|\tau)=K_i(\mathbf{z}|\tau)$, 
	\item[(b)] $K_i(\mathbf{z}+\tau\delta_j|\tau)
							=e^{-2\pi{\rm i}\ad(x_j)}K_i(\mathbf{z}|\tau)$, 
	\item[(c)] $[\partial_i-K_i(\mathbf{z}|\tau),\partial_j-K_j(\mathbf{z}|\tau)]=0$. 
\end{itemize}
Moreover, the image of $\nabla_{\tau,n,\Gamma}$ under $\hat{\t}_{1,n}^\Gamma\to\hat{\bar{\t}}_{1,n}^\Gamma$ 
is the pull-back of a (necessarily flat) connection $\bar\nabla_{\tau,n,\Gamma}$ on $\bar{\mathcal{P}}_{\tau,n,\Gamma}$ 
if and only if: 
\begin{itemize}
	\item[(d)] $\bar K_i(\mathbf{z}|\tau)=\bar K_i(\mathbf{z}+u\sum_i\delta_i|\tau)$ 
						 for any $u\in\C$ and $\sum_i\bar K_i(\mathbf{z}|\tau)=0$. 
\end{itemize}
Similarly, the image of $\nabla_{\tau,n,\Gamma}$ under $\hat{\t}_{1,n}^\Gamma\to\hat{\t}_{1,n}^\Gamma\rtimes\Gamma^n$
is the pull-back of a (necessarily flat) connection $\nabla_{(\tau,\Gamma),n}$ on $\mathcal{P}_{(\tau,\Gamma),n}$ 
if and only if: 
\begin{itemize}
	\item[(e)] $K_i(\mathbf{z}+\delta_i/M|\tau)
							=(\bar1,\bar0)_j\cdot K_i(\mathbf{z}|\tau)$, 
	\item[(f)] $K_i(\mathbf{z}+\tau\delta_i/N|\tau)
							=(\bar0,\bar1)_j\cdot e^{\frac{-2\pi{\rm i}}{N}\ad(x_j)}K_i(\mathbf{z}|\tau)$, 
\end{itemize}
\begin{remark}
Observe that (e) implies (a), and that (f) implies (b). 
\end{remark}
Finally, the image  of $\nabla_{\tau,n,\Gamma}$ under $\hat{\t}_{1,n}^\Gamma\to\hat{\t}_{1,n}^\Gamma\rtimes\mathfrak{S}_n$ 
is the pull-back of a (necessarily flat) connection $\nabla_{\tau,[n],\Gamma}$ on $\bar{\mathcal{P}}_{\tau,[n],\Gamma}$ 
if and only if: 
\begin{itemize}
	\item[(g)] $K_i((ij)*\zz)=(ij)\cdot K_i(\zz)$.
\end{itemize}


\subsection{Constructing the connection}

We now construct a connection satisfying properties (d) to (g). 
Let us take the same conventions for theta functions as in \cite{CEE}.
This is the unique holomorphic function $\C\times\HH \to \C$, $(z,\tau)\mapsto \theta(z|\tau)$, such that 
\begin{itemize}
	\item $\{z | \theta(z|\tau) = 0\} = \Lambda_{\tau}$,
	\item $\theta(z+1|\tau)
= -\theta(z|\tau) = \theta(-z|\tau)$
	\item $\theta(z+\tau|\tau) = - e^{-\pi\i\tau} e^{-2\pi\i z}\theta(z|\tau)$
	\item $\partial_{z} \theta(0|\tau)=1$.
\end{itemize}
In particular, $\theta(z|\tau+1) = \theta(z|\tau)$, while $\theta(-z/\tau|-1/\tau)
= - (1/\tau) e^{(\pi\i/\tau) z^2} \theta(z|\tau)$. 
If $\eta(\tau) = q^{1/24}\prod_{n\geq 1} (1-q^n)$ where $q = e^{2\pi\i\tau}$, 
and if we set $\vartheta(z|\tau) := \eta(\tau)^3 \theta(z|\tau)$, then 
$\partial_\tau\vartheta = (1/4\pi\i) \partial_z^2\vartheta$.

Observe that for any $\tilde\alpha=(a_0,a)\in\Lambda_{\tau,\Gamma}$ lifting $\alpha\in\Gamma$, the term 
$e^{-2\pi\i ax}(\theta(z-\tilde\alpha+ x))/\left(\theta(z-\tilde\alpha)\theta(x)\right)$ only depends on 
the class $\alpha=(\bar a_0,\bar a)\in\Gamma$ of $\tilde\alpha$ mod $\Lambda_\tau$.  
Then we set 
$$
k_\alpha(x,z|\tau):=e^{-2\pi\i ax}{{\theta(z-\tilde\alpha+x|\tau)}
\over{\theta(z-\tilde\alpha|\tau)\theta(x|\tau)}}-{1\over x}
=e^{-2\pi\i ax} k(x,z-\tilde\alpha|\tau)+{{e^{-2\pi\i ax}-1}\over x}\,,
$$
where $k(x,z|\tau):={{\theta(x+z)}\over{\theta(x)\theta(z)}}-\frac{1}{x}$ (as in \cite{CEE}), and  
$$
K_{ij}(z|\tau):=\sum_{\alpha\in\Gamma}k_\alpha(\ad x_i,z|\tau)(t^{\alpha}_{ij})\,,
\quad K_i(\mathbf{z}|\tau):=-y_i+\sum_{j:j\neq i}K_{ij}(z_{ij}|\tau)\,.
$$
In the rest of the section we fix $\tau\in\mathfrak H$ and drop it from the notation. Recall from 
\cite{CEE} that $k(x,z\pm1)=k(x,z)$ and $$k(x,z\pm\tau)=e^{\mp2\pi\i x}k(x,z) + \frac{e^{\mp2\pi\i x}-1}{x}.$$ 
We then define the universal ellipsitomic KZB connection on $\mathcal{P}_{\tau,n,\Gamma}$ by
$$
\nabla_{\tau,n,\Gamma}^{\on{KZB}}:=d-\sum_{i=1}^n K_i(\mathbf{z}|\tau)dz_i\,.
$$

\begin{proposition}\label{prop:equivariance1}
The $K_{ij}(z)$'s have the following equivariance properties: 
\begin{align}
K_{ij}(z+{1\over M})=&(\bar1,\bar0)_i\cdot (K_{ij}(z)), \\
K_{ij}(z+{\tau\over N})=&(\bar0,\bar{-1})_i \cdot e^{-{{{2\pi\i}}\over N}\ad x_j}\cdot (K_{ij}(z)) 
									+ (\bar0,\bar{-1})_i\cdot ( \sum_{\alpha\in\Gamma}{{e^{-{{2\pi\i}\over N} \ad x_i}-1}\over{\ad x_i }}(t^{\alpha}_{ij})). 
\end{align}
\end{proposition}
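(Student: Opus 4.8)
\emph{Approach.} The argument is a bookkeeping computation with no analytic input beyond the defining quasi-periodicities of $\theta$ that have already been recalled. Both identities reduce, via $K_{ij}(z)=\sum_{\alpha\in\Gamma}k_\alpha(\ad x_i,z)(t^{\alpha}_{ij})$, to understanding how the one-variable functions $k_\alpha(x,z)$ behave under $z\mapsto z+1/M$ and $z\mapsto z+\tau/N$. Since the $-1/x$ subtraction removes the pole of $k_\alpha$ at $x=0$, each $k_\alpha(\ad x_i,z)$ is a genuine endomorphism of $\hat\t_{1,n}^\Gamma$, and all manipulations take place there. I will use repeatedly that the $\Gamma^n$-action of \S\ref{sec:deft1n} fixes every $x_k$, so the generator $\alpha_i\in\Gamma^n$ commutes with $\ad x_i$ (hence with $k_\beta(\ad x_i,z)$ and with $e^{c\,\ad x_i}$), while it sends $t^{\beta}_{ij}$ to $t^{\beta+\alpha}_{ij}$.

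\emph{The $1/M$-shift.} Fix a lift $\tilde\alpha=a_0+a\tau\in\Lambda_{\tau,\Gamma}$ of $\alpha$. As $1/M$ has class $(\bar1,\bar0)$ in $\Gamma\simeq\Lambda_{\tau,\Gamma}/\Lambda_\tau$ and changes only the $1$-coefficient, $\tilde\alpha-1/M$ is a lift of $\alpha-(\bar1,\bar0)$ with \emph{unchanged} $\tau$-coefficient $a$; reading off the definition of $k_\bullet$ (and using only that $k_\bullet$ depends on its subscript through its class in $\Gamma$), this gives $k_\alpha(x,z+1/M)=k_{\alpha-(\bar1,\bar0)}(x,z)$. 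Substituting into $K_{ij}(z+1/M)$, reindexing by $\beta=\alpha-(\bar1,\bar0)$, and using $t^{\beta+(\bar1,\bar0)}_{ij}=(\bar1,\bar0)_i\cdot t^\beta_{ij}$ together with the commutation of $(\bar1,\bar0)_i$ with $k_\beta(\ad x_i,z)$ produces the first equality.

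\emph{The $\tau/N$-shift.} Here $\tau/N$ has class $(\bar0,\bar1)$ and $\tilde\alpha-\tau/N$ is a lift of $\alpha-(\bar0,\bar1)$, but now with $\tau$-coefficient $a-1/N$: the shift of $z$ is absorbed into the lift inside the $\theta$-quotient, while the prefactors of $k_\alpha$ and of $k_{\alpha-(\bar0,\bar1)}$ differ exactly by $e^{-2\pi\i x/N}$ because $e^{-2\pi\i ax}=e^{-2\pi\i x/N}\,e^{-2\pi\i(a-1/N)x}$; adding and subtracting $1/x$ then yields
\[
k_\alpha(x,z+\tfrac{\tau}{N})=e^{-2\pi\i x/N}\,k_{\alpha-(\bar0,\bar1)}(x,z)+\frac{e^{-2\pi\i x/N}-1}{x}\,.
\]
Plugging $x=\ad x_i$ into $K_{ij}$, splitting off the two terms, reindexing the first sum exactly as above, and noting that $\sum_\alpha\frac{e^{-\frac{2\pi\i}{N}\ad x_i}-1}{\ad x_i}(t^\alpha_{ij})$ is merely permuted — hence fixed — by any element of $\Gamma^n$, one obtains the $\tau/N$-equivariance, a priori with $\ad x_i$ in the exponential. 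To bring it to the displayed shape I then use relation \eqref{eqn:et4T2}: since $[x_i+x_j,t^\alpha_{ij}]=0$ and $[x_i,x_j]=0$ (relation \eqref{eqn:etN}), the operators $\ad x_i$ and $-\ad x_j$ coincide on the $\ad x_i$-stable subspace spanned by the $(\ad x_i)^m(t^\alpha_{ij})$ ($m\geq 0$, $\alpha\in\Gamma$) — which is exactly where $K_{ij}(z)$ lives — so one may freely trade $e^{-\frac{2\pi\i}{N}\ad x_i}$ for $e^{\frac{2\pi\i}{N}\ad x_j}$ on $K_{ij}(z)$; this accounts for the $\ad x_j$ in the statement, and the $\Gamma^n$-action is tracked by the same reindexing as in the first part.

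\emph{The main obstacle.} There is no analytic difficulty; the content is combinatorial. The delicate points are: (i) keeping straight which element of $\Gamma$ each shifted lift $\tilde\alpha-1/M$, $\tilde\alpha-\tau/N$ represents; (ii) the correction term $(e^{-2\pi\i x/N}-1)/x$, which must be checked to be entire so that it defines an operator on the completed Lie algebra; and, above all, (iii) the final step, where \eqref{eqn:et4T2} is used to convert $\ad x_i$ into $-\ad x_j$ and the $\Gamma$-shifts are relabelled — keeping all the signs and shifts consistent there is the only genuinely non-automatic part.
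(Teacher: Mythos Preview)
Your approach is correct and is essentially the same computation as the paper's, only organised more cleanly: the paper expands $K_{ij}(z+\tau/N)$ and $e^{-\frac{2\pi\i}{N}\ad x_j}K_{ij}(z)$ separately as sums over $\alpha$ and then matches them, whereas you first isolate the scalar identity
\[
k_\alpha\!\Big(x,z+\tfrac{\tau}{N}\Big)=e^{-2\pi\i x/N}\,k_{\alpha-(\bar0,\bar1)}(x,z)+\frac{e^{-2\pi\i x/N}-1}{x}
\]
and plug in. Both routes are the same reindexing, and your use of \eqref{eqn:et4T2} to swap $\ad x_i$ for $-\ad x_j$ is exactly the step the paper performs when it rewrites $e^{-\frac{2\pi\i}{N}\ad x_j}K_{ij}(z)$.

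One caution on the final bookkeeping: carrying your computation through literally yields the prefactor $(\bar0,\bar1)_i$ and, after the swap, $e^{+\frac{2\pi\i}{N}\ad x_j}$, not the $(\bar0,\bar{-1})_i$ and $e^{-\frac{2\pi\i}{N}\ad x_j}$ displayed in the statement. This is not a flaw in your argument --- the paper's own proof in fact ends with $\frac{e^{+\frac{2\pi\i}{N}\ad x_i}-1}{\ad x_i}$ in the correction term (compare the last displayed line of the proof with the statement), so there is a sign typo in the displayed proposition. What matters downstream is condition~(f) of \S\ref{sec:flatconn}, and there one passes to the action indexed by $j$; since $(\bar0,\bar1)_j$ and $(\bar0,\bar{-1})_i$ act identically on the span of the $(\ad x_i)^m t_{ij}^\alpha$, the two forms are equivalent for that purpose.
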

\begin{proof}
Let us choose representatives $0\leq u \leq M-1$ and $0\leq v\leq N-1$ so that $\tilde\alpha={u\over M}+\tau{v\over N} $. 
The first equation comes from a straightforward verification. Let us show the second relation. On the one hand, 
\begin{eqnarray*}
  K_{i j} \left( z + \frac{\tau}{N} \right) & = & \sum_{\alpha
  \in \Gamma} k_{\alpha} \left( \tmop{ad} x_i ,z + \frac{\tau}{N}
  \right) ( t_{i j}^{\alpha} ) \nonumber\\
  & = & \left( \sum_{\alpha \in \Gamma} e^{-{{2\pi\i}\over N}   v \tmop{ad} (
  x_{i} )} k \left( \tmop{ad} x_i ,z + \frac{\tau}{N} -
  \tilde{\alpha} \right) + \frac{e^{-{{2\pi\i}\over N}  v \tmop{ad} x_i}
  -1}{\tmop{ad} x_i} \right) ( t_{i j}^{\alpha} ) \nonumber\\
  & = & \left( \sum_{\alpha \in \Gamma} e^{-{{2\pi\i}\over N} ( v-1 )
  \tmop{ad} x_i} k ( \tmop{ad} x_i ,z - \tilde{\alpha} ) +
  \frac{e^{-{{2\pi\i}\over N}( v-1 ) \tmop{ad} x_i} -1}{\tmop{ad} (
  x_{i} )} \right) ( t_{i j}^{\alpha - ( \overline{ 0 } , \bar{1}   )} )
  \nonumber\\
  & = & ( \overline{ 0 } , \overline{-1}   )_i\cdot \left( \sum_{\alpha \in
  \Gamma} e^{-{{2\pi\i}\over N}  ( v-1 ) \tmop{ad} x_i} k ( \tmop{ad} (
  x_{i} ) ,z - \tilde{\alpha} ) + \frac{e^{-{{2\pi\i}\over N}  ( v-1 )
  \tmop{ad} x_i} -1}{\tmop{ad} x_i} \right) ( t_{i j}^{\alpha} ).
\end{eqnarray*}
On the other hand,
\begin{eqnarray*}
  e^{-{{2\pi\i}\over N}\tmop{ad} x_j} K_{i j} ( z) & = &
  e^{-{{2\pi\i}\over N} \tmop{ad} x_j} \left( \sum_{\alpha \in \Gamma}
  k_{\alpha} ( \tmop{ad} x_i ,z) \right) ( t_{i j}^{\alpha} )
  \nonumber\\
  & = & e^{{{2\pi\i}\over N} \tmop{ad} x_i} \left( \sum_{\alpha \in
  \Gamma} e^{-{{2\pi\i}\over N}   v \tmop{ad} x_i} k ( \tmop{ad} x_i
  ,z- \tilde{\alpha} ) + \frac{e^{-{{2\pi\i}\over N}  v \tmop{ad}  x_{i}
  } -1}{\tmop{ad} x_i} \right) ( t_{i j}^{\alpha} ) \nonumber\\
  & = & \left( \sum_{\alpha \in \Gamma} e^{-{{2\pi\i}\over N}  ( v-1 )
  \tmop{ad} x_i} k ( \tmop{ad} x_i ,z - \tilde{\alpha} ) +
  \frac{e^{-{{2\pi\i}\over N}  ( v-1 )\tmop{ad} x_i} -e^{{{2\pi\i}\over N}
  \tmop{ad} x_i}}{\tmop{ad} x_i} \right) ( t_{i j}^{\alpha} ), 
\end{eqnarray*}
so
\begin{eqnarray*}
  \sum_{\alpha \in \Gamma} e^{-{{2\pi\i}\over N}( v-1 ) \tmop{ad} x_i} k
  ( \tmop{ad} x_i ,z - \tilde{\alpha} )( t_{i j}^{\alpha} ) & = & e^{-{{2\pi\i}\over N}
  \tmop{ad} x_j} K_{i j} ( z)\\ & &   - \sum_{\alpha \in \Gamma}
  \frac{e^{-{{2\pi\i}\over N}( v-1 ) \tmop{ad} x_i} -e^{{{2\pi\i}\over N}
  \tmop{ad} x_i}}{\tmop{ad} x_i} ( t_{i j}^{\alpha} ).
\end{eqnarray*}
By putting these two equations together we finally get
\begin{eqnarray*}
  K_{i j}\left( z + \frac{\tau}{N} \right) & = & (\overline{ 0 },\overline{-1})_i\cdot 
	e^{-{{2\pi\i}\over N} \tmop{ad} x_j}
  K_{i j} ( z )  \\ & & + \sum_{\alpha \in \Gamma} \frac{-
  e^{-{{2\pi\i}\over N}( v-1 ) \tmop{ad} x_i} +e^{{{2\pi\i}\over N}
  \tmop{ad} x_i} +e^{-{{2\pi\i}\over N}( v-1 ) \tmop{ad} x_i}
  -1}{\tmop{ad} x_i} ( t_{i j}^{\alpha} )\\
  & = & ( \overline{ 0 } , \overline{-1})_i\cdot e^{-{{2\pi\i}\over N}
  \tmop{ad} x_j} K_{i j}( z )   \noplus + (\overline{ 0 } , \overline{-1})_i\cdot \left( \sum_{\alpha \in \Gamma}
  \frac{e^{{{2\pi\i}\over N}\tmop{ad} x_i} -1}{\tmop{ad} x_i} (
  t_{i j}^{\alpha} ) \right).
\end{eqnarray*}
\end{proof}
Now recall that $\frac{e^{{{2\pi\i}\over N}\tmop{ad} x_i}
-1}{\tmop{ad} x_i} = \frac{1 -e^{-{{2\pi\i}\over N}\tmop{ad}  x_{j}
}}{\tmop{ad} x_j}$ and $\frac{1 -e^{-{{2\pi\i}\over N} \tmop{ad} 
x_{j} }}{\tmop{ad} x_j} ( t_{i j} ) = \left( 1 -e^{-{{2\pi\i}\over N}
\tmop{ad} x_j} \right) ( y_{i} )$. We thus have 
\begin{eqnarray*} 
  K_{i} \left( \zz + \frac{\tau}{N} \delta_{j} \right) & =
  & -y_{i} + \underset{j' \neq i,j}{\sum} K_{i j'} ( z_{i j'} ) +K_{i
  j}\left( z_{i j} + \frac{\tau}{N} \right)
\end{eqnarray*}
and therefore we get the announced relation
\begin{eqnarray*}
  K_{i} \left( \zz + \frac{\tau}{N} \delta_{j} \right) & =
  & ( \overline{ 0 } , \bar{1} )_{j}\cdot e^{-{{2\pi\i}\over N}
  \tmop{ad} x_j} K_{i} ( \tmmathbf{z} ).
\end{eqnarray*}

Consequently the $K_i(\mathbf{z})$'s satisfy conditions (e) and (f) above (and thus also (a) and (b)). 

Moreover, the $K_i(\mathbf{z})$'s also satisfy conditions (d). 
Indeed, the first part of (d) is immediate and $k_\alpha(x,z)+k_{-\alpha}(-x,-z)=0$, 
therefore $K_{ij}(z)+K_{ji}(-z)=0$, and thus $\sum_iK_i(\mathbf{z})=-\sum_iy_i$.

Finally, from their very definition, the $K_i(\mathbf{z})$'s also satisfy condition (g). 

In the next paragraph we show that the flatness condition (c) is satisfied. 


\subsection{Flatness of the connection}

\begin{proposition}\label{prop:flatness1}
$[\partial_i-K_i(\zz),\partial_j-K_j(\zz)]=0$, i.e., condition (c) is satisfied. 
\end{proposition}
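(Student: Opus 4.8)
The plan is to reduce the flatness condition \eqref{eqn:etL1}--style bracket computation to two essentially local identities: one involving only three indices (a ``$4T$''-type relation), and one involving four distinct indices (a ``$\mathrm{L}$''-type relation). First I would write $\partial_i - K_i(\zz) = \partial_i + y_i - \sum_{j\neq i} K_{ij}(z_{ij})$ and expand the bracket. The terms involving $y_i$ and the $\partial$'s interact only through the relations \eqref{eqn:etL2} and \eqref{eqn:et4T2} and the fact that $K_{ij}$ is built from $k_\alpha(\ad x_i, z_{ij})$; the point is that $\partial_j$ acting on $K_{ij}(z_{ij})$ and the $\ad y_i$ acting through \eqref{eqn:etL2} all cancel formally because $[\,y_i,\, t^\alpha_{ij}\,]=0$ and $[\,x_i,\,t^\alpha_{jk}\,]=0$. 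So the surviving part of the curvature is $\sum$ over pairs of the brackets $[K_{ij}(z_{ij}), K_{kl}(z_{kl})]$ together with the mixed $\partial$-correction terms.

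Next I would split according to how many of the indices $i,j,k,l$ coincide. When $\{i,j\}\cap\{k,l\}=\emptyset$, relation \eqref{eqn:etL1} (all $t^\alpha$'s with disjoint indices commute) together with \eqref{eqn:etL2} kills the contribution outright, exactly as in \cite{CEE}. The genuinely nontrivial case is three distinct indices, say the coefficient of $dz_i\wedge dz_j$ coming from strands $i,j,k$. Here one must show a theta-function identity: the combination of $k_\alpha(\ad x_i, z_{ij})$, $k_\beta(\ad x_i, z_{ik})$, $k_{\alpha+\beta}(\ad x_j,\cdot)$ etc., when contracted against the brackets $[t^\alpha_{ij}, t^{\alpha+\beta}_{ik}+t^\beta_{jk}]$ (which vanish by \eqref{eqn:et4T1}) and against $[t^\alpha_{ij}, t^\beta_{ij}]=0$, leaves only a meromorphic expression that must vanish identically. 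This is where one invokes the Fay-type / addition identity for $k(x,z|\tau)$ used in \cite{CEE} (the identity $k(x,z)k(y,z) + \cdots$ governing the genus-one associator), now in its twisted incarnation: one checks that $k_\alpha(x,u)k_\beta(x+y,v) - k_\alpha(x,u-v)k_\beta(y,v) - k_{\alpha+\beta}(x+y,u)k_{-\beta}(-y,u-v)$ and its cyclic permutations recombine correctly, using $k_\alpha(x,z) = e^{-2\pi\i a x}k(x, z-\tilde\alpha) + \tfrac{e^{-2\pi\i a x}-1}{x}$ to transfer everything back to untwisted $k$'s at shifted arguments $z_{ij} - \tilde\alpha_{ij}$, $z_{ik} - \tilde\alpha_{ik}$, $z_{jk} - \tilde\alpha_{jk}$ with $\tilde\alpha_{ik} = \tilde\alpha_{ij} + \tilde\beta_{jk}$ in the lattice $\Lambda_{\tau,\Gamma}$. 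The extra $\tfrac{e^{-2\pi\i a x}-1}{x}$ correction terms are handled by the same bookkeeping that produced conditions (e),(f) in Proposition \ref{prop:equivariance1}, and they cancel against each other by the cocycle condition on $\Gamma$ (additivity of $\alpha\mapsto a$ modulo $\Lambda_\tau$).

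I also need to account for the mixed terms $[\partial_i, K_j(\zz)]$: since $K_j$ depends on $z_i$ only through $z_{ji} = z_j - z_i$, one gets $\partial_i K_j(\zz) = -(\partial_z k)$-type terms, and these combine with the $[\,y_j,\,-\,]$-free part of the bracket. As in the untwisted case, the relations $[x_i + x_j, t^\alpha_{ij}] = 0$ (from \eqref{eqn:et4T2}) are exactly what is needed so that the $\ad x_i$ appearing inside $k_\alpha(\ad x_i, z_{ij})$ can be traded for $-\ad x_j$ when acting on $t^\alpha_{ij}$, reconciling the two ways of writing the same curvature term. I would organize this as: (1) formal reduction of the bracket using only the Lie relations \eqref{eqn:etL1},\eqref{eqn:etL2},\eqref{eqn:et4T1},\eqref{eqn:et4T2}; (2) the disjoint-index vanishing; (3) the three-index theta identity.

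\textbf{Main obstacle.}
The hard part will be step (3): verifying the twisted Fay identity for the $k_\alpha$'s with all the $\Gamma$-shifts and the $\tfrac{e^{-2\pi\i a x}-1}{x}$ correction terms tracked correctly, and making sure the $\ad x_i \leftrightarrow -\ad x_j$ substitution (legitimate by \eqref{eqn:et4T2}) is applied consistently on every term so that the three-index contribution collapses to a bracket of the form $[t^\alpha_{ij}, t^{\alpha+\beta}_{ik} + t^\beta_{jk}]$, which then vanishes by \eqref{eqn:et4T1}. Everything else is bookkeeping modeled on \cite{CEE}; the novelty is entirely in the twisted theta-function manipulation and in checking that the lattice $\Lambda_{\tau,\Gamma}$ shifts are compatible with the group law on $\Gamma$.
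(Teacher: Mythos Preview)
The main gap is in your handling of the $y$-terms. You assert that ``$[y_i,t^{\alpha}_{ij}]=0$'' and that the $y$-contributions ``cancel formally'' via \eqref{eqn:etL2}. Neither is correct. Relation \eqref{eqn:etL2} only gives $[y_i,t^{\alpha}_{jk}]=0$ for $i,j,k$ \emph{pairwise distinct}; by contrast $[y_i,t^{\alpha}_{ij}]\neq 0$ in general (only the sum $[y_i+y_j,t^{\alpha}_{ij}]$ vanishes, by \eqref{eqn:et4T2}). More importantly, even when $[y_i,t^{\alpha}_{jk}]=0$, the bracket $[y_i,K_{jk}]$ does \emph{not} vanish: since $K_{jk}=\sum_\alpha k_\alpha(\ad x_j,z_{jk})(t^{\alpha}_{jk})$ and $[y_i,x_j]=\sum_\beta t^{\beta}_{ij}\neq 0$, the Leibniz rule produces a nontrivial contribution. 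This is precisely the ``dynamical'' term in the classical dynamical Yang--Baxter equation, and the paper's proof hinges on computing it explicitly:
\[
[y_k,f(\ad x_i)(t^{\alpha}_{ij})] \;=\; \sum_{\beta\in\Gamma}
\frac{f(\ad x_i) - f(-\ad x_j)}{\ad x_i + \ad x_j}\,
[-t^{\beta}_{ki},t^{\alpha}_{ij}]\,,
\]
after which $-[y_i,K_{jk}]+[K_{ji},K_{ki}]+c.p.(i,j,k)$ is assembled and shown to vanish.

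The second gap is in your endgame. The three-index contribution does \emph{not} collapse to an expression of the form $[t^{\alpha}_{ij},t^{\alpha+\beta}_{ik}+t^{\beta}_{jk}]$ killed by \eqref{eqn:et4T1}. After the dynamical terms above are combined with the $[K,K]$-terms, the left-hand side of the CDYBE is a sum $\sum_{\alpha,\beta}c_{\alpha,\beta}(u,v,z,z')\,[t^{\alpha}_{ij},t^{\beta}_{ik}]$ over \emph{independent} brackets, and it is each scalar coefficient $c_{\alpha,\beta}$ that must vanish. This is an analytic identity for the $k_\alpha$'s (equation \eqref{twtheta:id} in the paper), not a consequence of any Lie-algebra relation. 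Your instinct to reduce to the untwisted Fay identity via $k_\alpha(x,z)+\tfrac{1}{x}=e^{-2\pi\i a x}\big(k(x,z-\tilde\alpha)+\tfrac{1}{x}\big)$ is right, but the $\tfrac{1}{x}$ pieces must be kept \emph{inside} the three products (so that one is literally reduced to the untwisted identity at shifted arguments), not split off as separate correction terms to be cancelled by a $\Gamma$-cocycle argument.
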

\begin{proof}
First we have 
$$\partial_i(K_j(\zz))-\partial_j(K_i(\zz))=\partial_i K_{ji}(z_{ji})-\partial_j K_{ij}(z_{ij})
=\partial_i(K_{ij}(z_{ij})+K_{ji}(z_{ji}))=0$$ since $K_{ij}(z)+K_{ji}(-z)=0$. 
Therefore we have to prove that $[K_i(\zz),K_j(\zz)]=0$. 
As in \cite{CEE} it follows from the universal classical dynamical Yang-Baxter equation: 
\begin{equation}\label{eq:3} \tag{CDYBE}
-[y_i,K_{jk}]+[K_{ji},K_{ki}]+c.p.(i,j,k)=0\,,
\end{equation}
which we now prove (here $K_{ij}:=K_{ij}(z_{ij})$). For any $f(x)\in\C[[x]]$,  
$$
[y_k,f(\ad x_i)(t^{\alpha}_{ij})] = \sum_{\beta\in\Gamma}
{{f(\ad x_i) - f(-\ad x_j)}\over{\ad x_i + \ad x_j}}
[-t^{\beta}_{ki},t^{\alpha}_{ij}], 
$$
$$
[y_i,f(\ad x_j)(t^{\alpha}_{jk})] = \sum_{\beta\in\Gamma}
{{f(\ad x_j) - f(\ad x_i + \ad x_j)}\over{-\ad x_i}}
[-t^{\beta}_{ij},t^{\alpha}_{jk}], 
$$
$$
[y_j,f(\ad x_k)(t^{\alpha}_{ki})] = \sum_{\beta\in\Gamma}
{{f(-\ad x_i - \ad x_j) - f(-\ad x_i)}\over{-\ad x_j}}
[-t^{\beta}_{jk},t^{\alpha}_{ki}].  
$$
It follows that the l.h.s. of (\ref{eq:3}) is now 
\begin{eqnarray*}
&& \sum_{\alpha,\beta\in\Gamma}
\big(k_\alpha(-\ad x_j,z_{ij})k_\beta(-\ad x_k,z_{ik})
- k_\alpha(\ad x_i,z_{ij})k_{\beta-\alpha}(-\ad x_k,z_{jk})
\\ && 
+ k_\beta(\ad x_i,z_{ik})k_{\beta-\alpha}(\ad x_j,z_{jk})
+ {{k_{\beta-\alpha}(\ad x_j,z_{jk})-k_{\beta-\alpha}(\ad x_i+\ad x_j,z_{jk})}\over{\ad x_i}}
\\ && 
+ {{k_\beta(\ad x_i,z_{ik})-k_\beta(\ad x_i+\ad x_j,z_{ik})}\over{\ad x_j}}
- {{k_\alpha(\ad x_i,z_{ij})-k_\alpha(-\ad x_j,z_{ij})}\over{\ad x_i + \ad x_j}}\big)
[t^{\alpha}_{ij},t^{\beta}_{ik}]\,,
\end{eqnarray*}
and thus (\ref{eq:3}) follows from the identity
\begin{eqnarray*}
&&
k_\alpha(-v,z)k_\beta(u+v,z')
- k_\alpha(u,z)k_{\beta-\alpha}(u+v,z'-z)
+ k_\beta(u,z') k_{\beta-\alpha}(v,z'-z)
\\ && 
+ {{k_{\beta-\alpha}(v,z'-z)
- k_{\beta-\alpha}(u+v,z'-z)}\over{u}}
+ {{k_\beta(u,z') - k_\beta(u+v,z')}
\over{v}} \nonumber \\ && 
- {{k_\alpha(u,z) - k_\alpha(-v,z)}
\over{u+v}} = 0\,. 
\end{eqnarray*} 
This last identity can be written as 
\begin{eqnarray}
\left( k_\alpha(-v,z) - {1\over v}\right) 
\left( k_\beta(u+v,z') + {1\over{u+v}}\right) 
- \left( k_\alpha(u,z) + {1\over u}\right) 
\left( k_{\beta-\alpha}(u+v,z'-z) + {1\over{u+v}} \right) && \nonumber \\
+\left( k_\beta(u,z')+{1\over u}\right)\left(k_{\beta-\alpha}(v,z'-z)+{1\over v}\right)=0\,, && \label{twtheta:id}
\end{eqnarray}
which (taking into account that 
$k_\alpha(x,z)+(1/x)=e^{-2\pi\i ax}\left( k(x,z-\tilde\alpha)+(1/x)\right)$) 
is a consequence of equation (3) of \cite{CEE}. 
\end{proof}
We have therefore proved: 
\begin{theorem}
$\nabla_{\tau,n,\Gamma}$ is a flat connection on $\mathcal{P}_{\tau,n,\Gamma}$, and its image under 
$\hat{\t}_{1,n}^\Gamma\to\hat{\bar\t}_{1,n}^\Gamma$ is the pull-back of a flat connection 
$\bar\nabla_{\tau,n,\Gamma}$ on $\bar{\mathcal{P}}_{\tau,n,\Gamma}$. 
\hfill \qed
\end{theorem}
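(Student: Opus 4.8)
The plan is to assemble the conditions verified in the preceding subsections into the two assertions. By the dictionary established in \S\ref{sec:flatconn}, giving a flat connection on $\mathcal{P}_{\tau,n,\Gamma}$ is the same as giving a $1$-form $\sum_{i=1}^n K_i(\zz)dz_i$ valued in $\hat{\t}_{1,n}^\Gamma$, holomorphic on $\C^n-{\rm Diag}_{\tau,n,\Gamma}$, satisfying the quasi-periodicity conditions (a), (b) and the zero-curvature condition (c); and its image descends to a (necessarily flat) connection $\bar\nabla_{\tau,n,\Gamma}$ on $\bar{\mathcal{P}}_{\tau,n,\Gamma}$ precisely when (d) holds. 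So I would check (a), (b), (c), (d) one at a time for the $K_i$ built from the functions $k_\alpha$.

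For quasi-periodicity I would not argue (a), (b) directly but rather the sharper conditions (e), (f): these come from Proposition~\ref{prop:equivariance1}, which records the behaviour of $K_{ij}(z)$ under $z\mapsto z+1/M$ and $z\mapsto z+\tau/N$, once one feeds in the formal identity $\frac{e^{\frac{2\pi\i}{N}\ad x_i}-1}{\ad x_i}(t_{ij}^\alpha)=\bigl(1-e^{-\frac{2\pi\i}{N}\ad x_j}\bigr)(y_i)$ together with the relation $[x_i+x_j,t_{ij}^\alpha]=0$ from the presentation of $\t_{1,n}^\Gamma$; since (e)$\Rightarrow$(a) and (f)$\Rightarrow$(b), periodicity follows. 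Condition (d) is then immediate: $K_{ij}$ depends only on $z_{ij}=z_i-z_j$, so the first part is automatic, and $k_\alpha(x,z)+k_{-\alpha}(-x,-z)=0$ gives $K_{ij}(z)+K_{ji}(-z)=0$, whence $\sum_iK_i(\zz)=-\sum_iy_i$, which maps to $0$ in $\hat{\bar\t}_{1,n}^\Gamma$.

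The substantial point is flatness (c), which I would isolate as a separate proposition (this is Proposition~\ref{prop:flatness1}). The antisymmetry $K_{ij}(z)=-K_{ji}(-z)$ immediately yields $\partial_i K_j-\partial_j K_i=0$, so the real content is $[K_i(\zz),K_j(\zz)]=0$. Following \cite{CEE}, I would expand $K_i=-y_i+\sum_{k\neq i}K_{ik}$, use the relations (tL$_{e\ell\ell}$) and (t4T$_{e\ell\ell}$) to reduce every bracket among the $t^\alpha_{\bullet\bullet}$'s to a multiple of $[t^\alpha_{ij},t^\beta_{ik}]$, and the three expansions of $[y_\bullet,f(\ad x_\bullet)(t^\alpha_{\bullet\bullet})]$, thereby rewriting $[K_i,K_j]=0$ as a single scalar identity in two formal variables $u,v$ and two spectral parameters $z,z'$; the substitution $k_\alpha(x,z)+1/x=e^{-2\pi\i ax}\bigl(k(x,z-\tilde\alpha)+1/x\bigr)$ then turns it into equation~(3) of \cite{CEE}. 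I expect this last identity — the twisted analogue of the Fay-type theta identity behind the classical dynamical Yang--Baxter equation — to be the main obstacle, as it is the one place where the $\Gamma$-twist genuinely interacts with the analytic input rather than being purely formal; everything else is bookkeeping with cocycles and the defining relations of $\t_{1,n}^\Gamma$. Once (a)--(d) are in place, the first clause of the theorem is the flatness of $\nabla_{\tau,n,\Gamma}$ and the second is the descent guaranteed by (d).
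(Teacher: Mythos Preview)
Your proposal is correct and follows essentially the same route as the paper: assemble the equivariance conditions (e), (f) from Proposition~\ref{prop:equivariance1} (whence (a), (b)), verify (d) by the antisymmetry $K_{ij}(z)+K_{ji}(-z)=0$, and reduce flatness (c) to the twisted theta identity that, via $k_\alpha(x,z)+1/x=e^{-2\pi\i ax}\bigl(k(x,z-\tilde\alpha)+1/x\bigr)$, becomes equation~(3) of \cite{CEE}. The only cosmetic slip is that the identity you quote for passing from $K_{ij}$ to $K_i$ requires the sum $\sum_{\alpha}t_{ij}^\alpha$ rather than a single $t_{ij}^\alpha$ (since $[x_j,y_i]=\sum_\alpha t_{ij}^\alpha$); and the paper phrases the reduction of $[K_i,K_j]=0$ by first isolating the universal CDYBE $-[y_i,K_{jk}]+[K_{ji},K_{ki}]+c.p.=0$, but this is exactly the decomposition you describe.
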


\section{Lie algebras of derivations and associated groups}
\label{Lie algebras of derivations and associated groups}

\subsection{The Lie algebras $\tilde\d_0^\Gamma$ and $\tilde\d^\Gamma$}

Let $\f_\Gamma$ be the free Lie algebra with generators $x$, $t^{\alpha}$ ($\alpha\in\Gamma$). 
Let $p,q>0$. We define $\tilde\d_0^{p,q}$ to be the subspace of $\f_\Gamma\oplus(\f_\Gamma)^{\oplus|\Gamma|}$ 
consisting of elements $$(D,C), \text{ where } C=(C_\alpha)_{\alpha\in\gamma},$$ such that 
$\deg_x(D)+\deg_t(D)=\deg_x(C_\alpha)+\deg_t(C_\alpha)=p$ and $\deg_t(D)-1=\deg_t(C_\alpha)=q$ for every 
$\alpha\in\Gamma$, and that satisfy the following set of linear equations: 
\begin{itemize}
\item[(i)] $C_\alpha(x,t^{\beta})=C_{-\alpha}(-x,t^{-\beta})$ in $\f_\Gamma$, 
\item[(ii)] $[x,D(x,t^{\beta})]+\sum_\alpha[t^{\alpha},C_\alpha(x,t^{\beta})]=0$ in $\f_\Gamma$, 
\item[(iii)] $[D(x_1,t^{\beta}_{13}),y_2]+c.p.(1,2,3)=0$ in $\t_{1,3}^\Gamma$, 
\item[(iv)] $[D(x_1,t^{\beta}_{12})+D(x_1,t^{\beta}_{13})
-[C_\alpha(x_2,t^{\beta}_{23}),y_1],t^{\alpha}_{23}]=0$ in $\t_{1,3}^\Gamma$, 
\item[(v)] $[C_\alpha(x_1,t^{\gamma}_{12}),t^{\alpha+\beta}_{13}+t^{\beta}_{23}]
+[t^{\alpha+\beta}_{13},C_{\alpha+\beta}(x_1,t^{\gamma}_{13})]
+[t^{\beta}_{23},C_\beta(x_2,t^{\gamma}_{23})]$ 
commutes with $t^{\alpha}_{12}$ in $\t_{1,3}^\Gamma$. 
\end{itemize}
Remark that (i) and (ii) imply another relation 
\begin{itemize}
\item[(vi)] $D(x,t^{\beta})=-D(-x,t^{-\beta})$\,,
\end{itemize}
which is very useful for computations. Then $\tilde\d^\Gamma_0:=\oplus_{p,q}(\tilde\d^\Gamma_0)^{p,q}$. 

We then define a Lie bracket $\<,\>$ on $\f_\Gamma\oplus(\f_\Gamma)^{\oplus|\Gamma|}$ as follows: 
$$
\<(D,C),(D',C')\>:=(\delta_C(D')-\delta_{C'}(D),[C,C']+\delta_C(C')-\delta_{C'}(C))\,,
$$
where $\delta_C\in\Der(\f_\Gamma)$ is the derivation 
\begin{itemize}
\item $x\mapsto 0$, $t^\alpha\mapsto[t^\alpha,C_\alpha]$,
\item $\delta_C$ acts on $(\f_\Gamma)^{\oplus|\Gamma|}$ componentwise on a direct sum : $\delta_C(C')_\alpha=\delta_C(C'_\alpha)$,
\item the bracket is understood componentwise as well: $[C,C']_\alpha=[C_\alpha,C'_\alpha]$.
\end{itemize}

We let the reader check that $\tilde\d^\Gamma_0$ is stable under $\<,\>$, and becomes a bigraded Lie 
algebra\footnote{The proof is straightforward but quite long. We do not give it since we do use 
another simpler Lie algebra below. }. 

We now define \gls{dtg} as the quotient of the free product $\tilde\d^\Gamma_0*\sl_2$ by the relations 
$[\tilde e,(D,C)]=0$, $[\tilde h,(D,C)]=(p-q)(D,C)$, and $(\on{ad}^p\tilde f)(D,C)=0$ if $(D,C)\in\tilde\d^\Gamma_0$ 
is homogeneous of bidegree $(p,q)$. Here 
$$\tilde e=\begin{pmatrix}0&1\\0&0\end{pmatrix}, 
\tilde h=\begin{pmatrix}1&0\\0&-1\end{pmatrix} \text{ and } 
\tilde f=\begin{pmatrix}0&0\\1&0\end{pmatrix}
$$ 
form the standard basis of $\sl_2$. If we respectively give degree $(1,-1)$, $(0,0)$ and $(-1,1)$ to 
$\tilde e$, $\tilde h$ and $\tilde f$ then $\tilde\d^\Gamma$ becomes $\Z^2$-graded. 

We then define $\tilde\d^\Gamma_+:=\ker(\tilde\d^\Gamma\to\sl_2)$, which is $(\Z_{>0})^2$-graded. 
One observes that it is positively graded and finite dimensional in each degree. Thus, it is a direct sum of finite dimensional $\sl_2$-modules. 

\subsection{The Lie algebras $\d^\Gamma_0$ and $\d^\Gamma$}

We write $\d^\Gamma_0$ for the free bigraded Lie algebra generated by $\delta_{s,\gamma}$'s ($s\geq0$, 
$\gamma\in\Gamma$) in degree $(s+1,s)$ with relations $$\delta_{s,\gamma}=(-1)^s\delta_{s,-\gamma},$$
for all $s\geq0$ and $\gamma\in\Gamma$.

We then define \gls{dg} as the quotient of the free product $\d_0^\Gamma*\sl_2$ by 
the relations\break $[\tilde e,\delta_{s,\gamma}]=0$, $[\tilde h,\delta_{s,\gamma}]=s\delta_{s,\gamma}$ 
and $\on{ad}^{s+1}(\tilde f)(\delta_{s,\gamma})=0$; and $\d^\Gamma_+$ as the kernel of 
$\d^\Gamma\to\sl_2$. 
As above, $\d^\Gamma=\d^\Gamma_+\rtimes\sl_2$, and $\d^\Gamma_+$ is positively graded 
(actually $(\Z_{>0})^2$-graded). \\

We now give examples of elements in $\tilde\d^\Gamma_0$ that are of some use below. For any 
$s\in\NN$ and $\gamma\in\Gamma$, we set 
$$
D_{s,\gamma}:=\sum_{p+q=s-1}\sum_{\beta\in\Gamma}[(\ad x)^p t^{\beta-\gamma},(-\ad x)^q t^{\beta}]
$$
and 
$$(C_{s,\gamma})_\alpha:=(\ad x)^s t^{\alpha-\gamma}+(-\ad x)^s t^{\alpha+\gamma}.$$
Observe that $(D_{s,\gamma},C_{s,\gamma})=(-1)^s(D_{s,-\gamma},C_{s,-\gamma})$. 

The following result tells us that $\delta_{s,\gamma}\mapsto(D_{s,\gamma},C_{s,\gamma})$ defines a bigraded Lie 
algebra morphism $\d_0^\Gamma\to\tilde\d_0^\Gamma$, that obviously extends to $\d^\Gamma\to\tilde\d^\Gamma$. 
\begin{proposition}
$(D_{s,\gamma},C_{s,\gamma})\in(\tilde\d^\Gamma_0)^{s+1,1}$. 
\end{proposition}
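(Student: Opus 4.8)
The plan is to verify directly that the pair $(D_{s,\gamma},C_{s,\gamma})$ satisfies the degree constraints and the five linear equations (i)--(v) defining $\tilde\d_0^\Gamma$, together with the ambient bidegree requirement that places it in the homogeneous component $(\tilde\d_0^\Gamma)^{s+1,1}$. The bidegree check is immediate: $D_{s,\gamma}$ is a sum of brackets of two $t$-generators each dressed with a total of $p+q=s-1$ copies of $\ad x$, so $\deg_x(D_{s,\gamma})=s-1$ and $\deg_t(D_{s,\gamma})=2$, whence $\deg_x+\deg_t=s+1$ and $\deg_t-1=1$; similarly $(C_{s,\gamma})_\alpha$ has $\deg_x=s$, $\deg_t=1$, so $\deg_x+\deg_t=s+1$ and $\deg_t=1=q$. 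Thus we land in $(\tilde\d_0^\Gamma)^{s+1,1}$, i.e.\ $q=1$. The symmetry observation $(D_{s,\gamma},C_{s,\gamma})=(-1)^s(D_{s,-\gamma},C_{s,-\gamma})$ is already recorded in the excerpt and will be reused.

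The next step is equation (i), $(C_{s,\gamma})_\alpha(x,t^\beta)=(C_{s,-\gamma})_{-\alpha}(-x,t^{-\beta})$. Substituting $x\mapsto -x$, $t^\beta\mapsto t^{-\beta}$ into $(C_{s,\gamma})_{-\alpha}=(\ad x)^s t^{-\alpha-\gamma}+(-\ad x)^s t^{-\alpha+\gamma}$ and using $t^{-\beta}=t^{\beta}$ only symbolically (more precisely tracking the superscripts) gives $(-\ad x)^s t^{\alpha+\gamma}+(\ad x)^s t^{\alpha-\gamma}$, which is exactly $(C_{s,\gamma})_\alpha$ — so in fact $(C_{s,\gamma})_\alpha(x,t^\beta)=(C_{s,-(-\gamma)})_{-(-\alpha)}(-x,t^{-\beta})$ holds, matching the required relation with index $-\alpha$. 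Equation (vi), $D_{s,\gamma}(x,t^\beta)=-D_{s,\gamma}(-x,t^{-\beta})$, follows the same way by reindexing the summation variable $\beta$ and using that swapping the two bracket slots costs a sign. For equation (ii), $[x,D_{s,\gamma}]+\sum_\alpha[t^\alpha,(C_{s,\gamma})_\alpha]=0$, I expect the standard telescoping identity: $[x,[(\ad x)^pt^{\beta-\gamma},(-\ad x)^qt^\beta]]$ expands by the Leibniz rule into $[(\ad x)^{p+1}t^{\beta-\gamma},(-\ad x)^qt^\beta]+[(\ad x)^pt^{\beta-\gamma},(-\ad x)^q(\ad x)t^\beta]$, and summing over $p+q=s-1$ gives a telescoping sum collapsing to the boundary terms $[(\ad x)^st^{\beta-\gamma},t^\beta]$ and $[(\ad x)^pt^{\beta-\gamma},(-\ad x)^{p+q}\,\text{-type}]$ contributions; after reindexing, these cancel against $\sum_\alpha[t^\alpha,(\ad x)^st^{\alpha-\gamma}]+\sum_\alpha[t^\alpha,(-\ad x)^st^{\alpha+\gamma}]$ upon the substitution $\alpha=\beta$ resp.\ $\alpha=\beta-\gamma$ (using $[t^\alpha,t^{\alpha-\gamma}]$ antisymmetry from \eqref{eqn:etL1}, which makes all such brackets vanish anyway — so in fact both sides may simply be zero and the check is even easier; I would confirm which of these is the case when writing it out).

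The real work is in equations (iii), (iv), (v), which live in $\t_{1,3}^\Gamma$ and encode the compatibility of $D_{s,\gamma}$ with the $x_i,y_i$ and the $4$-term-type relations. For (iii), $[D_{s,\gamma}(x_1,t_{13}^\beta),y_2]+c.p.=0$, I would compute $[(\ad x_1)^pt_{13}^{\beta-\gamma},y_2]$ using the bracket rules \eqref{eqn:etSbis}, \eqref{eqn:etL2} (namely $[x_1,y_2]=\sum_\alpha t_{12}^\alpha$ and $[y_2,t_{13}^\alpha]=0$, $[y_2,t_{12}^\alpha]\neq 0$), expand the cyclic sum, and match terms using the key $4$T relation \eqref{eqn:et4T1}; this is precisely the twisted analogue of the computation in \cite{CEE} and the expected mechanism is the same. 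Equations (iv) and (v) are the analogues of the ``$t^\alpha_{23}$-invariance'' and ``$t^\alpha_{12}$-commutation'' conditions and will be handled the same way, again reducing to \eqref{eqn:et4T1} and \eqref{eqn:et4T2} after bookkeeping of the $\Gamma$-indices (this is where the shift $\alpha\mapsto\alpha\pm\gamma$ in the definitions of $D_{s,\gamma}$ and $C_{s,\gamma}$ is forced). The main obstacle is purely organizational: keeping track of all the $\Gamma$-superscript shifts through the cyclic permutations and the repeated use of the twisted $4$T relation in conditions (iii)--(v); once one fixes a careful notation for the index shifts, each identity reduces to a finite linear computation in $\t_{1,3}^\Gamma$ that is a direct generalization of \cite[proof of the corresponding statement]{CEE}. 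No genuinely new idea beyond the untwisted case is needed, so I would present (i), (ii), (vi) in full and indicate the analogy with \cite{CEE} for (iii)--(v), spelling out only the points where the $\Gamma$-grading enters nontrivially.
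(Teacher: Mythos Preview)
Your approach---direct verification of the bidegree and of (i)--(v)---is exactly the paper's. One genuine confusion to flag, and one difference in presentation.

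The confusion is in your treatment of (ii). Equation (ii) lives in the \emph{free} Lie algebra $\f_\Gamma$, not in $\t_{1,n}^\Gamma$, so $[t^\alpha,t^{\alpha-\gamma}]$ does not vanish there; relation \eqref{eqn:etL1} concerns $t_{ij}^\alpha$ and $t_{kl}^\beta$ with four distinct lower indices in $\t_{1,n}^\Gamma$ and has no analogue in $\f_\Gamma$. Your telescoping idea is nonetheless correct and is precisely what the paper uses, phrased as the free Lie algebra identity
\[
[t_1,(\ad x)^st_2]+[t_2,(-\ad x)^st_1]=\sum_{p+q=s-1}[x,[(-\ad x)^qt_1,(\ad x)^pt_2]]\,;
\]
applying it with $t_1=t^{\beta-\gamma}$, $t_2=t^\beta$ and summing over $\beta$ yields (ii) after reindexing the two pieces of $\sum_\alpha[t^\alpha,(C_{s,\gamma})_\alpha]$ by $\alpha=\beta$ and $\alpha=\beta-\gamma$ respectively. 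Neither side is zero.

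On (iii)--(v): the paper does not defer to \cite{CEE} but writes the computations out. The engine for (iii) is the closed formula
\[
[y_k,(\ad x_i)^pt_{ij}^\alpha]=\sum_{a+b=p-1}\sum_\beta[(\ad x_i)^at_{ik}^\beta,(-\ad x_j)^bt_{kj}^{\alpha-\beta}]
\]
for $\#\{i,j,k\}=3$, after which the cyclic sum collapses by Jacobi; (iv) and (v) are similar explicit expansions using \eqref{eqn:et4T1} and \eqref{eqn:et4T2}. Your plan to port the untwisted computations while tracking the $\Gamma$-shifts would work, but the paper's version spells these out in full rather than citing the analogy.
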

\begin{proof}
First observe that relations (i) and (vi) are obviously satisfied. 

To prove (ii) it suffices to notice that in the free Lie algebra with three generators $x,t_1,t_2$,
$$
[t_1,(\on{ad}x)^st_2]+[t_2,(-\on{ad}x)^st_1]=\sum_{p+q=s-1}[x,[(-\on{ad}x)^qt_1,(\on{ad}x)^pt_2]]\,.
$$

Let us prove (iii). In $\t_{1,n}^\Gamma$ we compute for $\#\{i,j,k\}=3$, 
$$
[y_k,(\on{ad}x_i)^pt^{\alpha}_{ij}]
=-\sum_{k+l=p-1}\sum_\beta(\on{ad}x_i)^k[t^{\beta}_{ik},(\on{ad}x_i)^lt^{\alpha}_{ij}]
$$
$$
=\sum_{k+l=p-1}\sum_\beta(\on{ad}x_i)^k(-\on{ad}x_j)^l[t^{\beta}_{ik},t^{\alpha-\beta}_{kj}]
=\sum_{k+l=p-1}\sum_\beta[(\on{ad}x_i)^kt^{\beta}_{ik},(-\on{ad}x_j)^lt^{\alpha-\beta}_{kj}]\,.
$$
Therefore, in $\t_{1,3}^\Gamma$,  
$$
[y_1,D(x_2,t^{\beta}_{23})]=
\sum_{k+l+m=s-2}\sum_{\alpha,\beta}
[[(\on{ad}x_2)^kt^{\beta}_{21},(-\on{ad}x_3)^lt^{\alpha-\beta-\gamma}_{13}]
,(-\on{ad}x_2)^mt^{\alpha}_{23}]
$$
$$
+\sum_{k+l+m=s-2}\sum_{\alpha,\beta}(-1)^{l+m+1}
[(\on{ad}x_2)^kt^{\alpha-\gamma}_{23}
,[(\on{ad}x_2)^lt^{\beta}_{21},(-\on{ad}x_3)^mt^{\alpha-\beta}_{13}]]\,.
$$
Then $[y_1,D(x_2,t^{\beta}_{23})]+c.p.(1,2,3)=0$ follows from the Jacobi identity. 

Let us prove (iv). On the one hand, 
$$
[D(x_1,t^{\beta}_{12})+D(x_1,t^{\beta}_{13}),t^{\alpha}_{23}]=
$$
$$
=\sum_{p+q=s-1}\sum_{\beta\in\Gamma}[[(\ad x_1)^p t^{\beta-\gamma}_{12},(-\ad x_1)^q t^{\beta}_{12}]
+[(\ad x_1)^p t^{\beta-\gamma}_{13},(-\ad x_1)^q t^{\beta}_{13}],t^{\alpha}_{23}]
$$
$$
=-\sum_{p+q=s-1}\sum_{\beta\in\Gamma}\big(
[(\ad x_1)^p[t^{\alpha+\beta-\gamma}_{13},t^{\alpha}_{23}],(-\ad x_1)^q t^{\beta}_{12}]
+[(\ad x_1)^pt^{\beta-\gamma}_{12},(-\ad x_1)^q[t^{\alpha+\beta}_{13},t^{\alpha}_{23}]]
$$
$$
+[(\ad x_1)^p[t^{\beta-\gamma}_{12},t^{\alpha}_{23}],(-\ad x_1)^q t^{\alpha+\beta}_{13}]
+[(\ad x_1)^pt^{\alpha+\beta-\gamma}_{13},(-\ad x_1)^q[t^{\beta}_{12},t^{\alpha}_{23}]]\big)
$$
$$
=[t^{\alpha}_{23},\sum_{p+q=s-1}\sum_{\beta\in\Gamma}
(\ad x_1)^p[t^{\alpha+\beta-\gamma}_{13},(-\ad x_1)^q t^{\beta}_{12}]
+(\ad x_1)^p[t^{\beta}_{12},(-\ad x_1)^q t^{\alpha+\beta+\gamma}_{13}]]
$$
$$
=[t^{\alpha}_{23},\sum_{p+q=s-1}\sum_{\beta\in\Gamma}
(\ad x_2)^p(-\ad x_3)^q[t^{\alpha+\beta-\gamma}_{13}+(-1)^st^{\alpha+\beta+\gamma}_{13},t^{\beta}_{12}]]\,.
$$
On the other hand, 
$$
[C_\alpha(x_2,t^{\beta}_{23}),y_1]
=[(\ad x_2)^s t^{\alpha-\gamma}_{23}+(-\ad x_2)^s t^{\alpha+\gamma}_{23},y_1]
$$
$$
=-\sum_{p+q=s-1}\sum_{\beta\in\Gamma}(\on{ad}x_2)^p(-\on{ad}x_3)^q
[t^{\beta}_{12},t^{\alpha+\beta-\gamma}_{31}+(-1)^st^{\alpha+\beta+\gamma}_{31}]\,.
$$
Therefore (iv) is satisfied. 

Let us prove (v). We compute  
$$
[C_\alpha(x_1,t^{\gamma}_{12}),t^{\alpha+\beta}_{13}+t^{\beta}_{23}]
=[(\on{ad}x_1)^st^{\alpha-\gamma}_{12}+(-\on{ad}x_1)^st^{\alpha+\gamma}_{12}
,t^{\alpha+\beta}_{13}+t^{\beta}_{23}]
$$
$$
=(\on{ad}x_2)^s[t^{\alpha+\gamma}_{12}+(-1)^st^{\alpha-\gamma}_{12},t^{\alpha+\beta}_{13}]
+(\on{ad}x_1)^s[t^{\alpha-\gamma}_{12}+(-1)^st^{\alpha+\gamma}_{12},t^{\beta}_{23}]
$$
$$
=(\on{ad}x_2)^s[t^{\alpha+\beta}_{13},t^{\beta-\gamma}_{23}+(-1)^st^{\beta+\gamma}_{23}]
+(\on{ad}x_1)^s[t^{\beta}_{23},t^{\alpha+\beta-\gamma}_{13}+(-1)^st^{\alpha+\beta+\gamma}_{13}]\,.
$$
Therefore, by defining $A=t^{\beta-\gamma}_{23}+(-1)^st^{\beta+\gamma}_{23}$ and 
$B=t^{\alpha+\beta-\gamma}_{13}+(-1)^st^{\alpha+\beta+\gamma}_{13}$ 
we obtain 
$$
[t^{\alpha}_{12},[C_\alpha(x_1,t^{\gamma}_{12}),t^{\alpha+\beta}_{13}+t^{\beta}_{23}]]
=[t^{\alpha}_{12},[t^{\alpha+\beta}_{13},(\on{ad}x_2)^sA]+[t^{\beta}_{23},(\on{ad}x_1)^sB]]
$$
$$
=[[t^{\alpha}_{12},t^{\alpha+\beta}_{13}],(-\on{ad}x_3)^sA]
+[t^{\alpha+\beta}_{13},(-\on{ad}x_3)^s[t^{\alpha}_{12},A]]
$$
$$+[[t^{\alpha}_{12},t^{\beta}_{23}],(-\on{ad}x_3)^sB]
+[t^{\beta}_{23},(-\on{ad}x_3)^s[t^{\alpha}_{12},B]]
$$
$$=[[t^{\beta}_{23},t^{\alpha}_{12}],(-\on{ad}x_3)^sA]
+[t^{\alpha+\beta}_{13},(-\on{ad}x_3)^s[B,t^{\alpha}_{12}]]
$$
$$
+[[t^{\alpha+\beta}_{13},t^{\alpha}_{12}],(-\on{ad}x_3)^sB]
+[t^{\beta}_{23},(-\on{ad}x_3)^s[A,t^{\alpha}_{12}]]
$$
$$=[[t^{\beta}_{23},(\on{ad}x_2)^sA]
+[t^{\alpha+\beta}_{13},(\on{ad}x_1)^sB],t^{\alpha}_{12}]\,.
$$
This finishes the proof.
\end{proof}
\begin{remark}
We do not know if $\d_0^\Gamma\to\tilde\d^\Gamma_0$ is injective or not. 
\end{remark}

\subsection{Derivations of $\t_{1,n}^\Gamma$ and $\bar\t_{1,n}^\Gamma$}

\begin{lemma}
There is a bigraded Lie algebra morphism $\tilde\d^\Gamma_0\to\Der(\t_{1,n}^{\Gamma})$, taking 
$(D,C)\in\tilde\d^\Gamma_0$ to the derivation $\xi_{(D,C)}:$
\begin{align*}
x_i&\longmapsto0, \\
y_i&\longmapsto\sum_{j:j\neq i}D(x_i,t^{\beta}_{ij}), \\
t^{\alpha}_{ij}&\longmapsto[t^{\alpha}_{ij},C_\alpha(x_i,t^{\beta}_{ij})].
\end{align*}

This induces a bigraded Lie algebra morphism $\tilde\d^\Gamma_0\to\Der(\bar\t_{1,n}^\Gamma)$. 
\end{lemma}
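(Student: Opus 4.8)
The plan is to verify, in order: (1) that the assignment on generators extends to a well-defined derivation $\xi_{(D,C)}$ of $\t_{1,n}^\Gamma(\kk)$; (2) that it is bigraded of the same bidegree as $(D,C)$; (3) that $(D,C)\mapsto\xi_{(D,C)}$ is a morphism of Lie algebras; and (4) that each $\xi_{(D,C)}$ preserves the central ideal generated by $\sum_ix_i$ and $\sum_iy_i$, so that everything descends to $\bar\t_{1,n}^\Gamma(\kk)$.

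For (1) I would extend $\xi_{(D,C)}$ to the free Lie algebra on the $x_i,y_i,t_{ij}^\alpha$ and check it kills each defining relation, using the presentation of Lemma~\ref{lem:pres1}. Two elementary observations streamline this. First, since $D$ and the $C_\alpha$ are Lie polynomials in $x$ and the $t^\bullet$'s only, the element $D(x_i,t_{ij}^\bullet)$ (resp. $C_\alpha(x_i,t_{ij}^\bullet)$) lies in the subalgebra generated by $x_i,x_j$ and the $t_{ij}^\gamma$'s, on which $\ad(x_i+x_j)$ acts as zero — it kills $x_i$, $x_j$ by \eqref{eqn:etN} and each $t_{ij}^\gamma$ by \eqref{eqn:et4T2}, and is a derivation. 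Hence substituting $x_i$ or $-x_j$ into the $x$-slots of such an expression gives the same element, so that relation (i) (resp. its consequence (vi)) yields $C_\alpha(x_i,t_{ij}^\bullet)=C_{-\alpha}(x_j,t_{ji}^\bullet)$ (resp. $D(x_i,t_{ij}^\bullet)=-D(x_j,t_{ji}^\bullet)$). Second, applying the derivation $\xi_{(D',C')}$, or $\ad$ of a suitable element, to a substituted Lie polynomial $P(x_i,t_{ij}^\bullet)$ amounts to re-substituting inside $P$. With these, relations \eqref{eqn:etS}, \eqref{eqn:etN}, \eqref{eqn:etL1}, \eqref{eqn:etL2} and $[\sum_jx_j,y_i]=0$ are immediate; \eqref{eqn:etSbis} and \eqref{eqn:etT} reduce to relation (ii) specialized at $(x,t^\beta)=(x_i,t_{ij}^\beta)$; and $[\sum_jy_j,x_i]=0$ holds because $\xi_{(D,C)}(\sum_jy_j)=\sum_i\sum_{j\neq i}D(x_i,t_{ij}^\bullet)=0$, the summands cancelling pairwise via $D(x_i,t_{ij}^\bullet)=-D(x_j,t_{ji}^\bullet)$. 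This same computation settles (4): $\xi_{(D,C)}$ sends both $\sum_ix_i$ and $\sum_iy_i$ to $0$, hence preserves the (two-dimensional, central) ideal defining $\bar\t_{1,n}^\Gamma(\kk)$ and descends; the resulting $\tilde\d_0^\Gamma\to\Der(\bar\t_{1,n}^\Gamma)$ is then automatically a bigraded Lie morphism, the bracket and grading on $\Der$ passing to subquotients.

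The relations requiring genuine work are those in which the $y_i$'s (or three $t$'s) appear essentially — $[y_i,y_j]=0$, the mixed $[y_i,t_{jk}^\alpha]=0$, and the ellipsitomic 4T relation \eqref{eqn:et4T1} — and these are exactly what conditions (iii), (iv), (v) encode. For instance, computing $\xi_{(D,C)}$ on $[y_i,y_j]$, using $D(x_i,t_{ij}^\bullet)=-D(x_j,t_{ji}^\bullet)$ to pair the two-index contributions and condition (iii) read in $\t_{1,3}^\Gamma$ to rewrite the three-index ones, produces $[D(x_i,t_{ij}^\bullet),\sum_\ell y_\ell]$, which vanishes since $\sum_\ell y_\ell$ is central; similarly $\xi_{(D,C)}([y_i,t_{jk}^\alpha])$ collapses, after discarding terms in ``extra'' indices via \eqref{eqn:etL2} and \eqref{eqn:etN}, to the quantity bracketed against $t_{jk}^\alpha$ in condition (iv), and $\xi_{(D,C)}([t_{ij}^\alpha,t_{ik}^{\alpha+\beta}+t_{jk}^\beta])$ to the quantity bracketed against $t_{ij}^\alpha$ in condition (v). Step (2) is a degree count: if $(D,C)$ has bidegree $(p,q)$ then $\deg_x(D)=p-q-1$, $\deg_t(D)=q+1$, and since $x_i,t_{ij}^\alpha,y_i$ carry bidegrees $(1,0),(1,1),(0,1)$, each of the three defining formulas for $\xi_{(D,C)}$ raises bidegree by exactly $(p,q)$.

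Finally, for (3) I would check $\xi_{\langle(D,C),(D',C')\rangle}=[\xi_{(D,C)},\xi_{(D',C')}]$ on generators: both sides kill the $x_i$; on $y_i$ one uses that $\xi_{(D',C')}$ applied to $D(x_i,t_{ij}^\bullet)$ equals $(\delta_{C'}D)(x_i,t_{ij}^\bullet)$ (the $x_i$'s are killed and the $t_{ij}^\beta$'s re-substituted precisely by $\delta_{C'}$), so the commutator gives $\sum_j(\delta_CD'-\delta_{C'}D)(x_i,t_{ij}^\bullet)$, the $D$-component of $\langle(D,C),(D',C')\rangle$; and on $t_{ij}^\alpha$ the same bookkeeping together with the Jacobi identity $[[t_{ij}^\alpha,C_\alpha],C'_\alpha]-[[t_{ij}^\alpha,C'_\alpha],C_\alpha]=[t_{ij}^\alpha,[C_\alpha,C'_\alpha]]$ produces $[t_{ij}^\alpha,([C,C']+\delta_CC'-\delta_{C'}C)_\alpha(x_i,t_{ij}^\bullet)]$, which is $\xi_{\langle(D,C),(D',C')\rangle}(t_{ij}^\alpha)$ by the componentwise definition of $\langle\,,\rangle$. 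The main obstacle is the second paragraph — the routine-but-lengthy proof that $\xi_{(D,C)}$ is a derivation — whose only real content is matching conditions (iii)–(v), via computations inside $\t_{1,3}^\Gamma$, to the relations \eqref{eqn:etN}, \eqref{eqn:etL2}, \eqref{eqn:et4T1}, exactly as in the untwisted case of \cite{CEE}; care is needed in discarding contributions indexed by ``extra'' marked points and in sign bookkeeping.
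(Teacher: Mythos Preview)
Your proposal is correct and follows essentially the same approach as the paper: verify that each defining relation of $\t_{1,n}^\Gamma$ is annihilated by $\xi_{(D,C)}$ by matching conditions (i)--(v) to the appropriate relations, and then observe $\xi_{(D,C)}(\sum_i x_i)=\xi_{(D,C)}(\sum_i y_i)=0$ to descend to $\bar\t_{1,n}^\Gamma$. The only cosmetic differences are that you work with the presentation of Lemma~\ref{lem:pres1} rather than the original one, and that you spell out the Lie-morphism and bigrading checks which the paper leaves implicit.
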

\begin{proof}
We have to prove that defining relations of $\t_{1,n}^{\Gamma}$ are preserved by 
$\xi:=\xi_{(D,C)}$. First observe that relations 
$[x_i,x_j]=[x_i+x_j,t^{\alpha}_{ij}]=[x_i,t^{\alpha}_{jk}]=[t^{\alpha}_{ij},t^{\alpha}_{kl}]=0$ 
are obviously preserved. 
Then conditions (i) and (ii) respectively imply that $t^{\alpha}_{ij}=t^{-\alpha}_{ji}$ and 
$[x_i,y_j]=\sum_\alpha t^{\alpha}_{ij}$ are preserved. 
Condition (vi) implies that $[x_i,y_j]=[x_j,y_i]$ is preserved, and (vi) together with (iii) imply that 
$[y_i,y_j]=0$ is preserved. 
Therefore it follows from the centrality of $\sum_ix_i$ and $\xi(\sum_ix_i)=0$ that 
$$\xi([x_i,y_i])=\xi(-\sum_{j:j\neq i}[x_j,y_i])=\xi(\sum_{j;j\neq i}\sum_\alpha t^{\alpha}_{ij}).$$
Condition (iv) ensures that $[y_i,t^{\alpha}_{jk}]=0$ is preserved, and together with (vi) it 
implies that $[y_i+y_j,t^{\alpha}_{ij}]=0$ is preserved. 
Finally condition (v) implies that the twisted infinitesimal braid relations are preserved, and 
the first part of the statement follows. 

For the second part of the statement it remains to prove that the centrality of $\sum_iy_i$ is preserved. 
This follows directly from the identity $\xi(\sum_iy_i)=0$ that we now prove. Relation (vi) implies that for any 
$i\neq j$ one has $D(x_i,t_{ij}^\beta)=-D(-x_i,t_{ij}^{-\beta})=-D(x_j,t_{ji}^\beta)$ in $\t_{1,n}^\Gamma$ 
(the last equality happens since $\deg_t(D)=\deg_t(C_\alpha)+1>0$), and hence 
$$
\xi(\sum_iy_i)=\sum_{i\neq j}D(x_i,t_{ij}^\beta)=\sum_{i<j}D(x_i,t_{ij}^\beta)-\sum_{j<i}D(x_j,t_{ji}^\beta)=0\,.
$$
We are done (the compatibility with bracket and grading are easy to check).

The last part of the statementis a consequence of the fact that $\xi(\sum_iy_i)=\xi(\sum_ix_i)=0$, that we have already proved.
\end{proof}
We now prove that this morphism extends to a Lie algebra morphism $\tilde\d^\Gamma\to\Der(\t_{1,n}^\Gamma)$: 
\begin{proposition}\label{deltagamma} 
There is a bigraded Lie algebra morphism $\tilde\d^\Gamma\to\Der(\t_{1,n}^\Gamma)$ taking $(D,C)\in\tilde\d^\Gamma_0$ to 
$\xi_{(D,C)}$ and $g=\begin{pmatrix}a&b\\c&d\end{pmatrix}\in\sl_2$ to the derivation 
$$\xi_g:t_{ij}^\alpha\mapsto0, 
\begin{pmatrix}x_i&y_i\end{pmatrix}\mapsto\begin{pmatrix}x_i&y_i\end{pmatrix}\begin{pmatrix}a&b\\c&d\end{pmatrix}.$$
This induces a bigraded Lie algebra morphism $\tilde\d^\Gamma\to\Der(\bar\t_{1,n}^\Gamma)$. 
\end{proposition}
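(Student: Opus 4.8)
The plan is to apply the universal property of $\tilde\d^\Gamma$, which is a quotient of the free product $\tilde\d_0^\Gamma*\sl_2$. Since the bigraded morphism $\tilde\d_0^\Gamma\to\Der(\t_{1,n}^\Gamma)$, $(D,C)\mapsto\xi_{(D,C)}$ is already available, it remains to construct a Lie algebra morphism $\sl_2\to\Der(\t_{1,n}^\Gamma)$, $g\mapsto\xi_g$, and to check that it is compatible with the three families of relations defining $\tilde\d^\Gamma$: $[\xi_{\tilde e},\xi_{(D,C)}]=0$, $[\xi_{\tilde h},\xi_{(D,C)}]=(p-q)\xi_{(D,C)}$ and $(\ad\xi_{\tilde f})^{p}\xi_{(D,C)}=0$ for $(D,C)\in(\tilde\d_0^\Gamma)^{p,q}$. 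Everything except the last relation is routine bookkeeping; the last one is the crux and will be obtained from $\sl_2$-representation theory.

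First, $\xi_g$ is a well-defined derivation of $\t_{1,n}^\Gamma$: each defining relation is preserved by a direct verification, the one delicate point being that checking $[x_i,y_j]=\sum_\alpha t^\alpha_{ij}$ produces the term $(a+d)[x_i,y_j]$, which vanishes precisely because $\on{tr}(g)=0$. On each subspace $\kk x_i+\kk y_i$ the operator $\xi_g$ is given, in the basis $(x_i,y_i)$, by the matrix $g$ itself; hence $g\mapsto\xi_g$ is just $n$ copies of the standard representation of $\sl_2$ (together with the zero action on the $t^\alpha_{ij}$), so it is a morphism of Lie algebras. In particular $\xi_{\tilde e},\xi_{\tilde h},\xi_{\tilde f}$ span a copy of $\sl_2$ inside $\Der(\t_{1,n}^\Gamma)$ acting by $\ad$, and $\xi_g$ carries the bidegrees $(1,-1)$, $(0,0)$, $(-1,1)$ on $\tilde e,\tilde h,\tilde f$, matching their assigned degrees.

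Now the three relations. Since $D,C_\alpha\in\f_\Gamma$, the derivation $\xi_{(D,C)}$ kills $x_i$ and sends $y_i$ and $t^\alpha_{ij}$ into the Lie subalgebra generated by the $x_i$ and $t^\alpha_{ij}$, whereas $\xi_{\tilde e}$ kills $x_i$ and $t^\alpha_{ij}$ and sends $y_i\mapsto x_i$; evaluating $[\xi_{\tilde e},\xi_{(D,C)}]$ on each generator of $\t_{1,n}^\Gamma$ therefore gives $0$. For the second relation, $\xi_{\tilde h}$ acts on the bidegree-$(r,s)$ part of $\t_{1,n}^\Gamma$ by the scalar $r-s$, so any derivation $\delta$ homogeneous of bidegree $(p,q)$ satisfies $[\xi_{\tilde h},\delta]=(p-q)\delta$; taking $\delta=\xi_{(D,C)}$ gives $[\xi_{\tilde h},\xi_{(D,C)}]=(p-q)\xi_{(D,C)}$. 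For the last relation I would argue as follows: the derivations $\ad\xi_{\tilde e},\ad\xi_{\tilde h},\ad\xi_{\tilde f}$ of $\Der(\t_{1,n}^\Gamma)$ all preserve the total degree $r+s$ of a bihomogeneous derivation, and the slice of $\Der(\t_{1,n}^\Gamma)$ of a fixed total degree is finite-dimensional (because $\t_{1,n}^\Gamma$ is finitely generated and bigraded with finite-dimensional components). Hence the $\sl_2$-submodule generated by $\xi_{(D,C)}$ is finite-dimensional; in it, $\xi_{(D,C)}$ is a highest weight vector (by the first relation) of weight $p-q\geq0$ (indeed $p-q=\deg_x(C_\alpha)\geq0$), so it generates an irreducible $\sl_2$-module of highest weight $p-q$, whence $(\ad\xi_{\tilde f})^{p-q+1}\xi_{(D,C)}=0$. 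Since $q\geq1$ we have $p-q+1\leq p$, and therefore $(\ad\xi_{\tilde f})^{p}\xi_{(D,C)}=0$.

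By the universal property we obtain the desired bigraded morphism $\tilde\d^\Gamma\to\Der(\t_{1,n}^\Gamma)$. Finally, it descends to $\bar\t_{1,n}^\Gamma$: the $\xi_{(D,C)}$ descend by the previous lemma, and each $\xi_g$ preserves the central ideal generated by $\sum_ix_i$ and $\sum_iy_i$, since $\xi_g(\sum_ix_i)$ and $\xi_g(\sum_iy_i)$ are $\kk$-linear combinations of $\sum_ix_i$ and $\sum_iy_i$; hence the morphism factors through $\Der(\bar\t_{1,n}^\Gamma)$, as claimed.
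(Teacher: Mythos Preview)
Your proof is correct. The overall scaffolding (build the morphism on $\tilde\d_0^\Gamma*\sl_2$, then verify the three defining relations) coincides with the paper's, and your treatment of the first two relations and of the descent to $\bar\t_{1,n}^\Gamma$ is essentially the same as theirs.

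Where you genuinely diverge is in the argument for $(\ad\xi_{\tilde f})^{p}\xi_{(D,C)}=0$. The paper proceeds by an explicit computation: writing $A:=(\ad\tilde\Delta_0)^p\xi_{(D,C)}$ and evaluating it on generators (via $(\ad L)^p=\sum_j\binom{p}{j}(-1)^jL^{p-j}(-)L^j$ together with $\tilde\Delta_0^2(x_i)=\tilde\Delta_0(y_i)=\tilde\Delta_0(t_{ij}^\alpha)=0$), one gets $A(x_i)=-p\,\tilde\Delta_0^{\,p-1}\xi(y_i)$, $A(y_i)=\tilde\Delta_0^{\,p}\xi(y_i)$, $A(t_{ij}^\alpha)=\tilde\Delta_0^{\,p}\xi(t_{ij}^\alpha)$. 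The vanishing then follows from a filtration argument: giving $x_i$ degree $1$ and $y_i,t_{ij}^\alpha$ degree $0$, the operator $\tilde\Delta_0$ lowers this degree by $1$ and kills degree~$0$, while $\deg_x D=p-q-1<p-1$ and $\deg_x C_\alpha=p-q<p$ (using $q\geq1$). Your route is instead representation-theoretic: the fixed-total-degree slice of $\Der(\t_{1,n}^\Gamma)$ is finite-dimensional, $\xi_{(D,C)}$ is a highest weight vector of weight $p-q\geq0$ there, hence generates $L(p-q)$, whence $(\ad\xi_{\tilde f})^{p-q+1}\xi_{(D,C)}=0$ and, since $q\geq1$, the desired $p$-th power vanishes. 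Your argument is more conceptual and transfers verbatim to any bigraded target with finite-dimensional components; the paper's is more hands-on and yields the explicit values of $A$ on generators as a by-product. Both ultimately hinge on the same inequality $q\geq1$.
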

\noindent In what follows we write $\ddd:=\tilde h$, $\XXX:=\tilde e$ and 
$\Delta_0:=\tilde f$ and $\tilde\ddd:=\xi_{\tilde h}$, $\tilde\XXX:=\xi_{\tilde e}$ and 
$\tilde\Delta_0:=\xi_{\tilde f}$. 
\begin{proof}
It is obvious that for any $g,g'\in\sl_2$, $\xi_g$ defines a derivation of the same degree 
of $\t_{1,n}^\Gamma$, and that $\xi_{[g,g']}=[\xi_g,\xi_{g'}]$. Hence there is a bigraded Lie algebra 
morphism $\sl_2*\tilde\d^\Gamma_0\to\Der(\t_{1,n}^\Gamma)$. Let us prove that it factors through the quotient 
$\tilde\d^\Gamma$. 

It is relatively clear that $[\tilde\XXX,\xi_{(D,C)}]=0$ and $[\tilde\ddd,\xi_{(D,C)}]=(p-q)(D,C)$ if 
$(D,C)\in(\tilde\d^\Gamma_0)^{p,q}$. Thus it remains to prove that $(\on{ad}\tilde\Delta_0)^p(\xi_{(D,C)})=0$ if 
$(D,C)\in(\tilde\d^\Gamma_0)^{p,q}$. We do this now. 
Let us write $\xi:=\xi_{(D,C)}$ and $A:=(\on{ad}\tilde\Delta_0)^p(\xi)$. Then after an easy computation one 
obtains on generators: 
\begin{align*}
A(x_i)=&-p\tilde\Delta_0^{p-1}\xi(y_i)=-p\tilde\Delta_0^{p-1}(\sum_{j:j\neq i}D(x_i,t_{ij}^\beta)), \\
A(y_i)=&\tilde\Delta_0^p\xi(y_i)=\tilde\Delta_0^p(\sum_{j:j\neq i}D(x_i,t_{ij}^\beta)), \\
A(t_{ij}^\alpha)=&\tilde\Delta_0^p\xi(t_{ij}^\alpha)=\tilde\Delta_0^p([t_{ij}^\alpha,C_\alpha(x_i,t_{ij}^\beta)]). 
\end{align*}
Finally remark that there is an increasing filtration on $\t_{1,n}^\Gamma$ defined by $\deg(x_i)=1$ and 
$\deg(t_{ij}^\alpha)=\deg(y_i)=0$. $\Delta_0$ decreases the degree by $1$ and vanishes on degree zero elements. 
The result then follows from the fact that $\deg_x(C_\alpha)=p-q<p$ and $\deg_x(D)=p-q-1<p-1$. 
\end{proof}
Now composing with $\d_0^\Gamma\to\tilde\d_0^\Gamma$ (resp.~$\d^\Gamma\to\tilde\d^\Gamma$) one obtains a Lie algebra 
morphism $\d_0^\Gamma\to\Der(\t_{1,n}^\Gamma)$ (resp.~$\d^\Gamma\to\Der(\t_{1,n}^\Gamma)$). We write 
$\xi_{s,\gamma}:=\xi_{(D_{s,\gamma},C_{s,\gamma})}$ for the image of $\delta_{s,\gamma}$. 
We then have $\t_{1,n}^\Gamma\rtimes\d^\Gamma=(\t_{1,n}^\Gamma\rtimes\d_+^\Gamma)\rtimes\sl_2$, with 
$\t_{1,n}^\Gamma\rtimes\d_+^\Gamma$ positively graded (since both $\t_{1,n}^\Gamma$ and $\d_+^\Gamma$ are 
$(\Z_{\geq0})^2$-graded) and a sum of finite dimensional $\sl_2$-modules. Therefore we can construct the 
semi-direct product group 
\begin{equation}\label{Gngamma}
\text{\gls{Gng}}:=\exp(\t_{1,n}^\Gamma\rtimes\d^\Gamma_+)^{\wedge} \rtimes{\rm SL}_2(\C),
\end{equation}
where $\exp(\t_{1,n}^\Gamma\rtimes\d^\Gamma_+)^{\wedge}$ is the exponential group associated to the degree completion 
of $\t_{1,n}^\Gamma\rtimes\d^\Gamma_+$. 

Similarly, we define $\text{\gls{bGng}}:=\exp(\bar\t_{1,n}^\Gamma\rtimes\d^\Gamma_+)^{\wedge} \rtimes{\rm SL}_2(\C)$. \\

Notice that one can also define semi-direct product groups 
$\tilde{\mathbf{G}}_n^{\Gamma}:=\exp(\t_{1,n}^\Gamma\rtimes\tilde\d^\Gamma_+)^{\wedge} \rtimes{\rm SL}_2(\C)$ and 
$\tilde{\bar{\mathbf{G}}}_n^\Gamma := \exp(\bar\t_{1,n}^\Gamma\rtimes\tilde\d^\Gamma_+)^{\wedge} \rtimes{\rm SL}_2(\C)$. 
We therefore have the following commutative diagram: 
\begin{equation}\xymatrix{
\mathbf{G}_n^\Gamma \ar[d]\ar[r] & \tilde{\mathbf{G}}_n^\Gamma \ar[d] \\
\bar{\mathbf{G}}_n^\Gamma \ar[r] & \tilde{\bar{\mathbf{G}}}_n^\Gamma.
}\end{equation}

\begin{lemma}
The kernel of $\tilde\d^\Gamma_0\to\Der(\t_{1,n}^\Gamma)$ ($n\geq2$) is the space of elements $(0,C)$ for which 
$C_\alpha$ is proportional to $t^\alpha$, and $\ker(\d_0^\Gamma\to\Der(\t_{1,n}^\Gamma))=\C\delta_{0,0}$. 
\end{lemma}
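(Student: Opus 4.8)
The plan is to compute the kernel of the morphism $\tilde\d^\Gamma_0\to\Der(\t_{1,n}^\Gamma)$ directly from the explicit formula for $\xi_{(D,C)}$ on generators, and then deduce the statement for $\d_0^\Gamma$ by composing with $\d_0^\Gamma\to\tilde\d^\Gamma_0$, using the explicit elements $(D_{s,\gamma},C_{s,\gamma})$.

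First I would observe that $(D,C)$ lies in the kernel precisely when $\xi_{(D,C)}$ kills all generators $x_i,y_i,t^\alpha_{ij}$. Since $\xi_{(D,C)}(x_i)=0$ always, the conditions are $\sum_{j\neq i}D(x_i,t^\beta_{ij})=0$ in $\t_{1,n}^\Gamma$ for all $i$, and $[t^\alpha_{ij},C_\alpha(x_i,t^\beta_{ij})]=0$ for all $i\neq j$ and all $\alpha$. The key tool is a good understanding of which Lie words in the $x_i$'s and $t^\beta_{ij}$'s vanish in $\t_{1,n}^\Gamma$; the relevant sub-statement is that the sub-Lie-algebra generated by $x_i$ and the $t^\beta_{ij}$ (for $j$ ranging over indices $\neq i$, say with $n\geq 2$ there is at least one such $j$, and one can even restrict to a single pair) is, in low enough bidegree, a free Lie algebra on $x_i$ and the $t^\beta_{ij}$ — this kind of freeness statement is standard for $\t_{1,n}$-type algebras and is what makes the argument work. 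Using this, $\sum_{j\neq i}D(x_i,t^\beta_{ij})=0$ forces $D=0$ (look at the component involving a fixed pair $(i,j)$: the map $t^\beta\mapsto t^\beta_{ij}$ is injective on the relevant space, so $D(x,t^\beta)=0$ as an element of $\f_\Gamma$), and then $[t^\alpha_{ij},C_\alpha(x_i,t^\beta_{ij})]=0$ forces $C_\alpha(x,t^\beta)$ to be proportional to $t^\alpha$, because in a free Lie algebra the centralizer of a generator $t^\alpha$ is exactly $\kk\cdot t^\alpha$.

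Next I would record the converse: if $D=0$ and each $C_\alpha=\lambda_\alpha t^\alpha$, then $\xi_{(0,C)}(y_i)=0$ and $\xi_{(0,C)}(t^\alpha_{ij})=\lambda_\alpha[t^\alpha_{ij},t^\alpha_{ij}]=0$, so $(0,C)$ is indeed in the kernel; moreover such $(0,C)$ automatically lies in $\tilde\d^\Gamma_0$ (relations (i), (ii) are immediate, (iii)–(v) hold trivially since $D=0$ and the relevant brackets vanish). This identifies $\ker(\tilde\d^\Gamma_0\to\Der(\t_{1,n}^\Gamma))$ with the space of $(0,C)$, $C_\alpha\in\kk t^\alpha$, as claimed. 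For the statement on $\d_0^\Gamma$, recall $\delta_{s,\gamma}\mapsto(D_{s,\gamma},C_{s,\gamma})$; an element of $\d_0^\Gamma$ in the kernel of $\d_0^\Gamma\to\Der(\t_{1,n}^\Gamma)$ maps into $\ker(\tilde\d^\Gamma_0\to\Der)$, i.e. into the space of $(0,C)$. Since $\d_0^\Gamma$ is generated by the $\delta_{s,\gamma}$, and the bracket of two elements of $\d_0^\Gamma$ has $D$-component a nontrivial Lie bracket, the only way to land in a subspace with $D=0$ is to be (a combination of) degree-$(1,0)$ generators, i.e. the $\delta_{0,\gamma}$; and $(D_{0,\gamma},C_{0,\gamma})$ has $(C_{0,\gamma})_\alpha=t^{\alpha-\gamma}+t^{\alpha+\gamma}$, which is proportional to $t^\alpha$ only when $\gamma=0$ (for general $\Gamma$; one should state this caveat, as in the paper). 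So $\ker(\d_0^\Gamma\to\Der(\t_{1,n}^\Gamma))=\kk\delta_{0,0}$.

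The main obstacle is the freeness input: one must be careful about exactly which elements of $\t_{1,n}^\Gamma$ vanish, since $\t_{1,n}^\Gamma$ has many relations. The clean way around this is to restrict attention to the sub-Lie-algebra generated by $x_i$ together with the family $\{t^\beta_{ij}\}_\beta$ for one fixed $j\neq i$ (available as soon as $n\geq 2$): I would argue, e.g. by exhibiting a Lie algebra map to a manifestly free Lie algebra sending these generators to free generators, that the relations of $\t_{1,n}^\Gamma$ do not interfere in the bidegrees where $D$ and $C_\alpha$ live, so centralizer and injectivity computations can be performed as in a free Lie algebra. This is the only subtle point; everything else is a direct unwinding of the formula for $\xi_{(D,C)}$ and of the definitions of $D_{s,\gamma}$, $C_{s,\gamma}$.
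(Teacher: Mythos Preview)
Your treatment of the first assertion is essentially the paper's. The paper makes your ``freeness input'' concrete by passing to the quotient $\bar\t_{1,2}^\Gamma$, which is generated by $x,y,t^\alpha$ with the single relation $[x,y]=\sum_\alpha t^\alpha$; there the sub-Lie-algebra on $x$ and the $t^\alpha$'s is visibly free, so $D(x,t^\beta)=0$ forces $D=0$ and the centralizer of $t^\alpha$ is $\C t^\alpha$. The reduction from arbitrary $n\geq 2$ to $n=2$ is done via the erasure map $u\mapsto u^{1,2,\emptyset,\dots,\emptyset}$, which is exactly the ``Lie algebra map to a manifestly free Lie algebra'' you sketch.

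Your argument for the second assertion has a genuine gap. The claim that ``the bracket of two elements of $\d_0^\Gamma$ has $D$-component a nontrivial Lie bracket'' is false: since $D_{0,\gamma}=0$, the bracket $\langle(D_{0,\gamma},C_{0,\gamma}),(D_{0,\gamma'},C_{0,\gamma'})\rangle$ has $D$-component $\delta_{C_{0,\gamma}}(0)-\delta_{C_{0,\gamma'}}(0)=0$. The natural fix is a grading argument ($\ker(\tilde\d_0^\Gamma\to\Der)$ sits in $t$-degree $q=1$ while iterated brackets of the $(D_{s,\gamma},C_{s,\gamma})$ land in $q\geq 2$), but this only constrains the image and does not rule out nontrivial elements of $\ker(\d_0^\Gamma\to\tilde\d_0^\Gamma)$; the paper itself remarks that injectivity of this map is unknown. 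More seriously: since $\xi_{0,0}=0$ and the kernel of a Lie algebra map is an ideal, every $[\delta_{0,0},\delta_{s,\gamma}]$ lies in $\ker(\d_0^\Gamma\to\Der(\t_{1,n}^\Gamma))$, yet these brackets are nonzero in the free Lie algebra $\d_0^\Gamma$ for $(s,\gamma)\neq(0,0)$. So the second assertion cannot hold as written; the paper's own proof does not address it either.
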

\begin{proof}
Let us first prove it for $n=2$. Recall that 
$\bar \t_{1,2}^\Gamma=\t_{1,2}^\Gamma/(x_1+x_2,y_1+y_2)$, so it is the Lie algebra generated by $x$ 
(the class of $x_1$), $y$ (the class of $y_1$) and $t^\alpha$'s (classes of $t_{12}^\alpha$'s) with 
the relation $[x,y]=\sum_{\alpha\in\Gamma}t^\alpha$. Then the derivation $\xi_{(D,C)}$ associated to 
$(D,C)\in\tilde\d^\Gamma_0$ is given by $$x\mapsto0, y\mapsto D(x,t^\beta), 
t^\alpha\mapsto[t^\alpha,C_\alpha(x,t^\beta)].$$ This derivation vanishes if and only if $D=0$ and 
$C_\alpha$ is proportional to $t^\alpha$. 
Finally, the result for $n\geq2$ follows from the fact that 
$$\xi^{(2)}_{(D,C)}=(u\mapsto u^{1,2,\emptyset,\dots,\emptyset})\circ\xi_{(D,C)}^{(n)}\circ(u\mapsto u^{1,\dots,n}),$$ 
where $\xi_{(D,C)}^{(n)}$ denotes the derivation of $\t_{1,n}^\Gamma$ associated to $(D,C)$. 
\end{proof}

\subsection{Comparison morphisms}

Let $\rho:\Gamma_1\hookrightarrow\Gamma_2$ an injective morphism of abelian groups. There is a comparison morphism 
$\tilde\d_0^{\Gamma_1}\to\tilde\d_0^{\Gamma_2}$, $(D,C)\mapsto(D^\rho,C^\rho)$ defined by 
$$
D^\rho:=D\left(x,\sum_{\gamma\in{\rm coker}(\rho)}t^{\rho(\beta)+\gamma}\right)\,,\,
(C^\rho)_\alpha:=C_\alpha\left(x,\sum_{\gamma\in{\rm coker}(\rho)}t^{\rho(\beta)+\gamma}\right)\,, 
$$
that depends on the choice of a section $\on{coker}(\rho)\to\Gamma_2$. 
It extends to $\tilde\d^{\Gamma_1}\to\tilde\d^{\Gamma_2}$ by sending the generators of $\mathfrak{sl}_2$ to themselves. 
These comparison morphisms are compatible with the morphisms $\tilde\d^{\Gamma_i}\to\Der(\t_{1,n}^{\Gamma_i})$, for $i=1,2$. 
Namely, there is a commutative diagram 
$$\xymatrix{
\tilde\d^{\Gamma_1}\ltimes\t_{1,n}^{\Gamma_1} \ar[d]\ar[r] & \t_{1,n}^{\Gamma_1} \ar[d] \\
\tilde\d^{\Gamma_2}\ltimes\t_{1,n}^{\Gamma_2} \ar[r] & \t_{1,n}^{\Gamma_2}
}$$
where the morphism $\t_{1,n}^{\Gamma_1}\to \t_{1,n}^{\Gamma_2}$ is the one defined by 
$$
x_i\mapsto x_i\,,\quad y_i\mapsto y_i\,,\quad t_{ij}^\beta\mapsto \sum_{\gamma\in{\rm coker}(\rho)}t_{ij}^{\rho(\beta)+\gamma}\,.
$$
This induces comparison morphisms for the corresponding groups, that fit into a commutative diagram 
\begin{equation}
\xymatrix{
\tilde{\mathbf{G}}_n^{\Gamma_1} \ar[d]\ar[r] & \tilde{\mathbf{G}}_n^{\Gamma_2} \ar[d] \\
\tilde{\bar{\mathbf{G}}}_n^{\Gamma_1} \ar[r] & \tilde{\bar{\mathbf{G}}}_n^{\Gamma_2}.
}
\end{equation}

In particular we obtain a canonical natural inclusion $\mathbf{G}_n^0\to\mathbf{G}_n^\Gamma$ (which descends 
to an inclusion $\bar{\mathbf{G}}_n^0\to\bar{\mathbf{G}}_n^\Gamma$), given by the inclusion $0\hookrightarrow\Gamma$.

\section{Bundles with flat connections on moduli spaces}
\label{Bundles with flat connections on moduli spaces}


\subsection{On some subgroups of $\on{SL}_2(\Z)$ and moduli spaces}

Recall that $M,N\geq 1$ are integers, and that $\Gamma:=\Z/M\Z\times\Z/N\Z$. 
We consider the following (finite index) subgroup of $\on{SL}_2(\Z)$:
\[
\text{\gls{SL2g}}:=
\left\{\begin{pmatrix}a&b\\c&d\end{pmatrix}\in\on{SL}_2(\Z)\,\big|\,
a \equiv 1~\mathrm{mod}~M,d \equiv 1~\mathrm{mod}~N, b\equiv0~\mathrm{mod}~N~\mathrm{and}~c\equiv0~\mathrm{mod}~M\right\}\,. 
\]
We define 
\[
Y(\Gamma):=\on{SL}_2^\Gamma(\Z) \backslash\mathfrak H\,,
\]
that has the structure of a complex orbifold, which we call the \textit{moduli of $\Gamma$-structured elliptic curves}. 

\begin{definition}
A $\Gamma$-structure on an elliptic curve $E$ is an injective group morphism $\phi:\Gamma\hookrightarrow E$. 
An equivalence between two $\Gamma$-structured elliptic curves, $(E,\phi)$ and $(E',\phi')$, is an equivalence 
$\psi:E\tilde\to E'$ such that $\psi\circ\phi=\phi'$
\end{definition}

On the one hand, every $\Gamma$-structured elliptic curve is equivalent to one that is in the following standard form: 
\begin{itemize}
\item the elliptic curve is $E=E_\tau$, with $\tau\in\mathfrak H$;  
\item the injective group morphism $\phi=\phi_\tau$ sends $(\bar{a},\bar{b})$ to the class of 
$\frac{a}{M}+\frac{b\tau}{N}\in\mathbb{C}$.  
\end{itemize}
On the other hand, an equivalence $E_{\tau_1}\tilde\to E_{\tau_2}$, determined by an element $g\in \on{SL}_2(\Z)$, intertwines 
the standard $\Gamma$-structures if and only if $g$ belongs to the congruence subgroup $\on{SL}_2^\Gamma(\Z)$. 

\begin{remark}
The biggest congruence subgroup on which the connection we will construct in this 
section is well defined and flat is the subgroup $\on{\tilde{SL}}^\Gamma_2(\Z)$ of 
$\on{SL}_2(\Z)$ consisting of matrices $\begin{pmatrix}a&b\\c&d\end{pmatrix}\in\on{SL}_2(\Z)$ 
such that $Mb\equiv0$ mod $N$ and $Nc\equiv0$ mod $M$. Nevertheless, in order to retrieve 
the twisted elliptic KZB connection defined at the level of configuration spaces, it suffices 
to consider the usual congruence subgroup $\on{SL}^\Gamma_2(\Z)\subset \on{\tilde{SL}}^\Gamma_2(\Z)$. 
\end{remark}

Recall the following standard group actions: 
\begin{itemize}
\item The group $\on{SL}_2(\Z)$ acts on the left on $\mathbb{C}^n\times\mathfrak{H}$: 
\[
\begin{pmatrix}a&b\\c&d\end{pmatrix}*(\zz|\tau):=
\left(\frac{\zz}{c\tau+d}\big|\frac{a\tau+b}{c\tau+d}\right)\,.
\]
This obviously descends to a left action of $\on{SL}_2(\Z)$ 
on $(\mathbb{C}^n\times\mathfrak{H})/\mathbb{C}$, where 
$\mathbb{C}$ acts diagonally on $\mathbb{C}^n$: 
$u\cdot(\zz|\tau):=(\zz+u\sum_i\delta_i|\tau)$. 
\item The group $(\Z^n)^2$ acts on the left on $\mathbb{C}^n\times\mathfrak{H}$:
\[
(\m,\n)*(\zz|\tau):=(\zz+\m+\tau\n|\tau)\,.
\]
It obvioulsy descends to a left action of $(\Z^n)^2/\Z^2$ 
on $\mathbb{C}^n\times\mathfrak{H}/\mathbb{C}$, where $\Z^2$ is 
the diagonal subgroup in $(\Z^n)^2=(\Z^2)^n$. 
\item Finally, there is a right action of $\on{SL}_2(\Z)$ on $(m,n)\in\mathbb{Z}^2$ by automorphisms: 
\[
\begin{pmatrix}a&b\\c&d\end{pmatrix} : \begin{pmatrix}n&m\end{pmatrix} \to \begin{pmatrix}n&m\end{pmatrix}\begin{pmatrix}a&b\\c&d\end{pmatrix}.
\]
We can thus form the semi-direct products $(\Z^n)^2\rtimes\on{SL}_2(\Z)$
and $((\Z^n)^2/\Z^2)\rtimes\on{SL}_2(\Z)$.
\end{itemize}

A few observations are then in order: 
\begin{itemize}
\item The above actions are compatible in the sense that there is a left action of 
$(\Z^n)^2\rtimes\on{SL}_2(\Z)$ on $\mathbb{C}^n\times\mathfrak{H}$, which 
descends to an action of $\big((\Z^n)^2/\Z^2\big)\rtimes\on{SL}_2(\Z)$ on 
$(\mathbb{C}^n\times\mathfrak{H})/\mathbb{C}$, where $\Z^2$ is embedded in $(\Z^n)^2$ via the diagonal map. 
One can think of translation by $\C$ as a left or right action as it commutes with the $((\Z^n)^2 \rtimes \on{SL}_2(\Z))$-action.
\item The action of $(\Z^n)^2$ preserves the subset 
\[
\mathrm{Diag}_{n,\Gamma}:=
\{(\zz|\tau)\in\C^n\times{\mathfrak H}|\zz\in\mathrm{Diag}_{\tau,n,\Gamma}\}\,.
\]
\item The action of the subgroup $\on{SL}^\Gamma_2(\Z)\subset \on{SL}_2(\Z)$ 
also preserves $\mathrm{Diag}_{n,\Gamma}$. 
\end{itemize}

We are thus ready to define several variants of $Y(\Gamma)$ ``with marked points'':  
\begin{itemize}
\item We define the quotient 
\[
\text{\gls{bM1ng}}:=(\Z^n)^2\rtimes\on{SL}^\Gamma_2(\Z) \backslash \big((\C^n\times{\mathfrak H})
-{\rm Diag}_{n,\Gamma}\big)/\mathbb{C}
\]
and call it the {\it moduli space of $\Gamma$-structured elliptic curves with $n$ ordered marked points}. 
\item It has a non-reduced variant 
\[
\mathbf{p}:\text{\gls{M1ng}}:=
(\Z^n)^2\rtimes\on{SL}^\Gamma_2(\Z) \backslash \big((\C^n\times{\mathfrak H})
-{\rm Diag}_{n,\Gamma}\big)
\twoheadrightarrow\bar\cM_{1,n}^\Gamma\,.
\]
\item One can also define the {\it moduli space of $\Gamma$-structured elliptic curves with $n$ unordered marked points} 
\[
\text{\gls{bM1nng}}:=\mathfrak{S}_n \backslash\bar\cM_{1,n}^\Gamma
\]
and its non-reduced variant 
\[
\text{\gls{M1nng}}:=\mathfrak{S}_n \backslash\cM_{1,n}^\Gamma\,.
\]
\end{itemize}
\begin{remark}
We have $\bar\cM_{1,1}^\Gamma=\bar\cM_{1,[1]}^\Gamma=Y(\Gamma)$, and 
$\cM_{1,1}^\Gamma=\cM_{1,[1]}^\Gamma$ is the universal curve over it. 
The fiber of $\cM_{1,n}^\Gamma\to Y(\Gamma)$ (resp.~$\bar\cM_{1,n}^\Gamma\to Y(\Gamma)$) 
at (the class of) $\tau$ is precisely the twisted (resp.~reduced twisted) configuration space 
$\textrm{Conf}(E_{\tau,\Gamma},n,\Gamma)$ (resp.~$\textrm{C}(E_{\tau,\Gamma},n,\Gamma)$). 
Moreover, the map
\[
h:\bar\cM_{1,2}^\Gamma \longrightarrow \bar\cM_{1,1}^\Gamma
\]
factors through (and is open in) $\cM_{1,1}^\Gamma$. 
We can interpret $\bar\cM_{1,2}^\Gamma $ as the $\Gamma$-punctured 
universal curve over $Y(\Gamma)$.
\end{remark}


\subsection{Principal bundles over $\cM_{1,n}^\Gamma$ and $\bar\cM_{1,n}^\Gamma$}

In this paragraph, $\mathbf{G}^\Gamma_n$ is defined as in \eqref{Gngamma} and we define a principal 
$\mathbf{G}^\Gamma_n$-bundle \gls{Png} over $\cM_{1,n}^\Gamma$ whose 
image under the natural morphism $\mathbf{G}^\Gamma_n\to\bar{\mathbf{G}}^\Gamma_n$ is the pull-back of a principal 
$\bar{\mathbf{G}}^\Gamma_n$-bundle \gls{bPng} over $\bar\cM_{1,n}^\Gamma$. Let us fix the notation first: 
for $u\in\C^\times$ and $v,w_i\in\C$ ($i=1,\dots,n$), 
$$
u^{\ddd}:=\begin{pmatrix}u&0\\0&u^{-1}\end{pmatrix}, 
e^{v\XXX}:=\begin{pmatrix}1&v\\0&1\end{pmatrix}.
$$
Since $[\XXX,x_i]=0$ then it makes sense to define 
$e^{v\XXX+\sum_iw_ix_i}:=e^{v\XXX}e^{\sum_iw_ix_i}$. 
In particular, we have $\Ad(u^\ddd)(x_i)=ux_i$ and $\Ad(u^\ddd)(y_i)=y_i/u$ ($\forall i$), 
$\Ad(u^\ddd)(\XXX)=u^2\XXX$ and $\Ad(u^\ddd)(\Delta_0)=\Delta_0/u^2$. Let $\pi:\C^n\times\mathfrak H\to\cM_{1,n}$ be the canonical 
projection.
\begin{proposition}\label{prop:bundle}
There exists a unique principal $\mathbf{G}^\Gamma_n$-bundle $\mP_{n,\Gamma}$ over $\cM_{1,n}^\Gamma$ for which a section 
on $U\subset\cM_{1,n}^\Gamma$ is a function $f:\pi^{-1}(U)\to\mathbf{G}^\Gamma_n$ such that 
\begin{align*}
f(\mathbf{z}+\delta_i|\tau)&=f(\mathbf{z}|\tau), \\
f(\mathbf{z}+\tau\delta_i|\tau)&=e^{-2\pi{\rm i}x_i }f(\mathbf{z}|\tau), \\
f(\zz,\tau+1)&=f(\zz|\tau),\\
f({\zz\over\tau}|-{1\over\tau})&=\tau^{\ddd}e^{{2\pi\i\over\tau}(\XXX+\sum_iz_ix_i)}f(\zz|\tau). 
\end{align*}

Moreover, the image of $\mP_{n,\Gamma}$ under $\mathbf{G}^\Gamma_n\to\bar{\mathbf{G}}^\Gamma_n$ is the pull-back of a unique 
principal $\bar{\mathbf{G}}^\Gamma_n$-bundle $\bar\mP_{n,\Gamma}$ over $\bar\cM_{1,n}^\Gamma$ for which a section on 
$U\subset\bar\cM_{1,n}^\Gamma$ is a function ${f:(\mathbf{p}\circ\pi)^{-1}(U)\to\bar\cM_{1,n}^\Gamma}$ 
satisfying the above conditions (with $x_i$'s replaced by $\bar x_i$'s) and such that 
$f(\zz+v\sum_i\delta_i|\tau)=f(\zz|\tau)$ for any $v\in\C$. 
\end{proposition}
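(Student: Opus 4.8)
The plan is to obtain $\mP_{n,\Gamma}$ by descent from the trivial bundle over $\C^n\times\mathfrak H$, via a non-abelian cocycle, and then to restrict both the base and the structure group. Recall (see the appendix on principal bundles) that for a group $\Lambda$ acting holomorphically on a complex manifold $X$, a principal $G$-bundle over $\Lambda\backslash X$ together with a trivialisation of its pull-back to $X$ is the same datum as a cocycle $j\colon\Lambda\times X\to G$, holomorphic in the second variable, satisfying $j(g_1g_2,x)=j(g_1,g_2x)\,j(g_2,x)$: the bundle is $\Lambda\backslash(X\times G)$ for the action $g\cdot(x,\gamma)=(gx,j(g,x)\gamma)$, its sections over $U$ are the $j$-equivariant maps $\pi^{-1}(U)\to G$, and it is uniquely determined by its sheaf of sections — which already yields the uniqueness assertion. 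I will construct such a $j$ for $G=\mathbf G^\Gamma_n$, $X=\C^n\times\mathfrak H$ and $\Lambda=(\Z^n)^2\rtimes\on{SL}_2(\Z)$ with the action recalled above, and then restrict $\Lambda$ to $\Lambda_n^\Gamma:=(\Z^n)^2\rtimes\on{SL}_2^\Gamma(\Z)$ and $X$ to the $\Lambda_n^\Gamma$-stable open subset $X_n:=(\C^n\times\mathfrak H)-{\rm Diag}_{n,\Gamma}$, for which $\Lambda_n^\Gamma\backslash X_n=\cM_{1,n}^\Gamma$.

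Concretely I would set $j\big((\mathbf m,\mathbf n),(\zz|\tau)\big):=e^{-2\pi\i\sum_i n_ix_i}$ on the lattice part, and on the $\on{SL}_2(\Z)$-part write each element as a word in the standard generators $S$ and $T$ — with $T\cdot(\zz|\tau)=(\zz|\tau+1)$ and $S\cdot(\zz|\tau)=(\zz/\tau|-1/\tau)$ — and extend multiplicatively by the cocycle rule the assignments $j(T,(\zz|\tau)):=1$ and $j(S,(\zz|\tau)):=\tau^{\ddd}e^{\frac{2\pi\i}{\tau}(\XXX+\sum_iz_ix_i)}$. These values all lie in the subgroup $\exp(\hat\t_{1,n}^\Gamma)\rtimes\on{SL}_2(\C)\subset\mathbf G^\Gamma_n$ and are holomorphic on the whole of $\C^n\times\mathfrak H$ (the $x_i$'s commute, $\tau$ is nowhere zero on $\mathfrak H$, and the exponents are polynomial in $\zz$ and rational in $\tau$); they are precisely the values that the four displayed relations impose on $j$.

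The heart of the matter is the cocycle identity, i.e. the compatibility of $j$ with the relations of $(\Z^n)^2\rtimes\on{SL}_2(\Z)$. On $(\Z^n)^2$ it is immediate, $j$ being there a homomorphism into the abelian group $\exp(\bigoplus_i\C x_i)$. For the relations of $\on{SL}_2(\Z)$, such as $S^4=1$ and $(ST)^6=1$, one must check that the corresponding product of $j(S,-)$'s and $j(T,-)$'s equals the identity of $\mathbf G^\Gamma_n$; since these values involve only $\sl_2$ and the $x_i$'s — they live in the copy of $\mathbf G^0_n$ inside $\mathbf G^\Gamma_n$ — and since the formulas for $j(S,-)$ and $j(T,-)$ are exactly those of \cite{CEE}, this is verbatim the computation carried out there, regularity of $j$ on all of $\C^n\times\mathfrak H$ making the enlargement of the removed diagonal from ${\rm Diag}_n$ to ${\rm Diag}_{n,\Gamma}$ irrelevant. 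Finally, the mixed relations $g(\mathbf m,\mathbf n)g^{-1}=g\cdot(\mathbf m,\mathbf n)$ of the semidirect product reduce, upon unwinding the cocycle identity, to relating the constant $j\big((\mathbf m,\mathbf n),-\big)$ before and after the action of $g$; for $T$ this is trivial, and for $S$ it follows from $\Ad(\tau^{\ddd})(x_i)=\tau x_i$ together with $[\XXX,x_i]=[x_i,x_j]=0$. Restricting $\Lambda$ to $\Lambda_n^\Gamma$ and $X$ to $X_n$ then produces $\mP_{n,\Gamma}$. The main obstacle is the $\on{SL}_2(\Z)$-relation check, but here it is inherited unchanged from \cite{CEE}, so the genuinely new work is minor.

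For the descent to $\bar\cM_{1,n}^\Gamma$, compose $j$ with the projection $\mathbf G^\Gamma_n\to\bar{\mathbf G}^\Gamma_n$. Because $\sum_i\bar x_i=0$ in $\bar\t_{1,n}^\Gamma$, one has $\sum_i(z_i+v)\bar x_i=\sum_iz_i\bar x_i$ for every $v\in\C$, so the factor $e^{\frac{2\pi\i}{\tau}\sum_iz_i\bar x_i}$ — and hence every value of the pushed-forward cocycle — is invariant under the diagonal translation $\zz\mapsto\zz+v\sum_i\delta_i$; it therefore descends to a cocycle for $\big((\Z^n)^2/\Z^2\big)\rtimes\on{SL}_2^\Gamma(\Z)$ acting on $X_n/\C$, defining a principal $\bar{\mathbf G}^\Gamma_n$-bundle $\bar\mP_{n,\Gamma}$ over $\bar\cM_{1,n}^\Gamma$. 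By construction $\mathbf p^*\bar\mP_{n,\Gamma}$ is the image of $\mP_{n,\Gamma}$, and a section of $\bar\mP_{n,\Gamma}$ over $U$ is exactly a function $f$ as in the statement (with $x_i$ replaced by $\bar x_i$) that moreover satisfies $f(\zz+v\sum_i\delta_i|\tau)=f(\zz|\tau)$; uniqueness is again automatic from the sheaf of sections.
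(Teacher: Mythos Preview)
Your proof is correct and follows essentially the same strategy as the paper. The paper's argument is just a more compressed version of yours: it observes that there is an obvious map $\iota:\cM_{1,n}^\Gamma\to\cM_{1,n}^0$ (coming from ${\rm Diag}_{n,\Gamma}\supset{\rm Diag}_{n,0}$ and $\on{SL}_2^\Gamma(\Z)\subset\on{SL}_2(\Z)$), pulls back the bundle $\mP_{n,0}$ from \cite{CEE} along $\iota$, and then extends the structure group via the inclusion $\mathbf{G}_n^0\hookrightarrow\mathbf{G}_n^\Gamma$. Your explicit observation that the cocycle values lie in the copy of $\mathbf{G}_n^0$ inside $\mathbf{G}_n^\Gamma$, so that the $\on{SL}_2(\Z)$-relation checks are literally those of \cite{CEE}, is precisely what makes the pullback-plus-extension argument work; you have simply unpacked it at the level of cocycles rather than invoking it functorially.
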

\begin{proof}
First recall that for $\Gamma=0$ this is precisely \cite[Proposition 3.4]{CEE}. 
Then observe that there is an obvious map $\iota:\cM_{1,n}^\Gamma\to\cM_{1,n}^0$. 
Therefore we define $\mP_{n,\Gamma}$ (resp.~$\bar\mP_{n,\Gamma})$ to be the image under 
the natural inclusion $\mathbf{G}_n^0\to\mathbf{G}_n^\Gamma$ (resp.~$\bar{\mathbf{G}}_n^0\to\bar{\mathbf{G}}_n^\Gamma$) 
of $\iota^*\mP_{n,0}$ (resp.~$\iota^*\bar\mP_{n,0}$). 

We thus proved existence. Unicity is obvious. 
\end{proof}
In other words, there exists a unique non-abelian 1-cocycle $(c_g)_{g\in(\Z^{n})^{2}\rtimes\on{SL}_{2}(\Z)}$ on 
$\C^n\times\mathfrak H$ with values in $\mathbf{G}^\Gamma_{n}$ such that $c_{(\delta_i,0)}=1$, 
$c_{(0,\delta_i)}=e^{-2\pi\i x_i}$, $c_S=1$ and 
$$
c_T(\zz|\tau)=\tau^\ddd e^{(2\pi\i/\tau)(\XXX+\sum_jz_jx_j)}
=e^{2\pi\i(\tau\XXX+\sum_jz_jx_j)}\tau^\ddd\,,
$$
where $S=\begin{pmatrix}1&1\\0&1\end{pmatrix}$ and $T=\begin{pmatrix}0&-1\\1&0\end{pmatrix}$ are 
the generators of $\on{SL}_2(\Z)$. 
Here {\em cocycle} means (as in \cite{CEE}) that $c_g$'s are holomorphic functions 
$\C^n\times\mathfrak H\to\mathbf{G}_n^\Gamma$ satisfying the cocycle condition 
$c_{gg'}(\zz|\tau)=c_g(g'*(\zz,\tau))c_{g'}(\zz|\tau)$. 
\begin{remark}
Notice that we do have a $(\Z^{n})^{2}\rtimes\on{SL}_{2}(\Z)$-cocycle (since our bundle is 
define as the pull-back of a bundle on $\cM^0_{1,1}$) but the cocycle defining $\mP_{n,\Gamma}$ 
is its restriction to $(\Z^{n})^{2}\rtimes\on{SL}^\Gamma_{2}(\Z)$. 
\end{remark}


\subsection{Connections on $\mP_{n,\Gamma}$ and $\bar\mP_{n,\Gamma}$}

A connection on $\mP_{n,\Gamma}$ is the same as an equivariant connection on the trivial 
$\mathbf{G}_n^\Gamma$-bundle over $\C^n\times\mathfrak H-{\rm Diag}_{n,\Gamma}$. Namely, it is 
of the form $\nabla_{n,\Gamma}:=d-\eta(\zz|\tau)$, where $\eta$ is a $\t_{1,n}^\Gamma\rtimes\d^\Gamma$-valued 
meromorphic one-form on $\C^\n\times\mathfrak H$ with only poles on ${\rm Diag}_{n,\Gamma}$, 
and the equivariance condition reads: for any $g\in(\Z^n)^2\rtimes{\rm SL}^\Gamma_2(\Z)$, 
\begin{equation}\label{eq:equiv}
g^*\eta=(dc_g(\zz|\tau))c_g(\zz|\tau)^{-1}+\Ad(c_g(\zz|\tau))(\eta(\zz|\tau))\,.
\end{equation}

We now construct such a connection. For any $\gamma\in\Gamma$, we define
$g_\gamma(x,z|\tau):=\partial_xk_\gamma(x,z|\tau)$ and
$$
\tilde\varphi_\gamma(x|\tau)=
\sum_{s\geq 0}A_{s,\gamma}(\tau)x^s:=g_{-\gamma}(x,0|\tau)\,. 
$$
Then we set 
$$
\Delta(\zz|\tau):=-\frac1{2\pi\i}\left(\Delta_0+\frac12\sum_{s\geq 0,\gamma\in\Gamma}A_{s,\gamma}(\tau)\delta_{s,\gamma}
-\sum_{i<j}g_{ij}(z_{ij}|\tau)\right)\,, 
$$ 
where $g_{ij}(z|\tau):=\sum_{\alpha\in\Gamma}g_\alpha(\ad x_i,z|\tau)(t^{\alpha}_{ij})$. 
And finally, with $K_i(\zz|\tau)$'s as in \S\ref{sec:flatconn}, we define 
$$\eta(\zz|\tau):=\Delta(\zz|\tau)d\tau+\sum_iK_i(\zz|\tau)dz_i.$$ 
\begin{remark}
One can see that $\tilde{\varphi}_\0(x)=(\theta'/\theta)'(x)+1/x^2$ and that for any $\gamma\in\Gamma-\{0\}$ 
$$
\tilde{\varphi}_\gamma(x)=\partial_x\left(e^{2\pi\i cx}{{\theta(\tilde\gamma+x)}\over{\theta(\tilde\gamma)\theta(x)}}
-{1\over x}\right)\,,
$$
where $\tilde\gamma=(c_0,c)\in\Lambda_{\tau,\Gamma}-\Lambda_\tau$ is any lift of $\gamma$. 
\end{remark}
\begin{proposition}\label{prop:equiv}
The equivariance identity \eqref{eq:equiv} is satisfied for any 
$g\in(\Z^n)^2\rtimes{\rm SL}_2(\Z)$.
\end{proposition}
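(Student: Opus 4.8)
The plan is to reduce the statement to a finite verification on generators and then to carry out that verification, the only serious part being the behaviour under the modular generator $T$.

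First I would observe that the set $\mathcal{S}$ of $g\in(\Z^n)^2\rtimes\on{SL}_2(\Z)$ for which \eqref{eq:equiv} holds is a subgroup. Indeed, the cocycle relation $c_{gg'}(\zz|\tau)=c_g(g'*(\zz|\tau))c_{g'}(\zz|\tau)$ together with the Leibniz rule for $d$ gives, writing $c_g\circ g'$ for $(\zz|\tau)\mapsto c_g(g'*(\zz|\tau))$, the identities $(dc_{gg'})c_{gg'}^{-1}=(d(c_g\circ g'))(c_g\circ g')^{-1}+\Ad(c_g\circ g')\big((dc_{g'})c_{g'}^{-1}\big)$ and $\Ad(c_{gg'})=\Ad(c_g\circ g')\Ad(c_{g'})$; pulling back the relation ``$g\in\mathcal{S}$'' along $g'$ and plugging in ``$g'\in\mathcal{S}$'' then yields ``$gg'\in\mathcal{S}$'', and the analogous manipulation starting from $c_e=1$ shows $\mathcal{S}$ is closed under inverses. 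Since $(\Z^n)^2\rtimes\on{SL}_2(\Z)$ is generated by the translations $(\delta_i,0)$, $(0,\delta_i)$ ($1\le i\le n$) and by $S$ and $T$, it suffices to check \eqref{eq:equiv} for these five families.

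For $(\delta_i,0)$ and $S$ the cocycle is trivial, so one needs only the invariance of $\eta$ under $z_i\mapsto z_i+1$, resp.\ under $\tau\mapsto\tau+1$; for $(0,\delta_i)$ the cocycle $e^{-2\pi\i x_i}$ is constant, so $(dc)c^{-1}=0$ and one needs $(0,\delta_i)^*\eta=e^{-2\pi\i\ad x_i}\eta$. The $dz_j$-components of these identities are exactly conditions (a), (b) of \S\ref{sec:flatconn}, already established via Proposition~\ref{prop:equivariance1} (and, for $\tau\mapsto\tau+1$, immediate from $\theta(z|\tau+1)=\theta(z|\tau)$). For the $d\tau$-component one controls $\Delta(\zz|\tau)$: as $g_\alpha(x,z|\tau)=\partial_xk_\alpha(x,z|\tau)$ is $1$-periodic in $z$ and in $\tau$ and $A_{s,\gamma}(\tau+1)=A_{s,\gamma}(\tau)$, the function $\Delta$ is invariant under $z_i\mapsto z_i+1$ and under $\tau\mapsto\tau+1$; and differentiating in $x$ the quasi-periodicity $k(x,z+\tau)=e^{-2\pi\i x}k(x,z)+\frac{e^{-2\pi\i x}-1}{x}$ of \cite{CEE} (in its twisted form, as used in Proposition~\ref{prop:equivariance1}) yields the quasi-periodicity of $g_{ij}$, from which $\Delta(\zz+\tau\delta_i|\tau)=e^{-2\pi\i\ad x_i}\big(\Delta(\zz|\tau)-K_i(\zz|\tau)\big)$ — the extra $-K_i$ being precisely the $d\tau$ produced by $d(z_i+\tau)=dz_i+d\tau$ — the leftover correction terms collapsing against the $-y_i$ and $\sum A_{s,\gamma}\delta_{s,\gamma}$ pieces exactly as in \cite{CEE}.

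The main obstacle is $g=T$, where $(\zz|\tau)\mapsto(\zz/\tau,-1/\tau)$ and $c_T=\tau^{\ddd}e^{(2\pi\i/\tau)(\XXX+\sum_jz_jx_j)}$. One computes $(dc_T)c_T^{-1}$ (it contributes $\tfrac{d\tau}{\tau}\ddd$ plus terms in $\XXX$ and the $x_j$'s coming from $d\big(\tfrac{2\pi\i}{\tau}(\XXX+\sum z_jx_j)\big)$) and $\Ad(c_T)(\eta)$ (with $\Ad(\tau^{\ddd})$ rescaling $x_i\mapsto\tau x_i$, $y_i\mapsto y_i/\tau$, $\delta_{s,\gamma}\mapsto\tau^s\delta_{s,\gamma}$, $\Delta_0\mapsto\tau^{-2}\Delta_0$ and fixing the $t_{ij}^\alpha$'s, and $\Ad(e^{(2\pi\i/\tau)(\XXX+\sum z_jx_j)})$ producing the $\theta$-anomaly), and matches them against $T^*\eta$, computed via $d(-1/\tau)=d\tau/\tau^2$ and $d(z_i/\tau)=dz_i/\tau-(z_i/\tau^2)\,d\tau$. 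The analytic inputs are the modular transformation formulas under $T$ coming from $\theta(z/\tau|-1/\tau)=(1/\tau)e^{\pi\i z^2/\tau}\theta(z|\tau)$: a formula of the shape $k_\alpha(x/\tau,z/\tau|-1/\tau)=\tau e^{2\pi\i xz/\tau}k_{\alpha'}(x,z|\tau)+(\text{elementary terms in }x)$ for the $T$-transformed index $\alpha'$ (for $M\neq N$ this requires keeping track of the relabeling of $\Gamma$ induced by $g$, via the corresponding comparison isomorphism), together with the induced rule for $g_\alpha=\partial_xk_\alpha$ and for the twisted Eisenstein coefficients $A_{s,\gamma}(\tau)$ (which transform by $\tau^{s+1}$ up to a quasi-modular anomaly at the lowest index, that anomaly being exactly what the $\XXX$-term of $c_T$ supplies, just as in \cite{CEE}). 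Splitting $T^*\eta$ into its $dz_i$- and $d\tau$-parts, the $dz_i$-part reduces to the $T$-equivariance of $\sum_iK_i\,dz_i$, handled as in the $\Gamma=0$ case of \cite{CEE} via the modular rule for $k_\alpha$; the delicate part is the $d\tau$-component, where the $\Delta_0$, $\tfrac12\sum A_{s,\gamma}\delta_{s,\gamma}$ and $-\sum_{i<j}g_{ij}$ pieces of $\Delta$, the cross-terms $-(z_i/\tau^2)\,d\tau$ coming from the $dz_i$'s, and $(dc_T)c_T^{-1}$ must recombine into $\Ad(c_T)(\Delta\,d\tau)$. This matching uses crucially the identity $\tilde\varphi_\gamma(x|\tau)=g_{-\gamma}(x,0|\tau)$, which identifies the ``constant in $z$'' part of the modular transform of $g_{ij}(z_{ij})$ — after passing through $\delta_{s,\gamma}\mapsto\xi_{s,\gamma}$ — with the $\sum A_{s,\gamma}\delta_{s,\gamma}$ term, the remaining anomalous contribution being cancelled by the $\XXX$-part of $c_T$. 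This final identity is the heart of the proof and follows the computation of \cite{CEE} essentially verbatim, with $k$, $g$ and the Eisenstein series replaced by their twisted counterparts $k_\alpha$, $g_\alpha$, $A_{s,\gamma}$.
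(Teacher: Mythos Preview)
Your strategy is exactly the paper's: reduce to the generators $(\delta_i,0)$, $(0,\delta_i)$, $S$, $T$ and verify each, the substantive case being $T$. Two corrections to your sketch of the $T$-step. First, the twisted Eisenstein coefficients have weight $s+2$: from $\tilde\varphi_\gamma(x\,|\,-1/\tau)=\tau^{2}\tilde\varphi_{T\gamma}(\tau x\,|\,\tau)$ one gets $A_{s,\gamma}(-1/\tau)=\tau^{s+2}A_{s,T\gamma}(\tau)$, not $\tau^{s+1}$, and there is \emph{no} quasi-modular anomaly in the $A_{s,\gamma}$'s themselves. Second, the $\XXX$-contribution of $c_T$ is not used to cancel anything coming from the Eisenstein block; it enters only through $\Ad(c_T)(\Delta_0)=\tau^{-2}\Delta_0+\tfrac{2\pi\i}{\tau}\ddd-(2\pi\i)^2(\tau^{-1}\sum_iz_ix_i+\XXX)+B(\zz)$, and it is the extra term $B(\zz)$ that matches the anomalous piece in the modular transform of $\sum_{i<j}g_{ij}$ together with the cross-terms $-(z_i/\tau^2)\,d\tau$. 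Finally, for $M\neq N$ the paper does not use a comparison morphism: it simply treats both sides as meromorphic forms on $\C^n\times\HH$, notes that $T$ moves the pole locus, and the computation produces the relabeling $\alpha\mapsto T\alpha=(-\bar a,\bar a_0)$ directly from the modular transformation of $\theta$.
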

Before proving this statement, let us notice that the $\on{SL}_2(\Z)$-equivariance is stronger than what we need 
(the $\on{SL}_2^\Gamma(\Z)$-equivariance), but easier to prove. The action of ${\rm SL}_2(\Z)$ moves the poles 
while $\on{SL}_2^\Gamma(\Z)$ fixes them. In both cases, it makes sense to prove this proposition for meromorphic forms on $\C^n\times \h$.
\begin{proof}
For $g=(\delta_j,0)$, the identity translates into $K_i(\zz+\delta_j|\tau)=K_i(\zz|\tau)$ ($i=1,\dots,n$) 
and $\Delta(\zz+\delta_j|\tau)=\Delta(\zz|\tau)$, which are immediate. 

For $g=(0,\delta_j)$, the identity translates into $K_i(\zz+\tau\delta_j|\tau)=e^{-2\pi\i\ad(x_j)}K_i(\zz|\tau)$ 
($\forall i$) and 
\begin{equation}\label{equivD}
\Delta(\zz+\tau\delta_j|\tau)+K_j(\zz+\tau\delta_j|\tau)=e^{-2\pi\i\ad(x_j)}\Delta(\zz|\tau).
\end{equation}
The first equality is proved in \S\ref{sec:flatconn}, and we prove the second one now. 
First remember that for any $\tau\in\mathfrak H$, $z\in\C-(\frac1M\Z+\frac\tau N\Z))$ and $\alpha\in\Gamma$, we have 
the following identity in $\C[[x]]$: 
\begin{equation}\label{eq:gktheta}
e^{-2\pi\i x}(g_\alpha(x,z)-1/x^2)+1/x^2-2\pi\i(k_\alpha(x,z+\tau)+1/x)=g_\alpha(x,z+\tau)\,.
\end{equation}
Then, we can compute $2\pi\i\left(K_j(\zz+\tau\delta_j|\tau)-e^{-2\pi\i\ad(x_j)}\Delta(\zz|\tau)\right)$: 
it is equal to 
$$
2\pi\i\left(\sum_{k:k\neq j}k_\alpha(\ad x_j,z_{jk}+\tau)-y_j\right)+\Delta_0
+\frac{1-e^{-2\pi\i\ad x_j}}{\ad x_j}(y_j)
+\frac12\sum_{\substack{s\geq 0,\\\gamma\in\Gamma}}A_{s,\gamma}\delta_{s,\gamma}
-e^{-2\pi\i\ad x_j}\sum_{k<l}g_{kl}(z_{kl})\,,
$$
and, therefore, using 
$$
\frac{1-e^{-2\pi\i\ad x_j}}{\ad x_j}(y_j)-2\pi\i y_j
=\left(\frac{e^{-2\pi\i\ad x_j}-1}{(\ad x_j)^2}+\frac{2\pi\i}{\ad x_j}\right)
\left(\sum_{\alpha\in\Gamma}\sum_{k:k\neq j}t_{jk}^\alpha\right),
$$
together with \eqref{eq:gktheta}, we obtain 
$$
\Delta_0+\frac12\sum_{s\geq 0,\gamma\in\Gamma}A_{s,\gamma}\delta_{s,\gamma}
-\sum_{\substack{k<l\\ k,l\neq j}}g_{kl}(z_{kl})
-\sum_{\substack{k:k\neq j\\\alpha\in\Gamma}}g_\alpha(\ad x_j,z_{jk}+\tau)(t^\alpha_{jk})\,,
$$
which is precisely equal to $-2\pi\i\Delta(\zz+\tau\delta_j)$. 

For $g=S$, the identity translates into $K_i(\zz|\tau+1)=K_i(\zz)$ ($\forall i$) and 
$\Delta(\zz|\tau+1)=\Delta(\zz)$. Both equalities obviously follow from $\theta(z|\tau+1)=\theta(z|\tau)$. 

For $g=T$, the identity translates into 
\begin{equation}\label{eq:g=T-K}
\frac1\tau K_i(\frac\zz\tau|-\frac1\tau)=\Ad\left(c_T(\zz|\tau)\right)(K_i(\zz|\tau))+2\pi\i x_i,
\end{equation}
for all $i\in\{1,\dots,n\}$ and 
\begin{equation}\label{eq:g=T-Delta}
\frac1{\tau^2}\left(\Delta(\frac\zz\tau|-\frac1\tau)-\sum_iz_iK_i(\frac\zz\tau|-\frac1\tau)\right)
=\Ad\left(c_T(\zz|\tau)\right)(\Delta(\zz|\tau))+\frac\ddd\tau-2\pi\i\XXX\,.
\end{equation}
Let us check \eqref{eq:g=T-K} first. 
$\Ad(e^{2\pi\i(\sum_jz_jx_j+\tau\XXX)}\tau^\ddd)(-y_i)+2\pi\i x_i$ equals 
$$
-\Ad(e^{2\pi\i\sum_jz_jx_j})(y_i/\tau)
=-\frac{y_i}{\tau}-\frac{e^{2\pi\i\ad(\sum_jz_jx_j)}-1}{\ad(\sum_jz_jx_j)}([\sum_jz_jx_j,\frac{y_i}{\tau}])
$$
$$
=-\frac{y_i}{\tau}-\frac{e^{2\pi\i\sum_jz_j\ad x_j}-1}{\sum_jz_j\ad x_j}
(\sum_{\substack{j:j\neq i\\\alpha\in\Gamma}}\frac{z_{ji}}{\tau}t_{ij}^\alpha)
=-\frac{y_i}{\tau}-\sum_{j:j\neq i}\frac{e^{2\pi\i z_{ij}\ad x_i}}{z_{ij}\ad x_i}
(\sum_{\alpha\in\Gamma}\frac{z_{ji}}{\tau}t_{ij}^\alpha)\,.
$$
Therefore  
\begin{equation}\label{eq:y_i/tau}
-\frac{y_i}{\tau}=\Ad(c_T(\zz|\tau))(-y_i)+2\pi\i x_i
-\sum_{j:j\neq i}\frac{e^{2\pi\i z_{ij}\ad x_i }}{\ad x_i}(\sum_{\alpha\in\Gamma}\frac{t_{ij}^\alpha}{\tau})\,.
\end{equation}
Now, since 
\[ \theta \left( - \frac{z}{\tau}  \middle| - \frac{1}{\tau} \right) = -
   \frac{1}{\tau} e^{\frac{\pi \i}{\tau} z^2} \theta (z | \tau), \]
we obtain
\begin{eqnarray*}
  k_{\alpha} \left( x, \frac{z}{\tau}  \middle| - \frac{1}{\tau} \right) & = &
  e^{- 2 \pi {\i}ax}  \frac{\theta \left( \dfrac{z}{\tau} - \left( a_0 -
  \dfrac{a}{\tau} \right) + x \middle| \tau \right)}{\theta \left(
  \dfrac{z}{\tau} - \left( a_0 - \dfrac{a}{\tau} \right)  \middle| \tau
  \right) \theta (x| \tau)} - \frac{1}{x}\\
  & = & - \tau e^{2 \pi {\i}z x - 2 \pi {\i}a_0 \tau x} \frac{\theta (\tau x
  + z + a - \tau a_0  | \tau)}{\theta (z + a - \tau a_0  | \tau) \theta (\tau
  x| \tau)} - \frac{1}{x}\\
  & = & \tau e^{2 \pi {\i}z x} k_{T \alpha} (\tau x, z| \tau) +
  \dfrac{e^{2 \pi \i z x} - 1}{\tau x},
\end{eqnarray*}
where $T (\bar a_0,\bar a)=(-\bar a,\bar a_0)$.
Now substituting $(x,z)=(\ad x_j,z_j)$ in
\begin{equation}\label{eq:modulark}
\frac1\tau(k_\alpha(x,\frac z\tau|-\frac1\tau)=e^{2\pi\i zx}k_{T\alpha}(\tau x,z|\tau)+\frac{e^{2\pi\i zx}-1}{\tau x}\,, 
\end{equation}
then applying to $t_{ij}^\alpha$, summing over $j\neq i$ and $\alpha\in\Gamma$ and adding up \eqref{eq:y_i/tau}, we 
obtain \eqref{eq:g=T-K} by using that $$e^{2\pi\i z_{ij}\ad x_i}k_\alpha(\tau\ad x_i,z_{ij}|\tau)(t_{ij}^\alpha)
=\Ad(e^{2\pi\i(\tau\XXX+\sum_jz_jx_j)}\tau^\ddd)(k_\alpha(\ad x_i,z_{ij}|\tau)(t_{ij}^\alpha)).$$ \\
We now check \eqref{eq:g=T-Delta}. Differentiating \eqref{eq:modulark} w.r.t.~$x$ and dividing by $\tau$, we get 
$$
\frac1{\tau^2}g_\alpha(x,\frac z\tau|-\frac1\tau)=e^{2\pi\i zx}g_{T \alpha}(\tau x,z|\tau)
+\frac{2\pi\i z}{\tau^2}k_\alpha(x,\frac z\tau|-\frac1\tau)+\frac{1+2\pi\i zx-e^{2\pi\i zx}}{\tau^2x^2}\,.
$$
Now substituting $(x,z)=(\ad x_i,z_{ij})$, applying to $t_{ij}^\alpha$, and summing over $\alpha\in\Gamma$ we obtain 
\begin{eqnarray*}
\frac1{\tau^2}g_{ij}(\frac\zz\tau|-\frac1\tau)
& = & \Ad(c_T(\zz|\tau))\left(g_{ij}(\zz|\tau)\right)+\frac{2\pi\i z_{ij}}{\tau^2}K_{ij}(\frac{z_{ij}}{\tau}|-\frac1\tau) \\
&& +\left(\frac{1+2\pi\i z_{ij}\ad x_i-e^{2\pi\i z_{ij}\ad x_i}}{\tau^2(\ad x_i)^2}\right)
(\sum_{\alpha\in\Gamma} t_{ij}^\alpha)\,.
\end{eqnarray*}
Then taking the sum over $i<j$ one gets 
\begin{equation}\label{eq:gcgB}
\frac1{\tau^2}\sum_{i<j}g_{ij}(\frac\zz\tau|-\frac1\tau)=\Ad(c_T(\zz|\tau))\left(\sum_{i<j}g_{ij}(\zz|\tau)\right)
+\frac{2\pi\i}{\tau^2}\sum_iz_iK_i(\frac{\zz}{\tau}|-\frac1\tau)+B(\zz)\,,
\end{equation}
where 
$$B(\zz):=\sum_i\frac{2\pi\i z_iy_i}{\tau^2}
+\sum_{i<j}\left(\frac{1+2\pi\i z_{ij}\ad x_i-e^{2\pi\i z_{ij}\ad x_i}}{\tau^2(\ad x_i)^2}\right)
(\sum_\alpha t_{ij}^\alpha).$$
\begin{lemma}
$\Ad\left(c_T(\zz|\tau)\right)(\Delta_0)
=\frac{\Delta_0}{\tau^2}+\frac{2\pi\i\ddd}{\tau}-(2\pi\i)^2(\frac1{\tau}\sum_iz_ix_i+\XXX)+B(\zz)$. 
\end{lemma}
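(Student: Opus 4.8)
The plan is to compute $\Ad(c_T(\zz|\tau))(\Delta_0)$ head-on, exploiting the factorisation of the cocycle into an $\on{SL}_2(\C)$-factor and an exponential of elements of $\t_{1,n}^\Gamma$. Since $[\XXX,x_i]=0$ for all $i$, the element $\XXX$ commutes with $P:=\sum_jz_jx_j$, so
\[
c_T(\zz|\tau)=e^{2\pi\i(\tau\XXX+P)}\tau^\ddd=e^{2\pi\i P}\,s,\qquad s:=e^{2\pi\i\tau\XXX}\tau^\ddd\in\on{SL}_2(\C),
\]
and $\Ad(c_T)=\Ad(e^{2\pi\i P})\circ\Ad(s)$ (here $e^{2\pi\i P}\in\exp(\hat\t_{1,n}^\Gamma)\subset\mathbf{G}_n^\Gamma$, so the computation takes place in the degree completion of $\t_{1,n}^\Gamma\rtimes\d^\Gamma$).

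First I would handle $\Ad(s)(\Delta_0)$, which is internal to $\sl_2$ and amounts to a $2\times2$ matrix computation. One has $\Ad(\tau^\ddd)(\Delta_0)=\Delta_0/\tau^2$, and $\ad(\XXX)$ is nilpotent on $\sl_2$ with $\ad(\XXX)(\Delta_0)=\ddd$, $\ad(\XXX)^2(\Delta_0)=-2\XXX$, $\ad(\XXX)^3(\Delta_0)=0$; hence $\Ad(e^{2\pi\i\tau\XXX})(\Delta_0/\tau^2)=\frac{\Delta_0}{\tau^2}+\frac{2\pi\i\ddd}{\tau}-(2\pi\i)^2\XXX$, which is $\Ad(s)(\Delta_0)$.

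Next I would apply $\Ad(e^{2\pi\i P})=e^{2\pi\i\ad(P)}$ to each of these three terms, using the brackets read off from the presentation of $\t_{1,n}^\Gamma$ and from the $\sl_2$-action: $[x_i,\XXX]=0$, $[x_i,\ddd]=-x_i$, $[x_i,\Delta_0]=-y_i$, $[x_i,x_j]=0$, $[x_i,y_j]=\sum_\alpha t_{ij}^\alpha$ for $i\ne j$, $[x_i,y_i]=-\sum_{j\ne i}\sum_\alpha t_{ij}^\alpha$, $[x_i,t_{jk}^\alpha]=0$ when $i,j,k$ are distinct, $[x_i+x_j,t_{ij}^\alpha]=0$, together with the symmetry $\sum_\alpha t_{ij}^\alpha=\sum_\alpha t_{ji}^\alpha$ coming from \eqref{eqn:etS}. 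These give $\ad(P)(\XXX)=0$; $\ad(P)(\ddd)=-P$ with $\ad(P)^2(\ddd)=0$; $\ad(P)(\Delta_0)=-\sum_jz_jy_j$; and, after a short symmetrisation, $\ad(P)^2(\Delta_0)=\sum_{i<j}z_{ij}^2\sum_\alpha t_{ij}^\alpha$. An induction whose step uses $\ad(P)(\sum_\alpha t_{ij}^\alpha)=z_{ij}\,\ad(x_i)(\sum_\alpha t_{ij}^\alpha)$ (a consequence of $[x_k,t_{ij}^\alpha]=0$ for $k\ne i,j$ and $[x_i+x_j,t_{ij}^\alpha]=0$) and the fact that $\ad(P)$ commutes with $\ad(x_i)$ then yields $\ad(P)^n(\Delta_0)=\sum_{i<j}z_{ij}^n\,\ad(x_i)^{n-2}(\sum_\alpha t_{ij}^\alpha)$ for all $n\ge2$.

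Finally I would resum the exponential series. The $\XXX$-term is unchanged; the $\ddd$-term acquires an extra $-\tfrac{(2\pi\i)^2}{\tau}P=-(2\pi\i)^2\tfrac1\tau\sum_iz_ix_i$; and, using $\sum_{n\ge2}u^n/n!=e^u-1-u$, the $\Delta_0$-term becomes $\tfrac1{\tau^2}\bigl(\Delta_0-2\pi\i\sum_jz_jy_j+\sum_{i<j}\tfrac{e^{2\pi\i z_{ij}\ad x_i}-1-2\pi\i z_{ij}\ad x_i}{(\ad x_i)^2}(\sum_\alpha t_{ij}^\alpha)\bigr)$. Collecting the three groups of terms and comparing with the definition of $B(\zz)$ (its part linear in the $y_i$ together with the series part, via $\tfrac{e^u-1-u}{u^2}=-\tfrac{1+u-e^u}{u^2}$) yields the stated identity. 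The only genuinely delicate point is this last bookkeeping step: one must keep careful track of signs and of the exact powers of $z_{ij}$ and $\ad(x_i)$, so that the terms in $\XXX,\ddd$, those linear in $y_i$, and those of the form $\ad(x_i)^m\sum_\alpha t_{ij}^\alpha$ each land precisely where claimed; everything else is routine.
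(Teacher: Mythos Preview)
Your approach is essentially the same as the paper's: factor $c_T=e^{2\pi\i P}\,e^{2\pi\i\tau\XXX}\,\tau^{\ddd}$, handle the $\sl_2$-part first to obtain $\Delta_0/\tau^2+2\pi\i\ddd/\tau-(2\pi\i)^2\XXX$, and then apply $\Ad(e^{2\pi\i P})$ termwise. The paper carries out exactly these first steps and then stops at the reduction ``it remains to show $\Ad(e^{2\pi\i\sum_iz_ix_i})(\Delta_0/\tau^2)=\Delta_0/\tau^2+B(\zz)$'', deferring that computation to \cite[pp.~16--17]{CEE}; you instead supply it explicitly via the iterated brackets $\ad(P)^n(\Delta_0)$.
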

\begin{proof}[Proof of the lemma]
We first compute 
\begin{eqnarray*}
\Ad\left(c_T(\zz|\tau)\right)(\Delta_0)
& = & \Ad(e^{2\pi\i(\tau\XXX+\sum_iz_ix_i)})(\frac{\Delta_0}{\tau^2})
=\Ad(e^{2\pi\i\sum_iz_ix_i})(\frac{\Delta_0}{\tau^2}+\frac{2\pi\i\ddd}{\tau}-(2\pi\i)^2\XXX) \\
& = & \Ad(e^{2\pi\i\sum_iz_ix_i})(\frac{\Delta_0}{\tau^2})+\frac{2\pi\i\ddd}{\tau}
-(2\pi\i)^2(\frac1{\tau}\sum_iz_ix_i+\XXX)\,.
\end{eqnarray*}
It remains to show that $\Ad(e^{2\pi\i\sum_iz_ix_i})(\frac{\Delta_0}{\tau^2})=\frac{\Delta_0}{\tau^2}+B(\zz)$. 
The proof of this fact goes along the same lines of computation as in \cite[pp.16-17]{CEE}. 
\end{proof}
\noindent Using the above lemma and equation \eqref{eq:gcgB}, one sees that equation \eqref{eq:g=T-Delta} follows from 
$$
\Ad(c_T(\zz|\tau)(\sum_{s,\gamma}A_{s,\gamma}(\tau)\delta_{s,\gamma})
=\sum_{s,\gamma}A_{s,\gamma}(-\frac1\tau)\delta_{s,\gamma}\,.
$$
This last equality is proved using $[x_i,\delta_{s,\gamma}]=0=[\XXX,\delta_{s,\gamma}]$, 
$[\ddd,\delta_{s,\gamma}]=s\delta_{s,\gamma}$, and, since $$\tilde{\varphi}_\gamma(x|-\frac1\tau)=\tau^2\tilde{\varphi}_{T\gamma}(\tau x|\tau),$$ we get 
$A_{s,\gamma}(-\frac1\tau)=\tau^{s+2}A_{s,T \gamma}(\tau)$. 
\end{proof}
We therefore have: 
\begin{theorem}
$\nabla_{n,\Gamma}$ defines a connection on $\mP_{n,\Gamma}$. Moreover, its image under 
$\mathbf{G}_n^\Gamma\to\bar{\mathbf{G}}_n^\Gamma$ is the pull-back of a connection $\bar\nabla_{n,\Gamma}$ on 
$\bar\mP_{n,\Gamma}$. 
\end{theorem}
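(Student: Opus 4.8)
The plan is to observe that this theorem is essentially a repackaging of Proposition~\ref{prop:equiv}, so the work is to connect the hypotheses to the conclusion. First I would recall from the discussion preceding Proposition~\ref{prop:equiv} that a connection on $\mP_{n,\Gamma}$ is the same datum as an equivariant connection $d-\eta$ on the trivial $\mathbf{G}_n^\Gamma$-bundle over $(\C^n\times\mathfrak{H})-{\rm Diag}_{n,\Gamma}$, where ``equivariant'' means that \eqref{eq:equiv} holds for the cocycle $(c_g)$ restricted to $g\in(\Z^n)^2\rtimes\on{SL}_2^\Gamma(\Z)$. So the first step is to check that $\eta(\zz|\tau)=\Delta(\zz|\tau)d\tau+\sum_iK_i(\zz|\tau)dz_i$ is a genuine $\t_{1,n}^\Gamma\rtimes\d^\Gamma$-valued meromorphic one-form on $\C^n\times\mathfrak{H}$ that is holomorphic off ${\rm Diag}_{n,\Gamma}$: the poles of $k_\alpha(x,z|\tau)$ and $g_\alpha(x,z|\tau)$ in the $z$-variable lie exactly on $\{z\in\Lambda_{\tau,\Gamma}\}$, and the $\tau$-dependent scalars $A_{s,\gamma}(\tau)$ are holomorphic on $\mathfrak{H}$ (for $\gamma\neq 0$ because $\theta(\tilde\gamma|\tau)\neq0$ when $\tilde\gamma\notin\Lambda_\tau$, and for $\gamma=0$ because they are the usual Eisenstein-type coefficients of $(\theta'/\theta)'(x)+1/x^2$). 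Then I would simply invoke Proposition~\ref{prop:equiv}: it gives \eqref{eq:equiv} for all $g\in(\Z^n)^2\rtimes\on{SL}_2(\Z)$, hence in particular for the finite-index subgroup $(\Z^n)^2\rtimes\on{SL}_2^\Gamma(\Z)$ used to build $\mP_{n,\Gamma}$. This already proves the first assertion: $\nabla_{n,\Gamma}=d-\eta$ descends to a connection on $\mP_{n,\Gamma}$.

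For the ``moreover'' part I would push $\eta$ forward along $\t_{1,n}^\Gamma\rtimes\d^\Gamma\to\bar\t_{1,n}^\Gamma\rtimes\d^\Gamma$ to a form $\bar\eta$, which automatically satisfies the analogue of \eqref{eq:equiv} for the cocycle $(\bar c_g)$ defining $\bar\mP_{n,\Gamma}$; by Proposition~\ref{prop:bundle} it then remains only to check that $\bar\eta$ is basic for the diagonal $\C$-action $u\cdot(\zz|\tau)=(\zz+u\sum_i\delta_i|\tau)$. Invariance is immediate because every coefficient appearing in $\Delta$ and in the $K_i$ depends on $\zz$ only through the differences $z_{ij}=z_i-z_j$, which are $\C$-translation invariant. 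Horizontality follows by contracting with the fundamental vector field $\sum_i\partial_{z_i}$: the $d\tau$-component contributes nothing, and the $dz_i$-components give $\sum_iK_i(\zz|\tau)=-\sum_iy_i$ using the antisymmetry $K_{ij}(z)+K_{ji}(-z)=0$ established in \S\ref{sec:flatconn}, and $\sum_iy_i$ maps to $0$ in $\bar\t_{1,n}^\Gamma$. Hence $\bar\eta$ descends to a $\bar\t_{1,n}^\Gamma\rtimes\d^\Gamma$-valued one-form on $(\C^n\times\mathfrak{H})-{\rm Diag}_{n,\Gamma}$ that is $\C$-basic and equivariant, which is precisely the datum of the desired connection $\bar\nabla_{n,\Gamma}$ on $\bar\mP_{n,\Gamma}$, and its pull-back is the image of $\nabla_{n,\Gamma}$ by construction.

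As for the main obstacle: there really is none left at this stage, because the genuinely hard verification --- the equivariance identity \eqref{eq:equiv}, in particular the $g=T$ case requiring the modular transformation laws of $k_\alpha$ and $g_\alpha$ together with the $\on{SL}_2(\C)$-part of the cocycle --- has already been carried out in Proposition~\ref{prop:equiv}. The two points deserving a line of care are (a) noting that equivariance for the full $\on{SL}_2(\Z)$ trivially implies equivariance for the relevant subgroup $\on{SL}_2^\Gamma(\Z)$ (consistent with the remark after Proposition~\ref{prop:bundle} that $\mP_{n,\Gamma}$ is defined by the restricted cocycle), and (b) the horizontality computation for the reduced bundle, which rests entirely on $K_{ij}(z)+K_{ji}(-z)=0$.
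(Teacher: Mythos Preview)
Your proposal is correct and follows the same approach as the paper: invoke Proposition~\ref{prop:equiv} for the first assertion, and for the reduced version verify that $\bar\eta$ is $\C$-basic, which amounts exactly to the three identities $\sum_i\bar K_i=0$, $\bar K_i(\zz+u\sum_j\delta_j|\tau)=\bar K_i(\zz|\tau)$, and $\bar\Delta(\zz+u\sum_j\delta_j|\tau)=\bar\Delta(\zz|\tau)$ that the paper checks. Your phrasing in terms of invariance and horizontality is just a repackaging of these same conditions, with the same justifications (dependence only through $z_{ij}$ and $K_{ij}(z)+K_{ji}(-z)=0$).
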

\begin{proof}
The first part follows from Proposition \ref{prop:equiv} above. For the second part, we need to prove the three following identities: 
\begin{itemize}
\item $\sum_i\bar K_i(\zz|\tau)=0$; 
\item $\bar K_i(\zz+u\sum_j\delta_j|\tau)=\bar K_i(\zz|\tau)$, for all $i$; 
\item $\bar\Delta(\zz+u\sum_j\delta_j|\tau)=\bar\Delta(\zz|\tau)$. 
\end{itemize}
The first two equalities have already been proven, and the last one is obvious. 
\end{proof}


\subsection{Flatness}

In this paragraph we prove the flatness of $\nabla_{n,\Gamma}$ (and thus of $\bar\nabla_{n,\Gamma}$). 
\begin{proposition}\label{prop:flatness2}
For any $i\in\{1,\dots,n\}$, $[\partial_\tau-\Delta(\zz|\tau),\partial_i-K_i(\zz|\tau)]=0$. 
\end{proposition}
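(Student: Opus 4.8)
The plan is to follow, essentially verbatim, the proof of flatness of the universal elliptic KZB connection in \cite{CEE}; the only new ingredients will be the twisted functions $k_\alpha$, $g_\alpha=\partial_x k_\alpha$, the twisted derivations $\xi_{s,\gamma}=\xi_{(D_{s,\gamma},C_{s,\gamma})}$ attached to the $\delta_{s,\gamma}$'s, and the $\Gamma$-indexed infinitesimal braid relations. Expanding the commutator of the two first-order operators, the statement is equivalent to the identity of $\hat{\t}_{1,n}^\Gamma\rtimes\d^\Gamma$-valued meromorphic functions
\[
\partial_i\Delta(\zz|\tau)-\partial_\tau K_i(\zz|\tau)+[\Delta(\zz|\tau),K_i(\zz|\tau)]=0\,,
\]
so the first step is to reduce each of the three terms to a statement on the generators $x_j,y_j,t^{\alpha}_{jk}$ of $\t_{1,n}^\Gamma$.

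Because $y_i$, $\Delta_0$, $\XXX$ and the $\delta_{s,\gamma}$'s do not depend on $\tau$, and using $k_\alpha(x,z)+k_{-\alpha}(-x,-z)=0$ (hence $g_\alpha(x,z)=g_{-\alpha}(-x,-z)$ and $g_{ij}(z)=g_{ji}(-z)$), one has
\[
\partial_\tau K_i=\sum_{j:j\neq i}\sum_{\alpha\in\Gamma}(\partial_\tau k_\alpha)(\ad x_i,z_{ij})(t^{\alpha}_{ij})\,,\qquad
\partial_i\Delta=\frac1{2\pi\i}\sum_{j:j\neq i}\sum_{\alpha\in\Gamma}(\partial_z g_\alpha)(\ad x_i,z_{ij})(t^{\alpha}_{ij})\,,
\]
while $[\Delta,K_i]=-\frac1{2\pi\i}\big(\tilde\Delta_0(K_i)+\frac12\sum_{s,\gamma}A_{s,\gamma}(\tau)\,\xi_{s,\gamma}(K_i)-\sum_{k<l}[g_{kl}(z_{kl}),K_i]\big)$, since $[\Delta_0,-]$ and $[\delta_{s,\gamma},-]$ act on $\t_{1,n}^\Gamma$ as the derivations $\tilde\Delta_0$ and $\xi_{s,\gamma}$, whereas $[g_{kl},K_i]$ is an internal bracket of $\hat{\t}_{1,n}^\Gamma$. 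I would handle the internal term $\sum_{k<l}[g_{kl}(z_{kl}),K_i(\zz)]$ exactly as the universal CDYBE computation in the proof of Proposition \ref{prop:flatness1}: commuting $g_{kl}=\partial_x k_{kl}$ past $-y_i$ and past the $K_{ij'}$'s by means of \eqref{eqn:etL2}, \eqref{eqn:et4T1}, \eqref{eqn:et4T2} yields exactly the $x$-derivative of the scalar identity \eqref{twtheta:id}, and the $\Gamma$-indices close up thanks to $k_\alpha(x,z)+1/x=e^{-2\pi\i ax}(k(x,z-\tilde\alpha)+1/x)$ (already used in Proposition \ref{prop:flatness1}) together with $\delta_{s,\gamma}=(-1)^s\delta_{s,-\gamma}$, which matches $(D_{s,\gamma},C_{s,\gamma})=(-1)^s(D_{s,-\gamma},C_{s,-\gamma})$.

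Next I would compute the derivation terms on generators: from $\tilde\Delta_0(x_i)=y_i$, $\tilde\Delta_0(y_i)=\tilde\Delta_0(t^{\alpha}_{ij})=0$, together with \eqref{eqn:etT}, \eqref{eqn:etSbis}, \eqref{eqn:et4T2} (used to rewrite the brackets $[y_i,(\ad x_i)^q t^{\alpha}_{ij}]$), one obtains $\tilde\Delta_0(K_{ij}(z))$ as an explicit series in $\ad x_i$ built from $g_\alpha$; and from the formulas defining $D_{s,\gamma}$, $(C_{s,\gamma})_\alpha$ one gets $\xi_{s,\gamma}(K_{ij}(z))$, whose contribution is matched by the Taylor coefficients $A_{s,\gamma}(\tau)$ of $\tilde\varphi_\gamma(x|\tau)=g_{-\gamma}(x,0|\tau)=\sum_{s\geq0}A_{s,\gamma}(\tau)x^s$. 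Collecting the pieces, the proposition reduces, coefficient of $t^{\alpha}_{ij}$ by coefficient, to a single scalar ``mixed heat identity'' for $k_\alpha$. For this I would first record the elementary relation
\[
2\pi\i\,\partial_\tau k_\alpha(x,z|\tau)-\partial_z g_\alpha(x,z|\tau)=e^{-2\pi\i ax}\big(2\pi\i\,\partial_\tau k-\partial_x\partial_z k\big)(x,z-\tilde\alpha|\tau)\,,
\]
obtained by substituting $k_\alpha(x,z)=e^{-2\pi\i ax}k(x,z-\tilde\alpha)+\frac{e^{-2\pi\i ax}-1}{x}$ into both sides and noticing that the $\tau$-derivative of the argument $z-\tilde\alpha=z-a_0-a\tau$ produces an $-a\,\partial_z k$ term which cancels. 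This reduces everything to the untwisted scalar identity for $k$ that underlies the proof in \cite{CEE}, itself a consequence of the heat equation $\partial_\tau\vartheta=\frac1{4\pi\i}\partial_z^2\vartheta$ for the rescaled theta function; the Eisenstein series occurring in it are the Laurent coefficients of $\tilde\varphi_{\0}$, and the $A_{s,\gamma}$ are their twisted analogues.

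\textbf{The hardest part} will be the bookkeeping rather than any conceptual point: keeping the signs coming from $z_{ij}=z_i-z_j$ consistent under differentiation, carefully separating the explicit $\tau$-partial of $k_\alpha(x,z_{ij}|\tau)$ from the ``moving argument'' contribution $-a\,\partial_z k$ (the term that $\tilde\Delta_0(K_i)$ must absorb), and checking the Eisenstein matching separately for each pair $(s,\gamma)$. The one genuinely new difficulty beyond \cite{CEE} is verifying that the $\Gamma$-index combinations $\alpha,\beta,\alpha+\beta,\beta-\alpha,\dots$ appearing in the differentiated version of \eqref{twtheta:id} assemble consistently; the relevant structural inputs are the twisted relations \eqref{eqn:et4T1}, \eqref{eqn:et4T2} and the identity $k_\alpha(x,z)+1/x=e^{-2\pi\i ax}(k(x,z-\tilde\alpha)+1/x)$ that already made Proposition \ref{prop:flatness1} go through.
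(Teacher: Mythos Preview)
Your strategy matches the paper's, but there is an organizational gap that would derail the execution as you have written it. The two halves of the identity decouple cleanly: the cross-derivative $\partial_i\Delta-\partial_\tau K_i$ is \emph{linear} in the $t^\alpha_{ij}$'s and vanishes outright by your own displayed relation (its right-hand side is zero, since $2\pi\i\,\partial_\tau k=\partial_x\partial_z k$ in the untwisted case --- this is exactly the paper's first step, $\partial_\tau K_i=\partial_i\Delta$). The bracket $[\Delta,K_i]$, by contrast, has no linear-in-$t$ part: after expanding $\tilde\Delta_0(K_i)$, $\xi_{s,\gamma}(K_i)$ and $[g_{kl},K_i]$ everything lives in the span of $[(\ad x_i)^p t^\alpha_{ij},(\ad x_i)^q t^\beta_{ij}]$ and $[t^\alpha_{ij},t^\beta_{ik}]$. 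So there is no way to ``collect the pieces coefficient of $t^{\alpha}_{ij}$'' into a single mixed heat identity; the two pieces sit in different degrees and must be shown to vanish separately.

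For the bracket piece the paper first treats $n=2$, reducing $[\Delta,K_1]=0$ to the vanishing of a doubly-indexed coefficient $c^{\alpha,\beta}_{p,q}(z)$; this is Lemma~\ref{lem-utile}, a four-term identity in products of the form $g_\bullet\cdot k_\bullet$. It is \emph{not} the $x$-derivative of~\eqref{twtheta:id}: one instead rewrites $g_\alpha(x,z)-1/x^2=e^{-2\pi\i ax}(g(x,z-\tilde\alpha)-1/x^2)-2\pi\i a(k_\alpha(x,z)+1/x)$, after which the $2\pi\i a$-correction terms assemble into an instance of~\eqref{twtheta:id} and the remaining untwisted piece is the corresponding identity from \cite{CEE}. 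For $n>2$ the paper does not rerun the CDYBE argument of Proposition~\ref{prop:flatness1}; it splits $[\Delta,K_1]$ into a sum of images under the insertion maps $\{-\}_{1i}:\t_{1,2}^\Gamma\to\t_{1,n}^\Gamma$ (handled by the $n=2$ case) plus explicit triple terms $e_{1ij}+e_{1ji}-Y_{1ij}-[g_{ij},k_{1i}+k_{1j}]-[g_{1i},k_{1j}]-[g_{1j},k_{1i}]$ in $\C[\ad x_1,\ad x_i][t^\alpha_{1i},t^\beta_{1j}]$, each of which again vanishes by Lemma~\ref{lem-utile}. Your proposal should be reorganized along these lines.
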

\noindent In what follows, we often drop $\tau$ from the notation when it does not lead to any confusion. 
\begin{proof}
Let us first prove that $\partial_\tau K_i(\zz)=\partial_i\Delta(\zz)$. 
This follows from the identity $\partial_zg_\alpha(x,z)=2\pi\i\partial_\tau k_\alpha(x,z)$, 
which is proved as follows (here $\tilde\alpha=(a_0,a)$ is any lift of $\alpha$): 
\begin{eqnarray*}
& \partial_zg_\alpha(x,z)&=\partial_z\partial_x k_\alpha(x,z)=\partial_z\partial_x
\left(e^{-2\pi\i ax}k(x,z-\tilde\alpha)+{{e^{-2\pi\i ax}-1}\over x}\right) \\
&& =e^{-2\pi\i ax}\partial_z\partial_x k(x,z-\tilde\alpha)-2\pi\i ae^{-2\pi\i ax}\partial_z k(x,z-\tilde\alpha) \\
&& =2\pi\i e^{-2\pi\i ax}\partial_\tau k(x,z-\tilde\alpha)-2\pi\i ae^{-2\pi\i ax} \partial_z k(x,z-\tilde\alpha) \\
&& =2\pi\i\partial_\tau\left(e^{-2\pi\i ax}k(x,z-\tilde\alpha)\right)=2\pi\i\partial_\tau k_\alpha(x,z). 
\end{eqnarray*}

It remains to prove that $[\Delta(\zz),K_i(\zz)] = 0$. 

Let us first prove it in the case $n=2$. Namely, we will prove that 
\begin{equation}\label{eq-tautwisted}
[\Delta_0 + \frac12\sum_{s\geq0,\gamma\in\Gamma}A_{s,\gamma}\delta_{s,\gamma}
-\sum_{\alpha\in\Gamma}g_\alpha(\on{ad}x_1,z)(t^{\alpha}_{12})\,,
\,y_2+\sum_{\beta\in\Gamma}k_\beta(\on{ad}x_1,z)(t^{\beta}_{12})]=0.
\end{equation}
One the one hand, 
\begin{eqnarray*}
&& [\Delta_0+\frac12\sum_{s\geq0,\gamma\in\Gamma}A_{s,\gamma}\delta_{s,\gamma}
-\sum_{\alpha\in\Gamma}g_\alpha(\on{ad}x_1,z)(t^{\alpha}_{12})\,,\,y_2] \\
&& =[y_1,\sum_{\alpha\in\Gamma}g_\alpha(\on{ad}x_1,z)(t^{\alpha}_{12})]
-\frac12\sum_{\alpha,\gamma\in\Gamma}\sum_{p,q}a_{p,q}^\gamma
[\on{ad}^px_1(t^{\alpha-\gamma}_{12}),\on{ad}^qx_1(t^{\alpha}_{12})]\,,
\end{eqnarray*}
where 
$$
\frac{\tilde{\varphi}_\gamma(u)-\tilde{\varphi}_{-\gamma}(v)}{u+v}=\sum_{p,q}a_{p,q}^\gamma u^pv^q\,.
$$ 
On the other hand, 
\begin{align*}
[\Delta_0,\sum_\beta k_\beta(\on{ad}x_1,z)(t^{\beta}_{12})]
=& [y_1,\sum_\beta g_\beta(\on{ad}x_1,z)(t^{\beta}_{12})]\\
& +\sum_{p,q}\sum_{\alpha,\beta\in\Gamma}b^{\alpha,\beta}_{p,q}(z)
[\on{ad}^px_1(t^{\alpha}_{12}),\on{ad}^qx_1(t^{\beta}_{12})]\,,
\end{align*}

where the series $\sum_{p,q}b^{\alpha,\beta}_{p,q}(z)u^pv^q$ is given by 
$$
\frac12\left(\frac1{v^2}\left(k_\beta(u+v,z)-k_\beta(u,z)-v\partial_uk_\beta(u,z)\right)
-\frac1{u^2}\left(k_\alpha(u+v,z)-k_\alpha(v,z)-u\partial_vk_\alpha(v,z)\right)\right)\,.
$$
Therefore the l.h.s.~of (\ref{eq-tautwisted}) equals 
$$
\frac12\left(
\sum_{p,q}\sum_{\alpha,\beta\in\Gamma}c^{\alpha,\beta}_{p,q}(z)
[\on{ad}^px_1(t^{\alpha}_{12}),\on{ad}^qx_1(t^{\beta}_{12})]\right)\,,
$$
where $\sum_{p,q}c_{p,q}^{\alpha,\beta}u^pv^q(z)$ is given by 
\begin{eqnarray*}
\frac1{v^2}\left(k_\beta(u+v,z)-k_\beta(u,z)-vg_\beta(u,z)\right)
-\frac1{u^2}\left(k_\alpha(u+v,z)-k_\alpha(v,z)-ug_\alpha(v,z)\right) && \\
+\frac{\tilde{\varphi}_{\beta-\alpha}(u)-\tilde{\varphi}_{\alpha-\beta}(v)}{u+v}
+k_\alpha(u+v,z)\tilde{\varphi}_{\alpha-\beta}(v)-k_\beta(u+v,z)\tilde{\varphi}_{\beta-\alpha}(u) && \\
+k_\beta(u,z)g_\alpha(v,z)-g_\beta(u,z)k_\alpha(v,z)\,,&&
\end{eqnarray*}
which can be rewritten as 
\begin{eqnarray}
\left(g_{\beta-\alpha}(u,z-z')-\frac1{u^2}\right)\left(k_\alpha(u+v,z')+\frac1{u+v}\right)
-\left(g_{\alpha-\beta}(v,z'-z)-\frac1{v^2}\right)\left(k_\beta(u+v,z)+\frac1{u+v}\right) && \nonumber \\
+\left(g_\alpha(v,z')-\frac1{v^2}\right)\left(k_\beta(u,z)+\frac1u\right)
-\left(g_\beta(u,z)-\frac1{u^2}\right)\left(k_\alpha(v,z')+\frac1v\right), \label{eq-utile} && 
\end{eqnarray}
with $z=z'$. 
Thus, to end the proof of equation (\ref{eq-tautwisted}), the following lemma is sufficient: 
\begin{lemma}\label{lem-utile}
Expression (\ref{eq-utile}) equals zero. 
\end{lemma}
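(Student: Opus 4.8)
The plan is to reduce (\ref{eq-utile}) to the untwisted Kronecker--function identity used for $\tau$-flatness in \cite{CEE}, following the same pattern as the reduction of the spatial flatness identity in the proof of Proposition~\ref{prop:flatness1}. Fix lifts $\tilde\alpha=(a_0,a)$ and $\tilde\beta=(b_0,b)$ of $\alpha,\beta$ in $\Lambda_{\tau,\Gamma}$, so that $\tilde\beta-\tilde\alpha$ is a lift of $\beta-\alpha$, and set $H(x,z):=k(x,z|\tau)+1/x=\theta(x+z)/(\theta(x)\theta(z))$ and $G(x,z):=g(x,z|\tau)-1/x^2=\partial_xH(x,z)$ for the untwisted functions. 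The first step is to rewrite each of the eight factors of (\ref{eq-utile}) in terms of $H$ and $G$ at shifted spectral parameters. Besides $k_\alpha(x,z)+1/x=e^{-2\pi\i ax}\bigl(k(x,z-\tilde\alpha)+1/x\bigr)$, already used in the proof of Proposition~\ref{prop:flatness1}, one needs its $x$-derivative, immediate from $g_\alpha=\partial_xk_\alpha$:
\[
g_\alpha(x,z)-\tfrac1{x^2}=e^{-2\pi\i ax}\bigl(G(x,z-\tilde\alpha)-2\pi\i\,a\,H(x,z-\tilde\alpha)\bigr).
\]

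Substituting these into (\ref{eq-utile}), the next step is to check that the four product terms all acquire one and the same scalar prefactor $e^{-2\pi\i(bu+av)}$: for the first term this is $e^{-2\pi\i(b-a)u}\cdot e^{-2\pi\i a(u+v)}$, and a direct check (using $z=z'$ and that $\tilde\beta-\tilde\alpha$ lifts $\beta-\alpha$) yields the same prefactor for the other three. Dividing it out and abbreviating $q:=z'-\tilde\alpha$, $r:=z-\tilde\beta$ (so that the spectral argument of the $g_{\beta-\alpha}$-factor becomes $r-q$ and that of the $g_{\alpha-\beta}$-factor becomes $q-r$), one is left with the sum of
\[
G(u,r-q)H(u+v,q)-G(v,q-r)H(u+v,r)+G(v,q)H(u,r)-G(u,r)H(v,q)
\]
and the correction term $-2\pi\i$ times
\[
(b-a)\bigl(H(u,r-q)H(u+v,q)+H(v,q-r)H(u+v,r)\bigr)+a\,H(v,q)H(u,r)-b\,H(u,r)H(v,q).
\]

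The correction vanishes by Fay's trisecant identity $F(v,q)F(u,r)=F(u+v,q)F(u,r-q)+F(u+v,r)F(v,q-r)$ applied to the Kronecker function $F=H$ (see \cite{CEE}): the parenthesised expression equals $H(v,q)H(u,r)$, so the whole correction collapses to $\bigl((b-a)+a-b\bigr)H(v,q)H(u,r)=0$. The remaining $G$-expression is precisely the (two-spectral-parameter) untwisted counterpart of (\ref{eq-utile}), i.e.\ the $\tau$-direction identity for $k$ and $g=\partial_xk$ established in \cite{CEE}, evaluated at the shifted arguments $q$ and $r$; this closes the proof of the lemma. I expect the only real difficulty to be bookkeeping: keeping the three distinct shifted parameters $z'-\tilde\alpha$, $z-\tilde\beta$ and $z-z'-\tilde\beta+\tilde\alpha$ straight through the substitution, so that Fay's identity and the identity of \cite{CEE} apply with matching arguments. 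Note that $z=z'$ is used nowhere above, so the same computation gives the more general identity needed for the flatness proof for arbitrary $n$.
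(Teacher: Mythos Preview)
Your proof is correct and follows essentially the same route as the paper's. Both arguments substitute the relations $k_\alpha(x,z)+1/x=e^{-2\pi\i ax}H(x,z-\tilde\alpha)$ and its $x$-derivative into (\ref{eq-utile}), split the result into a ``main'' part (the untwisted $\alpha=\beta=0$ identity at shifted spectral parameters, which the paper records as ``follows from an explicit computation'') and a correction, and then kill the correction by Fay's trisecant identity. The only cosmetic difference is that the paper leaves the correction in twisted form and invokes the already-proven identity (\ref{twtheta:id}) rather than the raw untwisted Fay identity; as (\ref{twtheta:id}) is itself just Fay after the same exponential substitution, this amounts to the same computation. Your closing remark that $z=z'$ is never used is exactly right and is what the paper exploits immediately afterwards for general $n$.
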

\begin{proof}[Proof of the lemma.]
The case $\alpha=\beta=0$ follows from an explicit computation. Then we choose lifts $\tilde\alpha=(a_0,a)$ 
and $\tilde\beta=(b_0,b)$ of $\alpha$ and $\beta$, respectively. One has 
\begin{eqnarray*}
&& k_\alpha(x,z)+1/x=e^{-2{\rm i}\pi ax}\left(k(x,z-\tilde\alpha)+1/x\right) \quad \textrm{and} \\
&& g_\alpha(x,z)-1/x^2=e^{-2{\rm i}\pi ax}\left(g(x,z-\tilde\alpha)-1/x^2\right)
-2{\rm i}\pi b\left(k_\alpha(x,z)+1/x\right)\,.
\end{eqnarray*}
Therefore (\ref{eq-utile}) equals 
\begin{eqnarray*}
&& -2{\rm i}\pi (a-b)\big(\left(k_\alpha(v,z')+\frac1{v}\right)\left(k_\beta(u,z)+\frac1u\right) 
+\left(k_{\beta-\alpha}(u,z-z')+\frac1{u}\right)\left(k_\alpha(u+v,z')+\frac1{u+v}\right) \\
&& +\left(k_{\alpha-\beta}(v,z'-z)-\frac1{v}\right)\left(k_\beta(u+v,z)+\frac1{u+v}\right)\big)\,,
\end{eqnarray*}
which vanishes because of (\ref{twtheta:id}). 
\end{proof}

Let us now assume that $n> 2$. 

Let $\t_{n,+}^\Gamma\subset\t_{1,n}^\Gamma$ be the subalgebra 
generated by $x_i,t^{\alpha}_{jk}$ ($i,j,k = 1,\ldots,n$, $j\neq k$, $\alpha\in\Gamma$). 

There are functions $E_{ij}(\zz)$ with values in $\t_{n,+}^\Gamma$ defined by 
$E_{ij}(\zz)= [\Delta_0,k_{ij}] -[y_i,g_{ij}]$, which 
decomposes as $e_{ij}(\zz) + \sum_{k\neq i,j} e_{ijk}(\zz)$, where $e_{ij}(\zz)$ takes its values in 
$$
\on{Span}_{p,q,\alpha,\beta}[(\ad x_i)^p(t^{\alpha}_{ij}),(\ad x_j)^q(t^{\beta}_{ij})]
$$ 
and $e_{ijk}(\zz)$ takes its values in 
$\on{Span}_{\alpha,\beta}\C[\ad x_i,\ad x_j][t^{\alpha}_{ij},t^{\beta}_{jk}]$. 
Explicitely, 
$$
e_{ij}(\zz)= \sum_{\alpha,\beta}\sum_{p,q}b_{p,q}^{\alpha,\beta}(z_{ij})
[\ad ^px_i(t^{\alpha}_{ij}),\ad ^qx_i(t^{\beta}_{ij})]\,,
$$
where $b_{p,q}^{\alpha,\beta}(z)$ is as before, and 
$$
e_{ijk}(\zz) = \sum_{\alpha,\beta}\left(
{{k_\alpha(\ad x_i,z_{ij})-k_\alpha(-\ad x_j,z_{ij})}\over{(\ad x_i + \ad x_j)^2}}
- {{g_\alpha(-\ad x_j,z_{ij})}\over{\ad x_i + \ad x_j}}\right)
[t^{\alpha}_{ij},t^{\beta}_{ik}]. 
$$

We then define $Y_{ijk}(\zz)=[y_i,g_{jk}]$, which takes its values in 
$\on{Span}_{\alpha,\beta}\C[\ad x_i,\ad x_j][t^{\alpha}_{ij},t^{\beta}_{jk}]$. 
Explicitly, 
$$
Y_{ijk}(\zz) = -\sum_{\alpha,\beta}{{g_\beta(\ad x_j,z_{jk}) - g_{-\beta}(\ad x_k,-z_{jk})}
\over{\ad x_j + \ad x_k}}[t^{\alpha}_{ij},t^{\beta}_{jk}]
$$
(remember that $g_\alpha(u,z)=g_{-\alpha}(-u,-z)$). We obtain  
\begin{eqnarray}
[\Delta(\zz),K_1(\zz)] & = & \sum_{i>1}\left([\Delta_0,k_{1i}]-[y_1,g_{1i}]
+[\frac12\sum_\alpha\delta_{\tilde{\varphi}_\alpha},k_{1i}]-[g_{1i},k_{1i}]\right)
-[\frac12\sum_\alpha\delta_{\tilde{\varphi}_\alpha},y_1] \nonumber \\
& & -\sum_{1<i<j}\left([g_{1i},k_{1j}]+[g_{1j},k_{1i}]+[g_{ij},k_{1i}+k_{1j}]\right) \nonumber \\
& = & \sum_{i>1}\left(e_{12}+[\frac12\sum_\alpha\delta_{\tilde{\varphi}_\alpha},k_{12}]
-[g_{12},k_{12}]-[\frac12\sum_\alpha\delta_{\tilde{\varphi}_\alpha},y_1]\right)_{1i} \label{n=2} \\
& & +\sum_{1<i<j}\left(e_{1ij}+e_{1ji}-Y_{1ij}-[g_{ij},k_{1i}+k_{1j}]-[g_{1i},k_{1j}]-[g_{1j},k_{1i}]\right), 
\nonumber
\end{eqnarray} 
where $\{-\}_{1i}$ is the natural morphism $\t_{1,2}^\Gamma\to\t_{1,n}^\Gamma$, $u_1\mapsto u_1$, 
$u_2\mapsto u_i$ ($u=x,y$), $t^{\alpha}_{12}\mapsto t^{\alpha}_{1i}$. 
It is easy to see that the line (\ref{n=2}) equals $\sum_{i>1}\left([\Delta(z_{1i}),K_1(z_{1i})]\right)_{1i}$ 
which is zero as we have seen before (case $n=2$). 

Therefore $[\Delta(\zz),K_1(\zz)]$ equals 
\begin{eqnarray*}
 && \sum_{1<i<j}\sum_{\alpha,\beta}
\big({{k_\alpha(\ad x_1,z_{1i})-k_\alpha(-\ad x_i,z_{1i})-g_\alpha(-\ad x_i,z_{1i})
(\ad x_1 + \ad x_i)}\over{(\ad x_1+\ad x_i)^2}}[t^{\alpha}_{1i},t^{\beta}_{1j}] \\
&& -{{k_\beta(\ad x_1,z_{1j})-k_\beta(-\ad x_j,z_{1j})-g_\beta(-\ad x_j,z_{1j})
(\ad x_1 + \ad x_j)}\over{(\ad x_1 + \ad x_j)^2}}[t^{\alpha}_{1i},t^{\beta}_{1j}] \\
&& -{{g_{\beta-\alpha}(\ad x_i,z_{ij})-g_{\alpha-\beta}(\ad x_j,-z_{ij})}
\over{\ad x_i + \ad x_j}}[t^{\alpha}_{1i},t^{\beta}_{1j}] \\ 
&& -\left(k_\alpha(\ad x_1,z_{1i})g_{\beta-\alpha}(-\ad x_j,z_{ij})
-k_\beta(\ad x_1,z_{1j})g_{\beta-\alpha}(\ad x_i,z_{ij})\right)[t^{\alpha}_{1i},t^{\beta}_{1j}] \\
&& -\left(k_\beta(-\ad x_j,z_{1j})g_\alpha(-\ad x_i,z_{1i})
-k_\alpha(-\ad x_i,z_{1i})g_\beta(-\ad x_j,z_{1j})\right)[t^{\alpha}_{1i},t^{\beta}_{1j}]\big)\,, 
\end{eqnarray*}
which is zero because of Lemma \ref{lem-utile}. 
\end{proof} 
We have therefore proved (Proposition \ref{prop:flatness1} and Proposition \ref{prop:flatness2} above): 
\begin{theorem}\label{thm:nabla}
The connection $\nabla_{n,\Gamma}$ is flat, and thus so is $\bar\nabla_{n,\Gamma}$. 
\hfill \qed
\end{theorem}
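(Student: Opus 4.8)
The plan is to unwind what flatness of $\nabla_{n,\Gamma}=d-\eta$ means in terms of the components of the one-form $\eta(\zz|\tau)=\Delta(\zz|\tau)d\tau+\sum_iK_i(\zz|\tau)dz_i$ on the trivial $\mathbf{G}_n^\Gamma$-bundle over $\C^n\times\mathfrak H-{\rm Diag}_{n,\Gamma}$, with coordinates $z_1,\dots,z_n,\tau$. The curvature $F=d\eta-\eta\wedge\eta$ is a $\t_{1,n}^\Gamma\rtimes\d^\Gamma$-valued two-form, hence decomposes into its $dz_i\wedge dz_j$ (for $i<j$), $dz_i\wedge d\tau$, and $d\tau\wedge d\tau$ components. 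The last of these vanishes identically, as there is a single $\tau$-coordinate. The $dz_i\wedge dz_j$ component equals, up to sign, $[\partial_i-K_i(\zz|\tau),\partial_j-K_j(\zz|\tau)]$, which vanishes by Proposition \ref{prop:flatness1}. The $dz_i\wedge d\tau$ component equals, up to sign, $[\partial_\tau-\Delta(\zz|\tau),\partial_i-K_i(\zz|\tau)]$, which vanishes by Proposition \ref{prop:flatness2}. Therefore $F=0$, so the equivariant connection is flat; since it is equivariant for the cocycle treated in Proposition \ref{prop:equiv}, it descends to the announced flat connection $\nabla_{n,\Gamma}$ on $\mathcal{P}_{n,\Gamma}$.

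For the statement about $\bar\nabla_{n,\Gamma}$, I would argue as follows. By the previous theorem, $\bar\nabla_{n,\Gamma}$ is already a well-defined connection on $\bar\mathcal{P}_{n,\Gamma}$, and it is the image of $\nabla_{n,\Gamma}$ under the Lie algebra quotient morphism $\t_{1,n}^\Gamma\rtimes\d^\Gamma\to\bar\t_{1,n}^\Gamma\rtimes\d^\Gamma$. Curvature is natural with respect to such morphisms of the structure Lie algebra: the image of the curvature form of $\nabla_{n,\Gamma}$ is precisely the curvature form of $\bar\nabla_{n,\Gamma}$. Hence vanishing of the former forces vanishing of the latter, and $\bar\nabla_{n,\Gamma}$ is flat.

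All the genuine work has already been isolated into Propositions \ref{prop:flatness1} and \ref{prop:flatness2}, so given those the theorem is immediate and there is no real obstacle remaining in this step; the only thing to check is the (trivial) observation that the curvature has no components other than the two covered by those propositions. Had the two propositions not been separated out, the hard part would have lived in Proposition \ref{prop:flatness2}: reducing the computation of $[\Delta(\zz),K_i(\zz)]$ for general $n$ to the case $n=2$ via the morphisms $\{-\}_{1i}$, and then the $n=2$ identity \eqref{eq-utile} of Lemma \ref{lem-utile}, which ultimately rests on equation $(3)$ of \cite{CEE}, equivalently on the twisted theta identity \eqref{twtheta:id}.
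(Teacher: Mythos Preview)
Your proposal is correct and matches the paper's approach exactly: the paper states the theorem with an immediate \qed, citing only Propositions \ref{prop:flatness1} and \ref{prop:flatness2} in the sentence preceding it. Your explicit unpacking of the curvature into its $dz_i\wedge dz_j$ and $dz_i\wedge d\tau$ components, and the observation that no other components occur, is precisely the trivial glue the paper leaves implicit, and your treatment of $\bar\nabla_{n,\Gamma}$ via naturality of curvature is the intended reading of the phrase ``and thus so is $\bar\nabla_{n,\Gamma}$''.
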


Let us now show how the universal KZB connexion over moduli spaces coincides with the one defined over configuration spaces.

\begin{remark}
The connection $\nabla_{n,\Gamma}$ defined above is an extension to 
the twisted moduli space $\cM_{1,n}^\Gamma$ of the connection 
$\nabla_{n,\tau,\Gamma}$ defined over the twisted configuration space 
$\on{Conf}(E_{\tau,\Gamma},n,\Gamma)$ from Subsection \ref{sec:flatconn}.

Indeed, the pull-back of the principal 
${\mathbf{G}}_n^\Gamma$-bundle with flat connection 
$(\mathcal{P}_{n,\Gamma},\nabla_{n,\Gamma})$ along 
the inclusion 
\[
\on{Conf}(E_{\tau,\Gamma},n,\Gamma)\hookrightarrow\cM_{1,n}^\Gamma
\]
of the fiber at (the class of) $\tau$ in $Y(\Gamma)$ admits 
a reduction of structure group to 
\[
\exp(\t_{1,n}^\Gamma)\subset{\mathbf{G}}_n^\Gamma\,,
\]
and one easily sees from our explicit formul\ae that it coincides with 
$(P_{\tau,n,\Gamma},\nabla_{\tau,n,\Gamma})$ constructed in Subsection \ref{sec:flatconn}. 




Similarly, the connection $\bar\nabla_{n,\Gamma}$ is an extension to 
the twisted moduli space $\bar\cM_{1,n}^\Gamma$ of the connection 
$\bar\nabla_{n,\tau,\Gamma}$ defined over the reduced twisted configuration 
space $\on{C}(E_{\tau,\Gamma},n,\Gamma)$. 
\end{remark}


\subsection{Variations}

Let us first consider the unordered variants 
\[
{\mathcal M}^\Gamma_{1,[n]}=\mathfrak{S}_n \backslash {\mathcal M}^\Gamma_{1,n}
\quad\mathrm{and}\quad
\bar{\mathcal M}^\Gamma_{1,[n]}=\mathfrak{S}_n \backslash \bar{\mathcal M}^\Gamma_{1,n}\,,
\]
where, as before, the action of $\mathfrak{S}_n$ is again by permutation on $\mathbb{C}^n$. 
\begin{proposition}
1.~There exists a unique principal ${\mathbf G}^\Gamma_{n}\rtimes \mathfrak{S}_{n}$-bundle 
\gls{Pnng} over ${\mathcal M}^\Gamma_{1,[n]}$, such that a section over 
$U\subset {\mathcal M}^\Gamma_{1,[n]}$ is a function 
\[
f:\tilde\pi^{-1}(U) \to {\mathbf G}^\Gamma_{n}\rtimes \mathfrak{S}_{n}
\]
satisfying the conditions of Proposition \ref{prop:bundle} as well as 
$f(\sigma\zz|\tau) = \sigma^{-1} f(\zz|\tau)$ for $\sigma\in \mathfrak{S}_n$ (here 
$\tilde\pi : (\C^{n}\times \HH) - \on{Diag}_{n,\Gamma} \to {\mathcal M}^\Gamma_{1,[n]}$ 
is the canonical projection). \\
\indent 2.~There exists a unique flat connection $\nabla_{[n],\Gamma}$ on ${\mathcal P}_{[n],\Gamma}$, 
whose pull-back to $(\C^{n}\times \HH) - \on{Diag}_{n,\Gamma}$ is the connection 
\[
\on d - \Delta(\zz|\tau) \on{d}\tau - \sum_{i}K_{i}(\zz|\tau)\on{d}z_{i}
\]
on the trivial ${\mathbf G}_{n}^\Gamma\rtimes \mathfrak{S}_n$-bundle. \\
\indent 3.~The image of $({\mathcal P}_{[n],\Gamma},\nabla_{[n],\Gamma})$ 
under ${\mathbf G}^\Gamma_{n}\rtimes \mathfrak{S}_{n}\to \bar{\mathbf G}^\Gamma_{n}\rtimes \mathfrak{S}_{n}$ 
is the pull-back of a flat principal $\bar{\mathbf G}^\Gamma_{n}\rtimes \mathfrak{S}_{n}$-bundle 
$(\bar{\mathcal P}_{[n],\Gamma},\bar\nabla_{[n],\Gamma})$ on $\bar{\mathcal M}^\Gamma_{1,[n]}$. 
\end{proposition}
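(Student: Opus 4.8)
The plan is to obtain both $\mathcal{P}_{[n],\Gamma}$ and $\nabla_{[n],\Gamma}$ by descent from the trivial $\mathbf{G}_n^\Gamma\rtimes\mathfrak{S}_n$-bundle over $(\C^n\times\HH)-\on{Diag}_{n,\Gamma}$, equipped with the connection $\on d-\Delta(\zz|\tau)\on d\tau-\sum_iK_i(\zz|\tau)\on dz_i$, along the enlarged group $\big((\Z^n)^2\rtimes\on{SL}_2^\Gamma(\Z)\big)\rtimes\mathfrak{S}_n$; this group acts on $(\C^n\times\HH)-\on{Diag}_{n,\Gamma}$ because $\mathfrak{S}_n$ permutes the $\C^n$-coordinates and preserves $\on{Diag}_{n,\Gamma}$. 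The only ingredient beyond Proposition~\ref{prop:bundle} is the $\mathfrak{S}_n$-direction: I would extend the non-abelian $1$-cocycle $(c_g)_{g\in(\Z^n)^2\rtimes\on{SL}_2^\Gamma(\Z)}$ of the previous subsection by setting $c_\sigma:=\sigma^{-1}$ for $\sigma\in\mathfrak{S}_n$, and check that this still satisfies the cocycle condition with values in $\mathbf{G}_n^\Gamma\rtimes\mathfrak{S}_n$. By the semidirect-product structure this reduces to the $\mathfrak{S}_n$-equivariance of the old cocycle, which I would verify on the generators $(\delta_i,0),(0,\delta_i),S,T$: the translation cases follow from $c_{(0,\delta_i)}=e^{-2\pi\i x_i}$ and the fact that $\sigma$ carries $x_i$ to $x_{\sigma(i)}$, while the cases $S,T$ follow from $c_T(\zz|\tau)=\tau^\ddd e^{(2\pi\i/\tau)(\XXX+\sum_jz_jx_j)}$ together with the $\mathfrak{S}_n$-invariance of $\XXX$ and $\ddd$. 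Equivalently, one may pull everything back from the untwisted moduli space along the $\mathfrak{S}_n$-equivariant map $\iota:\cM_{1,n}^\Gamma\to\cM_{1,n}^0$, reducing the statement to the corresponding one in \cite{CEE}. Uniqueness of the bundle is immediate since the listed conditions determine its sections.

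For the connection, the key point is that the one-form $\eta(\zz|\tau)=\Delta(\zz|\tau)\on d\tau+\sum_iK_i(\zz|\tau)\on dz_i$ is $\mathfrak{S}_n$-equivariant: property (g) of \S\ref{sec:flatconn} gives $K_i((ij)*\zz)=(ij)\cdot K_i(\zz)$, and the analogous identity $\Delta((ij)*\zz)=(ij)\cdot\Delta(\zz)$ is immediate from the definition of $\Delta$ and the symmetry of its building blocks $k_\alpha,g_\alpha$ (the $\sl_2$-part $\Delta_0$ and the Eisenstein term $\sum_{s,\gamma}A_{s,\gamma}\delta_{s,\gamma}$ carry no $\zz$ and are $\mathfrak{S}_n$-invariant). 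Combined with the $(\Z^n)^2\rtimes\on{SL}_2(\Z)$-equivariance proved in Proposition~\ref{prop:equiv}, this yields the equivariance identity \eqref{eq:equiv} for every element of $\big((\Z^n)^2\rtimes\on{SL}_2^\Gamma(\Z)\big)\rtimes\mathfrak{S}_n$, so $\eta$ descends to a connection $\nabla_{[n],\Gamma}$ on $\mathcal{P}_{[n],\Gamma}$ with the asserted pull-back. Flatness of $\nabla_{[n],\Gamma}$ is a local statement and hence is inherited directly from Theorem~\ref{thm:nabla}.

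For part~3, the quotient morphism $\mathbf{G}_n^\Gamma\to\bar{\mathbf{G}}_n^\Gamma$ is $\mathfrak{S}_n$-equivariant (the central elements $\sum_ix_i$, $\sum_iy_i$ span an $\mathfrak{S}_n$-stable subspace), so it induces $\mathbf{G}_n^\Gamma\rtimes\mathfrak{S}_n\to\bar{\mathbf{G}}_n^\Gamma\rtimes\mathfrak{S}_n$. Pushing $(\mathcal{P}_{[n],\Gamma},\nabla_{[n],\Gamma})$ forward and descending it further along $\bar\cM_{1,n}^\Gamma\to\bar\cM_{1,[n]}^\Gamma=\mathfrak{S}_n\backslash\bar\cM_{1,n}^\Gamma$ requires exactly the three conditions $\sum_i\bar K_i(\zz|\tau)=0$, $\bar K_i(\zz+u\sum_j\delta_j|\tau)=\bar K_i(\zz|\tau)$ and $\bar\Delta(\zz+u\sum_j\delta_j|\tau)=\bar\Delta(\zz|\tau)$, which were already established when descending $\nabla_{n,\Gamma}$ to $\bar\cM_{1,n}^\Gamma$, together with the obvious $\mathfrak{S}_n$-compatibility of these properties. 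Flatness of $\bar\nabla_{[n],\Gamma}$ and uniqueness then follow as before.

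The argument is essentially bookkeeping assembled from material already in hand; the one genuinely new verification is the $\mathfrak{S}_n$-equivariance of $(c_g)$ and of $\eta$, and the main point to watch is keeping the left/right conventions of the two nested semidirect products consistent — in particular matching the choice $c_\sigma=\sigma^{-1}$ with the prescribed condition $f(\sigma\zz|\tau)=\sigma^{-1}f(\zz|\tau)$ and with the $\mathfrak{S}_n$-action on $\mathbf{G}_n^\Gamma$.
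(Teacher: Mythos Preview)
Your approach is essentially the same as the paper's: extend the non-abelian $1$-cocycle $(c_{\tilde g})$ to the semidirect product with $\mathfrak{S}_n$, reduce the extended cocycle condition to the $\mathfrak{S}_n$-equivariance $\sigma c_{\tilde g}(\zz|\tau)\sigma^{-1}=c_{\sigma\tilde g\sigma^{-1}}(\sigma^{-1}\zz)$, and then check $\mathfrak{S}_n$-equivariance of the one-form $\eta$ via property (g) for the $K_i$'s and a direct check for $\Delta$. The paper's proof is terser (it states the equivariance identity for $c_{\tilde g}$ without unpacking generators, and simply asserts that $\bar\Delta$ is equivariant), while you spell out the generator-by-generator verification and explain why the Eisenstein and $\sl_2$ pieces of $\Delta$ are $\mathfrak{S}_n$-invariant; your alternative pullback argument via $\iota:\cM_{1,n}^\Gamma\to\cM_{1,n}^0$ is not used here by the paper but is in the spirit of how Proposition~\ref{prop:bundle} itself was proved.

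One small point of bookkeeping: the paper sets $c_{(1,\sigma)}=\sigma$ rather than $\sigma^{-1}$, whereas your choice $c_\sigma=\sigma^{-1}$ is the one literally matching the condition $f(\sigma\zz|\tau)=\sigma^{-1}f(\zz|\tau)$ in the statement; either convention works provided it is carried through consistently, and you rightly flag this as the place to be careful.
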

\begin{proof}
For the proof of the first point, one easily checks that 
$\sigma c_{\tilde g}(\zz|\tau) \sigma^{-1} = c_{\sigma \tilde g\sigma^{-1}}(\sigma^{-1}\zz)$, 
where $\tilde g\in (\Z^{n})^{2}\rtimes \on{SL}^\Gamma_{2}(\Z)$, $\sigma\in \mathfrak{S}_n$. 
It follows that there is a unique cocycle 
$c_{(\tilde g,\sigma)} : \C^{n}\times\HH\to \bar{\mathbf G}^\Gamma_{n}\rtimes \mathfrak{S}_n$ 
such that $c_{(\tilde g,1)} = c_{\tilde g}$ and 
$c_{(1,\sigma)}(\zz|\tau)=\sigma$. 

For the proof of the second point, taking into account Theorem \ref{thm:nabla},
one only has to show that this connection is $\mathfrak{S}_n$-equivariant. 
We have already mentioned that $\sum_{i}\bar K_{i}(\zz|\tau)\on{d}z_{i}$ is equivariant, 
and $\bar\Delta(\zz|\tau)$ is also checked to be so. 

The third point is obvious. 
\end{proof}

\medskip

For every (class of) $\tau$ in $Y(\Gamma)$, one has an action of $\Gamma^n$ on the fiber 
$\on{Conf}(E_{\tau,\Gamma},n,\Gamma)$ at $\tau$ of $\cM_{1,n}^\Gamma\twoheadrightarrow Y(\Gamma)$, 
resp.~an action of $\Gamma^n/\Gamma$ on the fiber $\on{C}(E_{\tau,\Gamma},n,\Gamma)$ at $\tau$ of 
$\bar\cM_{1,n}^\Gamma\twoheadrightarrow Y(\Gamma)$. 
Recall that 
\[
\Gamma^n \backslash \on{Conf}(E_{\tau,\Gamma},n,\Gamma)=\on{Conf}(E_{\tau,\Gamma},n)
\quad\mathrm{and}\quad
(\Gamma^n/\Gamma) \backslash  \on{C}(E_{\tau,\Gamma},n,\Gamma)=\on{C}(E_{\tau,\Gamma},n)\,.
\]
This action depends holomorphically of $\tau$, so that there is an action of $\Gamma^n$ on $\cM_{1,n}^\Gamma$, 
resp.~an action of $\Gamma^n/\Gamma$ on $\bar\cM_{1,n}^\Gamma$. 
\begin{proposition}\label{equiv:m}
1.~There exists a unique principal ${\mathbf G}^\Gamma_{n}\rtimes \Gamma^n$-bundle \gls{Pn}
over $\Gamma^n \backslash{\mathcal M}^\Gamma_{1,n}$, such that a section over 
$U\subset \Gamma^n \backslash {\mathcal M}^\Gamma_{1,n}$ is a function 
\[
f:\tilde\pi^{-1}(U) \to {\mathbf G}^\Gamma_{n}\rtimes \Gamma^n
\]
satisfying the following conditions: 
\begin{align*}
f(\mathbf{z}+\frac{\delta_i}{M}|\tau)&=(\bar 1,\bar 0)_if(\mathbf{z}|\tau), \\
f(\mathbf{z}+\tau\frac{\delta_i}{N}|\tau)&=e^{-{{2\pi\i}\over N}x_i }(\bar 0,\bar 1)_if(\mathbf{z}|\tau), \\
f(\zz,\tau+1)&=f(\zz|\tau),\\
f({\zz\over\tau}|-{1\over\tau})&=\tau^{\ddd}e^{{2\pi\i\over\tau}(\XXX+\sum_iz_ix_i)}f(\zz|\tau). 
\end{align*}
Here, $\tilde\pi : (\C^{n}\times \HH) - \on{Diag}_{n,\Gamma} \to \Gamma^n \backslash {\mathcal M}^\Gamma_{1,n}$ 
is the canonical projection. \\
\indent 2.~There exists a unique flat connection on this bundle whose pull-back to 
$(\C^{n}\times \HH) - \on{Diag}_{n,\Gamma}$ is the connection 
\[
\on d - \Delta(\zz|\tau) \on{d}\tau - \sum_{i}K_{i}(\zz|\tau)\on{d}z_{i}
\]
on the trivial ${\mathbf G}_{n}^\Gamma\rtimes \Gamma^n$-bundle. \\
\indent 3.~The image of the above flat bundle under 
${\mathbf G}^\Gamma_{n}\rtimes \Gamma^n\to \bar{\mathbf G}^\Gamma_{n}\rtimes(\Gamma^n/\Gamma)$ 
is the pull-back of a flat principal $\bar{\mathbf G}^\Gamma_{n}\rtimes(\Gamma^n/\Gamma)$-bundle 
on $(\Gamma^n/\Gamma) \backslash \bar{\mathcal M}^\Gamma_{1,n}$. 
\end{proposition}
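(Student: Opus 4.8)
The plan is to prove this in parallel with Proposition \ref{prop:bundle} and Theorem \ref{thm:nabla}, viewing the desired pair as the descent of $(\mathcal P_{n,\Gamma},\nabla_{n,\Gamma})$ along the finite covering $\cM_{1,n}^\Gamma\to\Gamma^n\backslash\cM_{1,n}^\Gamma$, twisted by the action $\Gamma^n\to\Aut(\t_{1,n}^\Gamma)$ of \S\ref{sec:deft1n} --- equivalently, as the family over $Y(\Gamma)$ of the fibrewise objects $(\mathcal P_{(\tau,\Gamma),n},\nabla_{(\tau,\Gamma),n})$ of Subsections \ref{sec:flatconn} and \ref{sec6.2var}. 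So nothing genuinely new needs to be constructed; the point is to organise the already-available pieces.

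For point 1, I would first check that the action $\Gamma^n\to\Aut(\t_{1,n}^\Gamma)$ commutes with all the derivations $\xi_{s,\gamma}$ (a direct reindexing), so that it extends, fixing $\d^\Gamma$ and hence $\sl_2\subset\mathbf G_n^\Gamma$ pointwise, to automorphisms $\phi_\alpha$ of $\mathbf G_n^\Gamma$. Then the $\Gamma^n$-action on $\cM_{1,n}^\Gamma$, induced on the cover $(\C^n\times\HH)-\on{Diag}_{n,\Gamma}$ by translating $z_i$ by a lift $\tilde\alpha=a+b\tau\in\Lambda_{\tau,\Gamma}$ of $\alpha$, lifts to the trivialisation of $\mathcal P_{n,\Gamma}$ by
\[
\alpha_i:(\zz,\tau,g)\longmapsto\big(\zz+\tilde\alpha\delta_i,\tau,e^{-2\pi\i bx_i}\phi_{\alpha_i}(g)\big)\,.
\]
This lift is $\phi_{\alpha_i}$-equivariant for the right $\mathbf G_n^\Gamma$-action, it is independent of the choice of $\tilde\alpha$ modulo $\Lambda_\tau$ (translation by $\tau\delta_i$ on $\mathcal P_{n,\Gamma}$ is multiplication by $e^{-2\pi\i x_i}$), and it descends through $(\Z^n)^2\rtimes\on{SL}_2^\Gamma(\Z)$: compatibility with the lattice uses only $[x_i,x_j]=0$, and compatibility with $\on{SL}_2^\Gamma(\Z)$ rests --- and this is exactly where the congruence conditions enter --- on the fact that such matrices preserve the standard $\Gamma$-structure, hence act trivially on $\Lambda_{\tau,\Gamma}/\Lambda_\tau\simeq\Gamma$ (contrast the rule $T(\bar a_0,\bar a)=(-\bar a,\bar a_0)$ from the proof of Proposition \ref{prop:equiv}, which is \emph{not} trivial in general), together with the fact that each $\phi_\alpha$ fixes the cocycles $c_g$ (which only involve the $x_j$'s, $\ddd$ and $\XXX$). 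The quotient $\mathcal P_n:=\mathcal P_{n,\Gamma}/\Gamma^n$ is then a principal $\mathbf G_n^\Gamma\rtimes\Gamma^n$-bundle over $\Gamma^n\backslash\cM_{1,n}^\Gamma$, its sections over $U$ are precisely the functions satisfying the four listed conditions (the first two encoding the lift of the $\alpha_i$, the last two inherited from $\mathcal P_{n,\Gamma}$ as in Proposition \ref{prop:bundle}), and uniqueness is immediate.

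For points 2 and 3, the connection $\on d-\eta$ with $\eta=\Delta(\zz|\tau)\on d\tau+\sum_iK_i(\zz|\tau)\on dz_i$ (the underlying Lie algebra being unchanged, since $\Gamma^n$ is discrete) descends to $\mathcal P_n$ if and only if it is invariant under the twisted $\Gamma^n$-action, i.e.\ satisfies the equivariance identity \eqref{eq:equiv} for the two new translation generators. Their $\on dz_i$-components are exactly conditions (e) and (f), i.e.\ Proposition \ref{prop:equivariance1}; their $\on d\tau$-components give the further identities $\Delta(\zz+\tfrac{\delta_j}M)=(\bar1,\bar0)_j\cdot\Delta(\zz)$, which follows as in Proposition \ref{prop:equivariance1} from the behaviour of $g_\alpha$ under $z\mapsto z+\tfrac1M$, and
\[
\Delta\Big(\zz+\tfrac\tau N\delta_j\Big)+\tfrac1N K_j\Big(\zz+\tfrac\tau N\delta_j\Big)=(\bar0,\bar1)_j\cdot e^{-\frac{2\pi\i}N\ad x_j}\,\Delta(\zz)\,,
\]
which is the $\tfrac\tau N$-refinement of \eqref{equivD} and is proven exactly as there, from \eqref{eq:gktheta} together with the reindexing of $\tilde\alpha$ by $\tfrac\tau N$ used in the proof of Proposition \ref{prop:equivariance1}. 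Flatness on $\mathcal P_n$ is inherited from Theorem \ref{thm:nabla}, since the pull-back connection is literally $\on d-\eta$. Finally, point 3 follows as for the reduced statements already established: one knows $\sum_i\bar K_i=0$, $\bar K_i(\zz+u\sum_j\delta_j)=\bar K_i(\zz)$ and $\bar\Delta(\zz+u\sum_j\delta_j)=\bar\Delta(\zz)$, while the diagonal $\Gamma\hookrightarrow\Gamma^n$ maps to $1$ in $\Gamma^n/\Gamma$ and $\prod_ie^{-2\pi\i\bar x_i}=1$ in $\bar{\mathbf G}_n^\Gamma$, so the whole structure descends to $(\Gamma^n/\Gamma)\backslash\bar\cM_{1,n}^\Gamma$.

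The step I expect to be the main obstacle is the descent of the lifted $\Gamma^n$-action (and of the connection) through $\on{SL}_2^\Gamma(\Z)$ and the verification of the $\tfrac\tau N$-refinement of \eqref{equivD}. Conceptually both are clear --- "the congruence subgroup fixes the $\Gamma$-structure'' and "split the $\tau$-shift into $N$ equal pieces and reindex the lift'' --- but carrying them out cleanly requires pairing the modular transformation laws \eqref{eq:modulark}, \eqref{eq:gktheta} of $k_\alpha$ and $g_\alpha$ with the precise $\Gamma$-indexing conventions, and is best organised by exploiting that the relevant cocycles are already available over $\cM_{1,n}^\Gamma$ rather than by working with an explicit generating set of $\on{SL}_2^\Gamma(\Z)$, which (unlike $\on{SL}_2(\Z)$) is not generated by $S$ and $T$.
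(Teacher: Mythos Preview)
Your proposal is correct and follows the same approach as the paper: point~1 is supplied in more detail than the paper (which simply leaves it to the reader), point~3 is immediate, and point~2 is reduced to the equivariance conditions (e) and (f) of Proposition~\ref{prop:equivariance1} for the $K_i$ together with the analogous conditions for $\Delta$, the nontrivial one being the $\tfrac{\tau}{N}$-shift identity, which the paper isolates as Lemma~\ref{Equivariancetau}.

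One point is worth recording: your form of that identity,
\[
\Delta\Big(\zz+\tfrac{\tau}{N}\delta_j\Big)+\tfrac{1}{N}\,K_j\Big(\zz+\tfrac{\tau}{N}\delta_j\Big)
=(\bar0,\bar1)_j\cdot e^{-\frac{2\pi\i}{N}\ad x_j}\,\Delta(\zz)\,,
\]
is the one actually dictated by \eqref{eq:equiv}, since $g^*(dz_j)=dz_j+\tfrac{1}{N}\,d\tau$ under $g:(\zz,\tau)\mapsto(\zz+\tfrac{\tau}{N}\delta_j,\tau)$; it also iterates $N$ times to recover \eqref{equivD}. The paper's stated form in Lemma~\ref{Equivariancetau} omits the factor $\tfrac{1}{N}$ (equivalently, has an unrescaled $-K_j(\zz)$ on the right), which for $N>1$ does not iterate back to \eqref{equivD}; your version is the correct refinement, and the proof goes through exactly as you outline, by combining \eqref{eq:gktheta} with the $\tfrac{\tau}{N}$-reindexing already used in Proposition~\ref{prop:equivariance1}.
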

\begin{proof}
The first assertion is left to the reader. Assertion 3 is evident. Let us prove assertion 2.
By Proposition \ref{prop:equivariance1}, we know that the $K_i$ satisfy 
\begin{itemize}
	\item[(e)] $K_i(\mathbf{z}+\frac{\delta_j}{M}|\tau)
							=(\bar1,\bar0)_j\cdot K_i(\mathbf{z}|\tau)$, 
	\item[(f)] $K_i(\mathbf{z}+\frac{\tau\delta_j}{N}|\tau)
							=(\bar0,\bar1)_j\cdot e^{-{{2\pi\i}\over N}\ad(x_j)}K_i(\mathbf{z}|\tau)$.
\end{itemize}
The fact that $\Delta(\zz+{\delta_j\over M}|\tau)=(\bar{1},\bar{0})_j\cdot\Delta(\zz|\tau)$ is immediate. Thus, it remains to 
show that $\Delta(\zz+{\tau\delta_j\over N}|\tau)=e^{-{{2\pi\i}\over N}\ad(x_j)}(\bar{0},\bar{1})_j\cdot (\Delta(\zz|\tau)-K_j(\zz|\tau))$
which is proved in Lemma \ref{Equivariancetau} below.
\end{proof}

\begin{lemma}\label{Equivariancetau}
We have
\begin{equation}\label{equivD2}
\Delta(\zz+{\tau\delta_j\over N}|\tau)=e^{-{{2\pi\i}\over N}\ad(x_j)}(\bar{0},\bar{1})_j\cdot (\Delta(\zz|\tau)-K_j(\zz|\tau)).
\end{equation}
\end{lemma}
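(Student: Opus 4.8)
The plan is to run a computation entirely parallel to the one that establishes the full-period identity \eqref{equivD} in the proof of Proposition~\ref{prop:equiv}, now ``refined by a factor $N$'' in the $\tau$-direction and carrying a $\Gamma$-twist. First I would reformulate the claim. Since the automorphism $(\bar0,\bar1)_j$ of $\t_{1,n}^\Gamma\rtimes\d^\Gamma$ fixes $x_j$, it commutes with $e^{-\frac{2\pi\i}{N}\ad(x_j)}$ as an operator; combining this with the transformation of $K_j$ under the shift obtained right before Proposition~\ref{prop:flatness1} (namely $K_j(\zz+\frac\tau N\delta_j)=e^{-\frac{2\pi\i}{N}\ad(x_j)}(\bar0,\bar1)_j\cdot K_j(\zz)$, a consequence of Proposition~\ref{prop:equivariance1}), proving \eqref{equivD2} amounts to checking the $\mathrm d\tau$-component of the equivariance relation \eqref{eq:equiv} for the translation $\zz\mapsto\zz+\frac\tau N\delta_j$ and the cocycle $e^{-2\pi\i x_j/N}(\bar0,\bar1)_j$, i.e.
\[
\Delta\big(\zz+\tfrac{\tau}{N}\delta_j\big)+\tfrac1N\,K_j\big(\zz+\tfrac{\tau}{N}\delta_j\big)=e^{-\frac{2\pi\i}{N}\ad(x_j)}(\bar0,\bar1)_j\cdot\Delta(\zz)
\]
(the $\tfrac1N$ appearing because $\mathrm dz_j$ pulls back to $\mathrm dz_j+\tfrac1N\mathrm d\tau$); this is the exact $N$-refined, $\Gamma$-twisted counterpart of \eqref{equivD}.

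Second, I would record the $N$-refined analogue of the theta identity \eqref{eq:gktheta}. If $\tilde\alpha=\frac uM+\frac vN\tau$ is a lift of $\alpha\in\Gamma$, then $\tilde\alpha-\frac\tau N=\frac uM+\frac{v-1}{N}\tau$ is a lift of $\alpha-(\bar0,\bar1)$; hence the definition of $k_\alpha$ together with $k(x,z+\tau)=e^{-2\pi\i x}k(x,z)+\frac{e^{-2\pi\i x}-1}{x}$ gives $k_\alpha(x,z+\frac\tau N)+\frac1x=e^{-\frac{2\pi\i}{N}x}\big(k_{\alpha-(\bar0,\bar1)}(x,z)+\frac1x\big)$, and differentiating in $x$ (recall $g_\alpha=\partial_xk_\alpha$) yields the power-series identity
\[
g_\alpha\big(x,z+\tfrac\tau N\big)=e^{-\frac{2\pi\i}{N}x}\,g_{\alpha-(\bar0,\bar1)}(x,z)-\tfrac{2\pi\i}{N}\,k_\alpha\big(x,z+\tfrac\tau N\big)+p(x)\,,\qquad p(x):=\tfrac{1-e^{-\frac{2\pi\i}{N}x}-\frac{2\pi\i}{N}x}{x^2}\,.
\]
Substituting $x=\ad(x_j)$, applying to $t^\alpha_{jk}$, summing over $\alpha$ and reindexing $\alpha\mapsto\alpha-(\bar0,\bar1)$ in the first summand (which is exactly the effect of $(\bar0,\bar1)_j$), this produces
\[
g_{jk}\big(z_{jk}+\tfrac\tau N\big)=e^{-\frac{2\pi\i}{N}\ad(x_j)}(\bar0,\bar1)_j\cdot g_{jk}(z_{jk})-\tfrac{2\pi\i}{N}K_{jk}\big(z_{jk}+\tfrac\tau N\big)+p(\ad x_j)\Big(\sum_{\alpha}t^\alpha_{jk}\Big)\,.
\]

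Third, I would expand both sides of the reformulated identity using $\Delta(\zz)=-\frac1{2\pi\i}\big(\Delta_0+\frac12\sum_{s,\gamma}A_{s,\gamma}(\tau)\delta_{s,\gamma}-\sum_{i<l}g_{il}(z_{il})\big)$ and $K_j(\zz)=-y_j+\sum_{m\neq j}K_{jm}(z_{jm})$. On the right-hand side, $e^{-\frac{2\pi\i}{N}\ad(x_j)}(\bar0,\bar1)_j$ fixes $\frac12\sum_{s,\gamma}A_{s,\gamma}(\tau)\delta_{s,\gamma}$ (as $(\bar0,\bar1)_j$ fixes every $\delta_{s,\gamma}$ and $[x_j,\delta_{s,\gamma}]=0$) and leaves every $g_{kl}(z_{kl})$ with $k,l\neq j$ untouched (by \eqref{eqn:etL2}, and as $(\bar0,\bar1)_j$ does not affect $t^\bullet_{kl}$), so these terms match their counterparts in $\Delta(\zz+\frac\tau N\delta_j)$ verbatim; and it sends $\Delta_0$ to $\Delta_0+\frac{2\pi\i}{N}y_j-p(\ad x_j)\big(\sum_{m\neq j}\sum_\alpha t^\alpha_{jm}\big)$, using $[x_j,\Delta_0]=-y_j$ and $[x_j,y_j]=-\sum_{m\neq j}\sum_\alpha t^\alpha_{jm}$ (relation \eqref{eqn:etT}). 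On the left-hand side, by evenness of $g$ each $\{j,m\}$-pair in $\sum_{i<l}g_{il}$ contributes $g_{jm}(z_{jm}+\frac\tau N)$, to which I apply the second step; then, collecting the $-y_j$ from $K_j$, the $\Delta_0$-contribution just computed, the $p(\ad x_j)$-pieces, and the $K_{jm}(z_{jm}+\frac\tau N)$-pieces weighted by the $\frac1N$ in front of $K_j(\zz+\frac\tau N\delta_j)$, all the ``polar'' corrections should cancel pairwise exactly as in the proof of Proposition~\ref{prop:equiv}, via $\ad(x_j)(y_m)=\sum_\alpha t^\alpha_{jm}$ \eqref{eqn:etSbis} and $[x_i+x_j,t^\alpha_{ij}]=0$ \eqref{eqn:et4T2}, leaving precisely $\Delta(\zz+\frac\tau N\delta_j)$.

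I expect the only genuine difficulty to be this last bookkeeping: keeping the group-label shift $\alpha\mapsto\alpha-(\bar0,\bar1)$ synchronized with the analytic shift $z\mapsto z+\frac\tau N$, and tracking the three weights ($-\frac{2\pi\i}{N}$ from the theta identity, $\frac1N$ from the $K_j$-correction, and $\frac{2\pi\i}{N}$ inside $e^{-2\pi\i\ad(x_j)/N}\Delta_0$) so that all the $1/x$- and $1/x^2$-type contributions cancel. No analytic input beyond equation~(3) of \cite{CEE} (equivalently \eqref{twtheta:id}), already used in the proof of Proposition~\ref{prop:flatness1}, is needed; the argument is a term-by-term twisted refinement of the one carried out for \eqref{equivD}.
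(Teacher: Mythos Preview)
Your approach is essentially the same as the paper's: expand $\Delta$ using its definition, compute the action of $e^{-\frac{2\pi\i}{N}\ad(x_j)}(\bar0,\bar1)_j$ on $\Delta_0$ via $[x_j,\Delta_0]=-y_j$ and \eqref{eqn:etT}, use the $\tau/N$-periodicity of $k_\alpha$ and $g_\alpha$ (with the label shift $\alpha\mapsto\alpha-(\bar0,\bar1)$), and then match terms exactly as in the verification of \eqref{equivD}. The paper's proof is terser---it writes out the two sides, quotes the transformation of $K_{ij}(z+\tau/N)$ from Proposition~\ref{prop:equivariance1} and the formula $g_\alpha(x,z)-1/x^2=e^{-2\pi\i ax}(g(x,z-\tilde\alpha)-1/x^2)-2\pi\i a(k_\alpha(x,z)+1/x)$, and then says ``follow the same lines as in the proof of relation \eqref{equivD}''---whereas you derive the refined identity for $g_\alpha(x,z+\tau/N)$ explicitly. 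Your version makes the mechanism of the $\Gamma$-label shift more transparent.

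One discrepancy to flag: your reformulated identity carries a coefficient $\tfrac1N$ in front of $K_j(\zz+\tfrac{\tau}{N}\delta_j)$, coming (correctly) from $g^*(dz_j)=dz_j+\tfrac1N\,d\tau$. The paper's proof, however, sets up the computation as $\Delta(\zz+\tfrac{\tau}{N}\delta_j)+K_j(\zz+\tfrac{\tau}{N}\delta_j)=e^{-\frac{2\pi\i}{N}\ad(x_j)}(\bar0,\bar1)_j\cdot\Delta(\zz)$, i.e.\ without the $\tfrac1N$, and this is what is literally equivalent to \eqref{equivD2} as stated. So your claim that your reformulation ``amounts to'' \eqref{equivD2} is off by this factor: either your reformulation needs the $\tfrac1N$ removed, or the lemma as printed is missing a $\tfrac1N$ in front of $K_j$. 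Your geometric derivation of the $\tfrac1N$ is sound, so this looks like a typo in the paper rather than an error in your strategy; but you should be aware that your displayed identity and \eqref{equivD2} are not the same equation. The computational scheme is unaffected either way.
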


\begin{proof}
On the one hand, 
$$
-2\pi\i\Delta(\zz+{\tau\delta_j \over N})=\Delta_0+\frac12\sum_{s\geq 0,\gamma\in\Gamma}A_{s,\gamma}\delta_{s,\gamma}
-\sum_{\substack{k<l\\ k,l\neq j}}g_{kl}(z_{kl})
-\sum_{\substack{k:k\neq j\\\alpha\in\Gamma}}g_\alpha(\ad x_j,z_{jk}+{\tau \over N})(t^\alpha_{jk}).
$$
On the other hand, as
\begin{eqnarray*}
e^{-{{2\pi\i}\over N}\ad(x_j)}(\Delta_0) & = & (1-(1-e^{-{{2\pi\i}\over N}\ad(x_j)})(\Delta_0)=
(\Delta_0)+\frac{1-e^{-{{2\pi\i}\over N}ad x_j}}{\ad x_j}(y_j) \\ & = & 
\frac{e^{-{{2\pi\i}\over N}\ad x_j}-1}{(\ad x_j)^2}\left(\sum_{\alpha\in\Gamma}\sum_{k:k\neq j}t_{jk}^\alpha\right)
\end{eqnarray*}

and the $\delta_{s,\gamma}$ commute with the $x_j$, we compute 
$$
2\pi\i\left(K_j(\zz+{\tau \over N}\delta_j|\tau)-e^{-{{2\pi\i}\over N}\ad(x_j)  }(\bar{0},\bar{1})_j\cdot\Delta(\zz|\tau)\right)
$$
$$
=2\pi\i\left((\bar{0},\overline{-1})_j\cdot K_j(\zz+{\tau \over N}\delta_j|\tau)-e^{-{{2\pi\i}\over N}\ad(x_j)  }\Delta(\zz|\tau)\right)
$$
\begin{eqnarray*}
& = & 2\pi\i(\bar{0},\overline{-1})_j\cdot \left(\sum_{k:k\neq j}k_\alpha(\ad x_j,z_{jk}+{\tau \over N})-y_j\right)+\Delta_0
+\frac{1-e^{-{{2\pi\i}\over N}\ad x_j  }}{\ad x_j}(y_j) \\
&  & +\frac12\sum_{\substack{s\geq 0,\\\gamma\in\Gamma}}A_{s,\gamma}\delta_{s,\gamma}
-e^{-{{2\pi\i}\over N}\ad x_j   }\sum_{k<l}g_{kl}(z_{kl}).
\end{eqnarray*}
Next, by combining
$$
K_{ij}(z+{\tau\over N})=(\bar0,\bar{-1})_i \cdot e^{-{{2\pi\i}\over N}\ad x_j}\cdot (K_{ij}(z)) 
									+ (\bar0,\bar{-1})_i\cdot ( \sum_{\alpha\in\Gamma}{{e^{-{{2\pi\i}\over N}\ad x_i}-1}\over{\ad x_i }}(t^{\alpha}_{ij}))\,, 
$$
with equation
$$
g_\alpha(x,z)-1/x^2=e^{-2{\rm i}\pi ax}\left(g(x,z-\tilde\alpha)-1/x^2\right)
-2{\rm i}\pi a_0\left(k_\alpha(x,z)+1/x\right)\,,
$$
we can follow the same lines as in the proof of relation \eqref{equivD} to obtain the wanted equation.
\end{proof}

We also leave to the reader the task of combining several variants. 

\section{Realizations}
\label{Realizations}

\subsection{Realizations of $\bar\t_{1,n}^\Gamma$ and $\bar\t_{n,+}^\Gamma$}\label{sec:real1}

Let $\g$ be a Lie algebra and $t_\g\in S^2(\g)^\g$ be nongenerate. Assume that there is a group 
morphism $\Gamma\to\on{Aut}(\g,t_\g)$ and set $\l:=\g^\Gamma$ and 
$\u:=\oplus_{\chi\in\widehat\Gamma-\{0\}}\g_\chi$, 
where $\g_\chi$ is the eigenspace of $\g$ corresponding to the character $\chi:\Gamma \longrightarrow \C^{\star}$. 
Then $\g=\l\oplus \u$ with $[\l,\u]\subset\u$, and $\t=\t_\l+\t_\u$ with $\t_\l\in S^2(\l)^\l$ and 
$\t_\u\in S^2(\u)^\l$. 
We denote by $(a,b)\mapsto\<a,b\>$ the invariant pairing on $\l$ corresponding to $t_\l$ and write 
$t_\l=\sum_\nu e_\nu\otimes e_\nu$. 

Let $\Diff(\l^*)$ be the algebra of algebraic differential operators on $\l^*$. It has generators $\x_l$, 
$\partial_l$ ($l\in\l$) and relations $\x_{tl+l'}=t\x_l+\x_{l'}$, $\partial_{tl+l'}=t\partial_l+\partial_{l'}$, 
$[\x_l,\x_{l'}]=0=[\partial_l,\partial_{l'}]$ and $[\partial_l,\x_{l'}]=\<l,l'\>$. 
Moreover, one has a Lie algebra morphism $\l\to\Diff(\l^*);l\mapsto X_l:=\sum_\nu\x_{[l,e_\nu]}\partial_{e_\nu}$. 
We denote by $\l^{\rm diag}$ the image of the induced morphism 
$$
\l\ni l\mapsto Y_l:=X_l\otimes 1+1 \otimes \sum_{i=1}^nl^{(i)}\in\Diff(\l^*)\otimes U(\g)^{\otimes n}\,,
$$
and define \gls{Hgl} as the Hecke algebra of $A_n:=\Diff(\l^*)\otimes U(\g)^{\otimes n}$ with respect to $\l^{\on{diag}}$. 
Namely, $H_n(\g,\l^*):=(A_n)^\l/(A_n\l^{\rm diag})^\l$. It acts in an obvious way on 
$(\cO_{\l^*}\otimes(\otimes_{i=1}^n V_i))^\l$ if $(V_i)_{1\leq i\leq n}$ is a collection of $\g$-modules. 

Let us set $\x_\nu:=\x_{e_\nu}$ and $\partial_\nu:=\partial_{e_\nu}$, and write $\alpha^{(i)}\cdot$ for the action of 
$\alpha\in\Gamma$ on the $i$-th component in $U(\g)^{\otimes n}$. 
\begin{proposition}\label{prop:real1}
There is a unique Lie algebra morphism $\rho_\g:\bar\t_{1,n}^\Gamma\to H_{n}(\g,\l^*)$ defined by
\begin{align*}
\bar x_i&\longmapsto M\sum_\nu\x_\nu\otimes e_\nu^{(i)},  \\
\bar y_i&\longmapsto -N\sum_\nu\partial_\nu\otimes e_\nu^{(i)}, \\
\bar t_{ij}^\alpha&\longmapsto 1\otimes(\alpha^{(1)}\cdot t_\g)^{(ij)}.
\end{align*}
\end{proposition}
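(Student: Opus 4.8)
The plan is to verify that the proposed assignments respect all defining relations of $\bar\t_{1,n}^\Gamma$, using the alternative presentation from Lemma \ref{lem:pres1} (with $\sum_i\bar x_i=\sum_i\bar y_i=0$), so that only relations \eqref{eqn:etS}, \eqref{eqn:etSbis}, \eqref{eqn:etN}, \eqref{eqn:etL1}, \eqref{eqn:etL2}, and \eqref{eqn:et4T1} need to be checked; existence and uniqueness of the morphism then follow from the universal property of a Lie algebra given by generators and relations. Throughout I would work inside $A_n=\Diff(\l^*)\otimes U(\g)^{\otimes n}$, checking that the images of the generators commute with $\l^{\rm diag}$ (so that they descend to elements of the Hecke algebra $H_n(\g,\l^*)$), and that the bracket identities already hold at the level of $A_n$ modulo $A_n\l^{\rm diag}$.

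First I would record the elementary facts: $[\partial_\nu,\x_\mu]=\<e_\nu,e_\mu\>$, the $\x$'s mutually commute, the $\partial$'s mutually commute, and $\sum_\nu \x_\nu\otimes\<e_\nu,l\>=\x_l\otimes 1$-type contractions behave as expected; also that $\alpha\in\Gamma$ acts on $\g$ by $(t_\g$-)orthogonal automorphisms, so $\alpha\cdot t_\g=\sum_\nu (\alpha e_\nu)\otimes(\alpha e_\nu)$ and, crucially, the Casimir-type element $(\alpha^{(1)}\cdot t_\g)^{(ij)}$ is $\l$-invariant (because $t_\g$ is $\g$-invariant and $\l$ commutes with $\Gamma$). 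The relations \eqref{eqn:etN} ($[\bar x_i,\bar x_j]=[\bar y_i,\bar y_j]=0$) and \eqref{eqn:etL2} ($[\bar x_i,\bar t^\alpha_{jk}]=[\bar y_i,\bar t^\alpha_{jk}]=0$ for $i\notin\{j,k\}$) are immediate since the tensor factors are disjoint and the $\x$'s (resp.\ $\partial$'s) commute. Relation \eqref{eqn:etS}, $\bar t^\alpha_{ij}=\bar t^{-\alpha}_{ji}$, follows from $(\alpha\cdot t_\g)^{(ij)}=((-\alpha)\cdot t_\g)^{(ji)}$, which in turn is the symmetry of $t_\g\in S^2(\g)$ together with $\alpha^{-1}$ acting as the inverse orthogonal map. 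For \eqref{eqn:etSbis}, $[\bar x_i,\bar y_j]=\sum_\alpha \bar t^\alpha_{ij}$ for $i\neq j$: here $[\bar x_i,\bar y_j]= -MN\sum_{\nu,\mu}[\x_\nu,\partial_\mu]\otimes e_\nu^{(i)}e_\mu^{(j)} = MN\sum_\nu 1\otimes e_\nu^{(i)}e_\nu^{(j)} = MN\, t_\l^{(ij)}$, and one must check that $MN\,t_\l=\sum_{\alpha\in\Gamma}\alpha\cdot t_\g$; this is a standard averaging identity, since $\frac1{|\Gamma|}\sum_\alpha \alpha\cdot t_\g$ is the $\Gamma$-invariant projection of $t_\g$, equal to $t_\l$ (the $\u$-part averages to zero), and $|\Gamma|=MN$. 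The symmetry $[\bar x_i,\bar y_j]=[\bar x_j,\bar y_i]$ is then visible from symmetry of $t_\l$.

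The substantial relation is \eqref{eqn:et4T1}, $[\bar t^\alpha_{ij},\bar t^{\alpha+\beta}_{ik}+\bar t^\beta_{jk}]=0$ for distinct $i,j,k$; this is the twisted $4T$/infinitesimal-braid identity and is precisely the ``cyclotomic classical Yang--Baxter'' fact. I would deduce it from $\g$-invariance of $t_\g$: writing $t=t_\g$ and using that $\alpha\cdot t$ is a sum of the form $\sum (\alpha a)\otimes(\alpha b)$ with $[\Delta(g), t]=0$, one computes $[(\alpha\cdot t)^{(ij)},(\alpha+\beta)\cdot t)^{(ik)}+(\beta\cdot t)^{(jk)}]$ and shows it vanishes by applying the invariance of $t$ in the $i$-th tensor slot after conjugating by $\alpha^{(i)}$ appropriately — essentially the same computation as the classical $[t^{(12)},t^{(13)}+t^{(23)}]=0$ but bookkeeping the $\Gamma$-twists so that the exponents match ($\alpha$ on $ij$, $\alpha+\beta$ on $ik$, $\beta$ on $jk$ compose consistently). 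Finally \eqref{eqn:etL1}, $[\bar t^\alpha_{ij},\bar t^\beta_{kl}]=0$ for four distinct indices, is immediate from disjoint supports.

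I expect the main obstacle to be two-fold: first, checking carefully that each image actually lies in $H_n(\g,\l^*)=(A_n)^\l/(A_n\l^{\rm diag})^\l$ rather than merely in $A_n$ — i.e.\ $\l$-invariance of the images of $\bar x_i,\bar y_i,\bar t^\alpha_{ij}$, which for $\bar x_i$ and $\bar y_i$ requires the identity $[\,Y_l,\ \sum_\nu\x_\nu\otimes e_\nu^{(i)}\,]\in A_n\l^{\rm diag}$, unwinding the definition $Y_l=X_l\otimes1+1\otimes\sum_j l^{(j)}$ and the formula $X_l=\sum_\nu\x_{[l,e_\nu]}\partial_{e_\nu}$; and second, verifying that the relations from the quotiented presentation involving $\sum_i\bar x_i$ and $\sum_i\bar y_i$ being zero are consistent, i.e.\ $\sum_i\bar x_i\mapsto M\sum_\nu\x_\nu\otimes(\sum_i e_\nu^{(i)})$ which is $M\sum_\nu\x_\nu\cdot$(diagonal action) $\equiv 0$ in the Hecke algebra since $\sum_i e_\nu^{(i)}$ is (up to the $X_l$ term) exactly $\l^{\rm diag}$ — one must check the $\x_\nu X_{e_\nu}\otimes 1$ correction also lies in $(A_n\l^{\rm diag})^\l$, and similarly for $\sum_i\bar y_i$. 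Once these bookkeeping points are settled the proof is a sequence of short bracket computations, and I would present it in that order: invariance/well-definedness first, then the easy relations, then \eqref{eqn:etSbis} and \eqref{eqn:et4T1}.
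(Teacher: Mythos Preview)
Your approach is essentially the same as the paper's: both use the alternative presentation from Lemma~\ref{lem:pres1} and verify the defining relations. The difference is one of emphasis. The paper declares that ``the only non trivial check is that the relation $\sum_j \bar x_j = 0$ is preserved''; all the bracket relations you work through, including \eqref{eqn:et4T1}, are treated as routine consequences of $\g$-invariance of $t_\g$ and disjointness of tensor slots. For the one genuinely non-trivial point, the paper's argument is sharper than your sketch: after writing $1\otimes\sum_i e_\nu^{(i)} = Y_{e_\nu} - X_{e_\nu}\otimes 1$, one gets $\rho_\g(\sum_i \bar x_i) \equiv -M\sum_\nu \x_\nu X_{e_\nu} \otimes 1 = -M\sum_{\nu_1,\nu_2}\x_{e_{\nu_1}}\x_{[e_{\nu_1},e_{\nu_2}]}\partial_{\nu_2}\otimes 1$, and this expression vanishes \emph{identically in $A_n$} (not merely modulo $A_n\l^{\rm diag}$, as your phrasing suggests): the $\x$'s commute, so $\x_{e_{\nu_1}}\x_{[e_{\nu_1},e_{\nu_2}]}$ is symmetric under the swap induced by $\l$-invariance of $t_\l$, while $\sum_{\nu_1} e_{\nu_1}\otimes[e_{\nu_1},e_{\nu_2}]$ is antisymmetric. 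The analogous check for $\sum_i\bar y_i$ is deferred to the untwisted case in \cite{CEE}.
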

\begin{proof}
Let us use the presentation of $\bar\t_{1,n}^\Gamma$ coming from Lemma \ref{lem:pres1}. 
The only non trivial check is that the relation $\sum_j \bar x_j = 0$ is preserved. 
We have
\begin{eqnarray*}
  \rho_{\mathfrak{g}} \left( \overset{n}{\sum_{i = 1}} x_i \right) & = &M
  \sum_{\nu} \mathrm{x}_{\nu} \otimes \overset{n}{\sum_{i = 1}} e_{\nu}^{(i)}
  =M \sum_{\nu} \left( \mathrm{x}_{\nu} \otimes 1 \right) \left( 1 \otimes
  \overset{n}{\sum_{i = 1}} e_{\nu}^{(i)} \right) \\ & \equiv & M \sum_{\nu} \left(
  \mathrm{x}_{\nu} \otimes 1 \right) (Y_{\nu} - X_{\nu} \otimes 1)\\
  & \equiv & M - \sum_{\nu} \mathrm{x}_{\nu} X_{\nu} \otimes 1 = M
  \underset{\nu_1, \nu_2}{\sum} \mathrm{x}_{e_{\nu_1}} \mathrm{x}_{[e_{\nu_1},
  e_{\nu_2}]} \partial_{\nu_2} \otimes 1 = 0
\end{eqnarray*}
as $\mathrm{x}_{e_{\nu_1}}$ commutes with $\mathrm{x}_{[e_{\nu_1},e_{\nu_2}]}$ and $t_{\mathfrak{l}}$ is invariant. 
Here the sign $\equiv$ means that both terms define the same equivalence class in  $H_{n}(\g,\l)$. 

The proof that $\sum_j \bar y_j = 0$ is preserved is a consequence of the fact that 
$\rho_{\mathfrak{g}}\left(\sum_j \bar y_j\right)=0$, which was proven in \cite[Proposition 6.1]{CEE}. 
\end{proof}
Let $\bar\t_{n,+}^\Gamma\subset\bar\t_{1,n}^\Gamma$ be the Lie subalgebra generated by 
$\bar x_i$'s and $\bar t_{jk}^\alpha$'s. 
Then the restriction of $\rho_\g$ to $\bar\t_{n,+}^\Gamma$ lifts to a Lie algebra morphism 
$\bar\t_{n,+}^\Gamma\to(\cO_{\l^*}\otimes U(\g)^{\otimes n})^\l$. 
Moreover, $(\cO_{\l^*}\otimes U(\g)^{\otimes n})^\l$ is a subalgebra of $H_{n}(\g,\l^*)$ 
that is a Lie ideal for the commutator, and one has a commutative diagram 
$$
\xymatrix{
\bar\t_{1,n}^\Gamma\times\bar\t_{n,+}^\Gamma\ar[d]\ar[r]^{(u,v)\mapsto[u,v]} & \bar\t_{n,+}^\Gamma \ar[d] \\
H_{n}(\g,\l^*)\times(\cO_{\l^*}\otimes U(\g)^{\otimes n})^\l \ar[r] & (\cO_{\l^*}\otimes U(\g)^{\otimes n})^\l\,. 
}
$$

\subsection{Realizations of $\bar\t_{1,n}^\Gamma\rtimes\d^\Gamma$}

Let us write $t_\g=\sum_ua_u\otimes a_u$. 

\begin{proposition}
The Lie algebra morphism $\rho_\g$ of Proposition \ref{prop:real1} extends to a Lie 
algebra morphism $\bar\t_{1,n}^\Gamma\rtimes\d^\Gamma\to H_{n}(\g,\l^*)$ defined by 
\begin{align*}
\ddd&\longmapsto-\frac12(\sum_\nu\x_\nu\partial_\nu+\partial_\nu\x_\nu)\otimes1, \\
\XXX&\longmapsto\frac12(\sum_\nu\x_\nu^2)\otimes1, \\
\Delta_0&\longmapsto-\frac12(\sum_\nu\partial_\nu^2)\otimes1,\\
\xi_{s,\gamma}&\longmapsto \frac{1}{|\Gamma|} \sum_{\nu_1,\cdots,\nu_s,u}\x_{\nu_1}\cdots\x_{\nu_s}\otimes
\sum_{i=1}^n\left(\ad(e_{\nu_1})\cdots\ad(e_{\nu_s})(a_u)\odot(\gamma\cdot a_u)\right)^{(i)}\,.
\end{align*}
\end{proposition}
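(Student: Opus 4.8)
The plan is to extend the Lie algebra morphism $\rho_\g$ from Proposition~\ref{prop:real1} to the semidirect product $\bar\t_{1,n}^\Gamma\rtimes\d^\Gamma$. Since $\d^\Gamma=\d_+^\Gamma\rtimes\sl_2$ and the $\delta_{s,\gamma}$'s together with $\tilde e,\tilde h,\tilde f$ generate $\d^\Gamma$, it suffices to check that the proposed images satisfy: (a) the $\sl_2$-relations among the images of $\ddd=\tilde h$, $\XXX=\tilde e$, $\Delta_0=\tilde f$; (b) the relations defining $\d^\Gamma$ as a quotient of $\d_0^\Gamma*\sl_2$, namely $[\XXX,\xi_{s,\gamma}]=0$, $[\ddd,\xi_{s,\gamma}]=s\,\xi_{s,\gamma}$, $(\on{ad}\Delta_0)^{s+1}(\xi_{s,\gamma})=0$, and the symmetry $\xi_{s,\gamma}=(-1)^s\xi_{s,-\gamma}$; and (c) the cross relations expressing how $\ddd,\XXX,\Delta_0,\xi_{s,\gamma}$ act on $\bar x_i,\bar y_i,\bar t_{ij}^\alpha$ inside $\Der(\bar\t_{1,n}^\Gamma)$ — that is, that $[\rho_\g(D),\rho_\g(u)]=\rho_\g(\xi_D(u))$ for $D\in\d^\Gamma$ and $u$ a generator of $\bar\t_{1,n}^\Gamma$. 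Of course I would first note that all of this is a $\Gamma$-equivariant enhancement of \cite[Proposition~6.3]{CEE}, so the untwisted skeleton is already in place and only the $\Gamma$-decorations need care.

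For (a) and the $\sl_2$-part of (b), I would work purely inside the Weyl algebra factor $\Diff(\l^*)$: the elements $-\tfrac12\sum_\nu(\x_\nu\partial_\nu+\partial_\nu\x_\nu)$, $\tfrac12\sum_\nu\x_\nu^2$, $-\tfrac12\sum_\nu\partial_\nu^2$ form a standard $\sl_2$-triple (this is the classical Weyl-algebra $\sl_2$, using $[\partial_\nu,\x_\mu]=\<e_\nu,e_\mu\>$ and invariance of $t_\l$), so the relations $[\ddd,\XXX]=2\XXX$, etc., are immediate. For $[\XXX,\xi_{s,\gamma}]=0$ I would observe that $\XXX\otimes 1$ is $\tfrac12\sum_\nu\x_\nu^2$ and the $\Diff(\l^*)$-component of $\rho_\g(\xi_{s,\gamma})$ is the multiplication operator $\x_{\nu_1}\cdots\x_{\nu_s}$, which commutes with any $\x_\mu^2$; the $U(\g)^{\otimes n}$-component of $\XXX$ is $1$. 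For $[\ddd,\xi_{s,\gamma}]=s\,\xi_{s,\gamma}$ one uses that $\ad(-\tfrac12\sum_\nu(\x_\nu\partial_\nu+\partial_\nu\x_\nu))$ acts on a degree-$s$ monomial $\x_{\nu_1}\cdots\x_{\nu_s}$ by multiplication by $s$ (the Euler-operator computation). The symmetry $\xi_{s,\gamma}=(-1)^s\xi_{s,-\gamma}$ follows from $\delta_{s,\gamma}=(-1)^s\delta_{s,-\gamma}$ on the source side and should be matched on the target by the change of basis $e_\nu\mapsto$ an orthonormal relabelling together with $\gamma\cdot a_u\mapsto(-\gamma)\cdot a_u$ — this is the one symmetry check that needs the explicit form of $a_u$ and $\odot$, but it is routine. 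The nilpotency relation $(\on{ad}\Delta_0)^{s+1}(\xi_{s,\gamma})=0$ reduces, as in \cite{CEE}, to the fact that $\on{ad}(-\tfrac12\sum_\nu\partial_\nu^2)$ lowers the $\x$-degree by one (each application of $\on{ad}(\partial_\mu^2)$ on a monomial of $\x$-degree $d$ produces something of $\x$-degree $d-1$ in the leading term and, crucially, the total expression stays of $\x$-degree $d-1$ because $t_\g$ is invariant), so $s+1$ applications annihilate a degree-$s$ monomial.

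The substantive part, and what I expect to be the main obstacle, is step (c): verifying that $\rho_\g(\xi_{s,\gamma})$ really implements the derivation $\xi_{s,\gamma}=\xi_{(D_{s,\gamma},C_{s,\gamma})}$ of $\bar\t_{1,n}^\Gamma$, i.e. that
$$
[\rho_\g(\xi_{s,\gamma}),\rho_\g(\bar y_i)]=\rho_\g\Big(\sum_{j:j\neq i}D_{s,\gamma}(x_i,t_{ij}^\beta)\Big),
\qquad
[\rho_\g(\xi_{s,\gamma}),\rho_\g(\bar t_{ij}^\alpha)]=\rho_\g\big([t_{ij}^\alpha,(C_{s,\gamma})_\alpha(x_i,t_{ij}^\beta)]\big),
$$
and $[\rho_\g(\xi_{s,\gamma}),\rho_\g(\bar x_i)]=0$. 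The last is easy ($\x$-monomials commute with $\x$-monomials, up to the $U(\g)$-component, where $\ad(e_{\nu_1})\cdots\ad(e_{\nu_s})(a_u)$ meets $e_\nu^{(i)}$ — but one checks this vanishes after symmetrization using invariance of $t_\g$, exactly as the $\sum x_j=0$ computation in Proposition~\ref{prop:real1}). The first two are genuine computations: one expands $\rho_\g(\bar y_i)=-N\sum_\mu\partial_\mu\otimes e_\mu^{(i)}$, commutes the $\partial_\mu$'s past the $\x$-monomial $\x_{\nu_1}\cdots\x_{\nu_s}$ (producing, via $[\partial_\mu,\x_{\nu_k}]=\<e_\mu,e_{\nu_k}\>$, a sum of $\x$-monomials of degree $s-1$ with one $\x_{\nu_k}$ deleted), and simultaneously commutes the Lie-algebra factors $e_\mu^{(i)}$ past $\big(\ad(e_{\nu_1})\cdots\ad(e_{\nu_s})(a_u)\odot(\gamma\cdot a_u)\big)^{(i')}$. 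The resulting double sum must be reorganized into $\sum_j\rho_\g(D_{s,\gamma}(x_i,t_{ij}^\beta))$; here the defining formula $D_{s,\gamma}=\sum_{p+q=s-1}\sum_{\beta}[(\ad x)^p t^{\beta-\gamma},(-\ad x)^q t^\beta]$ shows up because of the alternation between deleting an $\x$ from the left versus the right of the monomial, and the $\Gamma$-sum over $\beta$ appears because $\gamma\cdot a_u$ decorates one factor of $t_\g$ while $\rho_\g(t_{ij}^\beta)=1\otimes(\beta^{(1)}\cdot t_\g)^{(ij)}$ — so the sum $\sum_\beta$ on the source matches $\sum_u$ together with the $\g$-decomposition $\g=\oplus_\chi\g_\chi$ on the target, and the factor $1/|\Gamma|$ in $\rho_\g(\xi_{s,\gamma})$ is precisely what makes the multiplicities line up (each $t_{ij}^\beta$ sees one $\Gamma$-summand but $\xi_{s,\gamma}$ is built from the full $t_\g=\sum_u a_u\otimes a_u$, which has $|\Gamma|$ worth of redundancy relative to any single $\beta$). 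I would handle $n=2$ first, where everything reduces to $\Diff(\l^*)\otimes U(\g)^{\otimes 2}$ computations with $t_\g^{(12)}$, and then bootstrap to general $n$ using the operadic/insertion structure exactly as in the proof of the previous lemma (the morphism $\xi^{(2)}\mapsto\xi^{(n)}$ pattern). The real bookkeeping burden is keeping track of where the $\gamma$-shift lands and confirming the sign $(-\ad x)^q$ versus $(\ad x)^p$ matches the sign produced by commuting $\partial_\mu$ to the right rather than the left; this is the same phenomenon as in \cite[\S6]{CEE} but with an extra layer of $\Gamma$-indices, and I expect it to occupy the bulk of the written proof.
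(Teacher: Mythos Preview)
Your plan is correct and matches the paper's proof: both organize the verification into the Weyl-algebra $\sl_2$-relations, the $\d^\Gamma$-relations (symmetry, highest weight, nilpotency), and the cross relations with $\bar x_i,\bar y_i,\bar t_{ij}^\alpha$, and both single out the $[\xi_{s,\gamma},\bar y_i]$ check as the main computation. For that check the paper splits the sum over $j$ appearing in $\rho_\g(\xi_{s,\gamma})$ into the $j=i$ piece (shown to vanish by $\l$-invariance of $t_\l$) and the $j\neq i$ pieces (matched termwise to $\rho_\g(D_{s,\gamma})$), and for the $\bar t_{ij}^\alpha$ relation it uses the trick that $\rho_\g\big(\xi_{s,\gamma}+(\ad x_1)^s t_{12}^{\alpha-\gamma}+(\ad x_2)^s t_{12}^{\alpha+\gamma}\big)$ lands in the image of the coproduct $\Delta:U(\g)\to U(\g)^{\otimes 2}$ and hence commutes with $\rho_\g(t_{12}^\alpha)$.
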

Here $\odot$ denotes the symmetric product: $A\odot B:=AB+BA$. 
\begin{proof}
Since $t_\g$ is invariant under the commuting actions of $\Gamma$ and $\l$ then the relation 
$\xi_{s,\gamma}=(-1)^s\xi_{s,-\gamma}$ is also preserved.  
This invariance argument also implies that 
$[\rho_\g(\xi_{s,\gamma}),\rho_\g(\bar x_i)]$ equals 
$$
\frac{1}{|\Gamma|} \sum_{\nu_1,\cdots,\nu_s,\nu,u}\x_{\nu_1}\cdots\x_{\nu_s}\x_\nu\otimes
\sum_{t=1}^s\left(\ad(e_{\nu_1})\cdots\ad([e_\nu,e_{\nu_t}])\cdots\ad(e_{\nu_s})(a_u)\odot(\gamma\cdot a_u)\right)^{(i)}\,,
$$
which is zero since the first and second factors are respectively symmetric and antisymmetric in $(\nu,\nu_t)$. 
Let us now prove that the relation $[\xi_{s,\gamma},\bar t_{ij}^\alpha]
=[\bar t_{ij}^\alpha,(\ad \bar x_i)^s(\bar t_{ij}^{\alpha-\gamma})+(\ad \bar x_j)^s(\bar t_{ij}^{\alpha+\gamma})]$ 
is preserved. It is sufficient to do it for $n=2$: 
$$
\rho_\g(\xi_{s,\gamma}+(\ad x_1)^s(t_{12}^{\alpha-\gamma})+(\ad x_2)^s(t_{12}^{\alpha+\gamma}))
=\sum_{\nu_1,\cdots,\nu_s}\x_{\nu_1}\cdots\x_{\nu_s}\otimes(\alpha^{(1)}\cdot\Delta(B_{\nu_1,\cdots,\nu_s}))\,,
$$
where $\Delta$ is the standard coproduct of $U\g$ and 
$B_{\nu_1,\cdots,\nu_s}:=\sum_u{\ad(e_{\nu_1})\cdots\ad(e_{\nu_s})(a_u)\odot(\gamma\cdot a_u)}$; therefore 
$\rho_\g(\xi_{s,\gamma}+(\ad x_1)^s(t_{12}^{\alpha-\gamma})+(\ad x_2)^s(t_{12}^{\alpha+\gamma}))$ 
commutes with $\rho_\g(t_{12}^{\alpha})$. 
Hence it remains to prove that the relation $[\xi_{s,\gamma},\frac{y_i}{N}]=
\sum_{j:j\neq i}D_{s,\gamma}(\frac{x_i}{M},\frac{t_{ij}^\beta}{|\Gamma|})$  is preserved. For this 
we compute $[\rho_\g(\xi_{s,\gamma}),\rho_\g(\frac{y_i}{N})]$: it equals 
\begin{align*}
& \frac{1}{|\Gamma|} \sum_{\substack{\nu_1,\cdots,\nu_s \\ \nu,u}}\big(\sum_{j=1}^n[\partial_\nu,\x_{\nu_1}\cdots\x_{\nu_s}]\otimes
e_\nu^{(i)}\big(\ad(e_{\nu_1})\cdots\ad(e_{\nu_s})(a_u)\odot(\gamma\cdot a_u)\big)^{(j)} \cr
&+\x_{\nu_1}\cdots\x_{\nu_s}\partial_\nu\otimes
[e_\nu,\ad(e_{\nu_1})\cdots\ad(e_{\nu_s})(a_u)\odot(\gamma\cdot a_u)]^{(i)}\big) \cr
&= \frac{1}{|\Gamma|} \sum_{l=1}^s\sum_{\nu_1,\dots,\nu_s,\nu}\x_{\nu_1}\cdots\check{\x}_{\nu_l}\cdots\x_{\nu_s}\otimes
\sum_{j=1}^n\left(e_\nu^{(i)}\big(\ad(e_{\nu_1})\cdots\ad(e_{\nu_s})(a_u)\odot(\gamma\cdot a_u)\big)^{(j)}
-(i\leftrightarrow j)\right)\,.
\end{align*}
The term corresponding to $j=i$ is the linear map $S^{s-1}(\l)\to U(\g)^{\otimes n}$ such that for $x\in\l$
$$
x^{s-1}\longmapsto \frac{1}{|\Gamma|}\sum_{\substack{p+q=s-1\\\nu,u}}
[e_\nu,\ad(x)^p\ad(e_\nu)\ad(x)^q(a_u)\odot(\gamma\cdot a_u)]^{(i)}\,.
$$
Using $\l$-invariance of $\sum_ua_u\odot(\gamma\cdot a_u)$ one obtains that this last expression equals 
$$
= \frac{1}{|\Gamma|}\sum_{\substack{p+q+r=s-1\\\nu,u}}
\big(\ad(x)^p\ad([e_\nu,x])\ad(x)^q\ad(e_\nu)(\ad x)^r(a_u)\odot(\gamma\cdot a_u)
$$
$$
+\ad(x)^p\ad(e_\nu)\ad(x)^q\ad([e_\nu,x])\ad(x)^r(a_u)\odot(\gamma\cdot a_u)\big)^{(i)}\,,
$$
which is zero from the $\l$-invariance of $t_\l=\sum_\nu e_\nu\otimes e_\nu$. 
The term corresponding to $j\neq i$ is the linear map $S^{s-1}(\l)\to U(\g)^{\otimes n}$ such that for $x\in\l$ 
$$
x^{s-1}\longmapsto \frac{1}{|\Gamma|} \sum_{\substack{p+q=s-1\\\nu,u}}
\left(\ad(x)^p\ad(e_\nu)\ad(x)^q(a_u)\odot(\gamma\cdot a_u)\right)^{(j)}e_\nu^{(i)}-(i\leftrightarrow j)
$$
$$
= \frac{1}{|\Gamma|} \sum_{\substack{p+q=s-1\\\nu,u}}
\left(\ad(x)^p([e_\nu,a_u])\odot(-\ad(x))^q(\gamma\cdot a_u)\right)^{(j)}e_\nu^{(i)}-(i\leftrightarrow j)
$$
$$
= \frac{1}{|\Gamma|} \sum_{\substack{p+q=s-1\\\nu,u}}(-1)^q 
\left(\ad(x)^p([e_\nu,a_u])\odot(\ad(x))^q(\gamma\cdot a_u)\right)^{(j)}e_\nu^{(i)}-(i\leftrightarrow j)
$$
$$
= \frac{1}{|\Gamma|} \sum_{\substack{p+q=s-1\\\nu,u}}(-1)^q 
\left(\ad(x)^p([e_\nu,a_u])\odot(\ad(x))^q(\gamma\cdot a_u)\right)^{(j)}e_\nu^{(i)}-(i\leftrightarrow j)
$$
$$
= \frac{1}{|\Gamma|^2}   \sum_{\beta \in \Gamma} 
\sum_{\substack{p+q=s-1\\v,u}}(-1)^q \left(\ad(x)^p([a_v,a_u])\odot(\ad(x))^q(\gamma\cdot a_u)\right)^{(j)}(\beta \cdot a_v)^{(i)}-(i\leftrightarrow j)
$$
$$
= \frac{1}{|\Gamma|^2} \sum_{\beta \in \Gamma} 
\sum_{p+q=s-1}(-1)^q\sum_{\nu,u} \left( \ad(x)^p(a_v) \odot \ad(x)^q  (\gamma\cdot a_u)\right)^{(i)}(\beta \cdot [a_u,a_v])^{(j)}-(i\leftrightarrow j)
$$
$$
= \frac{1}{|\Gamma|^2} \sum_{\beta \in \Gamma} 
\sum_{p+q=s-1}(-1)^q\sum_{\nu,u} \left( \ad(x)^p(\beta \cdot a_v) \odot \ad(x)^q  ((\beta + \gamma)\cdot a_u)\right)^{(i)}[a_u,a_v]^{(j)}-(i\leftrightarrow j)
$$
$$
= \frac{1}{|\Gamma|^2} \sum_{\beta \in \Gamma} 
\sum_{p+q=s-1}(-1)^q\sum_{\nu,u} \left( \ad(x)^p((\beta- \gamma) \cdot a_v) \odot \ad(x)^q  ((\beta )\cdot a_u)\right)^{(i)}[a_u,a_v]^{(j)}-(i\leftrightarrow j)
$$
which coincides with the image of 
$$D_{s,\gamma}\left(\frac{x_i}{M},\frac{t_{i j}^{\beta}}{|\Gamma|}\right)=
\sum_{p+q=s-1}\sum_{\beta\in\Gamma}\left[\left(\ad \frac{x_i}{M}\right)^p 
\left(\frac{t_{i j}^{\beta}}{|\Gamma|}\right),\left(-\ad \frac{x_i}{M}\right)^q \left(\frac{t_{i j}^{\beta}}{|\Gamma|}\right)\right]$$ 
under $\rho_\g$. In conclusion we get the relation 
$$\rho_\g\left(\left[\xi_{s,\gamma},\frac{y_i}{N}\right]\right)=\left[\rho_\g(\xi_{s,\gamma}),\rho_\g\left(\frac{y_i}{N}\right)\right].$$ 
A direct computation shows that the commutation relations of $[\XXX,\xi_{s,\gamma}]=0$, $[\ddd, \xi_{s,\gamma}]=s\xi_{s,\gamma}$ 
and $\on{ad}^{s+1}(\Delta_0)(\xi_{s,\gamma})=0$ are
preserved, which finishes the proof.
\end{proof}

\subsection{Reductions}

Assume that $\l$ is finite dimensional and there is a reductive decomposition $\l=\h\oplus\m$, 
i.e.~$\h\subset\l$ is a subalgebra and $\m\subset\l$ is a vector subspace such that 
$[\h,\m]\subset\m$. We also assume that $t_\l=t_\h+t_\m$ with $t_\h=\sum_{\bar\nu}e_{\bar\nu}\otimes e_{\bar\nu}\in S^2(\h)^\h$ 
and $t_\m\in S^2(\m)^\h$, and that for a generic $h\in\h$, $\ad(h)_{|\m}\in{\rm End}(\m)$ is invertible. This last condition means that 
$$P(\lambda):={\rm det}(\ad(\lambda^\vee))_{|\m})\in S^{\rm dim(\m)}(\h)$$ is nonzero, where 
$\lambda^\vee:=(\lambda\otimes{\rm id})(t_\h)$ for any $\lambda\in\h^*$.
%

We now define \gls{Hgh}. As in the previous paragraph, $\Diff(\h^*)$ has generators $\bar\x_h$, 
$\bar\partial_h$ ($h\in\h$) and relations 
\begin{align*}
\bar\x_{th+h'}=t\bar\x_h+\bar\x_{h'}, \\
\bar\partial_{th+h'}=t\bar\partial_h+\bar\partial_{h'}, \\
[\bar\x_h,\bar\x_{h'}]=0=[\bar\partial_h,\bar\partial_{h'}],\\
[\bar\partial_h,\bar\x_{h'}]=\<h,h'\>,
\end{align*}
and $\Diff(\h^*_{reg})=\Diff(\h^*)[\frac1P]$ with 
$[\bar\partial_l,\frac1P]=-\frac{[\bar\partial_l,P]}{P^2}$. One has a Lie algebra morphism 
$$\h\to\Diff(\h^*);h\longmapsto\bar X_h:=\sum_{\bar\nu}\x_{[h,e_{\bar\nu}]}\partial_{e_{\bar\nu}}.$$ 
We denote by $\h^{\rm diag}$ the image of the map 
$$\h\ni h\longmapsto\bar Y_h:=\bar X_h+\sum_{i=1}^nl^{(i)}\in\Diff(\h^*_{reg})\otimes U(\g)^{\otimes n}=:B_n,$$
and define $H_{n}(\g,\h^*_{reg})$ as the Hecke algebra of 
$B_n$ with respect to $\h^{\on{diag}}$: $$H_n(\g,\h^*_{reg}):=(B_n)^\h/(B_n\h^{\rm diag})^\h.$$ 
It acts in an obvious way on $(\cO_{\h^*_{reg}}\otimes(\otimes_{i=1}^n V_i))^\h$ if 
$(V_i)_{1\leq i\leq n}$ is a collection of $\g$-modules. 
Finally, let us set, for $\lambda \in \h^*$,
$$
r(\lambda) := (\on{id}\otimes (\on{ad}\lambda^{\vee})_{|\m}^{-1})(t_{\m}). 
$$
Then, following \cite{EE}, $r : \h^*_{\on{reg}} \to \wedge^2(\m)$ is an $\h$-equivariant map
satisfying the classical dynamical Yang-Baxter equation (CDYBE)
$$
\sum_{\bar\nu} e_{\bar\nu}^{(1)}\partial_{\bar\nu} r^{(23)}
+[r^{(12)},r^{(13)}]+c.p.(1,2,3)=0\,,
$$
and we write $r = \sum_\delta a_\delta \otimes b_\delta \otimes \ell_\delta
\in (\m^{\otimes 2} \otimes 
S(\h)[1/P])^\h$.
\begin{proposition}
There is a unique Lie algebra morphism $\rho_{\g,\h}:\bar\t_{1,n}^\Gamma\to H_n(\g,\h^*_{reg})$ given by 
\begin{align*}
\bar x_{i}&\longmapsto  M \sum_{\bar\nu}\bar{\on{x}}_{\bar\nu} \otimes 
h_{\bar\nu}^{(i)},  \\
\bar y_{i}&\longmapsto -N \sum_{\bar\nu} \bar\partial_{\bar\nu} 
\otimes h_{\bar\nu}^{(i)} + \sum_{j} \sum_{\delta} \ell_\delta \otimes 
a_\delta^{(i)}b_\delta^{(j)}, \\
\bar t_{ij}^\alpha&\longmapsto 1\otimes(\alpha^{(1)}\cdot t_\g)^{(ij)}.
\end{align*}
\end{proposition}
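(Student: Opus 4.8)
I will use the alternative presentation of $\bar\t_{1,n}^\Gamma(\kk)$ provided by Lemma \ref{lem:pres1}, so that it is enough to check that the relations \eqref{eqn:etS}, \eqref{eqn:etSbis}, \eqref{eqn:etN}, \eqref{eqn:etL1}, \eqref{eqn:etL2}, \eqref{eqn:et4T1}, together with $\sum_j\bar x_j=\sum_j\bar y_j=0$, are preserved by the prescribed assignment; uniqueness is then automatic since we prescribe the images of a generating set. One first has to see that the three displayed elements lie in $B_n^{\,\h}$ and thus define classes in $H_n(\g,\h^*_{\on{reg}})=(B_n)^{\h}/(B_n\,\h^{\on{diag}})^{\h}$: this is the same $\h$-equivariance computation as in \cite{EE}, resting on antisymmetry of the structure constants of $\h$ (itself a consequence of $\h$-invariance of $t_\h$). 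The whole verification is the $\Gamma$-twisted, ``reduced/dynamical'' counterpart of Proposition \ref{prop:real1}: it combines the reduced realization of \cite[\S6]{CEE}, in which $\bar y_i$ already acquires a term built from a dynamical $r$-matrix, with the $r(\lambda)$-formalism of \cite{EE}. The only genuinely new ingredient is the bookkeeping of the $\Gamma$-eigenspace decomposition $\g=\l\oplus\u$ and the elementary identity
\[
\sum_{\alpha\in\Gamma}\big(\alpha^{(1)}\cdot t_\g\big)=|\Gamma|\,t_\l=|\Gamma|\,(t_\h+t_\m)\,,
\]
which holds because $\Gamma$ acts trivially on $\l=\h\oplus\m$, while $\sum_{\alpha\in\Gamma}\chi(\alpha)=0$ for every nontrivial character $\chi$ annihilates the $\u$-part of $t_\g$.

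\textbf{The non-dynamical relations.} First I would dispatch the relations not involving the $r$-matrix term. Relation \eqref{eqn:etS} is preserved because $(\alpha\otimes\alpha)(t_\g)=t_\g$ forces $(\alpha\otimes\on{id})(t_\g)=(\on{id}\otimes(-\alpha))(t_\g)$, hence $(\alpha^{(1)}\cdot t_\g)^{(ij)}=((-\alpha)^{(1)}\cdot t_\g)^{(ji)}$; relations \eqref{eqn:etN}, \eqref{eqn:etL1} and \eqref{eqn:etL2} restricted to the $\bar x_i$'s are immediate since distinct strands act on distinct tensor factors of $U(\g)^{\otimes n}$ and the corresponding $\Diff(\h^*_{\on{reg}})$-factors commute; relation \eqref{eqn:et4T1} is a $\Gamma$-equivariant avatar of the classical infinitesimal braid relation, preserved by $\g$-invariance of $t_\g$ together with $\Gamma$-equivariance. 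The identity $\sum_j\bar x_j=0$ is handled exactly as in the proof of Proposition \ref{prop:real1}: using the Hecke relation $\sum_i h^{(i)}\equiv-\bar X_h$ one reduces to $\sum_{\bar\nu}\bar{\on x}_{\bar\nu}\,\bar{\on x}_{[h_{\bar\nu},e_{\bar\mu}]}=0$, which holds again by total antisymmetry of the structure constants of $\h$.

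\textbf{The dynamical relations.} The relations \eqref{eqn:etSbis}, \eqref{eqn:etN} and \eqref{eqn:etL2} for the $\bar y_i$'s, and $\sum_j\bar y_j=0$, are the heart of the matter. For $\sum_j\bar y_j=0$ the $r$-matrix contributions cancel because $r$ is antisymmetric ($r+r^{\mathrm{op}}=0$), while the $\bar\partial$-contributions reduce, via $\sum_i h^{(i)}\equiv-\bar X_h$, to an expression that vanishes by symmetry of the $\bar\partial_{\bar\nu}$'s together with unimodularity of $\h$. For \eqref{eqn:etSbis} (with $i\neq j$) one computes $[\rho_{\g,\h}(\bar x_i),\rho_{\g,\h}(\bar y_j)]$ directly in $B_n$: the $\bar{\on x}$--$\bar\partial$ cross term produces the $t_\h$-part through $[\bar\partial_h,\bar{\on x}_{h'}]=\langle h,h'\rangle$, whereas the commutator of the $\bar{\on x}$-term with the $r$-matrix term produces the $t_\m$-part through the defining property
\[
\big(\on{id}\otimes(\on{ad}\lambda^\vee)_{|\m}\big)\big(r(\lambda)\big)=t_\m\,,
\]
so that altogether one recovers $|\Gamma|(t_\h+t_\m)^{(ij)}=\rho_{\g,\h}\big(\sum_{\alpha\in\Gamma}\bar t_{ij}^\alpha\big)$ by the identity of the first paragraph. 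The remaining cases $[\bar x_i,\bar y_i]$ (equivalently, compatibility with $\sum_j\bar x_j=0$), \eqref{eqn:etN} and \eqref{eqn:etL2} for the $\bar y_i$'s then follow from the classical dynamical Yang--Baxter equation satisfied by $r$ (the CDYBE recalled just before the statement, borrowed from \cite{EE}), exactly as in \cite[\S6]{CEE} but carrying the $\Gamma$-decoration along. At each step one must check that intermediate expressions are $\h$-invariant and work modulo $(B_n\,\h^{\on{diag}})^{\h}$, so that all equalities are genuine identities in $H_n(\g,\h^*_{\on{reg}})$.

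\textbf{Main obstacle.} I expect the principal difficulty to be precisely the verification of \eqref{eqn:etSbis} and of the $\bar y$-versions of \eqref{eqn:etN} and \eqref{eqn:etL2}: these force a somewhat delicate manipulation in the Weyl algebra $\Diff(\h^*_{\on{reg}})$ intertwined with the CDYBE for $r(\lambda)$ and with passage to the Hecke quotient, and one must keep careful track of which terms are exact, i.e.\ lie in $(B_n\,\h^{\on{diag}})^{\h}$. By contrast, the $\Gamma$-twisting contributes only routine bookkeeping, entirely governed by the single identity $\sum_{\alpha\in\Gamma}(\alpha^{(1)}\cdot t_\g)=|\Gamma|(t_\h+t_\m)$ recalled above.
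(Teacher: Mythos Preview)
Your proposal is correct and follows essentially the same route as the paper: use the presentation of Lemma~\ref{lem:pres1}, invoke the identity $\sum_{\alpha\in\Gamma}(\alpha^{(1)}\cdot t_\g)=|\Gamma|\,t_\l$ for \eqref{eqn:etSbis}, the antisymmetry of $r$ for $\sum_j\bar y_j=0$, and the CDYBE for $[\bar y_i,\bar y_j]=0$. One small correction: the relation \eqref{eqn:etL2} for the $\bar y_i$'s (i.e.\ $[\bar y_i,\bar t_{jk}^\alpha]=0$ for $i,j,k$ distinct) does \emph{not} require the CDYBE; it follows directly from the $\g$-invariance of $t_\g$, which gives $[r(\lambda)^{(ij)}+r(\lambda)^{(ik)},(\alpha^{(j)}\cdot t_\g)^{(jk)}]=0$, exactly as the paper does. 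Also, $[\bar x_i,\bar y_i]$ is not a relation in the Lemma~\ref{lem:pres1} presentation, so there is nothing to check there.
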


\begin{proof}
First of all, the images of the above elements are all $\h$-invariant. 
As in \cite{CEE}, we will imply summation over repeated indices, and adopt the following conventions: 
$\bar\partial_{e_{\bar\nu}}=\bar\partial_{\bar\nu}$, 
$\bar{\on{x}}_{e_{\bar\nu}}=\bar{\on{x}}_{\bar\nu}$, 
and $1\otimes-$'s and $-\otimes 1$'s may be dropped from the notation. 

In particular, $\rho_{\g,\h}(\bar x_i) = h_{\bar\nu}^{(i)}\bar{\on{x}}_{\bar\nu}$, 
$\rho_{\g,\h}(\bar y_{i}) = - h_{\nu}^{(i)}\bar\partial_{\nu} + \sum_{j=1}^n r(\lambda)^{(ij)}$ 
(here, for $x\otimes y\in \g^{\otimes 2}$, $(x\otimes y)^{(ii)}:= x^{(i)} y^{(i)}$).

We will use the same presentation of $\bar\t_{1,n}^\Gamma$ as in Lemma \ref{lem:pres1}. 
The relations $[\bar x_{i},\bar x_{j}] = 0$ and $\bar t_{ij}^\alpha=\bar t_{ji}^{-\alpha}$
are obviously preserved.

Let us check that $[\bar x_{i},\bar y_{j}]=\sum \bar t_{ij}^\alpha$ is preserved. 
Indeed, for $i\neq j$, 
\begin{eqnarray*}
\frac{1}{MN}[\rho_{\g,\h}(\bar x_{i}),\rho_{\g,\h}(\bar y_{j})] 
& = & -\sum_{\bar\nu_1,\bar\nu_2}[\bar{\on{x}}_{\bar\nu_1},\partial_{\bar\nu_2}]h_{\bar\nu_1}^{(i)}h_{\bar\nu_2}^{(j)} 
+ \sum{\bar\nu,\delta,k}\bar{\on{x}}_{\bar\nu}[h_{\bar\nu}^{(i)},\ell_{\delta} \otimes a_{\delta}^{(j)}b_{\delta}^{(k)}]\\
& = & t_{\h}^{(ij)} + t_{\m}^{(ij)} = t_{\l}^{(ij)} = \frac{1}{MN} \sum_{\alpha\in\Gamma} \alpha^{(i)} \cdot t_{\g}^{(ij)}
\end{eqnarray*}
by the same argument as in Proposition \ref{prop:real1}.

Let us check that $\sum_{i}\bar x_{i} = \sum_{i}\bar y_{i}=0$ are preserved. 
We have $\sum_{i}\rho_{\g,\h}(\bar x_{i}) = 0$
and $\sum_{i}\rho_{\g,\h}(\bar y_{i}) = \sum_{\bar\nu,i}h_{\bar\nu}^{(i)}\partial_{\bar\nu}$
(by the antisymmetry of $r$), which equals zero as in Proposition \ref{prop:real1}.

The fact that the relation $[\bar y_{i},\bar y_{j}]=0$ is satisfied for $i\neq j$ is a consequence of the dynamical 
Yang-Baxter equation (this follows from the exact same argument as in the proof of \cite[Proposition 63]{CEE}).

Next, $[\bar x_{i},\bar t_{jk}^\alpha]=0$ is preserved ($i,j,k$ distinct). Indeed, 
$$
[\rho_{\g,\h}(\bar x_{i}),\rho_{\g,\h}(\bar t_{jk}^\alpha)] = \sum_{\bar\nu}\bar{\on{x}}_{\bar\nu}[h_{\bar\nu}^{(i)},\alpha^{(i)}\cdot t_\g^{(jk)}]=0\,.
$$

Finally $[\bar y_{i},\bar t_{jk}^\alpha] = 0$ is preserved ($i,j,k$ distinct): 
\begin{align*}
[\rho_{\g,\h}(\bar y_{i}),\rho_{\g,\h}(\bar t_{jk}^\alpha)] = &
[-\sum_{\bar\nu}h_{\bar\nu}^{(i)}\bar\partial_{\bar\nu}+\sum_{l}r^{(il)},\alpha^{(j)}\cdot t_\g^{(jk)})] \\
= & [r(\lambda)^{(ij)} +r(\lambda)^{(ik)},\alpha^{(j)}\cdot t_\g^{(jk)})]=0\,,
\end{align*}
where the last equality follows the the $\g$-invariance of $t_{\g}$. 
\end{proof}

\begin{remark}
We expect that there exists a Lie algebra morphism 
\[
{\rm red}_{\l,\h}:H_n(\g,\l^*)\to H_n(\g,\h^*_{reg})
\]
such that the following diagram commutes
$$
\xymatrix{
\t_{1,n}^\Gamma\ar[r]^{\rho_{\g}}\ar[dr]_{\rho_{\g,\h}} & H_n(\g,\l^*) \ar[d]^{{\rm red}_{\l,\h}} \\
 & H_n(\g,\h^*_{reg})
}
$$
\end{remark}

\subsection{Elliptic dynamical $r$-matrix systems as realizations of the universal $\Gamma$-KZB system on twisted configuration spaces}

Let $K(z)$ be a meromorphic function on $\C$ with values in the subalgebra 
$\hat{\t}_{2,+}^\Gamma\subset\hat{\t}_{1,2}^\Gamma$ generated 
by $x_1$, $x_2$, $t_{12}^\alpha$ ($\alpha\in\Gamma$), such that 
$K(-z)=-K(z)^{2,1}$ and satisfying the universal CDYBE with a spectral parameter 
$$
-[y_1,K(z_{23})^{2,3}]+[K(z_{12})^{1,2},K(z_{13})^{1,3}]+c.p.(1,2,3)=0\,.
$$
On the one hand, it follows from \S\ref{sec:real1} that the image 
$r(\x,z):=\rho_{\mathfrak g}(K(z))$ of $K(z)$ under $\rho_{\mathfrak g}:\hat{\t}_{2,+}^\Gamma\to(\hat\cO_{\l^*}\otimes\g^{\otimes2})^\l$ 
is a dynamical $r$-matrix\footnote{Remember that $\cO_{\l^*}:=S(\l)$ and $\hat\cO_{\l^*}:=\hat S(\l)$. } with spectral parameter, 
i.e.~a solution of the CDYBE with a spectral parameter for the pair $(\l,\g)$
$$
\sum_\nu e_\nu^{(1)}\partial_\nu r(\x,z_{23})^{(23)}
+[r(\x,z_{12})^{(12)},r(\x,z_{13})^{(13)}]+c.p.(1,2,3)=0\,,
$$
which satisfies $r(\x,-z)=-r(\x,z)^{(21)}$. 
On the other hand, the image of $K(z)$ under $\rho_{\g,\h}:\hat{\t}_{2,+}^\Gamma\to(\hat\cO_{\h_{reg}^*}\otimes\g^{\otimes2})^\h$
 is precisely equal to the restriction $\rho_{\g}(K(z))|_{\h^*} \in (\hat\cO_{\h_{reg}^*}\otimes\g^{\otimes2})^\h$ of $\rho_{\g}(K(z))$ to $\h^*$.
Then applying \cite[Proposition 0.1]{EE}, we conclude that 
$$
\tilde r (\bar\x,z):=\rho_{\g,\h}(K(z))+r(\lambda)
$$
is a solution of the CDYBE with spectral parameter for $(\h,\g)$: 
$$
\sum_{\bar\nu} e_{\bar\nu}^{(1)}\partial_{\bar\nu} \tilde r(\bar\x,z_{23})^{(23)}
+[\tilde r(\bar\x,z_{12})^{(12)},\tilde r(\bar\x,z_{13})^{(13)}]+c.p.(1,2,3)=0\,.
$$

Then for any $n$-tuple $\underline{V}=(V_1,\dots,V_n)$ of $\g$-modules one has a flat connection 
$\nabla_{\tau,n,\Gamma}^{(\underline{V})}$ on the trivial vector bundle over $\C^n-{\rm Diag}_{\tau,n\Gamma}$ 
with fiber $(\cO_{\h^*_{reg}}\otimes(\otimes_iV_i))^\h$, defined by the following compatible system of 
first order differential equations: 
\begin{equation}\label{eq-dynrmat-sys}
\partial_{z_i}F(\bar\x,\zz)=\sum_{\bar\nu}e_{\bar\nu}^{(i)}\cdot\bar\partial_{\bar\nu}F(\bar\x,\zz)
+\sum_{j:j\neq i}\tilde r^{(ij)}(\bar\x,z_{ij})\cdot F(\bar\x,\zz)\,.
\end{equation}
Here $\zz\mapsto F(\bar\x,\zz)$ is a function with values in $(\cO_{\h^*_{reg}}\otimes(\otimes_iV_i))^\h$. \\

\medskip

Starting from $K(z)=K_{12}(z)$ as in \S\ref{sec:flatconn}, it would be interesting to know if one can recover 
(up to gauge equivalence), using the above realization morphisms, the generalization of Felder's elliptic 
dynamical $r$-matrices \cite{FICM} constructed in \cite{ES,FP}. 

\section{Formality of subgroups of the pure braid group on the torus}
\label{Formality of subgroups of the pure braid group on the torus}


\subsection{Relative formality}\label{relfor}

Let $G$ and $S$ be two groups, with $S$ finite, and let $\varphi : G\to S$ be a surjective group morphism 
with finitely generated kernel $\on{Ker}\varphi$. We then consider the category of commuting triangles 
$$
\xymatrix{
G \ar[r]\ar[rd]^{\varphi} & G' \ar[d]^{\varphi'} \\
& S
}
$$
where $G'$ is pro-algebraic, and $\varphi'$ is surjective with $\kk$-prounipotent kernel. 
This category has an initial object, denoted $\varphi(\kk):G(\varphi,\kk)\to S$, which we call the 
\textit{relative ($\kk$-prounipotent) completion} of $G$ with respect to $\varphi$. 

\medskip

Observe that, if we regard the finite group $S$ as an affine algebraic group, then this is a particular 
case of the relative completion defined in \cite{HainMalcev}. It also  coincides with the partial completion 
defined in \cite[\S1.1]{En} (which seems to force $S$ to be finite). 

\medskip

Right exactness of relative completion (see e.g.~\cite[Proposition 2.4]{HM}), together with standard 
characterization of obstructions to left exactness, provides us with an exact sequence\footnote{This can 
also be seen as the end of the long exact sequence from \cite[Theorem 1.17]{Pri}. } 
\[
H_2(S,\kk)\longrightarrow \big(\on{Ker}\varphi\big)(\kk) \longrightarrow G(\varphi,\kk) \longrightarrow S \longrightarrow 1.
\]
Since $S$ is finite, $H_2(S,\kk)=0$, and thus we get that the kernel $\on{Ker}\big(\varphi(\kk)\big)$ of 
$\varphi(\kk)$ is the usual $\kk$-prounipotent completion $\big(\on{Ker}\varphi\big)(\kk)$ of the 
kernel of $\varphi$, which we can therefore unambiguously denote $\on{Ker}\varphi(\kk)$. 

\begin{lemma}
Every extension 
\[
1 \longrightarrow U \longrightarrow H \longrightarrow S \longrightarrow 1
\]
of a finite group by a $\kk$-prounipotent one splits. 
\end{lemma}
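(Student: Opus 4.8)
The plan is to reduce the statement to a standard cohomological vanishing argument using the pro-unipotent structure of $U$. First I would recall that since $U$ is $\kk$-prounipotent, it is the inverse limit of its finite-dimensional unipotent quotients $U/U_m$, where $(U_m)_{m\geq 1}$ is the descending central filtration (with $U_1=U$ and each $U_m/U_{m+1}$ a finite-dimensional $\kk$-vector space, hence a torsion-free abelian group). The extension $1\to U\to H\to S\to 1$ then induces, for each $m$, a pushout extension
\[
1 \longrightarrow U/U_m \longrightarrow H/U_m \longrightarrow S \longrightarrow 1,
\]
and a splitting of the original extension is the same as a compatible family of splittings of these finite stages, i.e.\ an element of $\varprojlim_m$ of the (nonempty, once we show it) sets of splittings modulo conjugation. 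So the task is twofold: show each finite stage splits, and show the inverse limit of splittings is nonempty.

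For the finite stages I would argue by induction on $m$. The base case $m=1$ is trivial ($U/U_1$ is trivial). For the inductive step, suppose $H/U_{m}\to S$ splits; I want to lift the splitting across the central extension
\[
1 \longrightarrow U_{m}/U_{m+1} \longrightarrow H/U_{m+1} \longrightarrow H/U_{m} \longrightarrow 1,
\]
which is a central extension of $H/U_m$ by the finite-dimensional $\kk$-vector space $A:=U_m/U_{m+1}$. Pulling back along a chosen splitting $s\colon S\to H/U_m$, the obstruction to lifting $s$ to $H/U_{m+1}$ lives in $H^2(S,A)$ (with $S$ acting on $A$ through $s$ and conjugation). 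Since $S$ is finite and $A$ is a $\Q$-vector space (char $\kk=0$), the group cohomology $H^2(S,A)$ vanishes — multiplication by $|S|$ is both zero (transfer-restriction, as $H^i(S,A)$ is annihilated by $|S|$) and invertible (as $A$ is a $\Q$-vector space). Hence the obstruction class is zero and the splitting lifts. This gives a splitting at every finite stage.

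The main obstacle is the passage to the limit: having splittings at each stage does not automatically give a compatible system, because the choice at stage $m+1$ refining a given one at stage $m$ is a torsor under $H^1(S,A)=0$ — wait, that group also vanishes by the same transfer argument, so in fact the splitting lifting a given one is \emph{unique up to $U_m$-conjugacy}. Concretely, I would organize this as follows: let $\mathrm{Split}_m$ denote the set of $U/U_m$-conjugacy classes of splittings of $H/U_m\to S$. The vanishing of $H^2$ shows each $\mathrm{Split}_m$ is nonempty, and the vanishing of $H^1(S,A)=0$ shows the transition maps $\mathrm{Split}_{m+1}\to\mathrm{Split}_m$ are \emph{bijections} (surjective by the $H^2$-vanishing lifting argument applied to an arbitrary given splitting downstairs, injective because two splittings upstairs mapping to conjugate ones downstairs differ by a $1$-cocycle valued in $A$, which is a coboundary). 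An inverse limit of a tower of nonempty sets with surjective (here bijective) transition maps is nonempty, producing a compatible family $(s_m)$ and hence a splitting $s\colon S\to H=\varprojlim_m H/U_m$. I should take a little care that the central filtration $U_m$ is $S$-stable (it is, being characteristic in $U$, and $S$ acts through $H$ which normalizes $U$) so that all the quotients $H/U_m$ make sense and fit into the stated tower; and that the finite-dimensionality of each $A=U_m/U_{m+1}$ is what makes the $H^i(S,A)$ genuinely $\Q$-vector spaces to which the transfer argument applies. This completes the proof.
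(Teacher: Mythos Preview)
Your proof is correct and follows essentially the same route as the paper: filter $U$ by its lower central series, use the vanishing of $H^i(S,-)$ with coefficients in a $\kk$-vector space (since $S$ is finite and $\on{char}\kk=0$) to split each finite stage, and pass to the limit. The paper phrases the inductive step via the exact sequence in non-abelian $H^1$, whereas you phrase it as an $H^2$-obstruction to lifting a chosen splitting; these are equivalent formulations of the same argument. You are more explicit than the paper about the inverse limit step (showing the transition maps on conjugacy classes of splittings are bijections via $H^1$-vanishing), which is a welcome point of care but not a genuinely different idea.
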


\begin{proof}
We consider the filtration $(F_i)_i$ given by the lower central series of $U$, 
and prove by induction that 
\[
1 \longrightarrow U/F_i \longrightarrow H/F_i \longrightarrow S \longrightarrow 1
\]
splits. \\
\underline{Initial step ($i=2$):} Recall that $F_1=U$, and that $F_1/F_2$ is abelian and finitely generated, 
so that 
\[
1 \longrightarrow U/F_2 \longrightarrow H/F_2 \longrightarrow S \longrightarrow 1
\]
splits, as every extension of a finite group by a finite dimensional representation splits 
(this is because the cohomology of a finite group with coefficients in a divisible module vanishes). \\
\underline{Induction step:} There is a (surjective) morphism of extensions
\[
\xymatrix{
1 \ar[r] & U/F_{i+1} \ar[r]  \ar[d] 
& H/F_{i+1} \ar[r] \ar[d] & S \ar[r]  \ar[d]&  1. \\
1 \ar[r] & U/F_i \ar[r]
& H/F_i \ar[r]  & S \ar[r] &  1 
}
\]
Assuming (by induction) that the bottom extension splits, we obtain that the corresponding obstruction 
class in the first non-abelian cohomology $H^1\big(S,U/F_i\big)$ is trivial. 
Hence, by exactness of 
\[
H^1\big(S,F_i/F_{i+1}\big)
\longrightarrow H^1\big(S,U/F_{i+1}\big)
\longrightarrow H^1\big(S,U/F_i\big),
\]
we get that the obstruction class for the splitting of the top extension lies in the image of 
\[
H^1\big(S,F_i/F_{i+1}\big)
\longrightarrow H^1\big(S,U/F_{i+1}\big)\,.
\]
We conclude by using the vanishing of group cohomology of a finite group in a finite dimensional representation. 
\end{proof}

The above Lemma tells us in particular that $G(\varphi,\kk)\simeq\on{Ker}(\varphi)(\kk)\rtimes S$, and justifies 
the following definition from \cite[\S1.2]{En}\footnote{In \cite{En}, Enriquez speaks about \textit{relative formality}. 
We prefer to speak about \textit{relative filtered-formality} in order to remain consistent with our conventions in the 
absolute case $S=1$ (recall that we were following the convention from \cite{SW} in the absolute case). }. 
\begin{definition}
If $S$ is finite, we say that the surjective group morphism $\varphi\,:\,G\to S$ with finitely generated kernel 
is \textit{(relatively) filtered-formal} if there exists a group isomorphism 
\[
G(\kk,\varphi) \tilde\longrightarrow \on{exp}\big(\hat{\on{gr}}\on{Lie}\on{Ker}\varphi(\kk)\big)\rtimes S
\]
over $S$. This is equivalent to having an $S$-equivariant filtered-formality isomorphism 
\[
\on{Ker}\varphi(\kk) \tilde\longrightarrow \hat{\on{gr}}\on{Lie}\on{Ker}\varphi(\kk)\,.
\] 
\end{definition}

\begin{example}
The surjective morphism $\on{B}_n \twoheadrightarrow \SG_n$, where $\on{B}_n$ is the standard 
$n$ strands braid group is filtered-formal. This morphism, or rather the exact sequence 
\[
1 \longrightarrow \on{PB}_n \longrightarrow \on{B}_n \longrightarrow \SG_n \longrightarrow 1\,,
\]
can be deduced from the covering map $\on{Conf}(\C,n)\to \on{Conf}(\C,n)/\mathfrak{S}_n$. 
Note that filtered-formality of smooth complex algebraic varieties is proven in \cite{Mor78} 
in a functorial way, which implies in particular the wanted relative filtered-formality.  
An explicit filtered-formality isomorphism was first given in \cite{Kohno3} when $\kk = \C$ 
(in terms of the monodromy of the KZ connection) and then in \cite{DrGal} for $\kk = \Q$ (using 
an associator). 
We also refer to \cite[Example 1.5]{HainMalcev} for interesting considerations about this example. 
More precisely, one has an $\mathfrak{S}_n$-equivariant isomorphism 
${\tmop{PB}}_{n}(\kk)\tilde\to \exp(\hat{\mathfrak{t}}_{n})$. 
\end{example}

\begin{example}
Let $M\in\mathbb{N}$ be a positive integer. 
From the covering map $\on{Conf}(\C^\times,n,M)\to \on{Conf}(\C^\times,n)/\mathfrak{S}_n$ one also gets an exact sequence 
\[
1 \longrightarrow \on{PB}_n^M \longrightarrow \on{B}_n^1 \longrightarrow S \longrightarrow 1\,,
\]
where $S:=(\Z/M\Z)^n\rtimes\SG_n$. 
It follows from \cite[\S1.3--1.6]{En} that the surjective morphism $\on{B}_n^1 \twoheadrightarrow S$ is filtered-formal. 
More precisely, Enriquez exhibits an $S$-equivariant isomorphism ${\tmop{PB}}_{n}^M(\kk)\tilde\to \exp(\hat{\mathfrak{t}}_{n}^M)$. 
\end{example}

\subsection{Subgroups of $\on{B}_{1,n}$}

For $\tau\in\mathfrak H$, let $U_{\tau,n,\Gamma}\subset\C^n-{\rm Diag}_{\tau,n,\Gamma}$ be the 
open subset of all $\zz=(z_1,\dots,z_n)$ of the form $z_i=a_i+\tau b_i$, where $0<a_1<\cdots<a_n<1/M$ 
and $0<b_n<\cdots<b_1<1/N$. If $\zz_0\in U_{\tau,n,\Gamma}$, then it both defines a point in the 
$\Gamma$-twisted configuration space $\textrm{Conf}(E_{\tau,\Gamma},n,\Gamma)$ and in the (non-twisted) 
unordered configuration space $\textrm{Conf}(E_{\tau,\Gamma},[n])$.

Recall that the map 
\[
\textrm{Conf}(E_{\tau,\Gamma},n,\Gamma)\twoheadrightarrow\textrm{Conf}(E_{\tau,\Gamma},[n])
\]
is a covering map with structure group $\Gamma^n\rtimes\mathfrak{S}_n$. 
Hence we get a short exact sequence 
\[
1\longrightarrow \on{PB}_{1,n}^\Gamma\longrightarrow \on{B}_{1,n} 
\overset{\varphi_n}{\longrightarrow} \Gamma^n\rtimes\mathfrak{S}_n\longrightarrow 1\,,
\]
where \gls{PB1nG}$:=\pi_1(\textrm{Conf}(E_{\tau,\Gamma},n,\Gamma),\zz_0)$ and 
\gls{B1n}$:=\pi_1\big(\textrm{Conf}(E_{\tau,\Gamma},[n]),\zz_0\big)$. 

We will also consider \gls{PB1n}$=\pi_1\big(\textrm{Conf}(E_{\tau,\Gamma},n),\zz_0\big)$, and the 
short exact sequence 
\[
1\longrightarrow \on{PB}_{1,n}^\Gamma\longrightarrow \on{PB}_{1,n} \longrightarrow \Gamma^n\longrightarrow 1
\]
associated with the $\Gamma^n$-covering map 
\[
\textrm{Conf}(E_{\tau,\Gamma},n,\Gamma)\twoheadrightarrow\textrm{Conf}(E_{\tau,\Gamma},n)\,.
\]

Our main aim in this Section is to construct a relative filtered-formality isomorphism for  
\[
\on{B}_{1,n} \twoheadrightarrow \Gamma^n\rtimes\mathfrak{S}_n\,.
\]
Moreover, we will have an explicit description of the relative completion in terms of the 
Lie algebra $\t_{1,n}^\Gamma$. 


\subsection{The monodromy morphism $\on{B}_{1,n}\to\exp(\hat{\t}_{1,n}^\Gamma)\rtimes(\Gamma^n\rtimes\mathfrak{S}_n)$}

The monodromy of the flat $\exp(\hat{\t}_{1,n}^\Gamma)\rtimes(\Gamma^n\rtimes\mathfrak{S}_n)$-bundle 
$(\mathcal{P}_{(\tau,\Gamma),[n]},\nabla_{(\tau,\Gamma),[n]})$ on $\mathrm{Conf}(E_{\tau,\Gamma},[n])$ provides 
us with a group morphism 
\[
\mu_{\zz_0,(\tau,\Gamma),[n]}\,:\,
\on{B}_{1,n}\longrightarrow\exp(\hat{\t}_{1,n}^\Gamma)\rtimes(\Gamma^n\rtimes\mathfrak{S}_n)\,.
\]

This actually fits into a morphism of short exact sequences
\[
\xymatrix{
		1 \ar[r] &
		\on{PB}_{1, n}^{\Gamma} \ar[r] \ar[d] &
		\on{B}_{1,n} \ar[r] \ar[d] &
		\Gamma^n \rtimes \mathfrak{S}_n \ar[r] \ar@{=}[d] &
		1 \\
		1 \ar[r] &
		\exp(\hat{\t}_{1, n}^{\Gamma}) \ar[r] &
		\exp(\hat{\t}_{1, n}^{\Gamma})\rtimes(\Gamma^n \rtimes \mathfrak{S}_n) \ar[r] &
		\Gamma^n \rtimes \mathfrak{S}_n \ar[r] &
		1
}\,,
\]
where the first vertical morphism is the monodromy morphism 
\[
\mu_{\zz_0,\tau,n,\Gamma}\,:\,
\on{PB}_{1,n}^\Gamma\longrightarrow\exp(\hat{\t}_{1,n}^\Gamma)
\]
of associated with the flat $\exp(\hat{\t}_{1,n}^\Gamma)$-bundle 
$(\mathcal{P}_{\tau,n,\Gamma},\nabla_{\tau,n,\Gamma})$ on $\mathrm{Conf}(E_{\tau,\Gamma},n,\Gamma)$. 

Indeed, this comes from the fact that $\nabla_{(\tau,\Gamma),[n]}$ is obtained by descent, from 
$\nabla_{\tau,n,\Gamma}$ and using its equivariance properties (see \S\ref{sec6.2var}). 
More precisely, the monodromy of $\nabla_{(\tau,\Gamma),[n]}$ along a loop $\gamma$ based at $\zz_0$ 
in $\mathrm{Conf}(E_{\tau,\Gamma},[n])$ can be computed along the following steps: 
\begin{itemize}
\item First consider the unique lift $\tilde\gamma$ of $\gamma$ departing from 
$\zz_0\in \mathrm{Conf}(E_{\tau,\Gamma},n,\Gamma)$. Note that it ends at $g\cdot\zz_0$, 
$g\in\Gamma^n\rtimes\mathfrak{S}_n$. If $g=(g_1,\ldots,g_n)\in\Gamma^n$ and $\zz_0=(z_1,\ldots,z_n)$ 
we will simply write $g\cdot\zz_0:=(z_1^{g_1},\ldots z_n^{g_n})$.
\item Then compute the holonomy of $\nabla_{\tau,n,\Gamma}$ along $\tilde\gamma$: 
this is an element in $\exp(\hat{\t}_{1,n}^\Gamma)$, as $\nabla_{\tau,n,\Gamma}$ is defined on a principal 
$\exp(\hat{\t}_{1,n}^\Gamma)$-bundle obtained as a quotient of the trivial one on $\C^n-{\rm Diag}_{\tau,n,\Gamma}$ 
(see \S\ref{sec-confspaces}), that we abusively denote $\mu_{\zz_0,\tau,n,\Gamma}(\tilde\gamma)$.
\item Finally, $\mu_{\zz_0,(\tau,\Gamma),[n]}(\gamma)=g\mu_{\zz_0,\tau,n,\Gamma}(\tilde\gamma)$. 
\end{itemize}

Having such a morphism of exact sequences guarantees that it factors through a morphism
\[
\xymatrix{
		1 \ar[r] &
		{\hat{\on{PB}}_{1,n}^\Gamma}(\C) \ar[r] \ar[d] &
		{\hat{\on{B}}_{1,n}}(\varphi_n,\C) \ar[r] \ar[d] &
		\Gamma^n \rtimes \mathfrak{S}_n \ar[r] \ar@{=}[d] &
		1 \\
		1 \ar[r] &
		\exp(\hat{\t}_{1, n}^{\Gamma}) \ar[r] &
		\exp(\hat{\t}_{1, n}^{\Gamma})\rtimes(\Gamma^n \rtimes \mathfrak{S}_n) \ar[r] &
		\Gamma^n \rtimes \mathfrak{S}_n \ar[r] &
		1
}\,,
\]
where ${\hat{\on{B}}_{1,n}}(\varphi_n,\C)$ is the relative prounipotent completion of the morphism 
$\on{B}_{1,n}\to\Gamma^n\rtimes\mathfrak{S}_n$, and ${\hat{\on{PB}}_{1,n}^\Gamma}(\C)$ is the prounipotent 
completion of $\on{PB}_{1,n}^\Gamma$. 

We will call the vertical maps the completed monodromy morphisms. 

In the remainder of this Section we will prove that these completed monodromy morphisms are isomorphisms, 
exhibiting in particular a relative filtered-formality isomorphism for $\on{B}_{1,n}\to\Gamma^n\rtimes\mathfrak{S}_n$. 
\begin{theorem}\label{formalitythm}
The completed monodromy morphism 
\[
{\hat{\on{B}}_{1,n}}(\varphi_n,\C)\longrightarrow \exp(\hat{\t}_{1, n}^{\Gamma})\rtimes(\Gamma^n \rtimes \mathfrak{S}_n)
\]
is an isomorphism. Equivalently, the completed monodromy morphism 
\[
\hat{\mu}_{\zz_0,\tau,n,\Gamma}(\C)\,:\,{\hat{\on{PB}}_{1,n}^\Gamma}(\C)\longrightarrow \exp(\hat{\t}_{1, n}^{\Gamma})
\]
is an isomorphism. 
\end{theorem}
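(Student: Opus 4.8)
The plan is to prove the equivalent second assertion---that the completed monodromy morphism $\hat\mu_{\zz_0,\tau,n,\Gamma}(\C)\colon\hat{\on{PB}}_{1,n}^\Gamma(\C)\to\exp(\hat\t_{1,n}^\Gamma)$ is an isomorphism---since the first then follows from the morphism of short exact sequences displayed just before the theorem together with the splitting Lemma above (which identifies $\hat{\on{B}}_{1,n}(\varphi_n,\C)$ with $\hat{\on{PB}}_{1,n}^\Gamma(\C)\rtimes(\Gamma^n\rtimes\mathfrak{S}_n)$). Both sides are prounipotent and complete for their lower central series filtrations---and on $\hat\t_{1,n}^\Gamma$ this filtration is cofinal with the degree filtration, because $\t_{1,n}^\Gamma$ is generated in total degrees $\leq 2$---so it suffices to show that $\hat\mu$ is a filtered morphism inducing an isomorphism on associated graded Lie algebras, and I would do this by induction on $n$.

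The inductive mechanism is the Fadell--Neuwirth-type fibration $\on{Conf}(E_{\tau,\Gamma},n,\Gamma)\to\on{Conf}(E_{\tau,\Gamma},n-1,\Gamma)$ obtained by forgetting the last point: its fibre is $E_\tau$ with the $(n-1)|\Gamma|$ points $\{z_i+\tilde\alpha\mid i<n,\ \alpha\in\Gamma\}$ removed, hence has fundamental group free on the two cycles $x_n,y_n$ of $E_\tau$ and on small loops $t_{in}^\alpha$ around the punctures, subject only to $[x_n,y_n]=-\sum_{i<n}\sum_{\alpha\in\Gamma}t_{in}^\alpha$. On the algebraic side one shows, directly from the presentation and generalizing \cite{CEE}, that $\t_{1,n}^\Gamma$ carries a matching decomposition $\t_{1,n}^\Gamma\simeq\mathfrak{f}_n^\Gamma\rtimes\t_{1,n-1}^\Gamma$ with $\mathfrak{f}_n^\Gamma$ freely generated by $x_n$, $y_n$, and the $t_{in}^\alpha$ ($i<n$, $\alpha\in\Gamma$) modulo that same relation---i.e. a free Lie algebra on $1+(n-1)|\Gamma|$ generators. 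The restriction of $\nabla_{\tau,n,\Gamma}$ to a fibre is conjugate to a KZB-type connection on the punctured curve, and its monodromy identifies the prounipotent completion of the (free) fibre fundamental group with $\exp(\hat{\mathfrak{f}}_n^\Gamma)$, a morphism which is an isomorphism because it is one on $H_1$; the leading-term computations needed are that the monodromy of a small loop around the diagonal component $\{z_{ij}=\tilde\alpha\}$ equals $2\pi\i\,t_{ij}^\alpha$ to lowest order---using the residue $\on{Res}_{z=\tilde\alpha}k_\alpha(\ad x_i,z)=e^{-2\pi\i a\ad x_i}$ and the fact that $t_{ij}^\alpha$ already lies in lower central degree one, so the correction $e^{-2\pi\i a\ad x_i}(t_{ij}^\alpha)-t_{ij}^\alpha$ has degree $\geq2$---while the monodromies along the two cycles of the $i$-th point produce $-2\pi\i\,x_i$ (forced by the cocycle $e^{-2\pi\i x_i}$ defining $\mathcal{P}_{\tau,n,\Gamma}$) and $-y_i$ (from the constant term $-y_i$ of $K_i$), up to corrections in the $t$-directions. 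These leading terms span $H_1(\t_{1,n}^\Gamma)$, which already gives surjectivity of $\hat\mu$.

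The inductive step is then a five-lemma argument applied to the morphism, induced by $\mu_{\zz_0,\tau,n,\Gamma}$, from the short exact sequence of prounipotent completions of $1\to\pi_1(\text{fibre})\to\on{PB}_{1,n}^\Gamma\to\on{PB}_{1,n-1}^\Gamma\to1$ to the sequence $1\to\exp(\hat{\mathfrak{f}}_n^\Gamma)\to\exp(\hat\t_{1,n}^\Gamma)\to\exp(\hat\t_{1,n-1}^\Gamma)\to1$: the left vertical arrow is an isomorphism by the fibre analysis, the right one by the induction hypothesis (the base case $n=1$ being trivial, since $\on{PB}_{1,1}^\Gamma=\pi_1(E_\tau)\simeq\Z^2$ and $\t_{1,1}^\Gamma$ is abelian of rank $2$), whence the middle one is too. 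The main obstacle---and the place where filtered-, as opposed to $1$-, formality of $\on{PB}_{1,n}^\Gamma$ is genuinely used---is to establish that this bottom-row semidirect-product decomposition of $\t_{1,n}^\Gamma$ holds, and that the Malcev-completion sequence of the fibration stays exact (i.e. the completed fibre group injects into the completed total group), since the geometric fibration need not split; both are handled exactly as in \cite{CEE}, the twisted situation being a bookkeeping generalization carrying $|\Gamma|$ diagonal generators $t_{ij}^\alpha$ per pair $i<j$ in place of a single $t_{ij}$. Finally, transporting the resulting isomorphism $\hat{\on{PB}}_{1,n}^\Gamma(\C)\tilde\to\exp(\hat\t_{1,n}^\Gamma)$ through the morphism of exact sequences before the theorem yields the relative filtered-formality isomorphism $\hat{\on{B}}_{1,n}(\varphi_n,\C)\tilde\to\exp(\hat\t_{1,n}^\Gamma)\rtimes(\Gamma^n\rtimes\mathfrak{S}_n)$.
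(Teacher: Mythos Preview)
Your strategy is a genuinely different route from the paper's, and while the overall architecture is sound, two of the steps you label as routine are in fact the crux and are not handled in \cite{CEE} in the way you suggest.

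The paper does \emph{not} proceed by induction on $n$ via Fadell--Neuwirth. Instead it constructs a surjective graded Lie algebra map $p_n:\t_{1,n}^\Gamma\to\on{gr}(\pb_{1,n}^\Gamma)$ by checking directly, from group-theoretic identities in $\on{PB}_{1,n}$ (such as the commutator expansion of $(X_i^M,Y_j^N)$), that the defining relations of $\t_{1,n}^\Gamma$ hold for the symbols of $\log(X_i^M)$, $\log(Y_i^N)$, $\log(P_{ij}^\alpha)$. It then computes the leading terms of the monodromy on these generators---obtaining $\phi(x_i)=y_i$, $\phi(y_i)=-2\pi\i\,x_i+\tau y_i$, $\phi(t_{ij}^\alpha)=2\pi\i\,t_{ij}^\alpha$---and observes that the composite $\phi=\on{gr}\Lie(\mu)\circ p_n$ is a graded \emph{automorphism} of $\t_{1,n}^\Gamma$. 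Since $p_n$ is surjective, this forces $p_n$ and $\on{gr}\Lie(\mu)$ to be isomorphisms simultaneously. No fibration, no five-lemma, no semidirect-product splitting of $\t_{1,n}^\Gamma$ is needed; the argument is entirely self-contained at a fixed $n$. (Incidentally, your leading terms are slightly off: the $a$-cycle gives $y_i$, the $b$-cycle gives $-2\pi\i\,x_i+\tau y_i$, not the pair $(-2\pi\i\,x_i,-y_i)$ you wrote; this does not affect the conclusion since the resulting linear map is still invertible.)

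Your inductive approach is closer in spirit to \cite{BZ} than to \cite{CEE}, and it hinges on two facts you wave through: (a) the semidirect-product decomposition $\t_{1,n}^\Gamma\simeq\mathfrak{f}_n^\Gamma\rtimes\t_{1,n-1}^\Gamma$ with $\mathfrak{f}_n^\Gamma$ free of the asserted rank, and (b) left-exactness of Malcev completion on the fibration sequence. Neither is mere bookkeeping. For (a), the existence of the projection and section is easy from the presentation, but the \emph{freeness} of the kernel requires an independent argument (e.g.\ a Hilbert-series or PBW computation, as in \cite{BZ}); in fact in many treatments this freeness is \emph{deduced} from the formality isomorphism rather than used to prove it, so invoking it here risks circularity. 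For (b), one needs to know that the map from the Malcev completion of the fibre group into that of the total group is injective; this is not automatic for non-split fibrations and is typically obtained either via Stallings-type arguments on $H_1$ and $H_2$, or, again, as a \emph{consequence} of the direct formality proof. So your outline can be made to work, but it requires substantial additional input not supplied by \cite{CEE}, whereas the paper's direct method avoids both issues entirely.
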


Our aim now is to prove Theorem \ref{formalitythm}. 
For this we will prove, as usual, that the induced morphism on Malcev Lie algebras 
\[
\Lie(\mu_{\zz_0,\tau,n,\Gamma})\,:\,\pb_{1,n}^\Gamma\to \hat{\t}_{1,n}^\Gamma
\]
is an isomorphism of filtered Lie algebras. 


\subsection{A morphism $\t_{1,n}^\Gamma\to {\rm gr}(\pb_{1,n}^\Gamma)$}

Let us start with a few algebraic facts about $\on{PB}_{1,n}$ and $\on{PB}_{1,n}^\Gamma$. 
The group $\on{PB}_{1,n}$ is generated by the $X_i$'s and $Y_i$'s ($i=1,\dots,n$), 
where $X_i$ (resp.~$Y_i$) is the class of the path given by $[0,1]\ni t\mapsto\zz_0+t\delta_i/M$ 
(resp.~$[0,1]\ni t\mapsto\zz_0+t\tau\delta_i/N$). 
One sees easily that $X_i^M$ (resp.~$Y_i^N$) is the class of the path given by 
$[0,1]\ni t\mapsto\zz_0+t\delta_i$ (resp.~$[0,1]\ni t\mapsto\zz_0+t\tau\delta_i$), so that 
$X_i^M$ and $Y_i^N$ are elements of $\on{PB}_{1,n}^\Gamma$.
$$
\includegraphics[scale=1]{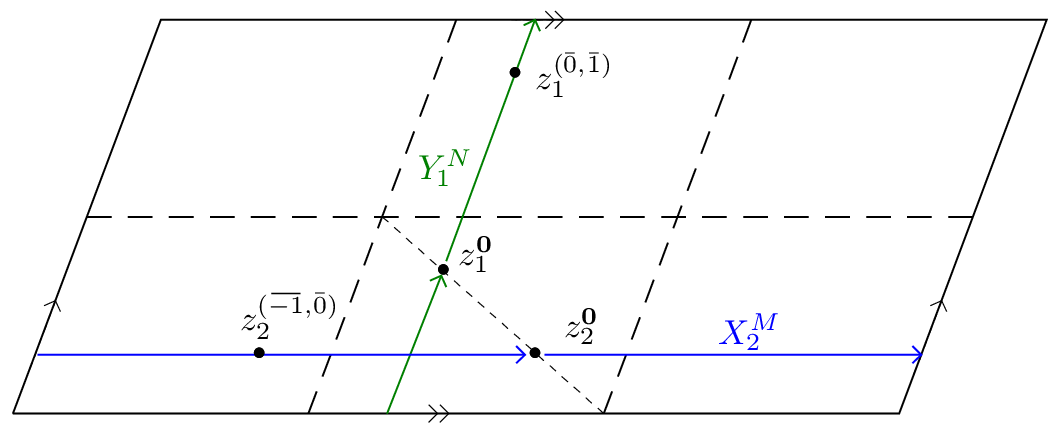}
$$
One has an obvious inclusion $\on{PB}_n\hookrightarrow \on{PB}_{1,n}^\Gamma$ coming from 
the identification of $\C$ with the fundamental domain 
\[
\{z=a+b\tau\in\C|0< a< \frac{1}{M},0< b<\frac{1}{N}\}
\]
of $E_{\tau,\Gamma}$. 

Recall that we write the composition of paths from left to right. Then one can check (by simply drawing) that the following relations are satisfied in $\on{PB}_{1,n}$: 
\begin{itemize}
\item[(T1)] $(X_i,X_j)=1=(Y_i,Y_j)$ ($i<j$), 
\item[(T2)] $(X_i,Y_j)=P_{ij}$, and is conjugated to $(X_j^{-1},Y_i^{-1})$ ($i<j$), 
\item[(T3)] $(X_1,Y_1^{-1})=P_{1n}\cdots P_{13} P_{12}$, 
\item[(T4)] $(X_i,P_{jk})=1=(Y_i,P_{jk})$ ($\forall i$, $j<k$), 
\item[(T5)] $(X_iX_j,P_{ij})=1=(Y_iY_j,P_{ij})$ ($i<j$). 
\end{itemize}
One also observes that $X_1\cdots X_n$ and $Y_1\cdots Y_n$ are central in $\on{PB}_{1,n}$. 

Now it follows from the geometric description of $\on{PB}_{1,n}^\Gamma$ that it is generated by 
$X_i^M$, $Y_i^N$ ($i=1,\dots,n$), and $P_{ij}^\alpha:=X_j^{-p}Y_j^{-q}P_{ij}Y_j^{q}X_j^{p}$ 
($i<j$, $1\leq p\leq M$, $1\leq q\leq N$ and $\alpha=(\bar p,\bar q)$). 
One can for instance represent lifts of $X_3$, $Y_3$ and $P_{12}^{(\bar{1},\bar{1})}$ in 
$\textrm{Conf}(E_{\tau,\Gamma},n,\Gamma)$ as follows

$$
\includegraphics[scale=1]{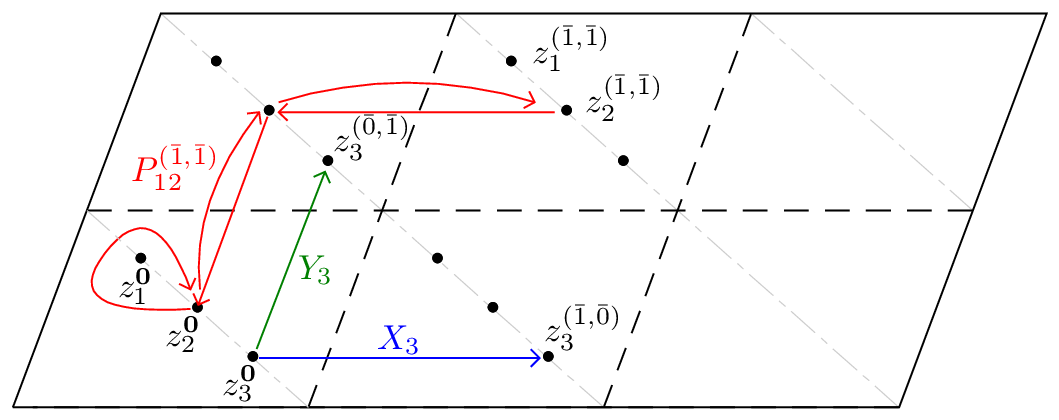}
$$

Observe that the standard descending filtration on $\hat{\t}_{1,n}^\Gamma$ 
coincides with the descending filtration coming from the grading of $\t_{1,n}^\Gamma$ defined 
in \S\ref{sec:deft1n}. 
\begin{proposition}\label{prop-gr-sigma}
There is a surjective graded Lie algebra morphism $p_n:\t_{1,n}^\Gamma\to {\rm gr}(\pb_{1,n}^\Gamma)$, 
sending 
\begin{itemize}
\item $x_i\longmapsto\sigma\big(\log(X_i^M)\big)$ for $i=1,\dots,n$,
\item $y_i\longmapsto\sigma\big(\log(Y_i^{N})\big)$ for $i=1,\dots,n$,
\item $t_{ij}^\alpha\longmapsto\sigma\big(\log(P_{ij}^\alpha)\big)$ for $i<j$,
\end{itemize}
where $\sigma$ denotes the symbol map $\pb_{1,n}^\Gamma\to{\rm gr}(\pb_{1,n}^\Gamma)$. 
\end{proposition}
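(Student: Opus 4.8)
The strategy is the standard one for establishing that a presented quadratic-type Lie algebra surjects onto the associated graded of a Malcev Lie algebra: exhibit the candidate assignment, check that the defining relations of $\t_{1,n}^\Gamma$ hold in ${\rm gr}(\pb_{1,n}^\Gamma)$ after passing to symbols, and then argue surjectivity from the known generators of $\on{PB}_{1,n}^\Gamma$. First I would record that the elements $X_i^M$, $Y_i^N$ ($i=1,\dots,n$) and $P_{ij}^\alpha$ ($i<j$, $\alpha\in\Gamma$) generate $\on{PB}_{1,n}^\Gamma$ (this is the geometric description already given in the text), so the images $\sigma(\log X_i^M)$, $\sigma(\log Y_i^N)$, $\sigma(\log P_{ij}^\alpha)$ generate ${\rm gr}(\pb_{1,n}^\Gamma)$. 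Since $X_i^M$ and $P_{ij}$ lie in the first term of the lower central series (they are genuine loops, hence their symbols are in degree $1$), the assignment on generators lands in the correct degrees, matching the bigrading on $\t_{1,n}^\Gamma$ where $x_i$ has degree $(1,0)$, $y_i$ degree $(0,1)$, $t_{ij}^\alpha$ degree $(1,1)$; here one uses the refinement of the filtration by the two ``$a$-direction'' and ``$b$-direction'' weights, exactly as in \cite{CEE}.

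The core of the proof is then to verify that each defining relation \eqref{eqn:etS}--\eqref{eqn:et4T2} of $\t_{1,n}^\Gamma$ is satisfied by the proposed images. The plan is to lift each of the group-level relations (T1)--(T5) together with the extra relations satisfied by the $P_{ij}^\alpha$ to commutator identities in $\pb_{1,n}^\Gamma$, and then take symbols. Concretely: relation \eqref{eqn:etN}, $[x_i,x_j]=[y_i,y_j]=0$, comes from (T1); relation \eqref{eqn:etSbis}, $[x_i,y_j]=\sum_\alpha t_{ij}^\alpha$, comes from (T2) --- here one needs that $\prod_{\alpha}$-type product of the conjugates $P_{ij}^\alpha$ over all $\alpha\in\Gamma$ realizes $(X_i^M,Y_j^N)$ up to higher-order terms, which is precisely why the sum over $\Gamma$ appears; relation \eqref{eqn:etT}, $[x_i,y_i]=-\sum_{j\ne i}\sum_\alpha t_{ij}^\alpha$, comes from (T3); relations \eqref{eqn:etL2}, $[x_i,t_{jk}^\alpha]=[y_i,t_{jk}^\alpha]=0$, come from (T4); relation \eqref{eqn:et4T2}, $[x_i+x_j,t_{ij}^\alpha]=[y_i+y_j,t_{ij}^\alpha]=0$, comes from (T5); relation \eqref{eqn:etS}, $t_{ij}^\alpha=t_{ji}^{-\alpha}$, is the geometric identity that the conjugated loop $P_{ij}^\alpha$ equals $P_{ji}^{-\alpha}$ (up to the relevant conjugation, which disappears after taking symbols); relation \eqref{eqn:etL1}, $[t_{ij}^\alpha,t_{kl}^\beta]=0$ for pairwise distinct indices, and relation \eqref{eqn:et4T1}, $[t_{ij}^\alpha,t_{ik}^{\alpha+\beta}+t_{jk}^\beta]=0$, are the twisted analogues of the classical commutation and $4T$ relations, obtained by restricting to the sub-braid-group on the three or four strands involved and using the standard genus-zero relations \eqref{eqn:PB1}, \eqref{eqn:PB3} on the $P_{ij}^\alpha$'s (the shift $\alpha+\beta$ in the index records how the conjugation by powers of $X_k,Y_k$ interacts with a passage of strand $i$ past strand $k$). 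In each case the procedure is the same: choose a group relation of the form $w=1$ with $w$ in the appropriate term $F_m$ of the lower central series, so that $\log w\in F_m$, and read off $\sigma(\log w)=0$ as the desired Lie relation in degree $m$; the bilinear terms in $\log w$ produced by the Baker--Campbell--Hausdorff formula give exactly the brackets written above. Finally, since $\sum_i X_i^M$ and $\sum_i Y_i^N$ map to the symbols of the central elements $X_1^M\cdots X_n^M$ and $Y_1^N\cdots Y_n^N$ of $\on{PB}_{1,n}^\Gamma$ — wait, more precisely one uses Lemma \ref{lem:pres1}: it suffices to check the relations there, and $[\sum_j x_j,y_i]=[\sum_j y_j,x_i]=0$ follows from the centrality of $X_1^M\cdots X_n^M$ and $Y_1^N\cdots Y_n^N$ noted after (T5). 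The universal property of the presentation then yields the graded Lie algebra morphism $p_n$, and surjectivity is immediate since its image contains a generating set of ${\rm gr}(\pb_{1,n}^\Gamma)$.

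The main obstacle, as usual in this circle of ideas, is the bookkeeping in relation \eqref{eqn:etSbis} and the twisted $4T$ relation \eqref{eqn:et4T1}: one must be careful that the loop $(X_i^M,Y_j^N)$ really decomposes, modulo $F_3$, as the product $\prod_{\alpha\in\Gamma}P_{ij}^\alpha$ (this is where the geometry of the $\Gamma$-cover enters — a single elementary ``$x$-move'' followed by a ``$y$-move'' of strand $i$ around strand $j$ picks up one encirclement in each $\Gamma$-sheet), and that the index shifts $t_{ij}^{\alpha}\mapsto t_{ij}^{\alpha+\beta}$ appearing when strand $i$ also circles strand $k$ are correctly tracked through the conjugations defining $P_{ij}^\alpha=X_j^{-p}Y_j^{-q}P_{ij}Y_j^q X_j^p$. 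I would handle this by drawing the relevant braids in $\textrm{Conf}(E_{\tau,\Gamma},n,\Gamma)$, exactly in the spirit of the pictures included just before the statement, reducing everything to the three- and four-strand cases and invoking the already-recalled genus-zero pure braid relations \eqref{eqn:PB1}--\eqref{eqn:PB3} together with (T1)--(T5). All other relations are formal consequences of BCH applied to commuting or near-commuting group elements and require no geometric input beyond what is already displayed.
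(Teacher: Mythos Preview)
Your approach is essentially the same as the paper's: verify each defining relation of $\t_{1,n}^\Gamma$ by lifting the group relations (T1)--(T5) and taking symbols, with surjectivity coming from the known generating set of $\on{PB}_{1,n}^\Gamma$. Two small points of correction and comparison. First, your statement that ``$P_{ij}$ lie in the first term of the lower central series \dots hence their symbols are in degree $1$'' is wrong: since $P_{ij}=(X_i,Y_j)$ is a commutator, $\log P_{ij}^\alpha$ lies in $F_2$ and its symbol is in degree $2$, which is exactly what is needed to match the total degree $(1,1)$ of $t_{ij}^\alpha$. Second, the step you flag as the ``main obstacle'' --- that $(X_i^M,Y_j^N)$ decomposes modulo $F_3$ as $\prod_{\alpha\in\Gamma}P_{ij}^\alpha$ --- is handled in the paper not by a geometric argument on the cover but by the purely algebraic free-group identity
\[
(X^M,Y^N)=\prod_{i=0}^{M-1}X^{M-i+1}\Big(\prod_{j=0}^{N-1}Y^j(X,Y)Y^{-j}\Big)X^{i-M-1},
\]
applied with $X=X_i$, $Y=Y_j$ (and similarly for (T3)); this is cleaner than a pictorial argument. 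Likewise, for $[x_i,t_{jk}^\alpha]=0$ the paper does not just invoke (T4) but writes out $(X_i^M,P_{jk}^\alpha)$ explicitly as a product of conjugates of $(X_i^M,Y_k^{-q})$ and of $P_{jk}$, and observes that the leading BCH term lies in degree $\geq 3$; (T4) alone only gives $(X_i,P_{jk})=1$, and the conjugation by $X_k^{-p}Y_k^{-q}$ introduces extra commutators whose degree must be checked.
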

\begin{proof}
It is sufficient to check that the defining relations of $\t_{1,n}^\Gamma$ are preserved by the above assignment. 
The relation $[x_i,x_j]=0=[y_i,y_j]$ is obviously preserved, thanks to (T1). 

Now using (T2) and the identity 
$$
(X^M,Y^N)=\prod_{i=0}^{M-1}X^{M-i+1}(\prod_{j=0}^{N-1}Y^j(X,Y)Y^{-j})X^{i-M-1}
$$ 
(which is true in the free group $F_2$, and thus in any group) with $X=X_i$ and $Y=Y_j$ ($i < j$), 
one obtains that $[x_j,y_i]=\sum_\alpha t_{ij}^\alpha$ is preserved. 
The same reasoning with $X=X_i$ and $Y=Y_j^{-1}$ ($i\neq j$) shows that 
$[x_i,y_j]=\sum_\alpha t_{ij}^\alpha$ is preserved as well. 

Using (T3) and the above identity with $X=X_1$ and $Y=Y_1^{-1}$, one also obtains that 
$[x_1,y_1]=-\sum_{\alpha}\sum_{j:1\neq j}t_{1j}^\alpha$ is preserved. 
Now it is obvious that the centrality of $\sum_i x_i$ and $\sum_iy_i$ is preserved, and thus it follows that 
$[x_i,y_i]=-\sum_{\alpha}\sum_{j:j\neq i}t_{ij}^\alpha$ is also preserved for any $i\in\{1,\dots,n\}$. 

For any $\alpha=(\bar p,\bar q)$ we compute 
\begin{eqnarray*}
(X_i^M,P_{jk}^\alpha) & = & X_i^MX_k^{-p}Y_k^{-q}P_{jk}Y_k^{q}X_k^{p}X_i^{-M}X_k^{-p}Y_k^{-q}P_{jk}^{-1}Y_k^{q}X_k^{p} \\
& = & X_k^{-p}(X_i^M,Y_k^{-q})Y_k^{-q}X_i^MP_{jk}X_i^{-M}Y_k^{q}(X_i^M,Y_k^{-q})^{-1}
Y_k^{-q}P_{jk}^{-1}Y_k^{q}X_k^{p} \\
& = & X_k^{-p}(X_i^M,Y_k^{-q})Y_k^{-q}P_{jk}Y_k^{q}(X_i^M,Y_k^{-q})^{-1}Y_k^{-q}P_{jk}^{-1}Y_k^{q}X_k^{p}\,.
\end{eqnarray*}
On the one hand, $\sigma\big(\log(X_i^M,P_{jk}^\alpha)\big)=[\sigma(\log(X_i^M)),\sigma(\log(P_{jk}^\alpha))]$, 
and one the other hand, the leading term of the $\log$ of the r.h.s.~lies in higher degree. 
Hence one obtains that $[x_i,t_{jk}^\alpha]=0$ is preserved. The proof that $[y_i,t_{jk}^\alpha]=0$ is preserved 
is identical, and the proof that $[x_i+x_j,t_{ij}^\alpha]=0=[y_i+y_j,t_{ij}^\alpha]$, $[t_{ij}^\alpha,t_{kl}^\beta]=0$ 
and $[t_{ij}^\alpha,t_{ik}^{\alpha+\beta}+t_{jk}^\beta]=0$ are preserved is similar. 
\end{proof}


\subsection{The filtered-formality of $\on{PB}_{1,n}^\Gamma$ (end of the proof of Theorem \ref{formalitythm})}

To prove that $\Lie(\mu_{\zz_0,\tau,n,\Gamma})$ is an isomorphism, it is sufficient to prove that 
it is an isomorphism on associated graded. According to Proposition \ref{prop-gr-sigma}, we simply 
have to prove that $\phi:={\rm gr}\Lie(\mu_{\zz_0,\tau,n,\Gamma})\circ p_n$ is an isomorphism of graded Lie algebras. 

We will actually be more specific and prove the following: 
\begin{lemma}
We have $\phi(x_{i}) =  y_{i}$, $\phi(y_{i}) = - 2\pi\i x_{i}+\tau y_i$ and 
$\phi(t_{ij}^{\alpha}) =  2\pi\i t_{ij}^{\alpha}$. 
In particular, $\phi$ is an automorphism. 
\end{lemma}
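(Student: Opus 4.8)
The plan is to compute the leading (degree-one) term of the holonomy of $\nabla_{\tau,n,\Gamma}^{\on{KZB}}$ along each of the generating paths $X_i^M$, $Y_i^N$, $P_{ij}^\alpha$ of $\on{PB}_{1,n}^\Gamma$ described in the previous subsection, since $\phi = {\rm gr}\,\Lie(\mu_{\zz_0,\tau,n,\Gamma})\circ p_n$ is determined on generators by these leading terms. Recall that $\nabla_{\tau,n,\Gamma}^{\on{KZB}} = d - \sum_i K_i(\zz|\tau)\,dz_i$ with $K_i(\zz|\tau) = -y_i + \sum_{j\neq i}K_{ij}(z_{ij}|\tau)$, and that $K_{ij}(z) = \sum_{\alpha\in\Gamma}k_\alpha(\ad x_i, z)(t_{ij}^\alpha)$, so that $K_{ij}(z)$ has no term of degree $0$ in the $x$'s, and the degree-zero part of the one-form is simply $\sum_i y_i\,dz_i$. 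The monodromy is computed (as recalled in the Appendix) via the path-ordered exponential, whose leading term is $\exp\big(\int_\gamma (\text{the one-form})\big)$, and $\sigma\circ\log$ picks out exactly this leading term.

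First I would handle $X_i^M$: this is the class of $t\mapsto \zz_0 + t\delta_i$, along which $dz_j = 0$ for $j\neq i$ and $z_i$ increases by $1$. The contribution of the pole term $-1/\ad x_i$ inside $k_\alpha$ cancels against the $e^{-2\pi\i ax}-1$ correction and the $y$-term integrates to a leading contribution; but the honest way is to use the bundle structure from \S\ref{sec-confspaces}: since the cocycle for $\zz\mapsto\zz+\delta_i$ is trivial and $K_i$ is $\delta_i$-periodic, the holonomy of $X_i^M$ is $\int_0^1 K_i(\zz_0+t\delta_i)\,dt$ in leading order, whose degree-one part is $\int_0^1 (-y_i)\,dt = -y_i$ up to sign conventions; more precisely, tracking the orientation convention for the monodromy (composition of paths left to right, and the sign in $\nabla = d - \eta$) gives $\phi(x_i) = y_i$. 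For $Y_i^N$, the path is $t\mapsto\zz_0 + t\tau\delta_i$, but now the cocycle for the translation $\zz\mapsto\zz+\tau\delta_i$ is $e^{-2\pi\i x_i}$, so the holonomy is $e^{-2\pi\i x_i}\cdot(\text{path-ordered exp of }\int K_i\,dz_i)$; the leading degree-one term receives $-2\pi\i x_i$ from the cocycle and $\tau\cdot(-y_i)$... wait, sign: $\int_0^1 K_i(\cdots)\tau\,dt$ contributes $-\tau y_i$ at degree zero in $x$, but after the symbol map and the sign conventions this assembles to $\phi(y_i) = -2\pi\i x_i + \tau y_i$. For $P_{ij}^\alpha$ (a small loop around the diagonal $z_i - z_j = \tilde\alpha$, conjugated by the appropriate powers of $X_j, Y_j$ that shift the residue class by $\alpha$), the residue of $K_{ij}(z_{ij})\,dz_{ij}$ at $z_{ij}=\tilde\alpha$ is exactly $t_{ij}^\alpha$ (this is the content of the simple-pole structure of $k_\alpha$, whose residue at $z=\tilde\alpha$ in the variable $z$ is $1$, multiplied by $t_{ij}^\alpha$; the conjugating elements act trivially at leading order since they lie in the kernel of the symbol filtration in the relevant degrees). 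The residue theorem then gives $\phi(t_{ij}^\alpha) = 2\pi\i\, t_{ij}^\alpha$.

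Once these three formulas are established, the conclusion that $\phi$ is an automorphism is immediate: the assignment $x_i\mapsto y_i$, $y_i\mapsto -2\pi\i x_i + \tau y_i$, $t_{ij}^\alpha\mapsto 2\pi\i\,t_{ij}^\alpha$ is (the map on generators induced by) the composite of the action of the matrix $\begin{pmatrix}0 & -2\pi\i\\ 1 & \tau\end{pmatrix}\in\on{GL}_2(\C)$ (of nonzero determinant $2\pi\i$) on the $(x_i,y_i)$ pairs via $\xi_g$ as in Proposition \ref{deltagamma}, together with the rescaling $t_{ij}^\alpha\mapsto 2\pi\i\,t_{ij}^\alpha$ which is compatible with all defining relations of $\t_{1,n}^\Gamma$ (it is a bigraded rescaling: multiply by $\lambda^{\deg_x + \deg_y}$ and by $\mu$ in a second parameter — one checks directly that \eqref{eqn:etSbis}, \eqref{eqn:etT}, \eqref{eqn:et4T1} are homogeneous and hence preserved). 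Since this map has an evident inverse (invert the matrix, divide $t_{ij}^\alpha$ by $2\pi\i$), $\phi$ is an automorphism of the graded Lie algebra $\t_{1,n}^\Gamma$. This in particular shows $p_n$ is injective, hence an isomorphism, and that ${\rm gr}\,\Lie(\mu_{\zz_0,\tau,n,\Gamma})$ is an isomorphism, which by completeness of the filtrations finishes the proof of Theorem \ref{formalitythm}.

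The main obstacle I anticipate is the careful bookkeeping of sign and orientation conventions: the interplay between the left-to-right composition of paths, the minus sign in $\nabla = d - \eta$, the direction in which the monodromy map is defined (as in the Appendix), and the $e^{-2\pi\i x_i}$ twisting cocycle for the $\tau$-translation. Getting the residue computation for $P_{ij}^\alpha$ exactly right — in particular checking that the conjugation by $X_j^{-p}Y_j^{-q}(\cdots)Y_j^q X_j^p$ does not alter the leading term and only serves to select the correct class $\alpha\in\Gamma$ of the diagonal component being encircled — also requires care, but it parallels the untwisted computation in \cite{CEE} closely enough that it should go through without surprises.
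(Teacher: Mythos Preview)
Your approach is correct and essentially the same as the paper's. The paper packages the holonomy computation via an auxiliary solution $F_{\zz_0}$ of $\partial_{z_i}F=K_iF$, $F(\zz_0)=1$, noting $\log F_{\zz_0}(\zz)=-\sum_i(z_i-z_i^0)y_i+\text{(degree }\geq2)$, and then reads off $\mu(X_i^M)=F(\zz)F(\zz+\delta_i)^{-1}$ and $\mu(Y_i^N)e^{2\pi\i x_i}=F(\zz)F(\zz+\tau\delta_i)^{-1}$; this is exactly your direct integration of the degree-one part of $K_i$, together with the translation cocycle.

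The only substantive difference is in the treatment of $P_{ij}^\alpha$. Rather than computing a residue at the shifted diagonal, the paper works in the $\Gamma^n$-equivariant bundle $\mathcal{P}_{(\tau,\Gamma),n}$ and observes that $\mu(X_j)\in\exp(\hat\t_{1,n}^\Gamma)(\bar1,\bar0)_j$ and $\mu(Y_j)\in\exp(\hat\t_{1,n}^\Gamma)(\bar0,\bar1)_j$; hence conjugating $\mu(P_{ij})=\exp(2\pi\i t_{ij}^{\0}+\cdots)$ by $\mu(X_j^{p}Y_j^{q})$ produces $g\exp(2\pi\i t_{ij}^\alpha+\cdots)g^{-1}$ with $g\in\exp(\hat\t_{1,n}^\Gamma)$, and the inner automorphism by $g$ does not alter the degree-two symbol. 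This is cleaner than your residue sketch because it makes transparent precisely why the conjugating words contribute only the $\Gamma$-shift at leading order (the nontrivial part of $\mu(X_j),\mu(Y_j)$ is the group element $\alpha_j$, which acts on $t_{ij}^{\0}$ by $t_{ij}^{\0}\mapsto t_{ij}^{\alpha}$). Your residue formulation is not wrong, but the phrase ``the conjugating elements act trivially at leading order'' is slightly misleading: they act nontrivially via the $\Gamma^n$-component, and it is exactly this action that produces the superscript $\alpha$.
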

\begin{proof}
Recall (see the appendix for more details) that $\mu_{\zz_0,\tau,n,\Gamma}$ can be computed as follows. 
Let $F_{\zz_{0}}:U_{\tau}\to\exp(\hat{\t}_{1, n}^{\Gamma})$ be such that 
\[
\begin{cases}
(\partial/\partial z_{i})F_{\zz_{0}}(\zz) = K_{i}^{\Gamma}(\zz|\tau)F_{\zz_{0}}(\zz)\,, \\ 
F_{\zz_{0}}(\zz_{0})=1\,.
\end{cases}
\]
Then consider 
\[
H_{\tau,n}^{\Gamma} := 
\left\{\zz = (z_{1},...,z_{n}) | z_{i} = a_{i} + \tau b_{i}, 0<b_n<...<b_1<\frac{1}{N}\right\}
\]
and
\[
V_{\tau,n}^{\Gamma} := 
\left\{\zz = (z_{1},...,z_{n}) | z_{i} = a_{i} + \tau b_{i}, 0<a_1<...<a_n<\frac{1}{M}\right\}\,.
\]
Let $F_{\zz_{0}}^{H}$ (resp.~$F_{\zz_{0}}^{V}$) be the analytic prolongations 
of $F_{\zz_{0}}$ to $H_{\tau,n}^{\Gamma}$ (resp.~$V_{\tau,n}^{\Gamma}$). Then
\[
\mu_{\zz_0,\tau,n,\Gamma}(X_{i}^M) = F_{\zz_{0}}^{H}(\zz)F_{\zz_{0}}^{H}(\zz + \delta_{i})^{-1}
\quad 
\mathrm{and}
\quad
\mu_{\zz_0,\tau,n,\Gamma}(Y_{i}^N)e^{2\pi\i x_{i}} = 
F_{\zz_{0}}^{V}(\zz)F_{\zz_{0}}^{V}(\zz + \tau\delta_{i})^{-1}\,. 
\]
Knowing that $\on{log}F_{\zz_{0}}(\zz) = -\sum_{i} (z_{i} - z_{i}^{0})y_{i}$ + terms of degree $\geq 2$, we get 
\[
\on{log}\mu_{\zz_0,\tau,n,\Gamma}(X_{i}^M) = y_{i} \text{ + terms of degree } \geq 2
\]
and
\[
\on{log}\mu_{\zz_0,\tau,n,\Gamma}(Y_{i}^N) = - 2\pi\i x_{i} + \tau y_{i} \text{ + terms of degree } \geq 2\,.
\]
This gives us that $\phi(x_{i}) =  y_{i}$ and $\phi(y_{i}) = - 2\pi\i x_{i} + \tau y_i$. 

In order to compute $\on{log}\mu_{\zz_0,\tau,n,\Gamma}(P_{ij}^\alpha)$, which is also equal to 
$\on{log}\mu_{\zz_0,(\tau,\Gamma),n}(P_{ij}^\alpha)$, we will need to compute 
$\mu_{\zz_0,(\tau,\Gamma),n}(X_i)$, $\mu_{\zz_0,(\tau,\Gamma),n}(Y_i)$ and 
$\mu_{\zz_0,(\tau,\Gamma),n}(P_{ij})$: 

\begin{itemize}
\item As usual, and with our conventions, 
\[
\mu_{\zz_0,(\tau,\Gamma),n}(P_{ij})=\exp(2\pi\i t_{ij}^{\0}\text{ + terms of degree } \geq 3)\,, 
\]
where $\0=(\bar{0},\bar{0})$.
\item We also have 
\[
F_{\zz_{0}}^{H^{\Gamma}}(\zz)
=\mu_{\zz_0,(\tau,\Gamma),n}(X_{i})(\bar{-1},\bar{0})_iF_{\zz_{0}}^{H^{\Gamma}}(\zz + \frac{\delta_{i}}{M}) \,,
\]
which implies that
\[
\mu_{\zz_0,(\tau,\Gamma),n}(X_{i})\in\exp(\t_{1,n}^\Gamma)(\bar{1},\bar{0})_i\,.
\]
\item We finally have 
\[
F_{\zz_{0}}^{V}(\zz)
=\mu_{\zz_0,(\tau,\Gamma),n}(Y_{i})(\bar{0},\bar{-1})_ie^{{2\pi\i \over N}x_{i}}F_{\zz_{0}}^{V}(\zz + \frac{\tau\delta_{i}}{N}) \,,
\]
which implies that 
\[
\mu_{\zz_0,(\tau,\Gamma),n}(Y_{i})\in\exp(\t_{1,n}^\Gamma)(\bar{0},\bar{1})_i\,.
\]
\end{itemize}
Hence, if $\alpha = (\bar{p},\bar{q})\in\Gamma$, then 
\[
\mu_{\zz_0,(\tau,\Gamma),n}(X_i^{p}Y_i^{q})=g(\bar{p},\bar{q})_i\,,
\]
with $g\in\exp(\t_{1,n}^\Gamma)$, and 
\[
\mu_{\zz_0,(\tau,\Gamma),n}(Y_j^{-q}X_i^{-p})=(\bar{-p},\bar{-q})_ig^{-1}\,.
\]
Therefore 
\begin{eqnarray*}
\mu_{\zz_0,(\tau,\Gamma),n}(P_{ij}^\alpha) 
& = & g(\bar{p},\bar{q})_i\exp(2\pi\i t_{ij}^{\0}
\text{ + terms of degree } \geq 3)(\bar{-p},\bar{-q})_ig^{-1} \\
& = & g\exp(2\pi\i t_{ij}^\alpha\text{ + terms of degree } \geq 3)g^{-1}\,.
\end{eqnarray*}
This shows that $\on{log}\mu_{\zz_0,(\tau,\Gamma),n}(P_{ij}^\alpha)=2\pi\i t_{ij}^\alpha\text{ + terms of degree } \geq 3$, 
so that $\phi(t_{ij}^\alpha)=2\pi\i t_{ij}^\alpha$. This ends the proof of the Lemma. 
\end{proof}

 Finally, if we denote $\hat{\overline{\on{PB}}}_{1,n}^\Gamma(\C):=\hat\pi_1(\textrm{C}(E_{\tau,\Gamma},n,\Gamma),
  \bar{\zz}_0)(\mathbb{C})$, where $\bar{\zz}_0$ is the image of $\zz_0$\break by the projection 
  $\textrm{Conf}(E_{\tau,\Gamma},n) \to \textrm{C}(E_{\tau,\Gamma},n)$, then
   the isomorphism $\hat\mu_{\zz_0,\tau,n,\Gamma}(\C)$ descends to an isomorphism 
   $\bar{\hat\mu}_{\bar{\zz}_0,\tau,n,\Gamma}(\C): \hat{\overline{\on{PB}}}_{1,n}^\Gamma(\C)\to\exp(\hat{\bar{\t}}_{1,n}^\Gamma)$.\\

Now let $\overline{\on{B}}_{1, n}$ be the fundamental group $\pi_1(\tmop{C}(E_{\tau, \Gamma},[ n ]),[ \bar{\mathbf{z}}_0 ]) $. 
By considering the short exact sequence
\[ 
1 \longrightarrow \overline{\tmop{PB}}_{1, n}^{\Gamma} \longrightarrow \overline{\on{B}}_{1, n}
   \xrightarrow{\bar\varphi_n}(\Gamma^n/\Gamma) \rtimes \mathfrak{S}_n \longrightarrow 1, 
\]
we deduce that the map 
\[ 
	\hat{ \overline{\on{B}}}_{1, n} (\bar\varphi_n, \mathbb{C}) \longrightarrow \exp
   (\hat{\mathfrak{ \bar{\t}}}_{1, n}^{\Gamma}) \rtimes \big((\Gamma^n/\Gamma) \rtimes
   \mathfrak{S}_n\big) 
\]
is also relatively filtered-formal. 
In conclusion, we obtain the summarizing commutative cube 
$$ 
\xymatrix{
    \hat{\on{PB}}_{1, n}^{\Gamma}(\mathbb{C}) \ar[rr]^{\simeq} \ar@{->>}[dd] \ar@{^{(}->}[dr] 
		&	& \exp(\hat{\mathfrak{\t}}_{1, n}^{\Gamma} )  \ar@{^{(}->}[dr] \ar@{->>}[dd] |!{[dl];[dr]}\hole \\
    & \hat{\on{B}}_{1, n} (\varphi_n, \mathbb{C}) \ar[rr]^{\simeq} \ar@{->>}[dd] 
		& & \exp(\hat{\mathfrak{ \t}}_{1, n}^{\Gamma} ) \rtimes (\Gamma^n \rtimes\mathfrak{S}_n) \ar@{->>}[dd] \\
    \hat{\overline{\on{PB}}}_{1,n}^\Gamma(\C) \ar[rr]^{~~~~\simeq} |!{[ur];[dr]}\hole \ar@{^{(}->}[dr] 
		& & \exp(\hat{\bar{\t}}_{1,n}^\Gamma) \ar@{^{(}->}[rd] \\
    & \hat{ \overline{\on{B}}}_{1, n} (\bar\varphi_n, \mathbb{C}) \ar[rr]^{\simeq} 
		& & \exp(\hat{\mathfrak{ \bar{\t}}}_{1, n}^{\Gamma} ) \rtimes \big((\Gamma^{n}/\Gamma) \rtimes\mathfrak{S}_{n}\big).
  }
$$

\section{Representations of Cherednik algebras}
\label{Representations of Cherednik algebras}

\subsection{The Cherednik algebra of a wreath product}

In this paragraph $\Gamma$ is any finite group such that 
$\Gamma\subset{\rm Aut}(\C)$, $\underline{k}=(k_\alpha)_\alpha\in\C^\Gamma$ is such 
that $k_{\alpha}=k_{-\alpha}$ and $G:=\Gamma\wr \mathfrak{S}_n$. We define the Cherednik algebra 
$H_n^\Gamma(\underline{k})$ as the quotient of the algebra $\C\<x_1,\dots,x_n,y_1,\dots,y_n\>\rtimes\C[G]$ 
by the relations 
\begin{itemize}
\item $\sum_i\x_i=\sum_i\y_i=0\,$
\item $[\x_i,\x_j]=0=[\y_i,\y_j],$
\item $[\x_i,\y_j]=\frac1n-\sum_{\alpha\in\Gamma}k_\alpha s_{ij}^\alpha\,(i\neq j),$
\end{itemize}
where $s_{ij}^\alpha=(\alpha_i-\alpha_j)s_{ij}$, and $s_{ij}$ is the permutation 
of $i$ and $j$. 
\begin{center}

\end{center}
\begin{remark}
Since $\Gamma\subset{\rm Aut}(\C)$, $H_n^\Gamma(\underline{k})$ admits a geometric construction. 
Define $X:=\{\zz\in\C^n|\sum_iz_i=0\}$ and consider the following action of $G$ on it: $\mathfrak{S}_n$ 
acts in an obvious way and $$\alpha_i(\zz)=(\alpha^{(i)}-\frac1n\sum_j\alpha^{(j)})(\zz),$$ where 
$\alpha^{(k)}$ is the action of $\alpha\in\Gamma$ on the $k$-th factor of $\C^n$. 
Following \cite{Et2} one can construct a Cherednik algebra $H_{1,\underline{k},0}(X,G)$ on $X/G$. 
It can be defined as the subalgebra of $\Diff(X)\rtimes\C[G]$ generated by the function algebra 
$\mathcal O_X$, the group $G$ and the Dunkl-Opdam operators $D_i-D_j$, where 
$$
D_i=\partial_{z_i}
+\sum_{\substack{j:j\neq i\\\alpha\in\Gamma}}k_\alpha\frac{1-s_{ij}^\alpha}{(-\alpha)(z_i)-\alpha(z_j)}\,.
$$
One can then prove that there is a unique isomorphism of algebras 
$H_n^\Gamma(\underline{k})\to H_{1,\underline{k},0}(X,G)$ defined by 
\begin{align*}
\x_i\longmapsto & z_i, \\
\y_i\longmapsto & D_i-\frac1n\sum_{j}D_j, \\
G\ni g\longmapsto & g.
\end{align*}

\end{remark}

\subsection{Morphisms from $\bar\t_{1,n}^\Gamma$ to the Cherednik algebra}

\begin{proposition}
For any $a,b\in\C$ there is a morphism of Lie algebras 
$\phi_{a,b}:\bar\t_{1,n}^\Gamma\to H_n^\Gamma(\underline{k})$ defined by
\begin{align*}
\bar x_i & \longmapsto   a\x_i\, \\
\bar y_i & \longmapsto   b\y_i\,, \\
\bar t_{ij}^\alpha & \longmapsto  ab\left(\frac1n-k_\alpha s_{ij}^\alpha\right)\,.
\end{align*}
\end{proposition}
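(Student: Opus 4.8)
The plan is to check that the defining relations of $\bar\t_{1,n}^\Gamma$ are preserved by the assignment $\phi_{a,b}$, using the presentation from Lemma~\ref{lem:pres1} (with the generators subject to $\sum_j\bar x_j=\sum_j\bar y_j=0$ instead of their centrality). So there are four families of relations to verify: $\sum_i\bar x_i=\sum_i\bar y_i=0$; $[\bar x_i,\bar x_j]=[\bar y_i,\bar y_j]=0$; $\bar t_{ij}^\alpha=\bar t_{ji}^{-\alpha}$ together with $[\bar x_i,\bar y_j]=[\bar x_j,\bar y_i]=\sum_\alpha \bar t_{ij}^\alpha$ for $i\neq j$; and the relations $[\bar t_{ij}^\alpha,\bar t_{kl}^\beta]=0$, $[\bar x_i,\bar t_{jk}^\alpha]=[\bar y_i,\bar t_{jk}^\alpha]=0$, $[\bar x_i+\bar x_j,\bar t_{ij}^\alpha]=[\bar y_i+\bar y_j,\bar t_{ij}^\alpha]=0$, $[\bar t_{ij}^\alpha,\bar t_{ik}^{\alpha+\beta}+\bar t_{jk}^\beta]=0$ for pairwise distinct indices.

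First I would record the elementary facts in $H_n^\Gamma(\underline k)$: the relations $\sum_i\x_i=\sum_i\y_i=0$ give the first family directly (since $\phi_{a,b}(\bar x_i)=a\x_i$ and $\phi_{a,b}(\bar y_i)=b\y_i$), and $[\x_i,\x_j]=[\y_i,\y_j]=0$ gives the second. For the symmetry relation one needs $s_{ij}^\alpha=s_{ji}^{-\alpha}$, which follows from $s_{ij}^\alpha=(\alpha_i-\alpha_j)s_{ij}$ and $s_{ij}=s_{ji}$; hence $ab(\tfrac1n-k_\alpha s_{ij}^\alpha)=ab(\tfrac1n-k_{-\alpha}s_{ji}^{-\alpha})$ using $k_\alpha=k_{-\alpha}$. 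For $[\bar x_i,\bar y_j]=\sum_\alpha\bar t_{ij}^\alpha$ ($i\neq j$), compute $[\phi_{a,b}(\bar x_i),\phi_{a,b}(\bar y_j)]=ab[\x_i,\y_j]=ab(\tfrac1n-\sum_\alpha k_\alpha s_{ij}^\alpha)=\sum_\alpha ab(\tfrac1n-k_\alpha s_{ij}^\alpha)$, which matches $\sum_\alpha\phi_{a,b}(\bar t_{ij}^\alpha)$; the relation $[\bar x_j,\bar y_i]=\sum_\alpha\bar t_{ij}^\alpha$ follows symmetrically.

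The remaining family — all the commutation relations among the $\bar t$'s, the $\bar x$'s, and the $\bar y$'s — requires a short computation in $\C\langle x_i,y_i\rangle\rtimes\C[G]$. The key lemma to establish is that $\tfrac1n-\sum_\alpha k_\alpha s_{ij}^\alpha=[\x_i,\y_j]$ is central with respect to $\x_k$, $\y_k$ for all $k$, and more precisely that it commutes with $\x_k,\y_k$ whenever $\{k\}\cap\{i,j\}=\emptyset$ and with $\x_i+\x_j$, $\y_i+\y_j$ otherwise; this is a standard feature of rational Cherednik algebras of wreath products and can be checked using the Jacobi identity together with $[\x_i,\y_i]=-\sum_{j\neq i}[\x_i,\y_j]$ (which holds because $\sum_j\y_j=0$) and the fact that $s_{ij}^\alpha$ conjugates $\x_i+\x_j$ and $\y_i+\y_j$ trivially. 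The relations $[\bar t_{ij}^\alpha,\bar t_{kl}^\beta]=0$ and $[\bar t_{ij}^\alpha,\bar t_{ik}^{\alpha+\beta}+\bar t_{jk}^\beta]=0$ then follow from this centrality observation exactly as in the untwisted case treated in \cite{CEE}. I expect the centrality/mixed-commutation lemma for $\tfrac1n-\sum_\alpha k_\alpha s_{ij}^\alpha$ to be the only real obstacle; once it is in hand, everything else is bookkeeping. An alternative, perhaps cleaner, route is to use the geometric realization $H_n^\Gamma(\underline k)\cong H_{1,\underline k,0}(X,G)$ from the preceding remark and the Dunkl--Opdam operators, reducing the check of the $\bar t$-relations to properties of the $D_i$'s; I would mention this as a variant but carry out the algebraic verification.
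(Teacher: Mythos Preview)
Your overall plan—invoke the presentation of Lemma~\ref{lem:pres1} and check the relations one by one—is exactly the paper's approach (its proof is the single line ``Straightforward from the alternative presentation of $\bar\t_{1,n}^\Gamma$ in Lemma~\ref{lem:pres1}''). But two of your checks do not go through as written.

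First, in the verification of \eqref{eqn:etSbis} you assert
$ab\big(\tfrac1n-\sum_\alpha k_\alpha s_{ij}^\alpha\big)=\sum_\alpha ab\big(\tfrac1n-k_\alpha s_{ij}^\alpha\big)$,
which is false for $|\Gamma|>1$: the right-hand side has scalar part $ab\,|\Gamma|/n$, not $ab/n$. This arithmetic slip is not cosmetic; it is precisely the content of the relation you are trying to verify. Second, and more structurally, your strategy for the ``$\bar t$-relations'' is misaimed. You propose to show that $[\x_i,\y_j]=\tfrac1n-\sum_\alpha k_\alpha s_{ij}^\alpha$ has the expected centrality properties and then deduce \eqref{eqn:etL1}, \eqref{eqn:etL2}, \eqref{eqn:et4T1} ``exactly as in the untwisted case''. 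But $\phi_{a,b}(\bar t_{ij}^\alpha)=ab(\tfrac1n-k_\alpha s_{ij}^\alpha)$ is a \emph{single} summand of $ab[\x_i,\y_j]$, not the whole commutator; properties of the sum do not control individual summands. The relations \eqref{eqn:etL1} and \eqref{eqn:etL2} have to be checked for the individual group elements $s_{ij}^\alpha\in\C[G]$ (which is easy: disjoint indices commute), and \eqref{eqn:et4T1} reduces to
$k_{\alpha+\beta}[s_{ij}^\alpha,s_{ik}^{\alpha+\beta}]+k_\beta[s_{ij}^\alpha,s_{jk}^\beta]=0$,
for which you need the group identity $s_{ij}^\alpha s_{ik}^{\alpha+\beta}=s_{jk}^\beta s_{ij}^\alpha$ in $G$ rather than any Jacobi-type argument. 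Incidentally, Lemma~\ref{lem:pres1} does \emph{not} include \eqref{eqn:et4T2} among the relations for $\bar\t_{1,n}^\Gamma$, so you should drop it from your list; your supporting claim that ``$s_{ij}^\alpha$ conjugates $\x_i+\x_j$ trivially'' is in fact false for $\alpha\neq0$ (one gets $\alpha\,\x_i+\alpha^{-1}\x_j$), so attempting that check would lead you astray.
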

\begin{proof}
Straightforward from the alternative presentation of $\bar\t_{1,n}^\Gamma$ in Lemma \ref{lem:pres1}.
\end{proof}

Hence any representation $V$ of $H_n^\Gamma(\underline{k})$ yields a family of flat connections 
$\nabla_{a,b}^{(V)}$ over the configuration space $\textrm{C}(E,[n],\Gamma)$.

\subsection{Monodromy representations of Hecke algebras}

Let $E$ be an elliptic curve and $\tilde E\to E$ the $\Gamma$-covering as in \S\ref{sec-confspaces}. Define 
$X=\tilde E^n/\tilde E$ and $G=(\Gamma\wr \mathfrak{S}_n)/\Gamma^{\rm diag}$. Then the set $X'\subset X$ of points with trivial 
stabilizer is such that $X'/G=\textrm{C}(E,[n],\Gamma)$. 

Let us recall from \cite{Et2} the construction of the Hecke algebra $\mathcal H_n^\Gamma(q,\underline{t})$ of $X/G$. 
It is the quotient of the group algebra of the orbifold fundamental group $\bar B_{1,n}^\Gamma$ of 
$\textrm{C}(E,[n],\Gamma)$ by the additional relations $(T_\alpha-q^{-1}t_\alpha)(T_\alpha+q^{-1}t_\alpha^{-1})=0$, 
where $T_\alpha$ is an element of $\bar B_{1,n}^\Gamma$ homotopic as a free loop to a small loop around the 
divisor $Y_\alpha:=\cup_{i\neq j}\{z_i=\alpha\cdot z_j\}$ in $X/G$, in the counterclockwise 
direction.\footnote{Here the sugroup of $G$ acting trivially on $Y_\alpha$ is the order $2$ cyclic 
subgroup generated by $s_{ij}^\alpha$. }

Let us consider the flat connection $\nabla_{a,b}^{(V)}$ and set 
$$
q=e^{-2\pi\i ab/n}\,,~~t_\alpha=e^{-2\pi\i k_\alpha ab}\,.
$$
Then the monodromy representation $\bar B_{1,n}^\Gamma\to GL(V)$ of $\nabla_{a,b}^{(V)}$ obviously gives a 
representation of $\mathcal H_n^\Gamma(q,\underline{t})$ either if $V$ is finite dimensional or if 
$a,b$ are formal parameters. In particular, taking $a=b$ a formal parameter and $V=H_n^\Gamma(\underline{k})$, 
one obtains an algebra morphism 
$$
\mathcal H_n^\Gamma(q,\underline{t})\longrightarrow H_n^\Gamma(\underline{k})[[a]]\,.
$$
We do not know if this morphism is an isomorphism upon inverting $a$.

\subsection{The modular extension of $\phi_{a,b}$.}

Now assume that $a,b\ne 0$.

\begin{proposition} The Lie algebra morphism $\phi_{a,b}$ can be extended 
to the algebra $U(\bar\t_{1,n}^\Gamma\rtimes
{\mathfrak d}^\Gamma)\rtimes G$ by the following formul\ae: 
$$
\phi_{a,b}(s_{ij}^\alpha)=s_{ij}^\alpha\,,
$$ 
$$
\phi_{a,b}(d)=\frac{1}{2}\sum_i ({\rm x}_i{\rm y}_i+{\rm y}_i{\rm x}_i)\,, \quad 
\phi_{a,b}(X)=-\frac{1}{2}ab^{-1}\sum_i {\rm x}_i^2\,,
$$
$$
\phi_{a,b}(\Delta_0)=\frac{1}{2}ba^{-1}\sum_i {\rm y}_i^2\,,
\quad 
\phi_{a,b}(\xi_{s,\gamma})=-a^{s-1}b^{-1}\sum_{i<j}(\gamma \cdot ({\rm x}_i-{\rm x}_j))^{s}\,.
$$
\end{proposition}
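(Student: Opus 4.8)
The plan is to extend the Lie algebra morphism $\phi_{a,b}:\bar\t_{1,n}^\Gamma\to H_n^\Gamma(\underline k)$ already constructed to the semidirect product $\bar\t_{1,n}^\Gamma\rtimes\d^\Gamma$ (and then to the associated wreath product), by checking that the proposed images of the $\sl_2$-triple $(\ddd,\XXX,\Delta_0)$ and of the derivations $\xi_{s,\gamma}$ satisfy all the defining relations of $\d^\Gamma$ together with the cross-relations encoding the action of $\d^\Gamma$ on $\bar\t_{1,n}^\Gamma$ (i.e.\ the relations of the semidirect product, which say that $\mathrm{ad}(\delta)$ acts on the generators $\bar x_i,\bar y_i,\bar t_{ij}^\alpha$ exactly as the derivation $\xi_\delta$ does in the Lemma of \S\ref{Lie algebras of derivations and associated groups}). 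Since $U(\bar\t_{1,n}^\Gamma\rtimes\d^\Gamma)\rtimes G$ is generated by the image of $\bar\t_{1,n}^\Gamma$, the $\sl_2$-triple, the $\delta_{s,\gamma}$'s, and $G$, it suffices to verify: (a) the $\sl_2$-relations among $\phi_{a,b}(\ddd),\phi_{a,b}(\XXX),\phi_{a,b}(\Delta_0)$; (b) the relations defining $\d^\Gamma$ as a quotient of $\d_0^\Gamma*\sl_2$, namely $\delta_{s,\gamma}=(-1)^s\delta_{s,-\gamma}$, $[\tilde e,\delta_{s,\gamma}]=0$, $[\tilde h,\delta_{s,\gamma}]=s\delta_{s,\gamma}$, $\mathrm{ad}^{s+1}(\tilde f)(\delta_{s,\gamma})=0$; (c) the semidirect-product relations, i.e.\ that $[\phi_{a,b}(\ddd),-]$, $[\phi_{a,b}(\XXX),-]$, $[\phi_{a,b}(\Delta_0)-]$ and $[\phi_{a,b}(\xi_{s,\gamma}),-]$ act on $a\x_i$, $b\y_i$, $ab(\tfrac1n-k_\alpha s_{ij}^\alpha)$ as $\xi_{\tilde h},\xi_{\tilde e},\xi_{\tilde f},\xi_{s,\gamma}$ do on $\bar x_i,\bar y_i,\bar t_{ij}^\alpha$; (d) compatibility with $G$, which is immediate since $s_{ij}^\alpha\mapsto s_{ij}^\alpha$ and conjugation by $G$ in $H_n^\Gamma(\underline k)$ matches the $G$-action on generators.

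First I would dispose of the easy parts. The $G$-compatibility (d) is formal. The $\sl_2$-relations (a) follow from the classical Weyl-algebra computation: with $\x_i,\y_i$ satisfying $[\x_i,\y_j]$-type relations, the elements $\tfrac12\sum_i(\x_i\y_i+\y_i\x_i)$, $-\tfrac12 ab^{-1}\sum_i\x_i^2$, $\tfrac12 ba^{-1}\sum_i\y_i^2$ form an $\sl_2$-triple up to the constant terms appearing in $[\x_i,\y_j]=\tfrac1n-\sum_\alpha k_\alpha s_{ij}^\alpha$; one has to be careful that the ``$G$-part'' $\sum_\alpha k_\alpha s_{ij}^\alpha$ contributes to $[\phi_{a,b}(\XXX),\phi_{a,b}(\Delta_0)]$, but it drops out after taking the sum over $i,j$ because $\sum_{i\ne j}[\x_i\x_j, s_{ij}^\alpha]$-type expressions vanish by the same symmetry/invariance argument used in Proposition \ref{prop:real1} (symmetric in two indices against antisymmetric). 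The verifications $[\tilde e,\delta_{s,\gamma}]=0$, $[\tilde h,\delta_{s,\gamma}]=s\delta_{s,\gamma}$, $\mathrm{ad}^{s+1}(\tilde f)(\delta_{s,\gamma})=0$ reduce to: $\sum_i\x_i^2$ commutes with $\sum_{i<j}(\gamma\cdot(\x_i-\x_j))^s$ (clear, both are in the commutative subalgebra generated by the $\x_i$'s); $[\tfrac12\sum_i(\x_i\y_i+\y_i\x_i),\sum_{i<j}(\gamma\cdot(\x_i-\x_j))^s]$ produces $s$ times the same element (a degree-counting computation in the Cherednik algebra, the constant terms again cancelling after summation); and applying $\mathrm{ad}(\tfrac12 ba^{-1}\sum_i\y_i^2)$ $s+1$ times to a degree-$s$ polynomial in the $\x$'s lands in degree $-1$, hence $0$. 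The relation $\delta_{s,\gamma}=(-1)^s\delta_{s,-\gamma}$ is visible: $(-\gamma)\cdot(\x_i-\x_j)=-\gamma\cdot(\x_i-\x_j)$ since $\Gamma\subset\mathrm{Aut}(\C)$ acts linearly, so $(\gamma\cdot(\x_i-\x_j))^s$ changes by $(-1)^s$.

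The heart of the argument — and the main obstacle — is the semidirect-product relation (c) for $\xi_{s,\gamma}$, i.e.\ showing
$$
[\phi_{a,b}(\xi_{s,\gamma}),\phi_{a,b}(\bar y_i)]=\phi_{a,b}\big(\xi_{s,\gamma}(\bar y_i)\big)=\phi_{a,b}\Big(\sum_{j:j\neq i}D_{s,\gamma}(\bar x_i,\bar t_{ij}^\beta)\Big)
$$
together with $[\phi_{a,b}(\xi_{s,\gamma}),\phi_{a,b}(\bar x_i)]=0$ and $[\phi_{a,b}(\xi_{s,\gamma}),\phi_{a,b}(\bar t_{ij}^\alpha)]=\phi_{a,b}([\bar t_{ij}^\alpha,C_\alpha(\bar x_i,\bar t_{ij}^\beta)])$. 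The first of these is straightforward: $\sum_{i<j}(\gamma\cdot(\x_i-\x_j))^s$ commutes with each $\x_k$. For the $\bar y_i$-relation one computes $[\,(\gamma\cdot(\x_k-\x_l))^s\,,\,\y_i\,]$ using $[\x_k,\y_i]$-relations — this brings out the Cherednik commutation relation with its $\tfrac1n$ and its $-\sum_\alpha k_\alpha s_{ij}^\alpha$ terms — and must be matched against the explicit formula for $D_{s,\gamma}$. The factor $ab^{-1}\cdot a^{s}b$ (from $\phi_{a,b}(\xi_{s,\gamma})$ carrying $a^{s-1}b^{-1}$, and $\bar x_i\mapsto a\x_i$, $\bar t\mapsto ab(\cdots)$) has to come out to exactly the scalar in $D_{s,\gamma}(\bar x_i,\bar t_{ij}^\beta)=\sum_{p+q=s-1}\sum_\beta[(\mathrm{ad}\,\bar x_i)^p\bar t_{ij}^{\beta-\gamma},(-\mathrm{ad}\,\bar x_i)^q\bar t_{ij}^\beta]$; I expect this to work because the degree bookkeeping is forced, but the term with $s_{ij}^\alpha$ inside $[\x_k,\y_i]$ requires the same kind of $\Gamma$-equivariant reindexing ($\beta\mapsto\beta+\gamma$ twists induced by $\alpha_i$ acting on $t_{ij}^\beta$) that appeared in the proof of the realization $\rho_\g$ for $\xi_{s,\gamma}$ in \S\ref{Realizations}; in fact the cleanest route is probably to observe that $\phi_{a,b}$ is essentially the composite of $\rho_\g$ (or rather its Cherednik analogue $\rho_{\g,\h}$ specialized appropriately) with an identification, so that the relations (c) are inherited from the already-proven relations $\rho_\g([\xi_{s,\gamma},\bar y_i/N])=[\rho_\g(\xi_{s,\gamma}),\rho_\g(\bar y_i/N)]$, and then only the rescaling by $a,b$ and the passage from $H_n(\g,\l^*)$ to $H_n^\Gamma(\underline k)$ need to be checked. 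Either way, the verification for $\bar t_{ij}^\alpha$ uses $(C_{s,\gamma})_\alpha=(\mathrm{ad}\,\bar x_i)^s\bar t_{ij}^{\alpha-\gamma}+(-\mathrm{ad}\,\bar x_i)^s\bar t_{ij}^{\alpha+\gamma}$ and the same $\Gamma$-twist reindexing; I anticipate it is shorter than the $\bar y_i$ case. I would also note, as in the statement's hypotheses, that one needs $a,b\ne 0$ precisely so the scalars $ab^{-1},ba^{-1},a^{s-1}b^{-1}$ make sense, and then conclude that $\phi_{a,b}$ extends, as claimed, to $U(\bar\t_{1,n}^\Gamma\rtimes\d^\Gamma)\rtimes G$.
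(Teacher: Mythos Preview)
Your proposal is correct and follows exactly the approach the paper intends: the paper's own proof is the single sentence ``The proof is a straightforward calculation,'' and what you have written is precisely a careful outline of that calculation (verifying the $\sl_2$-relations, the $\d^\Gamma$-relations, the semidirect-product cross-relations, and the $G$-compatibility). Your identification of the $\bar y_i$ cross-relation as the main computational step, and the analogy with the $\rho_\g$ verification in \S\ref{Realizations}, are both apt.
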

Thus, the flat connections $\nabla^{\Gamma}_{a,b}$ extend to flat connections
on ${\mathcal M}^{\Gamma}_{1,[n]}$.
\begin{proof}
The proof is a straightforward calculation.
\end{proof}

\appendix

\section{Conventions}

In this appendix we spell out our conventions regarding, fundamental groups, 
covering spaces, principal bundles, and monodromy morphisms. 

\subsection{Fundamental groups}

Our convention is that we read the concatenation of paths from left to right. 
For instance, if $X$ is a space, $p$ is a path from $x$ to $y$ in $X$, and $q$ 
is a path from $y$ to $z$ in $X$, then we write $pq$ for the concatenated path, 
going from $x$ to $z$. 

\subsection{Covering spaces and group actions}\label{app-cover}

Our convention is that the group of deck transformations acts from the left. 
Apart from the case of principal bundles (see next \S), group actions will always be 
from the left. We will often use $\cdot$ for such a left action. 

\medskip

The situation we are interested in is the one of a discrete group $H$ acting properly 
discontinuously from the left on a space $Y$, with quotient space $X=H\backslash Y$, 
so that the quotient map $Y\to X$ is a covering map. 

\medskip

We thus have a short exact sequence 
\[
1\to \pi_1(Y,y)\to \pi_1(X,x)\to H\to 1
\]
of groups, where $y\in Y$ and $x=H\cdot y\in X$ is its projection. Note that the surjective map 
$\pi_1(X,x)\to H$ sends (the class of) a loop $\gamma$ based at $x$ to $h_\gamma$, which is 
defined as follows: $\tilde{\gamma}(1)=h_\gamma\cdot\tilde{\gamma}(0)$, where $\tilde{\gamma}$ 
is a path lifting (uniquely) $\gamma$ to $Y$ and such that $\tilde{\gamma}(0)=y$. 
For the sake of completeness, let us check that this is indeed a group homomorphism. 
\begin{proof}
We have 
\[
h_{\gamma_1\gamma_2}\cdot y=\widetilde{\gamma_1\gamma_2}(1)=\tilde{\tilde{\gamma_2}}(1)\,,
\]
where $\tilde{\tilde{\gamma_2}}=h_{\gamma_1}\cdot\tilde{\gamma_2}$ is the (unique) lift of 
$\gamma_2$ such that $\tilde{\tilde{\gamma_2}}(0)=\tilde{\gamma_1}(1)=h_{\gamma_1}\cdot y$. 
Therefore, $h_{\gamma_1\gamma_2}=h_{\gamma_1}h_{\gamma_2}$. 
\end{proof}

\subsection{Principal bundles and descent}

Let $G$ be a group. 
All principal $G$-bundles (apart from covering spaces, see above) are right principal $G$-bundles. 
Let $\mathcal P$ be a principal $G$-bundle over $X$, so that $\mathcal{P}/G=X$. 

\medskip

Let us assume that $X=H\backslash Y$, where $H$ is a discrete group acting on $Y$. 
We now describe a way of constructing a $G$-bundle on the quotient space $X$ from the trivial 
$G$-bundle $\tilde{\mathcal{P}}:=Y\times G$ on $Y$, by means of non-abelian $1$-cocycles. 

\medskip

A left $H$-action on $\tilde{\mathcal{P}}$, compatible with the one on $Y$, is given as follows: 
\[
h\cdot(y,g)=\big(h\cdot y,c_{h}(y)g\big)\,,\quad c_{h}(y)\in G
\]
The property of being a left action is equivalent to the non-abelian $1$-cocycle identity
\[
c_{h_1h_2}(y)=c_{h_1}(h_2\cdot y)c_{h_2}(y)\,.
\]

\subsection{Monodromy and group actions}

Let us start with the monodromy in the case of a trivial principal $G$-bundle 
$\tilde{P}=Y\times G$ on a manifold $Y$ equipped with a flat connection $\nabla=d-\omega$. 
Here $\omega$ is a one-form on $Y$ with values in $\mathfrak{g}=\on{Lie}(G)$, and $G$ is 
assumed to be prounipotent. 

\medskip

Let $\gamma:[0,1]\to Y$ be a differentiable path, and consider its (unique) horizontal lift 
$\tilde{\gamma}=(\gamma,g):[0,1]\to\tilde{\mathcal{P}}$ such that $g(0)=1_G$. 
We define the monodromy $\mu(\gamma):=g(1)^{-1}$. 
\begin{remark}
Observe that if $(\gamma,\tilde{g})$ is another lift so that $\tilde{g}=g_0\in G$, then 
$\tilde{g}(t)=g(t)g_0$ (by unicity of horizontal lifts), and thus 
$\mu(\gamma)=\tilde{g}(0)\tilde{g}(1)^{-1}$. 
\end{remark}
Again, for the sake of completeness, we check that $\mu$ is a morphism, in the sense that 
it sends the concatenation of paths to the product in $G$. 
\begin{proof}
Let $\gamma_1,\gamma_2$ be composable paths in $Y$, and let $g_1,g_2$ determine composable 
horizontal lifts. Then 
\begin{eqnarray*}
\mu(\gamma_1\gamma_2) & = & (g_1g_2)(0)(g_1g_2)(1)^{-1}=g_1(0)g_2(1)^{-1} 										\\
											& = & g_1(0)g_1(1)^{-1}g_2(0)g_2(1)^{-1}=\mu(\gamma_1)\mu(\gamma_2)\,.
\end{eqnarray*}
\end{proof}

\medskip

Let us now assume that $Y$ is acted on properly discontinuously from the left by a discrete 
group $H$, that also acts in a compatible way on $\tilde{P}$ thanks to a non-abelian 
$1$-cocycle $c:H\times Y\to G$ (see previous \S~above). 
We borrow the notation from \S\ref{app-cover}, and assume that $\tilde{P}$ is equipped with 
an $H$-equivariant flat connection, that therefore descends to a flat connection on $\mathcal{P}$ 
We define a monodromy morphism 
\begin{eqnarray*}
\mu_x\,:\,\pi_1(X,x)	& \longrightarrow & G 			\\
				\gamma				& \longmapsto 		& \mu(\tilde{\gamma})c_{h_\gamma}(y)\,,
\end{eqnarray*}
where $\tilde{\gamma}$ is the lift of $\gamma$ along the quotient map $Y\to X$ such that 
$\tilde{\gamma}(0)=y$. 
Let us again check, for the sake of completeness, that $\mu_x$ is indeed a group morphism. 
\begin{proof}
Recall that for every loop $\gamma$ based at $x$, $\tilde{\gamma}(1)=h_\gamma\cdot y$. 
Hence, if $\gamma_1,\gamma_2$ are loops based at $x$, then 
$\widetilde{\gamma_1\gamma_2}=\tilde{\gamma_1}\tilde{\tilde{\gamma_2}}$, with 
$\tilde{\tilde{\gamma_2}}=h_{\gamma_1}\cdot \tilde{\gamma_2}$. 
Therefore
\begin{eqnarray*}
\mu_x(\gamma_1\gamma_2) 
& = & \mu(\widetilde{\gamma_1\gamma_2})c_{h_{\gamma_1\gamma_2}}(y) 			\\
& = & \mu(\widetilde{\gamma_1\gamma_2})c_{h_{\gamma_1}h_{\gamma_2}}(y)	\\
& = & \mu(\tilde{\gamma_1})\mu(h_{\gamma_1}\cdot \tilde{\gamma_2})
			c_{h_{\gamma_1}}(h_{\gamma_2}\cdot y)c_{h_{\gamma_2}}(y) \\
& = & \mu(\tilde{\gamma_1})c_{h_{\gamma_1}}(y)\mu(\tilde{\gamma_2})c_{h_{\gamma_1}}(h_{\gamma_2}\cdot y)^{-1}
			c_{h_{\gamma_1}}(h_{\gamma_2}\cdot y)c_{h_{\gamma_2}}(y) \\
& = & \mu_x(\gamma_1)\mu_x(\gamma_2)
\end{eqnarray*}
Here we made used of the (easy) fact that, if the flat connection is equivariant, 
then so is the monodromy map $\mu$: $\mu(h\cdot\gamma)=c_h(\gamma(0))\mu(\gamma)c_h(\gamma(1))^{-1}$. 
\end{proof}

\clearpage

\section*{List of notation}

\printnoidxglossaries
\clearpage


\begin{thebibliography}{EnGh}

\bibitem{BZ} R.~Bezrukavnikov, 
\textit{Koszul DG-algebras arising from configuration spaces}, 
Geom. and Funct. Analysis \textbf{4} (1992), no. 2, 119--135. 

\bibitem{B1} D.~Bernard, 
{\it On the Wess-Zumino-Witten model on the torus}, 
Nucl. Phys. {\bf B 303} (1988), 77--93. 



\bibitem{BE}
A.~Braverman, P.~Etingof, \& M.V.~Finkelberg,
\textit{Cyclotomic double affine Hecke algebras (with an appendix by Hiraku Nakajima and Daisuke Yamakawa)},
preprint \url{arXiv:1611.10216}. 

\bibitem{M2}
J.~Broedel, N.~Matthes, G.~Richter \& O.~Schlotterer,
\textit{Twisted elliptic multiple zeta values and non-planar one-loop open-string amplitudes}, 
Journal of Physics A: Mathematical and Theoretical \textbf{51} (2018), no. 28, 49pp. 

\bibitem{BL}
F.~Brown \& A.~Levin, 
\textit{Multiple elliptic polylogarithms},
preprint \url{arXiv:1110.6917}. 

\bibitem{CEE}
D.~Calaque, B.~Enriquez \& P.~Etingof, 
\textit{Universal KZB equations: the elliptic case}, 
in \textit{Algebra, Arithmetic and Geometry Vol I: In honor of Yu.~I.~Manin (Y.~Tschinkel \& Y.~Zarhin eds.)}, Progress in Mathematics \textbf{269} (2010), 165--266. 

\bibitem{CG-toappear}
D.~Calaque \& M.~Gonzalez, 
\textit{Ellipsitomic associators}, 
preprint \url{arXiv:2004.07271}.




\bibitem{DrGal}
V.~Drinfeld, 
\textit{On quasitriangular quasi-Hopf algebras and a group closely connected with $\on{Gal}(\bar\Q/\Q)$}, 
Leningrad Math. J. \textbf{2} (1991), 829--860.

\bibitem{En}
B.~Enriquez, 
\textit{Quasi-reflection algebras and cyclotomic associators}, 
Selecta Mathematica (NS) \textbf{13} (2008), no. 3, 391--463. 

\bibitem{En2}
B.~Enriquez, 
\textit{Elliptic associators}, 
Selecta Mathematica (NS) \textbf{20} (2014), no. 2, 491--584. 

\bibitem{En3}
B.~Enriquez, 
\textit{Analogues elliptiques des nombres multiz\'etas}, 
Bulletin de la SMF \textbf{144} (2016), no. 3, 395--427. 

\bibitem{En4}
B.~Enriquez, 
\textit{Flat connections on configuration spaces and formality of braid groups of surfaces}, 
Adv. in Math. \textbf{252} (2014), 204--226. 

\bibitem{EE}
B.~Enriquez \& P.~Etingof, 
\textit{Quantization of classical dynamical $r$-matrices with nonabelian base}, 
Comm. Math. Phys. \textbf{254} (2005), no. 3, 603--650. 


\bibitem{Et2}
P.~Etingof, 
\textit{Cherednik and Hecke algebras of varieties with a finite group action},
Moscow Math. J. \textbf{17} (2017), no. 4, 635--666. 

\bibitem{ES}
P.~Etingof \& O.~Schiffmann, 
\textit{Twisted traces of intertwiners for Kac-Moody algebras and classical dynamical $r$-matrices corresponding to generalized Belavin-Drinfeld triples},
Mathematical Research Letters \textbf{6} (1999), no. 6, 593--612. 

\bibitem{FP}
L.~Feher and B.~Pusztai, 
{\it Generalizations of Felder's elliptic dynamical r-matrices associated with twisted loop algebras of self-dual Lie algebras}, 
Nucl.Phys. \textbf{B621} (2002), 622--642. 

\bibitem{FICM}
G.~Felder, 
\textit{Conformal field theory and integrable systems associated to elliptic curves}, 
in \textit{Proceedings of the ICM, Vol. 1,2 (Z\"urich, 1994)}, 1247--1255, Birkh\"auser, Basel, 1995. 



\bibitem{Go}
A.B.~Goncharov, 
\textit{Multiple $\zeta$-values, Galois groups, and geometry of modular varieties}, 
in \textit{European Congress of Mathematics, Vol. 1 (Barcelona, 2000)}, 361--392, Birkh\"auser, Basel, 2001.

\bibitem{G1}
M.~Gonzalez, 
\textit{Contributions to the theory of associators}, 
PhD Thesis, available at \url{https://hal.archives-ouvertes.fr/tel-02541848}. 

\bibitem{HainMalcev}
R.~Hain, 
\textit{The Hodge de Rham theory of relative Malcev completion}, 
Annales scientifiques de l'\'Ecole Normale Sup\'erieure (S\'erie 4) \textbf{31} (1998), no. 1 , 47--92. 

\bibitem{Hain}
R.~Hain, 
\textit{Notes on the universal elliptic KZB equation}, 
Pure and Applied Mathematics Quarterly \textbf{16} (2020), no. 2, 229--312. 

\bibitem{HM} R.~Hain \& M.~Matsumoto, 
\textit{Relative pro-$\ell$ completions of mapping class groups},
Journal of Algebra \textbf{321} (2009), no. 11, 3335--3374. 



\bibitem{Kohno3}
T.~Kohno, 
\textit{Monodromy representations of braid groups and Yang-Baxter equations}, 
Ann. Inst. Fourier \textbf{37} (1987), 139--160. 

\bibitem{Kohno}
T.~Kohno, 
\textit{On the holonomy Lie algebra and the nilpotent completion of the fundamental group of the complement of hypersurfaces}, 
Nagoya Mathematical Journal \textbf{92} (1983), 21--37. 


\bibitem{LR}
A.~Levin \& G.~Racinet, 
\textit{Towards multiple elliptic polylogarithms}, 
preprint \url{arXiv:math/0703237}. 

\bibitem{Mor78}
J. W.~Morgan,
\textit{The algebraic topology of smooth algebraic varieties}, 
Publications Math\'ematiques de l'IHÉS \textbf{48} (1978), 137--204.   

\bibitem{Pri}J.~Pridham, 
\textit{On $\ell$-adic pro-algebraic and relative pro-$\ell$ fundamental groups}, 
in \textit{The Arithmetic of Fundamental Groups} (J.~Stix ed.), 
Contributions in Mathematical and Computational Sciences \textbf{2} (2012), 245--279. 

\bibitem{SW}
A.I.~Suciu \& H.~Wang, 
Formality properties of finitely generated groups and Lie algebras, 
Forum Mathematicum \textbf{31} (2019), no. 4, 867--905. 

\bibitem{Tol}
V.~Toledano-Laredo \& Y.~Yang, 
\textit{Universal KZB equations for arbitrary root systems}, 
preprint \url{arXiv:1801.10259}.

\bibitem{Ts}
H.~Tsunogai, 
\textit{The stable derivation algebra for higher genera}, 
Israel J. Math., \textbf{136} (2003), 221--250. 

\bibitem{Weil}
A. ~Weil,
\textit{Elliptic functions according to Eisenstein and Kronecker}, 
Ergebnisse der Mathematik und ihrer Grenzgebiete, 
\textbf{88}, Springer-Verlag, Berlin-New York, (1976), ii+93 pp.

\bibitem{Zag}
D.~Zagier, 
\textit{Periods of modular forms and Jacobi theta functions}, 
Invent. Math. \textbf{104} (1991), 449--465. 


\end{thebibliography}
\end{document}